\newtheorem{thm}{Theorem}[section]
\newtheorem{lem}[thm]{Lemma}
\newtheorem{prop}[thm]{Proposition}
\newtheorem{defn}[thm]{Definition}
\newtheorem{rem}[thm]{\textit{\textrm{Remark}}}
\newtheorem{notation}[thm]{Notation}
\newtheorem{ex}[thm]{\textit{\textrm{Example}}}
\newtheorem{note}{\textit{\textrm{Note}}}
\numberwithin{equation}{section}
\numberwithin{equation}{section}
\newcommand{\NI}{\noindent}
\newcommand{\ds}{\displaystyle}
\newcommand\HUGE{\@setfontsize\Huge{38}{47}}
\begin{document}
\title[Gram matrices  of a Class of Diagram algebras $\ldots$ ] {Gram Matrices and Stirling Numbers of a  Class of Diagram Algebras}

\author{N. Karimilla Bi and M.\,Parvathi$^\dag$}
\maketitle{\small{

\begin{center}
Ramanujan Institute for Advanced Study in Mathematics, \,\\
University  of  Madras,  \\
Chepauk, Chennai -600 005, Tamilnadu, India.\\

{\bf {$^\dag$ E-Mail: sparvathi@hotmail.com}}
\end{center}}

\begin{abstract}
In this paper, we introduce  Gram matrices for the signed partition algebras, the algebra of $\mathbb{Z}_2$-relations and the partition algebras. We prove that the Gram matrix is similar to a matrix which is a direct sum of block submatrices. In this connection, $(s_1, s_2, r_1, r_2, p_1, p_2)$-Stirling numbers of the second kind are introduced and their identities are established. As a consequence, the semisimplicity of a signed partition algebra is established.
\end{abstract}

\quad  \textbf{Keywords:} Gram matrices, partition algebras,  signed partition algebras and the

\quad \qquad \qquad \qquad algebra of  $\mathbb{Z}_2$-relations.

\quad \textbf{Mathematics Subject Classification(2010).}  16Z05.

\section{\textbf{INTRODUCTION}}
An extensive study of partition algebras is made by Martin
\cite{PH, PM1, PM2, PM3, PM4, PM5} and these algebras arose
naturally as Potts models in statistical mechanics and in the work of V. Jones \cite{J}.

A new class of algebras, called the signed partition algebras, are introduced in \cite{SP}
which are a generalization of partition algebras and signed Brauer
algebras \cite{PS}. The study of the structure of such finite-dimensional
algebras is important for it may be possible to find presumably new
examples of subfactors of a hyper finite $\Pi_1$-factor along the
lines of \cite{W}.

In this paper, we introduce a new class of matrices $G_{2s_1+s_2}^k$, $ \overrightarrow{G}_{2s_1+s_2}^k$ and $G_s^k$ of $A_k^{\mathbb{Z}_2}(x)$ (the algebra of $\mathbb{Z}_2$-relations),  $\overrightarrow{A}_k^{\mathbb{Z}_2}$ (signed partition algebras) and $A_k(x)$ (partition algebras) respectively which will be called as Gram matrices since by Theorem 3.8 in \cite{GL} the Gram matrices $G^{\lambda, \mu}_{2s_1+s_2}$ associated to the cell modules of $W[(s, (s_1, s_2)), ((\lambda_1, \lambda_2), \mu)]$ (for $\lambda = ([s_1], \Phi)$, $\mu = [s_2]$ if $s_1, s_2 \neq 0; \lambda = (\Phi, \Phi)$, $\mu = [s_2]$ if $s_1 = 0, s_2 \neq 0; \lambda = ([s_1], \Phi)$, $\mu = \Phi$ if $s_1 \neq 0 , s_2 = 0$; $\lambda = (\Phi, \Phi)$, $\mu = \Phi$ if $s_1 = s_2 = 0$, $0 \leq s_1 \leq k, 0 \leq s_2 \leq k$ and $0 \leq s_1+s_2 \leq k$) and $\overrightarrow{G}^{\lambda, \mu}_{2s_1+s_2}$ associated to the cell modules of $\overrightarrow{W}[(s, (s_1, s_2)), ((\lambda_1, \lambda_2), \mu)]$ (for $\lambda = ([s_1], \Phi)$, $\mu = [s_2]$ if $s_1, s_2 \neq 0; \lambda = (\Phi, \Phi)$, $\mu = [s_2]$ if $s_1 = 0, s_2 \neq 0; \lambda = ([s_1], \Phi)$, $\mu = \Phi$ if $s_1 \neq 0 , s_2 = 0$; $\lambda = (\Phi, \Phi)$, $\mu = \Phi$ if $s_1 = s_2 = 0$, $0 \leq s_1 \leq k, 0 \leq s_2 \leq k-1$ and $0 \leq s_1+s_2 \leq k - 1$)  defined in \cite{K1} coincides with the matrices  $G_{2s_1+s_2}^k$ and $\overrightarrow{G}_{2s_1+s_2}^k$ respectively. In this paper, we establish that $G^{k}_{2s_1+s_2}$ and $\overrightarrow{G}^{k}_{2s_1+s_2}$ are similar to matrices  $\widetilde{G}_{2s_1+s_2}^k$ and $\widetilde{\overrightarrow{G}}^k_{2s_1+s_2}$ respectively and each of which is a direct sum of block sub matrices $\widetilde{A}_{2r_1+r_2, 2r_1+r_2}$ and $\widetilde{\overrightarrow{A}}_{2r_1+r_2, 2r_1+r_2}$  of sizes $f^{2r_1+r_2}_{2s_1+s_2}$ and  $\overrightarrow{f}^{2r_1+r_2}_{2s_1+s_2}$ respectively. The diagonal entries of the matrices $\widetilde{A}_{2r_1+r_2, 2r_1+r_2}$ and $\widetilde{\overrightarrow{A}}_{2r_1+r_2, 2r_1+r_2}$ are the same and the diagonal element is a product of $r_1$ quadratic polynomials and $r_2$ linear polynomials which could help in determining the roots of the determinant of the Gram matrix. In this connection, $(s_1, s_2, r_1, r_2, p_1, p_2)$-Stirling numbers of the second kind for the algebra of $\mathbb{Z}_2$-relations and signed partition algebras are introduced and their identities are established. Similarly, we have also established that the Gram matrix $G^k_s$ of a partition algebra is similar to a matrix $\widetilde{G}^k_s$ which is a direct sum of block matrices $\widetilde{A}_{r, r}$ of size $f^r_s.$ The diagonal entries of the matrices $\widetilde{A}_{r, r}$ are the same and the diagonal element is a product of $r$ linear polynomials which could help in determining the roots of the determinant of the Gram matrix. Stirling numbers of second kind corresponding to the partition algebras are also introduced and their identities are established.

Using the cellularity structure defined in \cite{K1}, we show that the algebra of $\mathbb{Z}_2$-relations and signed partition algebras  are  semisimple over $\mathbb{K}(x)$ where $\mathbb{K}$ is  a field of
characteristic zero and $x$ is an indeterminate and it is also semisimple over a field of
characteristic zero except for a finite number of algebraic
elements and we also prove that the algebra of $\mathbb{Z}_2$-relations and the signed partition algebras are quasi-hereditary over a field of
characteristic zero. In particular, if $q$ is an integer $\leq k -2$ and $q$ is a  root of the polynomial $x^2-x-2r', 0 \leq r'\leq k-2$ then the algebras $A_k^{\mathbb{Z}_2}(q)$ and $\overrightarrow{A}_k^{\mathbb{Z}_2}(q)$ are not semisimple.

\section{Preliminaries}
\label{sect:prelims}

\subsection{Partition Algebras}

\quad We recall the definitions in \cite{HA} required in this paper. For $k \in \mathbb{N},$ let $\underline{k} = \{1,2, \cdots, k\}, \underline{k}' = \{1', 2', \cdots, k'\}.$ Let  $R_{\underline{k} \cup \underline{k}'}$ be the set of all partitions  of $\underline{k} \cup \underline{k}'$ or equivalence relation on $\underline{k} \cup \underline{k}'.$ Every equivalence class of $\underline{k} \cup \underline{k}'$ is called as connected component.

Any $d \in R_{\underline{k} \cup \underline{k}'}$ can be represented as a simple graph on two rows of $k$-vertices, one above the other with $k$ vertices in the top row, labeled $1,2,\cdots, k$ left to right and $k$ vertices in the bottom row labeled $1', 2', \cdots, k'$ left to right  with vertex $i$ and vertex $j$ connected by a path if $i$ and $j$ are in the same block of the set partition $d.$ The graph representing $d$ is called $k$-partition diagram and it is not unique. Two $k$-partition diagrams are said to be equivalent if they give rise to the same set partition of $2k$-vertices.

Any connected component $C$ of $d, d \in R_{\underline{k} \cup \underline{k}'}$  containing an element of $\{1, 2, \cdots, k\}$ and an element of $\{1', 2', \cdots, k'\}$ is  called a {\it through class}. Any connected component containing  elements only,  either from $\{1, 2, \cdots, k\}$ or $\{1', 2', \cdots, k'\}$ is called a {\it horizontal edge}.

\NI The number of through classes in $d$ is called {\it propagating number} and it is denoted by $\sharp^p(d).$

We shall define multiplication of two $k$-partition diagrams $d'$ and $d''$ as follows:

\begin{itemize}
  \item Place $d'$ above $d''$
  \item Identify the bottom dots of $d'$ with the top dots of $d''$
  \item $d' \circ d''$ is the resultant diagram obtained by using only the top row of $d'$ and bottom row of $d'',$ replace each connected component which lives entirely in the middle row by the variable $x$. i.e., $d' \circ d'' = x^l d'''$ where $l$ is the number of connected components that lie entirely in the middle row.
\end{itemize}

\NI This product is associative and is independent of the graph we choose to represent the $k$-partition diagram. Let $\mathbb{K}(x)$ be the field and $x$ be an indeterminate. The partition algebra $A_k(x)$ is defined to be the $\mathbb{K}(x)$-span of the $k$-partition diagrams, which is an associative algebra with identity $1$ where

\centerline{\includegraphics[height=2cm, width=7cm]{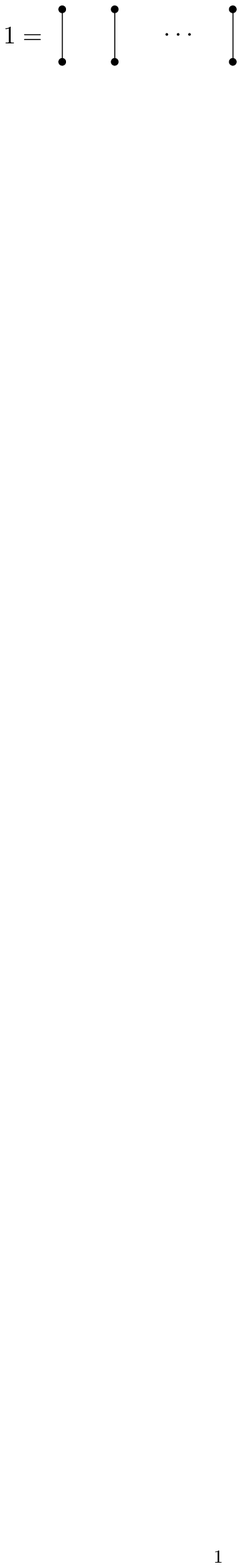}}

\NI By convention $A_0(x) = \mathbb{K}(x).$

\NI For $1 \leq i \leq k-1$ and $1 \leq j \leq k,$ the following are the generators of the partition algebras.
\begin{center}
\includegraphics[height=5cm, width=7cm]{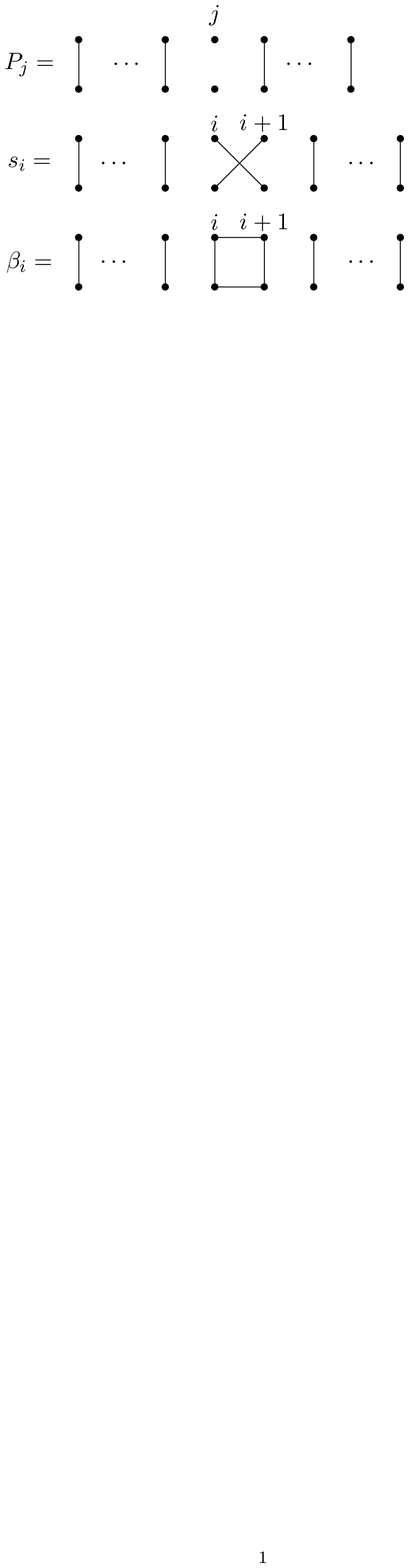}

\end{center}

\NI The above generators satisfy the relations given in Theorem 1.11 of \cite{HA}.

\subsection{The algebra of $\mathbb{Z}_2$-relations:}

\begin{defn}\label{D2.1} (\cite{VSS})
Let the group $\mathbb{Z}_2$ act on the set $X.$ Then the action of $\mathbb{Z}_2$ on $X$ can be extended to an action of $\mathbb{Z}_2$ on $R_X,$ where $R_X$ denote the set of all equivalence relations on $X$, given by

\centerline{$g.d = \{(gp, gq) \ | \ (p, q) \in d\}$}
\NI where $d \in R_X$ and $g \in \mathbb{Z}_2.$ (It is easy to see that the relation $g.d$ is again an equivalence relation).

An equivalence relation $d$ on $X$ is said to be a $\mathbb{Z}_2$-stable equivalence relation if $p \sim q$ in $d$ implies that $gp \sim gq$ in $d$ for all $g$ in $\mathbb{Z}_2.$ We denote $\underline{k}$ for the set $\{1, 2, \cdots, k\}.$ We shall only consider the case when $\mathbb{Z}_2$ acts freely on $X$; let $X = \underline{k} \times \mathbb{Z}_2$ and the action is defined by $g.(i, x) = (i, gx)$ for all $1 \leq i \leq k.$

Let $R_{\underline{k}}^{\mathbb{Z}_2}$ be the set of all $\mathbb{Z}_2$-stable equivalence relations on $\underline{k} \times \mathbb{Z}_2 .$
\end{defn}

\begin{notation}\label{N2.2}(\cite{VSS})
\begin{enumerate}
  \item[(i)]   $R_{\underline{k}}^{\mathbb{Z}_2}$ denotes the set of all $\mathbb{Z}_2$-stable equivalence relation on $ \underline{k} \times \mathbb{Z}_2.$

Each $d \in R_{\underline{k}}^{\mathbb{Z}_2}$ can be represented as a simple graph on row of $2k$ vertices.
\begin{itemize}
  \item The vertices $(1, e), (1, g), \cdots, (k, e), (k, g)$ is arranged from left to right in a single row.
  \item If $(i, g) \sim (j, g') \in R_{\underline{k}}^{\mathbb{Z}_2}$ then $((i, g), (j, g'))$ is an edge which is obtained by joining the vertices $(i, g)$ and $(j, g')$ by a line for $g, g' \in \mathbb{Z}_2.$
\end{itemize}
We say that the two graphs are equivalent if they give rise to the same set partition of the $2k$ vertices $\{(1, e), (1, g), \cdots, (k, e), (k, g)\}.$

\NI We may regard each element $d$ in $R_{\underline{k} \cup \underline{k}'}^{\mathbb{Z}_2}$ as a $2k$-partition diagram by arranging the $4k$ vertices $(i, g), i \in \underline{k} \cup \underline{k}', g \in \mathbb{Z}_2$ of $d$ in two rows in such a way that $(i, g)\left( (i', g)\right)$ is in the top(bottom) row of $d$ if $1 \leq i \leq k(1' \leq i' \leq k')$ for all $g \in \mathbb{Z}_2$  and if $(i, g) \sim (j, g')$ then $((i, g), (j, g'))$ is an edge which is obtained by joining the vertices $(i, g)$ and $(j, g')$ by a line where $g, g' \in \mathbb{Z}_2.$
\item[(ii)] $R_{\underline{k}}^{\mathbb{Z}_2}$ can be identified as a subset of $R_{\underline{2k}}$ by identifying $(r,e)$ with $2r-1, \ \forall \ 1 \leq r \leq k$ and $(r, g), g \neq e$ with $2r \ \forall \ 1 \leq r \leq k.$
\item[(iii)] The diagrams $d^+$ and $d^-$ are obtained from the diagram $d$ by restricting the vertex set to $\{(1, e), (1, g), \\  \cdots, (k, e), (k, g)\}$ and  $\{(1', e), (1', g), \cdots (k', e), (k', g)\}$ respectively. The diagrams $d^+$ and $d^-$ are also $\mathbb{Z}_2$-stable equivalence relation with $d^+ \in R_{\underline{k}}^{\mathbb{Z}_2}$ and $ d^- \in R_{\underline{k}'}^{\mathbb{Z}_2}.$
\end{enumerate}

\end{notation}

\begin{defn}\label{D2.3}(\cite{VSS})
Let $d \in R_{\underline{k} \cup \underline{k}'}^{\mathbb{Z}_2}.$ Then the equation

\centerline{$R^d = \{(i, j) \ | \ \text{ there exists } g, h \in \mathbb{Z}_2 \text{ such that }  ((i, g), (j, h)) \in d\}$}

\NI defines an equivalence relation on $\underline{k} \cup \underline{k}'.$
\end{defn}

\begin{rem}\label{R2.4} (\cite{VSS})
For every connected component $C$ of $R^{\mathbb{Z}_2}_{\underline{k} \cup \underline{k}'}$, $C^d$ will be a connected component in $R^d$ as in Definition \ref{D2.3}.

 For $d \in R_{\underline{k} \cup \underline{k}'}^{\mathbb{Z}_2}$ and for every $\mathbb{Z}_2$-stable equivalence class or a connected component $C$ in $d$ there exists a unique subgroup denoted by $H_C^d$ for $C^d \in R^d$ where
\begin{enumerate}
  \item[(i)]  $H_C^d = \{e\}$ if $(i, e) \not \sim (i, g) \ \forall \ i \in C^d, C$ is called an $\{e\}$-class or $\{e\}$-component and the $\{e\}$-component $C$ will always occur as a pair in $d$ which is denoted by  $C^e, C^g.$
  \item[(ii)] $H_C^d =\mathbb{Z}_2$ if $(i, e) \sim (i, g) \ \forall \ i \in C^d, C$ is called a $\mathbb{Z}_2$-class or $\mathbb{Z}_2$-component which is  denoted  by $C^{\mathbb{Z}_2}$ and the number of vertices in the $\mathbb{Z}_2$-component $C^{\mathbb{Z}_2}$ will always be even.
\end{enumerate}
\end{rem}

\begin{prop}\label{P2.5} (\cite{VSS})
The linear span of $R_{\underline{k} \cup \underline{k}'}^{\mathbb{Z}_2}$ is a subalgebra of $A_{2k}(x)$. We denote this subalgebra by $A_k^{\mathbb{Z}_2}(x),$ called the algebra of $\mathbb{Z}_2$-relations.
\end{prop}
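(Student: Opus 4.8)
The plan is to verify directly that the linear span of $R_{\underline{k} \cup \underline{k}'}^{\mathbb{Z}_2}$ is closed under the multiplication of $A_{2k}(x)$ and contains its identity; closure under addition and scalar multiplication is automatic, since we are taking a linear span. Using the identification of Notation \ref{N2.2}(ii), each $d \in R_{\underline{k} \cup \underline{k}'}^{\mathbb{Z}_2}$ is a genuine $2k$-partition diagram, and the $\mathbb{Z}_2$-stability condition of Definition \ref{D2.1} becomes invariance of $d$ under the vertex relabelling $g$ that interchanges the two vertices over each column, both in the top row (swapping $2r-1 \leftrightarrow 2r$) and in the bottom row (swapping $(2r-1)' \leftrightarrow (2r)'$) simultaneously. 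Indeed, $\mathbb{Z}_2$-stability says $p \sim q$ implies $gp \sim gq$, which since $g$ is an involution is equivalent to $g \cdot d = d$.

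First I would record that this relabelling is realised by conjugation with the permutation diagram $g \in S_{2k} \subset A_{2k}(x)$, which is invertible with $g^{-1} = g$; hence $\phi_g(d) := g\,d\,g^{-1}$ is an (inner) algebra automorphism of $A_{2k}(x)$ that permutes the diagram basis, and a diagram is $\mathbb{Z}_2$-stable precisely when it is a fixed point $\phi_g(d) = d$. The key step is then closure under multiplication. Given $d', d'' \in R_{\underline{k} \cup \underline{k}'}^{\mathbb{Z}_2}$, write $d' \circ d'' = x^{\ell} d'''$ for the resultant diagram $d'''$ and the number $\ell$ of components living entirely in the middle row. Applying $\phi_g$ yields
\[
x^{\ell}\,\phi_g(d''') = \phi_g(d' \circ d'') = \phi_g(d') \circ \phi_g(d'') = d' \circ d'' = x^{\ell} d''',
\]
and since $\phi_g$ fixes the scalar $x^{\ell}$ and permutes diagrams, this forces $\phi_g(d''') = d'''$. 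Thus $d'''$ is again $\mathbb{Z}_2$-stable and $d' \circ d'' \in \mathbb{K}(x)\,R_{\underline{k} \cup \underline{k}'}^{\mathbb{Z}_2}$. For the unit, the identity diagram of $A_{2k}(x)$ joins each top vertex to the vertex directly below it, and this is visibly fixed by $\phi_g$, hence lies in the span.

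If one prefers to avoid the automorphism language, the same conclusion follows by tracking $\mathbb{Z}_2$-stability through the three stages of the product: stacking $d'$ over $d''$ and identifying the bottom row of $d'$ with the top row of $d''$ gives a relation on the three rows that is $\mathbb{Z}_2$-stable because $d'$ and $d''$ are; the transitive closure of a $\mathbb{Z}_2$-stable relation remains $\mathbb{Z}_2$-stable, since applying $g$ to a connecting chain $p_0, p_1, \ldots, p_n$ produces the chain $gp_0, gp_1, \ldots, gp_n$; and restricting to the $\mathbb{Z}_2$-invariant subset given by the top row of $d'$ and the bottom row of $d''$ preserves stability. I expect the only point genuinely needing care is the bookkeeping underlying the automorphism claim: one must confirm that, read through the identification $(r,e) \leftrightarrow 2r-1$, $(r,g) \leftrightarrow 2r$, the action $g$ is induced by a single permutation acting identically on the top and bottom labels, so that it is a bona fide diagram-algebra automorphism rather than merely a set-theoretic map. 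Granting this, both multiplicative closure and the presence of the identity follow immediately.
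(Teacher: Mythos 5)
Your argument is correct. Note first that the paper itself offers no proof of this proposition: it is imported verbatim from \cite{VSS}, so there is nothing internal to compare against. Your conjugation argument is a clean, self-contained justification: the swap $(r,e)\leftrightarrow(r,g)$ on both rows is realised by a fixed-point-free involution $\sigma\in\mathfrak{S}_{2k}$ acting identically on top and bottom labels, conjugation by the corresponding permutation diagram is an inner automorphism of $A_{2k}(x)$ that permutes the diagram basis without introducing factors of $x$ (permutation diagrams have no horizontal edges, so no middle loops arise), and $\mathbb{Z}_2$-stability of $d$ is exactly the fixed-point condition $\sigma d\sigma^{-1}=d$ because $g\cdot d\subseteq d$ forces $g\cdot d=d$ for the bijection $g$. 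Comparing basis expansions of $\phi_\sigma(d'\circ d'')=d'\circ d''$ then forces the resultant diagram $d'''$ to be fixed, and the identity diagram is visibly stable, so the span is a unital subalgebra. The bookkeeping point you flag at the end is the only place where care is needed, and it checks out under the identification of Notation \ref{N2.2}(ii); your alternative argument via tracking stability through the stacking, transitive closure, and restriction steps is equally valid and closer in spirit to how \cite{VSS} treats general $G$-relations.
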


\begin{defn}\label{D2.6}(\cite{VSS})
For $0 \leq 2s_1+s_2 \leq 2k,$ define $I^{2k}_{2s_1+s_2}$ as follows:

\centerline{ $I^{2k}_{2s_1+s_2} = \left\{ d \in R_{\underline{k} \cup \underline{k}'}^{\mathbb{Z}_2} \ | \ \sharp^p(d) = 2s_1+s_2 \right\}$}

\NI i.e., $d$ has $s_1$ number of pairs of $\{e\}$-through classes and $s_2$ number of $\mathbb{Z}_2$-through classes.

\NI For $0 \leq s \leq 2k$ define,  $I_s^{2k} = \underset{2s_1+s_2 \leq s}{\cup} I^{2k}_{2s_1+s_2}$ then it  is clear that
\begin{displaymath}
 R_{\underline{k} \cup \underline{k}'}^{\mathbb{Z}_2} =  \underset{0 \leq s \leq 2k}{\cup} I_s^{2k} =  \underset{\ds  0 \leq 2s_1+s_2 \leq 2k}{\cup} I^{2k}_{2s_1+s_2}.
 \end{displaymath}

\end{defn}

\subsection{Signed Partition Algebras:}
\begin{defn}\label{D2.7} (\cite{SP}, \textbf{Definition 3.1.1})
Let the signed partition algebra $\overrightarrow{A}_k^{\mathbb{Z}_2}(x)$ be the subalgebra of $A_{2k}(x)$ generated by $H_1, F'_i, F''_i, G_i, F_j$ for $1 \leq i \leq k-1$ and $1 \leq j \leq k$ where
\vspace{0.5cm}
\begin{center}
\includegraphics[height=7cm, width=15cm]{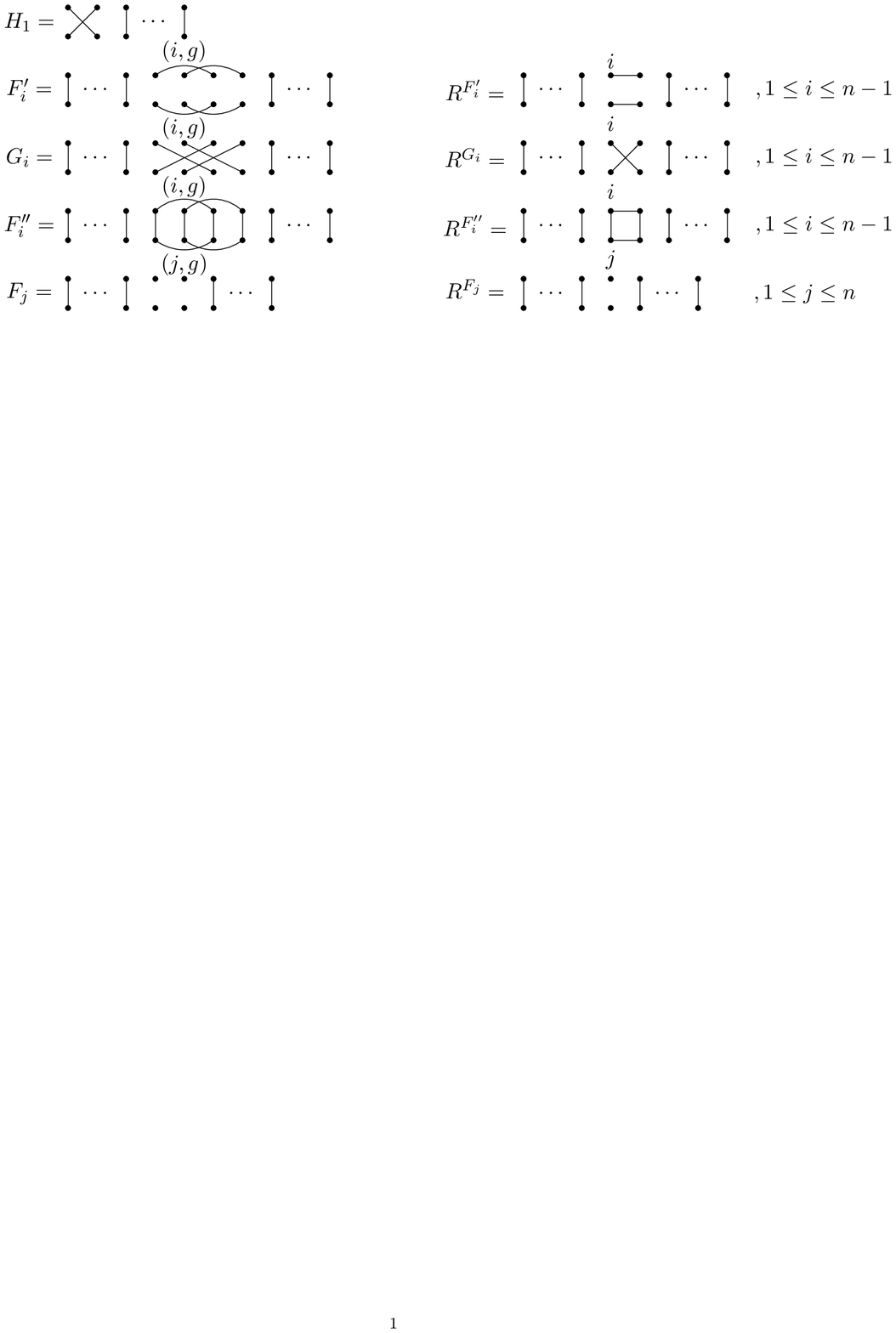}
\end{center}
\vspace{-0.5cm}
The subalgebra of the signed partition algebra generated by $F'_i, G_i, F''_i, F_j, 1 \leq i \leq k -1, 1 \leq j \leq k$ is isomorphic on to the partition algebra $A_{2k}(x^2).$

Also,  $ R^{G_i} = s_i, R^{F''_i} = \beta_i,  R^{F_j} = p_j, R^{F'_i} = p_i p_{i+1} \beta_i p_{i+1} p_i$ where $s_i, \beta_i, p_j$ are as in Page 3.
\end{defn}

We will obtain a basis for the signed partition algebra defined in Definition \ref{D2.7}.
\begin{defn}\label{D2.8}(\cite{SP}, \textbf{Definition 3.1.2})
Let $d \in R_{\underline{k} \cup \underline{k}'}^{\mathbb{Z}_2}$. For $0 \leq 2s_1+s_2 \leq 2k -1$ and $0 \leq s_1, s_2 \leq k - 1,$ define

$\overrightarrow{I}^{2k}_{2s_1+s_2} = \left\{ d \in I_{2s_1+s_2}^{2k} \left|
                                                                  \begin{array}{ll}
                                                                    (i) & s_1+s_2+r_1+r_2 \leq k-1 \text{ and } s_1+s_2+r'_1+r'_2 \leq k-1, \text{ or }  \\
                                                                    (ii) & s_1+s_2+r_1+r_2 \leq k \text{ and } s_1+s_2+r'_1+r'_2 \leq k-1 \text{ then } r_1 \neq 0, \text{ or } \\
                                                                    (iii) & s_1+s_2+r_1+r_2 \leq k-1 \text{ and } s_1+s_2+r'_1+r'_2 \leq k \text{ then } r'_1 \neq 0, \text{ or } \\
                                                                   (iv) & s_1+s_2+r_1+r_2 \leq k \text{ and } s_1+s_2+r'_1+r'_2 \leq k \text{ then } r_1 \neq 0 \text{ and } r'_1 \neq 0 .
                                                                  \end{array}
                                                                \right.
 \right\}$,
\NI  where
\begin{enumerate}
  \item[(a)] $s_1 = \natural \{(C^e, C^g) : C^d$ is a through class of $R^d$ and $H_C^d = \{e\} \},$
  \item[(b)] $s_2 = \natural \{C^{\mathbb{Z}_2} : C^d$ is a through class of $R^d$ and $H_C^d = \mathbb{Z}_2 \},$
  \item[(c)] $r_1 \left( r'_1\right)$ is the number of horizontal edges $C^d$ in the top(bottom) row of $R^d$ such that $H_C^d = \{e\}$
  \item[(d)] $r_2 \left( r'_2\right)$ is the number of horizontal edges $C^d$ in the top(bottom) row of $R^d$ such that $H_C^d = \mathbb{Z}_2$
  \item[(e)] $\sharp^p \left( R^d \right) = s_1 + s_2.$
\end{enumerate}

Also, $\overrightarrow{I}_{2k}^{2k} = I^{2k}_{2k}.$

\NI For $0 \leq s \leq 2k$, put $\overrightarrow{I}_s^{2k} = \underset{2s_1+s_2 \leq s}{\cup} \overrightarrow{I}^{2k}_{2s_1+s_2}.$
\end{defn}

\begin{prop}\label{P2.9}
\mbox{ }
\begin{enumerate}
  \item The \textit{ linear span of $\overrightarrow{I}_s^{2k}, 0 \leq s \leq 2k$ is  the signed partition algebra $\overrightarrow{A}_k^{\mathbb{Z}_2}.$}
  \item The linear span of $I_s^{2k}$ is an ideal of $\overrightarrow{A}_k^{\mathbb{Z}_2}.$
\end{enumerate}
\end{prop}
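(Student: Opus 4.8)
The plan is to prove that $\bigcup_{0\le s\le 2k}\overrightarrow{I}_s^{2k}$ is a $\mathbb{K}(x)$-basis of $\overrightarrow{A}_k^{\mathbb{Z}_2}$, which is exactly statement (1), and then to deduce (2) from the behaviour of the propagating number under the multiplication of $A_{2k}(x)$. Throughout, write $V$ for the linear span of $\bigcup_s\overrightarrow{I}_s^{2k}$ and recall from Proposition~\ref{P2.5} that the diagram multiplication of $A_{2k}(x)$ preserves $\mathbb{Z}_2$-stability, so that every product considered below is again a scalar multiple of an element of $R_{\underline{k}\cup\underline{k}'}^{\mathbb{Z}_2}$. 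Since the elements of $\bigcup_s\overrightarrow{I}_s^{2k}$ are pairwise distinct $2k$-partition diagrams, they are linearly independent in $A_{2k}(x)$; hence it only remains to identify $V$ with $\overrightarrow{A}_k^{\mathbb{Z}_2}$.

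For the inclusion $\overrightarrow{A}_k^{\mathbb{Z}_2}\subseteq V$ I would first check, by direct inspection of the diagrams in Definition~\ref{D2.7}, that each generator $H_1,F'_i,F''_i,G_i,F_j$ lies in $V$: one computes its propagating number and the six statistics $s_1,s_2,r_1,r_2,r'_1,r'_2$ and verifies that one of the conditions (i)--(iv) of Definition~\ref{D2.8} holds. Next I would prove the closure statement that for $d',d''\in\bigcup_s\overrightarrow{I}_s^{2k}$ the product $d'\circ d''=x^{l}d'''$ again has $d'''\in\bigcup_s\overrightarrow{I}_s^{2k}$. Granting these two facts, $V$ is a subalgebra of $A_{2k}(x)$ containing all the generators, hence it contains the subalgebra they generate, namely $\overrightarrow{A}_k^{\mathbb{Z}_2}$. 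For the reverse inclusion $V\subseteq\overrightarrow{A}_k^{\mathbb{Z}_2}$ I would exhibit, for an arbitrary $d\in\overrightarrow{I}^{2k}_{2s_1+s_2}$, an explicit factorisation of $d$ as a product of the generators, constructed by induction on the number of edges of $d$: the $F_j$ control the propagating number while $G_i,F'_i,F''_i$ and $H_1$ produce the $\mathbb{Z}_2$-classes and the paired $\{e\}$-classes, the admissibility conditions guaranteeing that every partial product stays inside $V$.

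The hard part of (1) is exactly these closure and generation steps, because the region carved out by conditions (i)--(iv) must be shown to be \emph{precisely} the set of diagrams obtainable from the generators. The delicate bookkeeping is the way the statistics $r_1,r_2,r'_1,r'_2$ change when a through class is severed into horizontal edges during a composition, and in particular the side conditions $r_1\ne 0$ and $r'_1\ne 0$ that separate cases (ii)--(iv); one must confirm that a composition can never produce a diagram in which, say, the top sum $s_1+s_2+r_1+r_2$ reaches $k$ while $r_1=0$, which is the only way admissibility could fail. This is where I expect essentially all of the work to lie.

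For (2) the argument is short and rests on the standard fact that composition in $A_{2k}(x)$ never increases the propagating number, together with Proposition~\ref{P2.5}. Observe first that the propagating number $\sharp^{p}(d)=2s_1+s_2$ of $d\in I^{2k}_{2s_1+s_2}$ is exactly the propagating number of $d$ regarded as a $2k$-partition diagram, so that $I_s^{2k}=\{d\in R_{\underline{k}\cup\underline{k}'}^{\mathbb{Z}_2}:\sharp^{p}(d)\le s\}$ and the ordinary non-increasing property applies verbatim. Hence for $d\in I_s^{2k}$ and any diagram $a$, both $a\circ d$ and $d\circ a$ are $\mathbb{Z}_2$-stable by Proposition~\ref{P2.5} and satisfy $\sharp^{p}(a\circ d)\le\sharp^{p}(d)\le s$ and $\sharp^{p}(d\circ a)\le s$; thus each is a power of $x$ times an element of $I_s^{2k}$, so the span of $I_s^{2k}$ is stable under left and right multiplication, i.e.\ a two-sided ideal. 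Running the same argument on the diagrams of $\overrightarrow{I}_s^{2k}$ — which by part (1) already span a subalgebra, so that the products remain inside it — yields the corresponding two-sided ideal of $\overrightarrow{A}_k^{\mathbb{Z}_2}$, completing the proof.
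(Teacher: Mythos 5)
The paper itself states Proposition \ref{P2.9} without proof (it is imported from \cite{SP}), so there is no argument to compare yours against; judged on its own terms, your submission has the right skeleton but a genuine gap. The two statements that carry all the weight of part (1) --- that the composition of two diagrams satisfying conditions (i)--(iv) of Definition \ref{D2.8} is again a scalar multiple of such a diagram, and that every such diagram factors as a product of $H_1, F'_i, F''_i, G_i, F_j$ --- are announced (``I would prove\dots'', ``I would exhibit\dots'') but never carried out, and you say yourself that this is ``where essentially all of the work lies.'' As written this is a proof plan, not a proof. For the closure step the missing argument is roughly: the top row of $R^{d'\circ d''}$ is a coarsening of the top row of $R^{d'}$, and a horizontal edge in the top row of $d'$ has no bottom-row vertices, hence cannot be touched by the middle identification; so the top block count $s_1+s_2+r_1+r_2$ cannot increase and every $\{e\}$-horizontal pair of $d'$ survives as an $\{e\}$-horizontal pair of the product, which is exactly what is needed to inherit the conditions ``count $\leq k-1$, or count $=k$ and $r_1\neq 0$'' (and symmetrically for the bottom row via $d''$). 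The generation step is genuinely nontrivial and is the actual content of \cite{SP}; an ``induction on the number of edges'' needs to be spelled out before it can be accepted.

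Part (2) is essentially fine once part (1) is in place: the propagating number does not increase under composition, so the span of the diagrams with $\sharp^p\leq s$ inside the algebra is a two-sided ideal. Two caveats. First, your argument for this quietly reuses the unproved closure statement (you need the product $a\circ d$ to land back in the admissible set, not merely in $R^{\mathbb{Z}_2}_{\underline{k}\cup\underline{k}'}$). Second, the linear span of $I_s^{2k}$ as literally written is not even contained in $\overrightarrow{A}_k^{\mathbb{Z}_2}$ (for $s=2k$ it is the whole of $A_k^{\mathbb{Z}_2}(x)$), so the statement must be read with $\overrightarrow{I}_s^{2k}$ in place of $I_s^{2k}$, consistent with Example \ref{E2.15}; you implicitly make this correction, but it should be said explicitly.
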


\begin{rem}\label{R2.10}
The algebra generated by $\{R^{F'_i}, R^{G_i}, R^{F''_i}, R^{F_j}\}_{\substack{1 \leq i \leq k-1 \\ \hspace{-0.4cm}1 \leq j \leq k}}$ is isomorphic to the partition algebra $A_k(x).$

\NI Also, let $I_s^k$ be the set of all $k$-partition diagrams $R^d$ in $A_k(x)$ such that $\sharp^p \left(R^d \right) \leq s$ where $d \in I^{2k}_{2s_1+0} \subseteq A_{2k}(x^2).$
\end{rem}

\begin{defn}\textbf{(\cite{K1}, Definition 4.2)} \label{D2.11}

Define,
\begin{enumerate}
  \item[(i)] $M^k[(s,(s_1, s_2))] = \Big\{(d, P) \ | \ d \in R_{\underline{k}}^{\mathbb{Z}_2}, P \in R_{\underline{s_1+s_2}}^{\mathbb{Z}_2} \text{ and } d \setminus P \in R_{\underline{k-s_1-s_2}}^{\mathbb{Z}_2}, |d| \geq 2s_1 + s_2, P \text{ is a }$

       \NI $\text{ $\mathbb{Z}_2$-stable subset of } d \text{ with } |P| = s \text{ where } s = 2s_1 + s_2, P = \underset{i=1}{\overset{s_1}{\cup}} (P_i^e \cup P_i^g) \underset{j=1}{\overset{s_2}{\cup}}P_{j}^{\mathbb{Z}_2}$ such that

       \NI  $H^d_{P_i^{\{e\}}} = \{e\}, 1 \leq i \leq s_1,$  $H^d_{P_j^{\mathbb{Z}_2}} = \mathbb{Z}_2, 1 \leq j \leq s_2 \Big\}.$

  \item[(ii)] $\overrightarrow{M}^k[(s,(s_1, s_2))] = \Big\{(d, P) \in M^k[(s, (s_1, s_2))] \ \Big| \  s_1+s_2+r_1+r_2 \leq k - 1 \text{ and if } s_1+s_2+r_1+r_2 = k \text{ then } s_1 = k \text{ or } r_1 \neq 0 \text{ where } 2r_1 \text{ is the number of } \{e\}-\text{ connected components in } d \setminus P \text{ and }r_2 \text{ is the number of } \mathbb{Z}_2-\text{connected components} \text{ in } d \setminus P  \Big\}.$
\end{enumerate}
\end{defn}

\NI  We shall now introduce an ordering for the connected components in
$P.$

\NI Suppose that  $P = \underset{1 \leq i \leq s_1}{\cup}(P_i^e \cup P_i^g) \cup \underset{1 \leq j \leq s_2}{\cup}P_{j}^{\mathbb{Z}_2}$
then $R^P = \underset{1 \leq i \leq s_1}{\cup} R^{P_i^{\{e\}}} \cup  \underset{1 \leq j \leq s_2}{\cup}R^{P_{j}^{\mathbb{Z}_2}}.$

Let $a_{11}, \cdots, a_{1s_1}$ be the minimal vertices of the
connected components $R^{P_1^{\{e\}}}, \cdots,
R^{P_{s_1}^{\{e\}}}$ in $R^P$  and $b_{11}, \cdots, b_{1s_2}$ be
the minimal vertices of the connected components
$R^{P_1^{\mathbb{Z}_2}}, \cdots, R^{P_{s_2}^{\mathbb{Z}_2}}$ in
$R^P$ then

\centerline{ $P_i^e < P_j^e$ \text{ and } $P_i^g < P_j^g$ \text{ if
and only if } $R^{P_i^{\{e\}}} < R^{P_j^{\{e\}}}$ \text{ if and
only if } $a_{1i} < a_{1j} \in R^{P}$ \text{ and }}

 \qquad \qquad \qquad \quad \quad $P_l^{\mathbb{Z}_2} < P_f^{\mathbb{Z}_2}$ \text{ if
and only if } $R^{P_l^{\mathbb{Z}_2}} < R^{P_f^{\mathbb{Z}_2}}$
\text{ if and only if } $b_{1l} < b_{1f} \in R^P.$

\NI Since $\overrightarrow{M}^k[(s, (s_1, s_2))] \subseteq M^k[(s, (s_1, s_2))],$ the above ordering can be used for the connected components $P$ when $(d, P) \in \overrightarrow{M}^k[(s, (s_1, s_2))].$

\begin{lem}\textbf{[\cite{K1}, Lemma 4.3]}\label{L2.12}
\mbox{ }
\NI Let $M^k[(s, (s_1, s_2))]$ and $\overrightarrow{M}^k[(s, (s_1, s_2))]$ be as in Definition \ref{D2.11}.
\begin{enumerate}
  \item[(i)] Each $d \in I^{2k}_{2s_1+s_2}$ can be associated with a pair of
elements $(d^+, P),  (d^{-}, Q) \in M^k[(s, (s_1, s_2))]$ and  an element
$((f, \sigma_1), \sigma_2) \in \left(\mathbb{Z}_2 \wr \mathfrak{S}_{s_1}\right)
\times \mathfrak{S}_{s_2}$ where $(d^+, P), (d^-, Q) \in M^k[(s, (s_1, s_2))]$ and  $((f, \sigma_1), \sigma_2) \in (\mathbb{Z}_2 \wr \mathfrak{S}_{s_1}) \times \mathfrak{S}_{s_2}.$
  \item[(ii)]  Each $d \in \overrightarrow{I}^{2k}_{2s_1+s_2}$ can  be associated with a pair of
elements $(d^+, P),  (d^{-}, Q) \in \overrightarrow{M}^k[(s, (s_1, s_2))]$ and  an element
$((f, \sigma_1), \sigma_2) \in \left(\mathbb{Z}_2 \wr \mathfrak{S}_{s_1}\right)
\times \mathfrak{S}_{s_2}$ where $(d^+, P), (d^-, Q)\in \overrightarrow{M}^k[(s, (s_1, s_2))]$ and  $((f, \sigma_1), \sigma_2)\in (\mathbb{Z}_2 \wr \mathfrak{S}_{s_1}) \times \mathfrak{S}_{s_2}.$
\end{enumerate}

\end{lem}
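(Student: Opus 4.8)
The plan is to realise the asserted association as an explicit cut of the diagram into its top half, its bottom half, and a matching datum, exactly as in the classical reduction of a partition diagram with $\ell$ propagating lines to a triple consisting of a top configuration, a bottom configuration, and a permutation in $\mathfrak{S}_{\ell}$; the one new feature is that $\mathbb{Z}_2$-stability forces the matching datum to lie in $\left(\mathbb{Z}_2 \wr \mathfrak{S}_{s_1}\right) \times \mathfrak{S}_{s_2}$ rather than in a single symmetric group. Since $\overrightarrow{M}^k[(s,(s_1,s_2))] \subseteq M^k[(s,(s_1,s_2))]$ and $\overrightarrow{I}^{2k}_{2s_1+s_2} \subseteq I^{2k}_{2s_1+s_2}$, the construction for part (ii) is word for word the one for part (i), and only the codomain membership must be re-examined.

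First I would restrict $d \in I^{2k}_{2s_1+s_2}$ to the top and to the bottom vertex rows, obtaining $d^+$ and $d^-$; because the $\mathbb{Z}_2$-action preserves each row, Notation \ref{N2.2}(iii) gives $d^+ \in R_{\underline{k}}^{\mathbb{Z}_2}$ and $d^- \in R_{\underline{k}'}^{\mathbb{Z}_2}$. By Remark \ref{R2.4} each through class of $d$ is either an $\{e\}$-class, occurring as a pair $C^e, C^g$ swapped by $g$, or a $\mathbb{Z}_2$-class $C^{\mathbb{Z}_2}$; since $d \in I^{2k}_{2s_1+s_2}$ there are exactly $s_1$ such pairs and $s_2$ such $\mathbb{Z}_2$-classes, accounting for all $\sharp^p(d) = 2s_1 + s_2$ through classes. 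Collecting the top traces of these classes yields a $\mathbb{Z}_2$-stable subset $P = \bigcup_{i=1}^{s_1}(P_i^e \cup P_i^g) \cup \bigcup_{j=1}^{s_2} P_j^{\mathbb{Z}_2}$ of $d^+$ with $H^d_{P_i^{\{e\}}} = \{e\}$ and $H^d_{P_j^{\mathbb{Z}_2}} = \mathbb{Z}_2$, so that $|P| = 2s_1 + s_2 = s$ and the leftover horizontal edges form $d^+ \setminus P \in R_{\underline{k-s_1-s_2}}^{\mathbb{Z}_2}$; hence $(d^+, P) \in M^k[(s,(s_1,s_2))]$, and the bottom traces give $(d^-, Q) \in M^k[(s,(s_1,s_2))]$ in the same way.

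Next I would read off the matching datum from how through classes join the two rows. Using the minimal-vertex ordering of the connected components of $P$ and of $Q$ introduced above, each through class joins exactly one component of $P$ to one component of $Q$, and since $H^d_C$ is an invariant of the whole folded component $C^d$, an $\{e\}$-pair is joined only to an $\{e\}$-pair and a $\mathbb{Z}_2$-component only to a $\mathbb{Z}_2$-component. The matching of the $s_2$ ordered $\mathbb{Z}_2$-components is a permutation $\sigma_2 \in \mathfrak{S}_{s_2}$; the matching of the $s_1$ ordered pairs is a permutation $\sigma_1 \in \mathfrak{S}_{s_1}$, and within the $i$-th pair the $\mathbb{Z}_2$-stability of $d$ forces $P_i^g$ to be joined to $Q_{\sigma_1(i)}^g$ (resp.\ $Q_{\sigma_1(i)}^e$) precisely when $P_i^e$ is joined to $Q_{\sigma_1(i)}^e$ (resp.\ $Q_{\sigma_1(i)}^g$), leaving a single binary choice per pair that is recorded by a function $f \colon \{1,\dots,s_1\} \to \mathbb{Z}_2$. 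Thus $d$ determines $((f,\sigma_1),\sigma_2) \in \left(\mathbb{Z}_2 \wr \mathfrak{S}_{s_1}\right) \times \mathfrak{S}_{s_2}$, which proves part (i); reconnecting $d^+$ over $d^-$ according to this datum recovers $d$, so the association is in fact a bijection.

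For part (ii) the construction is identical, and the only point to verify is that $(d^+, P)$ and $(d^-, Q)$ land in $\overrightarrow{M}^k[(s,(s_1,s_2))]$. Writing $r_1, r_2$ for the numbers of $\{e\}$-pairs and $\mathbb{Z}_2$-components among the horizontal edges of $d^+ \setminus P$ and $r'_1, r'_2$ for the corresponding counts in $d^- \setminus Q$, the four coupled cases (i)--(iv) in the definition of $\overrightarrow{I}^{2k}_{2s_1+s_2}$ (Definition \ref{D2.8}) are exactly the conjunction of a top constraint on $(r_1, r_2)$ and an identical bottom constraint on $(r'_1, r'_2)$, and each of these single-row constraints is precisely the membership condition of Definition \ref{D2.11}(ii). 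I expect this translation to be the only real obstacle: one must match the four coupled top/bottom cases of Definition \ref{D2.8} against the single uncoupled condition defining $\overrightarrow{M}^k$, keep track of the fact that $r_1$ in both definitions counts pairs (equivalently, $2r_1$ counts $\{e\}$-components), and dispose of the full-propagating boundary $2s_1 + s_2 = 2k$ (where $s_1 = k$ and no horizontal edges occur) through the separate clause $\overrightarrow{I}^{2k}_{2k} = I^{2k}_{2k}$. Everything else is the routine verification that restriction, the minimal-vertex ordering, and $\mathbb{Z}_2$-stability interact as stated.
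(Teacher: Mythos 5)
The paper gives no proof of this lemma: it is imported verbatim from \cite{K1}, Lemma 4.3 as a preliminary, so there is no internal argument to compare yours against. Your construction --- restricting $d$ to its two rows, collecting the top and bottom traces of the $s_1$ pairs of $\{e\}$-through classes and the $s_2$ $\mathbb{Z}_2$-through classes into $P$ and $Q$, and reading off the matching datum $((f,\sigma_1),\sigma_2)\in\left(\mathbb{Z}_2\wr\mathfrak{S}_{s_1}\right)\times\mathfrak{S}_{s_2}$ exactly as in Definition \ref{D2.13} --- is precisely the decomposition the paper uses downstream (see Remark \ref{R3.4} and the notation $U^{(d^+,P)}_{(d^-,Q)}((f,\sigma_1),\sigma_2)$), and your translation of the four coupled top/bottom cases of Definition \ref{D2.8} into the conjunction of two copies of the single-row condition of Definition \ref{D2.11}(ii), with the full-propagating boundary handled by $\overrightarrow{I}^{2k}_{2k}=I^{2k}_{2k}$, is the correct way to settle part (ii).
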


\begin{defn}\textbf{(\cite{K1}, Definition 4.6)}\label{D2.13}
\begin{enumerate}
  \item[(i)] Define a map $\phi^s_{s_1, s_2}: M^k[(s,(s_1, s_2))] \times M^k[(s,(s_1, s_2))]
\rightarrow R [\left(\mathbb{Z}_2 \wr \mathfrak{S}_{s_1}\right)
\times \mathfrak{S}_{s_2}]$ as follows:

\centerline{$\phi^s_{s_1, s_2}\left((d', P), (d'', Q)\right) = x^{l(P \vee Q)} ((f, \sigma_1),
    \sigma_2)$ and }

  \item[(ii)] Define a map $\overrightarrow{\phi}^s_{s_1, s_2}: \overrightarrow{M}^k[(s,(s_1, s_2))] \times \overrightarrow{M}^k[(s,(s_1, s_2))]
\rightarrow R [\left(\mathbb{Z}_2 \wr \mathfrak{S}_{s_1}\right)
\times \mathfrak{S}_{s_2}]$ as follows:

\centerline{$\overrightarrow{\phi}^s_{s_1, s_2}\left((d', P), (d'', Q)\right) = x^{l(P \vee Q)} ((f, \sigma_1),
    \sigma_2)$  }
\end{enumerate}
\NI {\bf Case (i):} if
\begin{enumerate}
    \item[(a)] No two connected components of $Q$ in $d''$ have non-empty intersection with a common  connected component of $d'$ in $d'. d''$, or vice versa.
       \item[(b)] No connected component of $Q$ has non-empty intersection only with the connected components   excluding the connected components of $P$ in $d' . d''.$ Similarly, no connected component in $P$ has non-empty intersection only with  a connected component excluding the connected \\ components of $Q$ in $d'. d''.$
        \end{enumerate}
        where $l(P \vee Q)$ denotes the number of connected
    components in $d' . d''$ excluding the union of all the
    connected components of $P$ and $Q$ and $d' . d'' \in R^{\mathbb{Z}_2}_{\underline{k} \cup \underline{k}'}$ is the smallest $d$ in $R^{\mathbb{Z}_2}_{\underline{k} \cup \underline{k}'}$ such that $d' \cup d'' \subset d.$

    The permutation $\left((f, \sigma_1), \sigma_2 \right)$ is obtained as
    follows: If there is a unique connected  component in $d'. d''$
    containing $P_i^e$ and $Q_j^{g'}$ then, define $\sigma_1(i) = j$ and

    \centerline{$f(i) =  \left\{%
\begin{array}{ll}
    \overline{1}, & \hbox{if $g' = g$;} \\
    \overline{0}, & \hbox{if $g' = e$.} \\
\end{array}%
\right.   $}

Also, if there is a unique connected component in $d' . d''$
containing $P_l^{\mathbb{Z}_2}$ and $Q_f^{\mathbb{Z}_2}$  then, define $\sigma_2(l) =  f).$

\NI {\bf Case (ii):} Otherwise, $\phi^s_{s_1, s_2}\left((d', P), (d'', Q)\right) = 0$ and $\overrightarrow{\phi}^s_{s_1, s_2}\left((d', P), (d'', Q)\right) = 0.$
\end{defn}

\begin{defn}\label{D2.14}

Let $(d, P) \in M^k[(s, (s_1, s_2))]$ such that $|d \setminus P| = 2r_1+r_2$ where $M^k[(s, (s_1, s_2))]$ be as in Definition \ref{D2.11}.

Let $\{P_{1i}^g, g \in \mathbb{Z}_2\}_{1 \leq i \leq s_1} \cup \{P_{2j}^{\mathbb{Z}_2}\}_{1 \leq j \leq s_2}$ be the connected components in $P$ and $ \{P_{3l}^{g'}, g' \in \mathbb{Z}_2\}_{1 \leq l \leq r_1} \cup \{P_{4m}^{\mathbb{Z}_2}\}_{1 \leq m \leq r_2}$ be the connected components in $d \setminus P.$

Define a map $\phi: M^k[(s, (s_1, s_2))] \rightarrow P(k)$ as $\phi((d, P)) = (\alpha_1, \alpha_2, \alpha_3, \alpha_4)$ where $\alpha_1 \vdash k_1, \alpha_2 \vdash k_2, \alpha_3 \vdash k_3, \alpha_4 \vdash k_4$ with $k_1 + k_2 + k_3 + k_4 = k, \alpha_1 = (\alpha_{11}, \alpha_{12}, \cdots,  \alpha_{1s_1}), \alpha_2 = (\alpha_{21}, \alpha_{22}, \cdots, \alpha_{2s_2}), \alpha_3 = (\alpha_{31}, \alpha_{32}, \cdots, \alpha_{3r_1})$ and $\alpha_{4} = (\alpha_{41}, \alpha_{42}, \cdots, \alpha_{4r_2})$ such that $|P_{1i}| = \alpha_{1i}, |P_{2j}| = \alpha_{2j}, |P_{3l}| = \alpha_{3l}, |P_{4m}| = \alpha_{4m}$ respectively for all $ 1 \leq i \leq s_1, 1 \leq j \leq s_2, 1 \leq l \leq r_1 $ and $1 \leq m \leq r_2.$
\end{defn}

\begin{ex}\label{E2.15}
The following example illustrates the use of $2s_1+s_2$  instead of $s=2s_1+s_2$ to denote the number of through classes for the diagrams in algebra of $\mathbb{Z}_2$-relations  and signed partition algebras.

\NI For $s_1 = 0$ and $s_2 = 2, $

\begin{center}
\includegraphics[width=17cm]{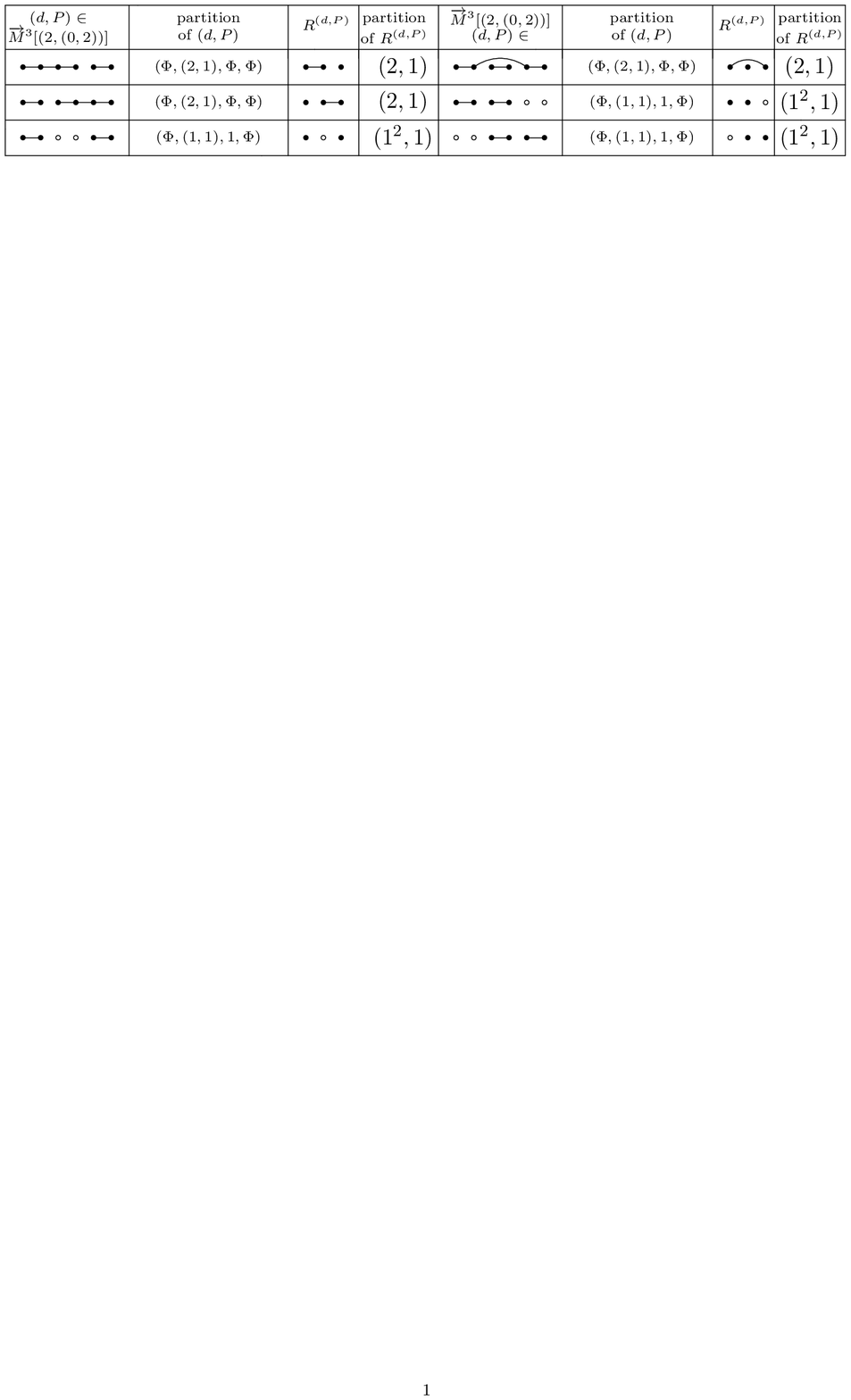}
\end{center}
\vspace{-0.5cm}

\NI For $s_1 = 1$ and $s_2 = 0,$
\vspace{-1cm}
\begin{center}
\includegraphics{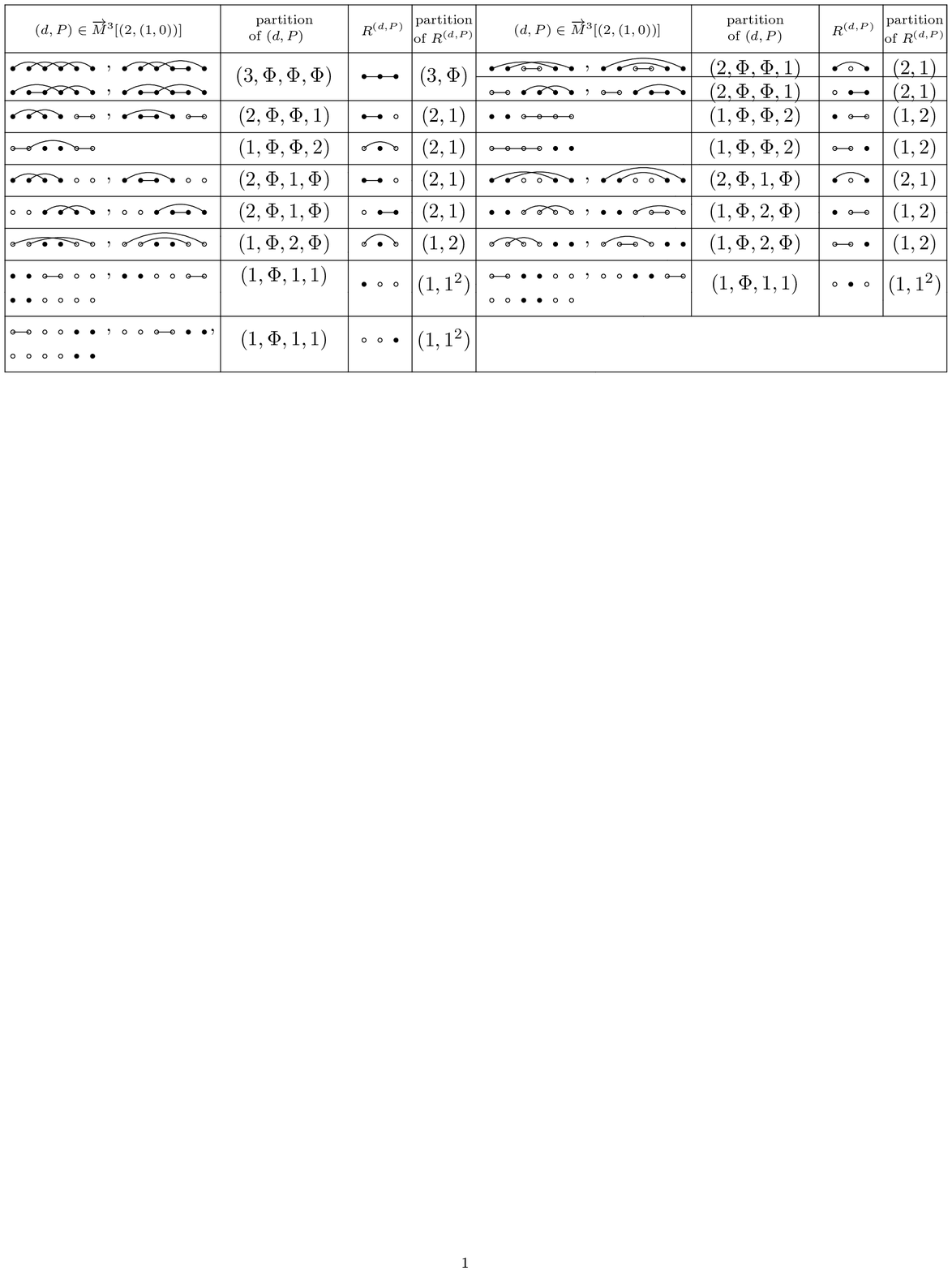}
\end{center}

\NI In the above diagrams, connected components with thick dots(hollow dots) belongs to $P(d \setminus P).$

In partition algebra, for any $d$ whose top row is $(d, P)$ and the bottom row is $(d', P')$ with $|P| = s$ then the number of possible ways to permute the through classes in $d$ will be $s!$ ways.In case of signed partition algebras, for  $(d, P), (d', P') \in M^k[(s, (s_1, s_2))]$ with $|P| = |P'| = 2s_1+s_2 = s,$ then the number of diagram $d$'s whose top row is $(d, P)$ and bottom row is $(d', P')$ will be $2^{s_1} \ s_1! \ s_2!.$ Since $\{e\}$-connected components$(\mathbb{Z}_2$-connected components) in $P$ can be joined only to $\{e\}$-connected components$(\mathbb{Z}_2$-connected components) in $P'.$

Moreover, By Definition \ref{D2.8} we know that $\overrightarrow{I}_s^{2k} = \underset{2s_1+s_2 \leq s}{\cup} \overrightarrow{I}^{2k}_{2s_1+s_2}.$

Let $\overrightarrow{L}_s^{2k}$ be the linear span of $\overrightarrow{I}_s^{2k}$ for every $0 \leq s \leq 2k$ then $\overrightarrow{L}_s^{2k}$ is an ideal of $\overrightarrow{I}_s^{2k}$  and the quotient
$\overrightarrow{L}_s^{2k} / \overrightarrow{L}_{s-1}^{2k} = $ linear span of $\{d \ | \ \sharp^p(d) = s\}.$

For example,

$\overrightarrow{I}_2^6 = \overrightarrow{I}^6_{2 \times 1 +0} \cup \overrightarrow{I}^6_{2 \times 0 +2} \cup \overrightarrow{I}^6_{2 \times 0+1 } \cup \overrightarrow{I}^6_{2 \times 0 +0}$ and $\overrightarrow{I}^6_1 = \overrightarrow{I}^6_{2 \times 0+1 } \cup \overrightarrow{I}^6_{2 \times 0 +0}$ then the quotient ring $\overrightarrow{L}_2^6 / \overrightarrow{L}_1^6$ splits into a direct sum of four ideals $A_1, A_2, A_3, A_4$ where

$\begin{array}{lll}
   A_1 & \text{ is the linear span of } & \left\{d \left(\ds \frac{((0, id), id) + ((0, id), \sigma_2)}{2}\right) \ \Big|  \ d = \widetilde{U}^{(d, P)}_{(d, P)} \right\}_{\widetilde{U}^{(d, P)}_{(d, P)} \in J^6_{2 \times 0 +2}}, \\
   A_2 & \text{ is the linear span of } & \left\{d \left(\ds \frac{((0, id), id) - ((0, id), \sigma_2)}{2}\right) \ \Big|  \ d = \widetilde{U}^{(d, P)}_{(d, P)}\right\}_{\widetilde{U}^{(d, P)}_{(d, P)} \in J^6_{2 \times 0 +2}}, \\
    B_1 & \text{ is the linear span of } & \left\{d \left(\ds \frac{((0, id), id) + ((0,  \sigma_1), id)}{2}\right) \ \Big|  \ d = \widetilde{U}^{(d, P)}_{(d, P)} \right\}_{\widetilde{U}^{(d, P)}_{(d, P)} \in J^6_{2 \times 1 +0}},\\
   B_2 & \text{ is the linear span of } &  \left\{d \left(\ds \frac{((0, id), id) - ((0, \sigma_1), id)}{2}\right) \ \Big|  \ d = \widetilde{U}^{(d, P)}_{(d, P)}\right\}_{\widetilde{U}^{(d, P)}_{(d, P)} \in J^6_{2 \times 1 +0}}.
 \end{array}
$

\NI where $\sigma_1^2 = Id, \sigma_2^2 = Id$ and  $0(i) = 0 $ for every $i.$
 \end{ex}

\section{\textbf{Gram Matrices  and $(s_1, s_2, r_1, r_2, p_1, p_2)$-Stirling Numbers}}

In this section, we introduce a new class of matrices $G_{2s_1+s_2}^k, \overrightarrow{G}_{2s_1+s_2}^k$ and $ G_s^k$ of the algebra of $\mathbb{Z}_2$-relations, signed partition algebras  and  partition algebras respectively which will be called as Gram matrices since by Theorem 3.8 in  \cite{GL}  the Gram matrices $G^{\lambda, \mu}_{2s_1+s_2}$ associated to the cell modules of $W[(s, (s_1, s_2)), ((\lambda_1, \lambda_2), \mu)]$ (for $\lambda = ([s_1], \Phi)$, $\mu = [s_2]$ if $s_1, s_2 \neq 0; \lambda = (\Phi, \Phi)$, $\mu = [s_2]$ if $s_1 = 0, s_2 \neq 0; \lambda = ([s_1], \Phi)$, $\mu = \Phi$ if $s_1 \neq 0 , s_2 = 0$; $\lambda = (\Phi, \Phi)$, $\mu = \Phi$ if $s_1 = s_2 = 0$, $0 \leq s_1 \leq k, 0 \leq s_2 \leq k$ and $0 \leq s_1+s_2 \leq k$) and $\overrightarrow{G}^{\lambda, \mu}_{2s_1+s_2}$ $\overrightarrow{W}[(s, (s_1, s_2)), ((\lambda_1, \lambda_2), \mu)]$  (for $\lambda = ([s_1], \Phi)$, $\mu = [s_2]$ if $s_1, s_2 \neq 0; \lambda = (\Phi, \Phi)$, $\mu = [s_2]$ if $s_1 = 0, s_2 \neq 0; \lambda = ([s_1], \Phi)$, $\mu = \Phi$ if $s_1 \neq 0 , s_2 = 0$; $\lambda = (\Phi, \Phi)$, $\mu = \Phi$ if $s_1 = s_2 = 0$, $0 \leq s_1 \leq k, 0 \leq s_2 \leq k-1$ and $0 \leq s_1+s_2 \leq k - 1$)  defined in Definition 6.3 of \cite{K1} coincides with the  matrices  $G_{2s_1+s_2}^k$ and $\overrightarrow{G}_{2s_1+s_2}^k$ respectively. Also, the Gram matrices  $G_{2s_1+s_2}^k$ and $\overrightarrow{G}_{2s_1+s_2}^k$ are similar to the matrices $\widetilde{G}^k_{2s_1+s_2}$ and $\widetilde{\overrightarrow{G}}^k_{2s_1+s_2}$ which is a direct sum of block sub matrices $\widetilde{A}_{2r_1+r_2, 2r_1+r_2}$ and $\widetilde{\overrightarrow{A}}_{2r_1+r_2, 2r_1+r_2}$  of sizes $f^{2r_1+r_2}_{2s_1+s_2}$ and  $\overrightarrow{f}^{2r_1+r_2}_{2s_1+s_2}$ respectively. The diagonal entries of the matrices $\widetilde{A}_{2r_1+r_2, 2r_1+r_2}$ and $\widetilde{\overrightarrow{A}}_{2r_1+r_2, 2r_1+r_2}$ are the same and the diagonal element is a product of $r_1$ quadratic polynomials and $r_2$ linear polynomials which could help in determining the roots of the determinant of the Gram matrix. In this connection, $(s_1, s_2, r_1, r_2, p_1, p_2)$-Stirling numbers of the second kind for the algebra of $\mathbb{Z}_2$-relations and signed partition algebras are introduced and their identities are established. Similarly, we have also established that the Gram matrix $G^k_s$ of a partition algebra is similar to a matrix $\widetilde{G}^k_s$ which is a direct sum of block matrices $\widetilde{A}_{r, r}$ of size $f^r_s.$ The diagonal entries of the matrices $\widetilde{A}_{r, r}$ are the same and the diagonal element is a product of $r$ linear polynomials which could help in determining the roots of the determinant of the Gram matrix. Stirling numbers of second kind corresponding to the partition algebras are also introduced and their identities are established.

We begin by calculating the size of the Gram matrices before explaining the entries of the Gram matrices.

\begin{defn} \label{D3.1}
Put
\begin{enumerate}
  \item[(a)] $\Omega_{s_1, s_2}^{r_1, r_2} = \Big\{ \left[ \alpha_1 \right]^1 \left[ \alpha_2 \right]^2 \left[ \alpha_3 \right]^3 \left[  \alpha_4 \right]^4 \Big| \alpha_1 \vdash k_1, \alpha_2 \vdash k_2, \alpha_3 \vdash k_3, \alpha_4 \vdash k_4  \text{ with } \alpha_1 \in \mathbb{P}(k_1, s_1),  \alpha_2 \in \mathbb{P}(k_2, s_2), \alpha_3 \in \mathbb{P}(k_3, r_1), \alpha_4 \in \mathbb{P}(k_4, r_2) \text{ such that } k_1 + k_2 + k_3 + k_4 = k \Big\}$

\NI where $\alpha_1 = \left( \alpha_{11}, \alpha_{12}, \cdots, \alpha_{1 s_1} \right),  \alpha_2 = \left(  \alpha_{21},  \alpha_{22}, \cdots,  \alpha_{2 s_2} \right), \alpha_3 = \left( \alpha_{31}, \alpha_{32}, \cdots, \alpha_{3 r_1} \right)$  and $ \alpha_4 = \left(  \alpha_{41},  \alpha_{42}, \cdots,  \alpha_{4 r_2}\right).$
  \item[(b)] $\overrightarrow{\Omega}_{s_1, s_2}^{r_1, r_2} = \Big\{ \left[ \alpha_1 \right]^1 \left[ \alpha_2 \right]^2 \left[ \alpha_3 \right]^3 \left[  \alpha_4 \right]^4 \in \Omega^{r_1, r_2}_{s_1, s_2} \ \Big| \ s_1 + s_2 + r_1 + r_2 \leq k-1 \text{ and if } s_1+s_2+r_1+r_2 = k \text{ then } r_1 \neq 0 \text{ or } s_1 = k\Big\}$

  \item[(c)] $\Omega^{r}_{s} = \{ [\alpha_1]^1 [\alpha_2]^2 \ | \ \alpha_1 \in \mathbb{P}(k_1, s), \alpha_2 \in \mathbb{P}(k_2, r) \text{ such that } k_1 + k_2 = k\}.$
\end{enumerate}

\end{defn}

\begin{defn}\label{D3.2}

Let $\alpha = [\alpha_1]^1 [\alpha_2]^2 [\alpha_3]^3 [\alpha_4]^4 \in \Omega^{r_1, r_2}_{s_1, s_2}.$

\NI We shall draw a graph corresponding to the partition $\alpha = [\alpha_1]^1 [\alpha_2]^2 [\alpha_3]^3 [\alpha_4]^4$ on the vertices $(i, e), (i, g)$ for all $1 \leq i \leq k$ and $1' \leq i \leq k'$ arranged in two rows of each having $k$-vertices labeled from left to right. The edges are drawn as follows:

\begin{enumerate}
  \item[(a)] Draw an edge  connecting the vertices $\left( \left( \underset{n=1}{\overset{i-1}{\sum}} |\alpha_{1n}|\right)+1, e\right)$, $\left( \left( \underset{n=1}{\overset{i-1}{\sum}} |\alpha_{1n}|\right)+2, e\right), $

     \NI  $\cdots, \left( \left( \underset{n=1}{\overset{i}{\sum}} |\alpha_{1n}|\right), e\right),\left(\left( \left( \underset{n=1}{\overset{i-1}{\sum}} |\alpha_{1n}|\right)+1\right)', e\right), \left( \left(\left( \underset{n=1}{\overset{i-1}{\sum}} |\alpha_{1n}|\right)+2\right)', e\right), \cdots,$

      \NI $\left( \left( \underset{n=1}{\overset{i}{\sum}} |\alpha_{1n}|\right)', e\right)$ and denote it by $P_{1i}^e$ for $1 \leq i \leq s_1.$ Since the diagram has to be a $\mathbb{Z}_2$-stable diagram there should be a copy of the connected component which is obtained by connecting the vertices  $\left( \left( \underset{n=1}{\overset{i-1}{\sum}} |\alpha_{1n}|\right)+1, g\right), \left( \left( \underset{n=1}{\overset{i-1}{\sum}} |\alpha_{1n}|\right)+2, g\right), \cdots,\left( \left( \underset{n=1}{\overset{i}{\sum}} |\alpha_{1n}|\right), g\right)$,

       \NI $\left(\left( \left( \underset{n=1}{\overset{i-1}{\sum}} |\alpha_{1n}|\right)+1\right)', g\right), \left( \left(\left( \underset{n=1}{\overset{i-1}{\sum}} |\alpha_{1n}|\right)+2\right)', g\right), \cdots, \left( \left( \underset{n=1}{\overset{i}{\sum}} |\alpha_{1n}|\right)', e\right)$ and denote it by $P^g_{1i}$ for $ 1\leq i \leq s_1.$ The connected components $P_{1i}^e$ and $P_{1i}^g$ for $1 \leq i \leq s_1$ are called $\{e\}$-through classes.\\

  \item[(b)] Draw an edge  connecting the vertices $\left( \left( \underset{i=1}{\overset{s_1}{\sum}} |\alpha_{1i}| + \underset{m=1}{\overset{j-1}{\sum}}|\alpha_{2m}|\right)+1, e\right)$,

       \NI $\left( \left( \underset{i=1}{\overset{s_1}{\sum}} |\alpha_{1i}| + \underset{m=1}{\overset{j-1}{\sum}}|\alpha_{2m}|\right)+1, g\right), \cdots, \left( \left( \underset{i=1}{\overset{s_1}{\sum}} |\alpha_{1i}| + \underset{m=1}{\overset{j}{\sum}}|\alpha_{2m}|\right), e\right), \left( \left( \underset{i=1}{\overset{s_1}{\sum}} |\alpha_{1i}| + \underset{m=1}{\overset{j}{\sum}}|\alpha_{2m}|\right), g\right), $

       \NI $\left(\left( \left( \underset{i=1}{\overset{s_1}{\sum}} |\alpha_{1i}| + \underset{m=1}{\overset{j-1}{\sum}}|\alpha_{2m}|\right)+1\right)', e\right), \left(\left( \left( \underset{i=1}{\overset{s_1}{\sum}} |\alpha_{1i}| + \underset{m=1}{\overset{j-1}{\sum}}|\alpha_{2m}|\right)+1\right)', g\right), \cdots,$

        \NI $\left( \left( \underset{i=1}{\overset{s_1}{\sum}} |\alpha_{1i}|+ \underset{m=1}{\overset{j}{\sum}}|\alpha_{2m}|\right)', e\right), \left( \left( \underset{i=1}{\overset{s_1}{\sum}} |\alpha_{1i}|+ \underset{m=1}{\overset{j}{\sum}}|\alpha_{2m}|\right)', g\right)$ and denote it by $P_{2j}^{\mathbb{Z}_2}$ for $1 \leq j \leq s_2.$

       \NI The connected components $P_{2j}^{\mathbb{Z}_2}$  for $1 \leq j \leq s_2$ are called $\mathbb{Z}_2$-through classes.

  \item[(c)] Draw edges connecting the vertices $\left( \left( \underset{i=1}{\overset{s_1}{\sum}} |\alpha_{1i}| + \underset{j=1}{\overset{s_2}{\sum}}|\alpha_{2j}| + \right.  \left.\underset{f=1}{\overset{l-1}{\sum}}|\alpha_{3f}|\right)+1, e\right), \cdots,$

        \NI $\left( \left( \underset{i=1}{\overset{s_1}{\sum}} |\alpha_{1i}| + \underset{j=1}{\overset{s_2}{\sum}}|\alpha_{2j}| + \underset{f=1}{\overset{l}{\sum}}|\alpha_{3f}|\right), e\right)$ in the top row and $\left(\left( \left( \underset{i=1}{\overset{s_1}{\sum}} |\alpha_{1i}| + \underset{j=1}{\overset{s_2}{\sum}}|\alpha_{2j}| + \underset{f=1}{\overset{l-1}{\sum}}|\alpha_{3f}|\right)+1\right)', e\right),$

      \NI $ \cdots,  \left( \left( \underset{i=1}{\overset{s_1}{\sum}} |\alpha_{1i}| + \underset{j=1}{\overset{s_2}{\sum}}|\alpha_{2j}| + \underset{f=1}{\overset{l}{\sum}}|\alpha_{3f}|\right)', e\right)$ in the bottom row and denote it by $P_l^e$ and $P_l^{'e}$ respectively. Since the diagram
       has to be $\mathbb{Z}_2$-stable diagram there will be copy of the above connected components obtained by connecting the vertices  $\left( \left( \underset{i=1}{\overset{s_1}{\sum}} |\alpha_{1i}| + \underset{j=1}{\overset{s_2}{\sum}}|\alpha_{2j}| + \underset{f=1}{\overset{l-1}{\sum}}|\alpha_{3f}|\right)+1, g\right), \cdots,$

         \NI $\left( \left( \underset{i=1}{\overset{s_1}{\sum}} |\alpha_{1i}| + \underset{j=1}{\overset{s_2}{\sum}}|\alpha_{2j}| + \underset{f=1}{\overset{l}{\sum}}|\alpha_{3f}|\right), g\right)$ in the top row $\left(\left( \left( \underset{i=1}{\overset{s_1}{\sum}} |\alpha_{1i}| + \underset{j=1}{\overset{s_2}{\sum}}|\alpha_{2j}| + \underset{f=1}{\overset{l-1}{\sum}}|\alpha_{3f}|\right)+1\right)', g\right),  $

      \NI $\cdots, \left( \left( \underset{i=1}{\overset{s_1}{\sum}} |\alpha_{1i}| + \underset{j=1}{\overset{s_2}{\sum}}|\alpha_{2j}| + \underset{f=1}{\overset{l}{\sum}}|\alpha_{3f}|\right)', g\right)$ and denote it by $P_l^g$ and $P^{'g}_l$ respectively.

      \NI The connected components $P_l^e, P^{'e}_l, P_l^g$ and $P^{'g}_l$ for $1 \leq l \leq r_1$ are called $\{e\}$-horizontal edges.

  \item[(d)] Draw edges connecting the vertices $\left( \left( \underset{i=1}{\overset{s_1}{\sum}} |\alpha_{1i}| + \underset{j=1}{\overset{s_2}{\sum}}|\alpha_{2j}| + \underset{l=1}{\overset{r_1}{\sum}}|\alpha_{3l}| + \underset{t=1}{\overset{m-1}{\sum}}|\alpha_{4t}| \right)+1, e\right),$

       \NI $\left( \left( \underset{i=1}{\overset{s_1}{\sum}} |\alpha_{1i}| + \underset{j=1}{\overset{s_2}{\sum}}|\alpha_{2j}| + \underset{l=1}{\overset{r_1}{\sum}}|\alpha_{3l}| + \underset{t=1}{\overset{m-1}{\sum}}|\alpha_{4t}| \right)+1, g\right), \cdots, $

      \NI $\left( \left( \underset{i=1}{\overset{s_1}{\sum}} |\alpha_{1i}| + \underset{j=1}{\overset{s_2}{\sum}}|\alpha_{2j}| + \underset{l=1}{\overset{r_1}{\sum}}|\alpha_{3l}| + \underset{t=1}{\overset{m}{\sum}}|\alpha_{4t}| \right), e\right), \left( \left( \underset{i=1}{\overset{s_1}{\sum}} |\alpha_{1i}| + \underset{j=1}{\overset{s_2}{\sum}}|\alpha_{2j}| + \underset{l=1}{\overset{r_1}{\sum}}|\alpha_{3l}| + \underset{t=1}{\overset{m}{\sum}}|\alpha_{4t}| \right), g\right)$

       \NI in the top row and $\left( \left(\left( \underset{i=1}{\overset{s_1}{\sum}} |\alpha_{1i}| + \underset{j=1}{\overset{s_2}{\sum}}|\alpha_{2j}| + \underset{l=1}{\overset{r_1}{\sum}}|\alpha_{3l}| + \underset{t=1}{\overset{m-1}{\sum}}|\alpha_{4t}| \right)+1\right)', e\right),$

      \NI $ \left(\left( \left( \underset{i=1}{\overset{s_1}{\sum}} |\alpha_{1i}| + \underset{j=1}{\overset{s_2}{\sum}}|\alpha_{2j}| + \underset{l=1}{\overset{r_1}{\sum}}|\alpha_{3l}| + \underset{t=1}{\overset{m-1}{\sum}}|\alpha_{4t}| \right)+1\right)', g\right), \cdots,$

        \NI $\left( \left( \underset{i=1}{\overset{s_1}{\sum}} |\alpha_{1i}| + \underset{j=1}{\overset{s_2}{\sum}}|\alpha_{2j}| + \underset{l=1}{\overset{r_1}{\sum}}|\alpha_{3l}| + \underset{t=1}{\overset{m}{\sum}}|\alpha_{4t}| \right)', e\right), \left( \left( \underset{i=1}{\overset{s_1}{\sum}} |\alpha_{1i}| + \underset{j=1}{\overset{s_2}{\sum}}|\alpha_{2j}| + \underset{l=1}{\overset{r_1}{\sum}}|\alpha_{3l}| + \underset{t=1}{\overset{m}{\sum}}|\alpha_{4t}| \right)', g\right)$ in the bottom row and it is denoted by $P_m^{\mathbb{Z}_2}$ and $P^{' \mathbb{Z}_2}_m$ for $1 \leq m \leq r_2.$

      \NI The connected components $P_m^{\mathbb{Z}_2}, P_m^{' \mathbb{Z}_2}$ for $1 \leq m \leq r_2$ are called $\mathbb{Z}_2$-horizontal edges.

  \NI The diagram obtained above is called standard diagram and it is denoted by $U^{\alpha}$ where

  \NI $\alpha = [\alpha_1]^1 [\alpha_2]^2 [\alpha_3]^3 [\alpha_4]^4 \in \Omega^{r_1, r_2}_{s_1, s_2}.$
\end{enumerate}
\end{defn}

\begin{ex}\label{E3.3}
The following are some examples of standard diagrams of $U^{\alpha}$ type in signed partition algebras $\overrightarrow{A}_5^{\mathbb{Z}_2}$  with their corresponding partitions.
\begin{center}
\includegraphics[height=4.5cm, width=14cm]{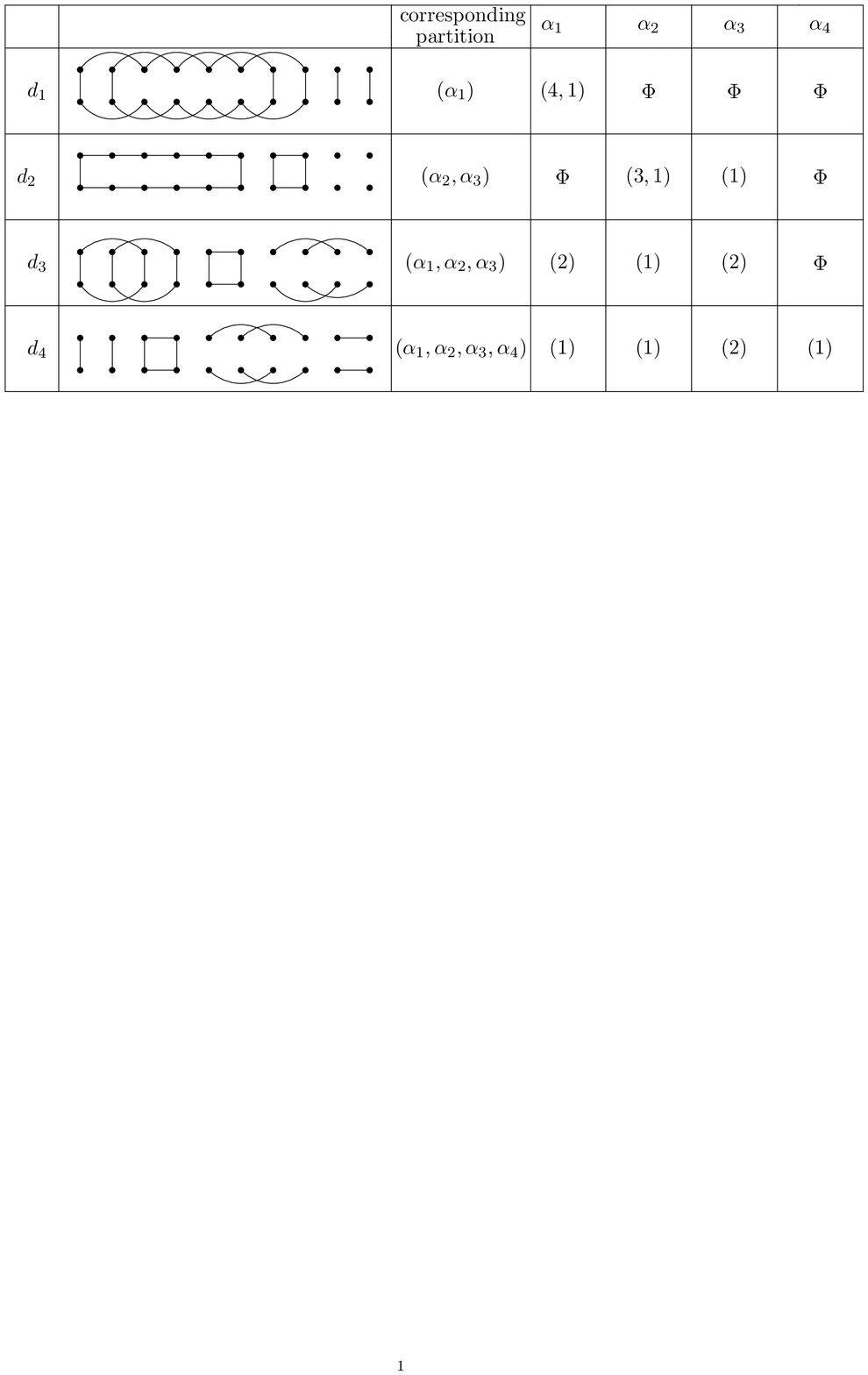}
\end{center}
\vspace{-0.5cm}

\end{ex}

\begin{rem} \label{R3.4}
Let $d \in I^{2k}_{2s_1+s_2}.$

By Lemma \ref{L2.12}, for any $d \in I^{2k}_{2s_1+s_2}$ we can associate a pair $(d^+, P), (d^-, Q) \in M^k[(s, (s_1, s_2))]$ and an element $((f, \sigma_1), \sigma_2) \in \left(\mathbb{Z}_2 \wr \mathfrak{S}_{s_1} \right) \times \mathfrak{S}_{s_2}$ and vice versa and it is denoted by $U^{(d^+, P)}_{(d^-, Q)}((f, \sigma_1), \sigma_2).$

If $d^+ = d^-, P = Q$ and $((f, \sigma_1), \sigma_2) = ((0, id), id) \in \left(\mathbb{Z}_2 \wr \mathfrak{S}_{s_1} \right) \times \mathfrak{S}_{s_2}$ then without loss of generality we can write such $d$ as $\widetilde{U}^{(d, P)}_{(d, P)}.$
\end{rem}

\begin{defn}\label{D3.5}
Let $\alpha = \left[ \alpha_1 \right]^1 \left[ \alpha_2 \right]^2 \left[ \alpha_3 \right]^3 \left[  \alpha_4 \right]^4 \in \Omega_{ s_1, s_2}^{r_1, r_2}.$

Define,

\centerline{$St^c \left( U^{\alpha} \right) = \left\{ \sigma \in \mathbb{Z}_2 \wr \mathfrak{S}_{k} \ \Big| \ \sigma U^{\alpha} \sigma^{-1} = U^{\alpha} \right\}$}

\NI where $U^{\alpha}$ is the standard diagram corresponding to the partition $\alpha$ as in Definition \ref{D3.2}.
\end{defn}

\begin{note}\label{N1}
\mbox{ }
\begin{enumerate}
  \item[(i)] Let $U^{\overrightarrow{\alpha}}$ denote the standard diagram in signed partition algebra  corresponding to the partition $\overrightarrow{\alpha} \in \overrightarrow{\Omega}^{r_1, r_2}_{s_1, s_2}$ and  $R^{U^{\alpha}}$ denote the standard diagram in  partition algebra corresponding to the partition $R^{\alpha} \in \Omega^r_s$ which can be defined as in Definition \ref{D3.2}, $St^c\left( U^{\overrightarrow{\alpha}}\right)$ and $St^c\left( R^{U^{\alpha}}\right)$ can also be defined as in Definition \ref{D3.5} for  the signed partition algebras $\overrightarrow{A}_k^{\mathbb{Z}_2}(x)$ and the partition algebras $A_k(x).$
  \item[(ii)] All other diagrams  $U^{(d, P)}_{(d, P)}, \overrightarrow{U}^{(d, P)}_{(d, P)}, $ and $R^{U^{(d, P)}_{(d, P)}}$ whose underlying partition is same as the underlying partition of $U^{\alpha}, U^{\overrightarrow{\alpha}}$ and $R^{U^{\alpha}}$ respectively  can be obtained as follows:

$U^{(d, P)}_{(d, P)} = \tau \ U^{\alpha} \ \tau^{-1} \ , \  \overrightarrow{U}^{(d, P)}_{(d, P)} = \overrightarrow{\tau} \ U^{\overrightarrow{\alpha}} \ \overrightarrow{\tau}^{-1}$ and $R^{U^{(d, P)}_{(d, P)}} = \rho \ R^{U^{\alpha}} \ \rho$

 \NI where $\tau,  \in \mathbb{Z}_2 \wr \mathfrak{S}_{k}$ and $\rho \in \mathfrak{S}_k$ are the coset representatives of $St^c \left( U^{\alpha} \right), St^c \left( U^{\overrightarrow{\alpha}} \right)   $ and $St^c\left( R^{U^{\alpha}}\right)$  respectively. Also,  $U^{\alpha}, U^{\overrightarrow{\alpha}}$ and $R^{U^{\alpha}}$ are the standard diagrams as  in Definition \ref{D3.2}.
 \end{enumerate}
\end{note}

\begin{notation}\label{N3.6}
\mbox{ }

\begin{enumerate}
            \item[(a)]  For $ 0 \leq r_1, r_2 \leq k-s_1 - s_2 $ and $ 0 \leq s_1, s_2 \leq k,$

            \NI put

             $\begin{array}{lcl}
                       J^{2k}_{2s_1+s_2} & = & \underset{ \ds 0 \leq r_1 + r_2 \leq k-s_1-s_2 }{\cup} \ \ \mathbb{J}_{2s_1+ s_2}^{2r_1 + r_2} \text{ and } \\
                       \mathbb{J}_{2s_1 + s_2}^{2r_1+ r_2} & = & \underset{ \ds \alpha = [\alpha_1]^1[\alpha_2]^2[\alpha_3]^3[\alpha_4]^4 \in \Omega^{r_1, r_2}_{s_1, s_2}}{\cup} \ \  \mathbb{J}_{2s_1+ s_2}^{2r_1 + r_2, \alpha}
                        \end{array}$

                \NI where $\mathbb{J}_{2s_1 + s_2}^{2r_1+ r_2, \alpha} = \Big\{ d \in I^{2k}_{2s_1 + s_2} \ \Big| \  d = \widetilde{U}^{(d, P)}_{(d, P)}  \text{ with } d^+ = (d, P), d^- = (d, P), \eta_e \left(\widetilde{U}^{(d, P)}_{(d, P)}\right) = s_1,$

                \NI $ \eta_{\small{\mathbb{Z}_2}} \left(\widetilde{U}^{(d, P)}_{(d, P)}\right) = s_2, \widetilde{U}^{(d, P)}_{(d, P)} \text{ has } r_1 \text{ number of pairs of }  \{e\}-\text{of horizontal edges, } r_2 \text{ number of }    $

                 \NI $\mathbb{Z}_2-\text{horizontal}, \text{edges }(d, P) \in  M^k[(s, (s_1, s_2))] \text{ as in Definition 2.10},   \| P\| = 2s_1 + s_2 \text{ and } \alpha \\ \text{is the underlying partition of } (d, P) \text{ as in Definition } \ref{D2.13}  \Big\}.$


 Also,

$\begin{array}{lll}
\left| \mathbb{J}^{2r_1+r_2, \alpha}_{2s_1+s_2}\right| & = & \text{index of } St^c(U^{\alpha}) = f^{2r_1+r_2, \alpha}_{2s_1+s_2}\\
 \left| \mathbb{J}_{2s_1 + s_2}^{2 r_1 + r_2} \right| & = & \underset{\ds \alpha = \left[\alpha_1 \right]^1 \left[  \alpha_2 \right]^2 \left[ \alpha_3 \right]^3 \left[ \alpha_4 \right]^4 \in \Omega_{s_1, s_2}^{r_1, r_2}}{\sum}  \text{ index of } St^c \left(U^{\alpha} \right) = f^{2r_1+r_2}_{2s_1+s_2} \\
  \left|  J^{2k}_{2s_1 + s_2} \right| & = & \underset{\ds 0 \leq r_1 + r_2 \leq k-s_1-s_2 }{\sum} \left| \mathbb{J}_{2s_1 + s_2}^{2r_1+r_2} \right|.
\end{array}$

\NI $\left|  J^{2k}_{2s_1 + s_2} \right|$ will define the size of the Gram matrix in the algebra of $\mathbb{Z}_2$-relation and it is denoted by $f_{2s_1+s_2}.$

\item[(b)] For $ 0 \leq r_1 \leq k-s_1 - s_2, 0 \leq r_2 \leq k -s_1-s_2-1, $ $ 0 \leq s_1 \leq k, 0 \leq s_2 \leq k - 1,$ and $ 0 \leq s_1+s_2+r_1+r_2 \leq k - 1$
     \begin{enumerate}
              \item[(i)] if $r_1 \neq 0$ then $\overrightarrow{\mathbb{J}}^{2r_1+r_2, \alpha}_{2s_1+s_2} = \mathbb{J}^{2r_1+r_2, \alpha}_{2s_1+s_2}$
              \item[(ii)] if $r_1 = 0$ then $\overrightarrow{\mathbb{J}}^{2r_1+r_2, \alpha}_{2s_1+s_2} = \{ d \in \mathbb{J}^{2r_1+r_2, \alpha}_{2s_1+s_2} \ | \ \text{ either } s_1 = k \text{ or } s_1+s_2 + r_2 \leq k-1\}$
            \end{enumerate}

  $\begin{array}{lcl}
    \overrightarrow{\mathbb{J}}_{2s_1 + s_2}^{2r_1+ r_2} & = & \underset{ \ds \alpha = [\alpha_1]^1[\alpha_2]^2[\alpha_3]^3[\alpha_4]^4 \in \overrightarrow{\Omega}^{r_1, r_2}_{s_1, s_2}}{\cup} \ \  \overrightarrow{\mathbb{J}}_{2s_1+ s_2}^{2r_1 + r_2, \alpha} \text{ and } \\ \overrightarrow{J}^{2k}_{2s_1+s_2} & = & \underset{\substack{\ds 0 \leq r_1 \leq k-s_1-s_2 \\ \ds \hspace{0.6cm} 0 \leq r_2 \leq k-s_1-s_2-1 \\ \ds \hspace{0.75cm} 0 \leq r_1+r_2 \leq k-s_1-s_2}}{\cup} \ \  \overrightarrow{\mathbb{J}}_{2s_1+ s_2}^{2r_1 + r_2}
                        \end{array}$

 Also,

$\begin{array}{lll}
\left| \overrightarrow{\mathbb{J}}^{2r_1+r_2, \alpha}_{2s_1+s_2}\right| & = & \text{index of } St^c(U^{\alpha}) = \overrightarrow{f}^{2r_1+r_2, \alpha}_{2s_1+s_2}\\
 \left| \overrightarrow{\mathbb{J}}_{2s_1 + s_2}^{2 r_1 + r_2} \right| & = & \underset{\ds \alpha = \left[\alpha_1 \right]^1 \left[  \alpha_2 \right]^2 \left[ \alpha_3 \right]^3 \left[ \alpha_4 \right]^4 \in \overrightarrow{\Omega}_{s_1, s_2}^{r_1, r_2}}{\sum}  \text{ index of } St^c \left(U^{\overrightarrow{\alpha}} \right) = \overrightarrow{f}^{2r_1+r_2}_{2s_1+s_2} \\
  \left|  \overrightarrow{J}^{2k}_{2s_1 + s_2} \right| & = & \underset{\substack{\ds 0 \leq r_1 \leq k-s_1-s_2 \\ \ds \hspace{0.6cm}0 \leq r_2 \leq k - s_1-s_2 -1 \\ \ds \hspace{0.75cm}0 \leq r_1+r_2 \leq k-s_1-s_2}}{\sum} \left| \overrightarrow{\mathbb{J}}_{2s_1 + s_2}^{2r_1+r_2} \right|.
\end{array}$

 \NI $\left|  \overrightarrow{J}^{2k}_{2s_1 + s_2} \right|$ will define the size of the Gram matrix in signed partition algebras and it is denoted by $\overrightarrow{f}_{2s_1+s_2}.$

\item[(c)] For $ 0 \leq r \leq k-s,  0 \leq s \leq k$  put $ J^{k}_{s} = \underset{\ds 0 \leq r \leq k-s }{\cup} \ \  \mathbb{J}_{s}^{r}$ and $\mathbb{J}^r_s = \underset{\alpha = [\alpha_1]^1[\alpha_2]^2 \in \Omega^{r}_s}{\cup} \mathbb{J}^{r, \alpha}_s$ where

$\mathbb{J}_{s}^{r, \alpha} = \Big\{ R^d \in I^{k}_{s} \ \Big| \ R^d = U^{(R^d)^+}_{(R^d)^-}, \left( R^d\right)^+ \text{ and }  \left(R^d\right)^{-} \text{ are the same},  \sharp^p(U^{(R^d)^+}_{(R^d)^-}) = s,  U^{(R^{d})^+}_{(R^d)^-} \text{ has }$

 \NI \hspace{3cm} $r \text{ number of horizontal edges and } \alpha \text{ is the underlying partition of } R^d \Big\}.$

\NI For the sake of simplicity we write,  $U^{(R^d)^+}_{(R^d)^-} = U^{R^d}_{R^d}.$

\NI Also, $
\left| \mathbb{J}^{r, \alpha}_{s}\right| = \text{index of }St^c \left(U^{R^{\alpha}} \right) = f^{r, \alpha}_{s},  \left| \mathbb{J}_{s}^{r} \right|  =  \underset{\ds R^{\alpha} = \left[\alpha_1 \right]^1 \left[  \alpha_2 \right]^2 \in \Omega_{s}^{r}}{\sum}  \text{ index of } St^c \left(U^{R^{\alpha}} \right)  = f^r_s \text{ and }$

\NI $  \left|  J^{k}_{s} \right|  =  \underset{\ds 0 \leq r \leq k-s}{\sum} \left| \mathbb{J}_{s}^{r} \right|.
$

          \end{enumerate}
\NI $\left|J^k_s \right|$ will define the size of the Gram matrix in the partition algebra and it is denoted by $f_s.$
\end{notation}

\begin{defn} \label{D3.7}
\mbox{ }
\begin{enumerate}
  \item[(a)] The diagrams in $J^{2k}_{2s_1 + s_2}$  are indexed as follows:

\centerline{$\ds \left\{\left( \widetilde{U}^{(d, P)}_{(d, P)}\right)_{i, \alpha}^{r_1, r_2} \ \Big| \ 1 \leq i \leq f^{2r_1+r_2, \alpha}_{2s_1 + s_2}, \alpha \in \Omega^{r_1, r_2}_{s_1, s_2}\right\}_{\substack{\ds 0 \leq r_1, r_2  \leq k-s_1-s_2\\ \ds  0 \leq r_1+r_2 \leq k-s_1-s_2  }} .$}

$(i, \alpha, r_1, r_2) < (j, \beta, r'_1, r'_2),$

\begin{itemize}
  \item[(i)] if $2r_1+r_2 < 2r'_1+r'_2$
  \item[(ii)] if $2r_1+r_2 = 2r'_1+r'_2 $ and $r_1+r_2 < r'_1+r'_2$
  \item[(iii)] if  $2r_1+r_2 = 2r'_1+r'_2, r_1+r_2 = r'_1+r'_2$ and $\alpha <  \beta$(lexicographical ordering)
   \item[(iv)]  if  $2r_1+r_2 = 2r'_1+r'_2, r_1+r_2 = r'_1+r'_2$ and $\alpha =  \beta$ then it can be indexed arbitrarily.
\end{itemize}
where $r_1$ is the number of pairs of $\{e\}$-horizontal edges in $\left( \widetilde{U}^{(d, P)}_{(d, P)}\right)_{i, \alpha}^{r_1, r_2}  , r'_1$ is the number of pairs of $\{e\}$-horizontal edges in $\left( \widetilde{U}^{(d, P)}_{(d, P)}\right)_{j, \beta}^{r'_1, r'_2} , r_2$ is the number of $\mathbb{Z}_2$-horizontal edges in $\left( \widetilde{U}^{(d, P)}_{(d, P)}\right)_{i, \alpha}^{r_1, r_2} , r'_2$is the number of $\mathbb{Z}_2$-horizontal edges in $\left( \widetilde{U}^{(d, P)}_{(d, P)}\right)_{j, \beta}^{r'_1, r'_2}$, $\alpha[\beta]$ is the partition corresponding to the diagram $\left( \widetilde{U}^{(d, P)}_{(d, P)}\right)_{i, \alpha}^{r_1, r_2} \left(\left( \widetilde{U}^{(d, P)}_{(d, P)}\right)_{j, \beta}^{r'_1, r'_2}\right)$ and $\alpha, \beta \in \Omega^{r_1, r_2}_{s_1, s_2}.$
\item[(b)] Since $\overrightarrow{J}^{2k}_{2s_1+s_2} \subset J^{2k}_{2s_1+s_2},$ we shall use the index defined above in (i) to index the diagrams of $\overrightarrow{J}^{2k}_{2s_1+s_2}.$
  \item[(c)] The diagrams in $J^{k}_{s}$ are indexed as follows:

\centerline{$\left\{ \left(U^{R^{d}}_{R^{d}} \right)_{i, \alpha}^r \ \Big| \ 1 \leq i \leq f^{r, \alpha}_{s} \text{ and } \alpha \in \Omega^r_s\right\}_{\ds 0 \leq r \leq k - s}$}

$(i, r, \alpha) < (j, r', \beta),$
\begin{enumerate}
  \item[(1)] if $r < r'$,
  \item[(2)] if $r = r'$ and $\alpha < \beta$ (lexicographic ordering)
  \item[(3)] if $r=r', \alpha = \beta,$ then it can be indexed arbitrarily
\end{enumerate}

\NI where $r(r')$ is the number of horizontal edges in $\left(U^{R^{d}}_{R^{d}} \right)_{i, \alpha}^r \left( \left(U^{R^{d}}_{R^{d}} \right)_{j, \beta}^{r'}\right)$, $\alpha(\beta)$ is the partition corresponding to the diagram $\left(U^{R^{d}}_{R^{d}} \right)_{i, \alpha}^r \left( \left(U^{R^{d}}_{R^{d}} \right)_{j, \beta}^{r'}\right)$ and $\alpha, \beta \in \Omega^{r}_{s}.$
\end{enumerate}
Now, $(d, P) \mapsto U^{(d, P)}_{(d, P)}$ gives a bijection of $M^k[(s, (s_1, s_2))]$ and $J^{2k}_{2s_1+s_2}.$
\end{defn}

\begin{note} \label{N2}
For the sake of simplicity, we shall write $\left(\widetilde{U}^{(d, P)}_{(d, P)}\right)_{i, \alpha}^{r_1, r_2}$ as $d_{i, \alpha}^{r_1, r_2}$ and $\left(U^{R^{d}}_{R^{d}} \right)_{i, \alpha}^r$ as $R^{d_{i, \alpha}^r}.$
\end{note}

We shall now explain the entries of the Gram matrices.

\begin{defn} \label{D3.8}
\mbox{ }
\begin{enumerate}
  \item[(a)] For $ 0 \leq s_1+s_2 \leq k,$ define  $G_{2s_1+s_2}^k$ (\textit{Gram matrices of the algebra of $\mathbb{Z}_2$-relations}) as follows:

\centerline{$G_{2s_1 + s_2}^k = \left( A_{2r_1 + r_2, 2r'_1+r'_2}\right)_{\substack{\ds 0 \leq r_1+r_2 \leq k-s_1-s_2 \\  \ds 0 \leq  r'_1 + r'_2 \leq k-s_1-s_2 }} $}
\NI where $\ds A_{2r_1 + r_2, 2r'_1 + r'_2}$ denotes the block matrix whose entries are $a_{(i, \alpha, r_1, r_2), (j, \beta, r'_1, r'_2)}$ with
\begin{center}
$\begin{array}{lllll}
  a_{(i, \alpha, r_1, r_2),  (j, \beta, r'_1, r'_2)} & = & x^{l(P_i \vee P_j)}  &  if &  \sharp^p \left( d^{r_1, r_2}_{i, \alpha} . d^{r'_1, r'_2}_{j, \beta}\right) = 2s_1 + s_2 \\
       & = & 0 & \text{Otherwise } i.e., &  \sharp^p \left( d^{r_1, r_2}_{i, \alpha} . d^{r'_1, r'_2}_{j, \beta} \right) < 2s_1 + s_2, \\
 \end{array}
$
\end{center}
\NI where  $1 \leq i \leq \Big| \mathbb{J}^{2r_1+r_2, \alpha}_{2s_1+s_2}\Big|, 1 \leq j \leq \Big| \mathbb{J}^{2r'_1+r'_2, \beta}_{2s_1+s_2}\Big|,  l(P_i \vee P_j) = l \left( d^{r_1, r_2}_{i, \alpha} . d^{r'_1, r'_2}_{j, \beta} \right),  l(P_i \vee P_j)$ denotes the number of connected components in $d^{r_1, r_2}_{i, \alpha} . d^{r'_1, r'_2}_{j, \beta}$ excluding the union of all the connected components of $P_i$ and $P_j$ or equivalently, $ l \left( d^{r_1, r_2}_{i, \alpha} . d^{r'_1, r'_2}_{j, \beta} \right)$ is the number of loops which lie in the middle row when $d^{r_1, r_2}_{i, \alpha}$ is multiplied with $d_{j, \beta}^{r'_1, r'_2}$, $d_{i, \alpha}^{r_1, r_2} \in \mathbb{J}^{2r_1 + r_2, \alpha}_{2s_1 +s_2}$ and $d_{j, \beta}^{r'_1, r'_2} \in \mathbb{J}^{2r'_1+ r'_2, \beta}_{2s_1 +s_2}$ respectively.

For example,

\begin{center}
\hspace{-3cm}\includegraphics[height=5cm, width=17cm]{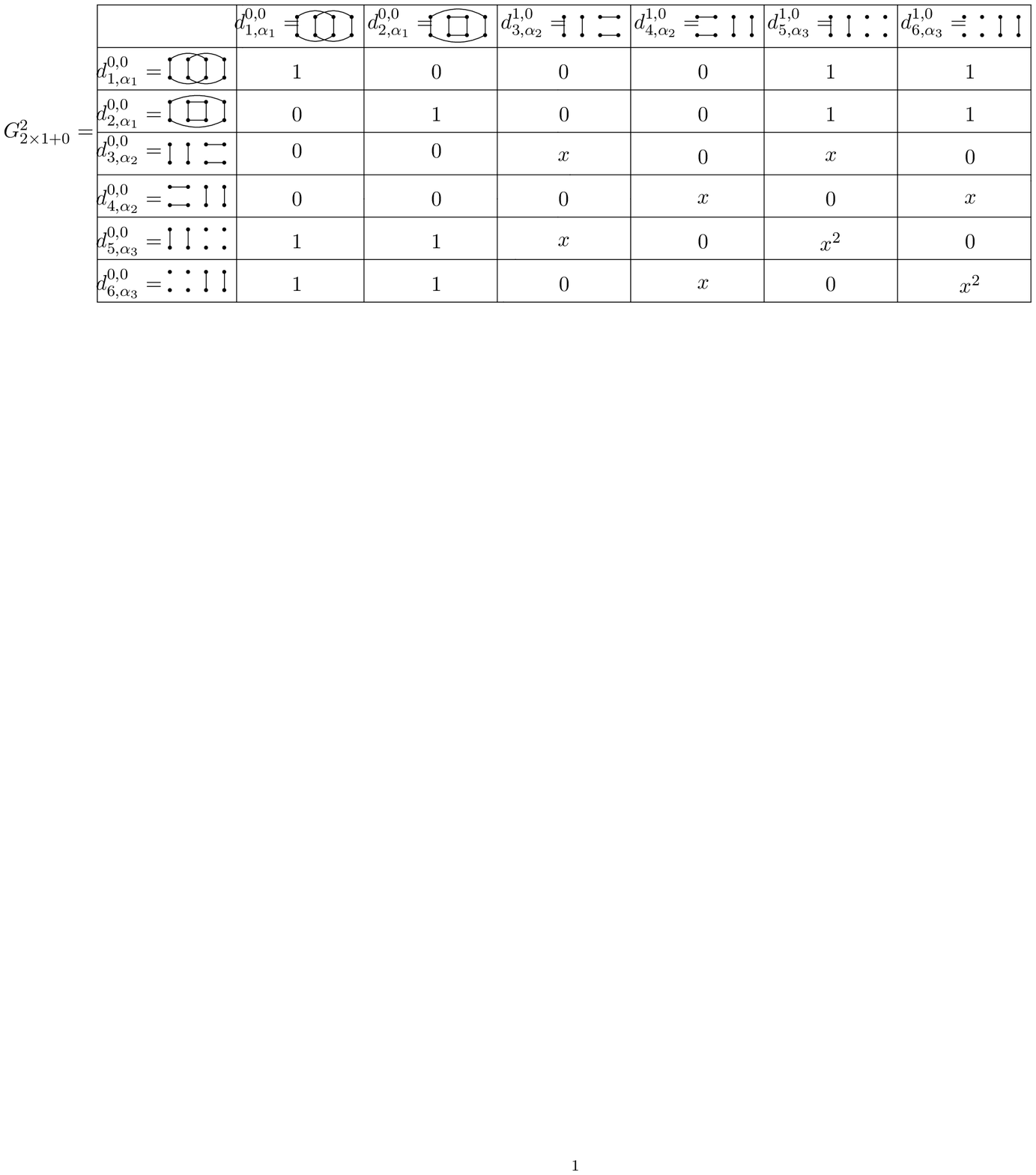}
\end{center}
\vspace{-1cm}
\NI where $\alpha_1 = (2, \Phi, \Phi, \Phi), \alpha_2= (1, \Phi, \Phi, 1)$ and $\alpha_3 = (1, \Phi, 1, \Phi).$
  \item[(b)] For $0 \leq s_1 \leq k, 0 \leq s_2 \leq k-1, 0 \leq s_1+s_2 \leq k-1,$ define $\overrightarrow{G}_{2s_1+s_2}^k$  (\textit{Gram matrices of signed partition algebra}) as follows:

\centerline{$\overrightarrow{G}^k_{2s_1 + s_2} = \left( \overrightarrow{A}_{2r_1 + r_2, 2r'_1+r'_2}\right)_{\substack{\hspace{-2.4cm}\ds 0 \leq r_1+r_2, r'_1 + r'_2 \leq k-1-s_1-s_2    \\ \ds 0 \leq r_1, r'_1 \leq k-s_1-s_2, 0 \leq r_2, r'_2 \leq k-s_1-s_2-1}}$} where $\overrightarrow{A}_{2r_1 + r_2, 2r'_1 + r'_2}$ denotes the block matrix whose entries are $a_{(i, \alpha, r_1, r_2), (j, \beta, r'_1, r'_2)}$ with
\begin{center}
$\begin{array}{lllll}
  a_{(i, \alpha, r_1, r_2), (j, \beta, r'_1, r'_2)} & = & x^{l(P_i \vee P_j)}  &  if &  \sharp^p \left( d_{i, \alpha}^{r_1, r_2} . d_{j, \beta}^{r'_1, r'_2} \right) = 2s_1 + s_2 \\
       & = & 0 & \text{Otherwise } i.e., &  \sharp^p \left( d_{i, \alpha}^{r_1, r_2} . d_{j, \beta}^{r'_1, r'_2} \right) < 2s_1 + s_2, \\
 \end{array}
$
\end{center}
\NI  $1 \leq i \leq \Big| \overrightarrow{\mathbb{J}}^{2r_1+r_2, \alpha}_{2s_1+s_2}\Big|, 1 \leq j \leq \Big| \overrightarrow{\mathbb{J}}^{2r'_1+r'_2, \beta}_{2s_1+s_2}\Big|,  l(P_i \vee P_j) = l \left( d_{i, \alpha}^{r_1, r_2} . d_{j, \beta}^{r'_1, r'_2} \right),  l(P_i \vee P_j)$ denotes the number of connected components in $d_{i, \alpha}^{r_1, r_2} . d_{j, \beta}^{r'_1, r'_2}$ excluding the union of all the connected components of $P_i$ and $P_j$ or equivalently, $ l \left( d_{i, \alpha}^{r_1, r_2} . d_{j, \beta}^{r'_1, r'_2} \right)$ is the number of loops which lie in the middle row when $d_{i, \alpha}^{r_1, r_2}$ is multiplied with $d_{j, \beta}^{r'_1, r'_2}$, $d_{i, \alpha}^{r_1, r_2} \in \overrightarrow{\mathbb{J}}^{2r_1 + r_2, \alpha}_{2s_1 +s_2}$ and $d_{j, \beta}^{r'_1, r'_2} \in \overrightarrow{\mathbb{J}}^{2r'_1+ r'_2, \beta}_{2s_1 +s_2}$ respectively.

 For example,
\vspace{-0.5cm}
 \begin{center}
 \includegraphics{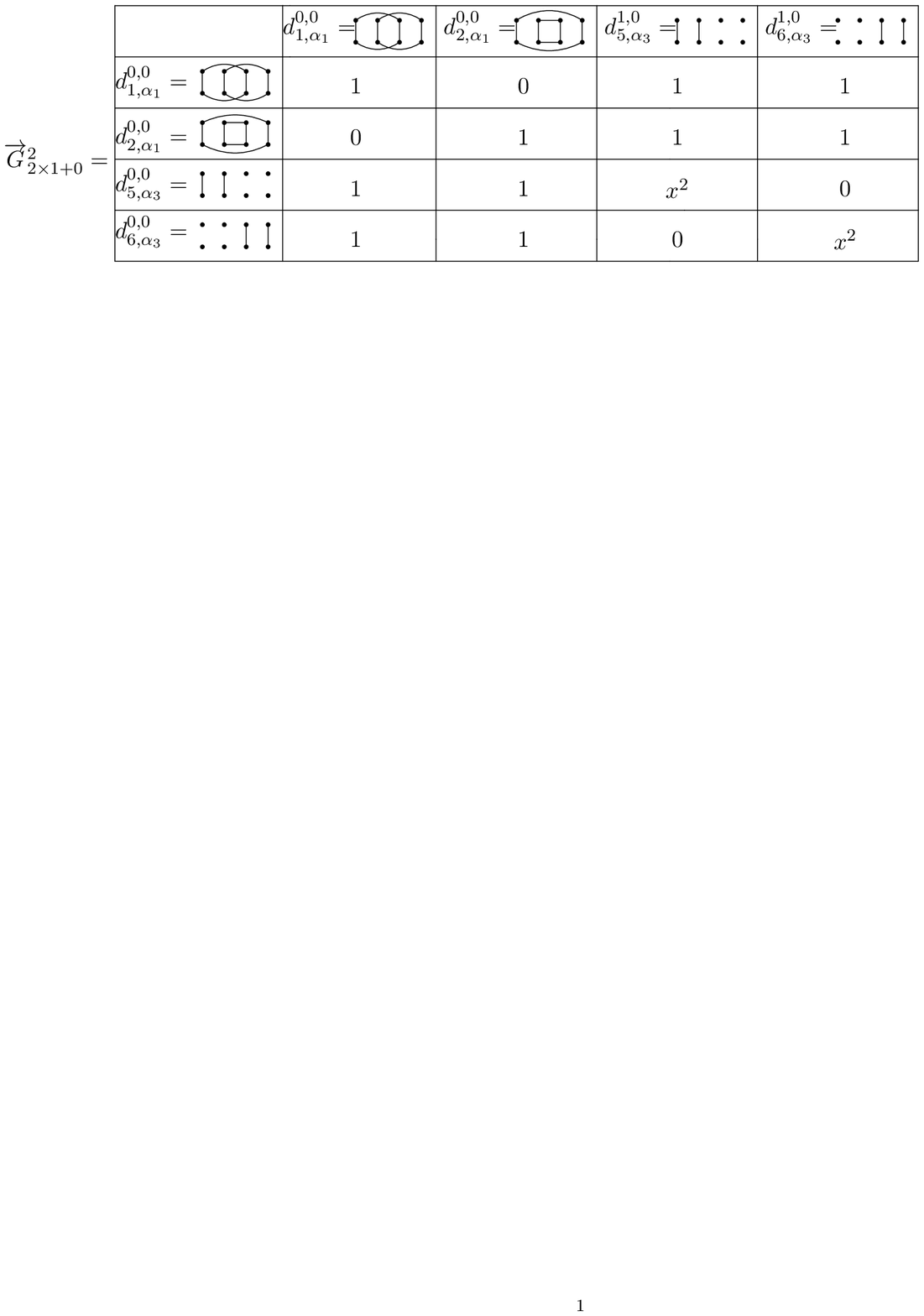}
 \end{center}
 \vspace{-1cm}
 \NI where $\alpha_1 = (2, \Phi, \Phi, \Phi), \alpha_2= (1, \Phi, \Phi, 1)$ and $\alpha_3 = (1, \Phi, 1, \Phi).$
  \item[(c)] For $0 \leq s \leq k,$ define $G_s^k$  (\textit{ Gram matrices of partition algebra}) as follows:

\centerline{$G_{s}^k = \left( A_{r, r'}\right)_{0 \leq r, r' \leq k-s} $}
\NI where $A_{r, r'}$ denotes the block matrix whose entries are $a_{(i, \alpha, r), (j, \beta, r')}$ with
\begin{center}
$\begin{array}{lllll}
  a_{(i, \alpha, r), (j, \beta, r')} & = & x^{l\left(R^{d_i} R^{d_j}\right)}  &  if &  \sharp^p \left( R^{d^r_{i, \alpha}}. R^{d^{r'}_{j, \beta}} \right) = s\\
       & = & 0 & Otherwise & i.e., \sharp^p \left( R^{d^r_{i, \alpha}}. R^{d^{r'}_{j, \beta}}\right) < s, \\
 \end{array}
$
\end{center}
\NI where  $1 \leq i \leq \Big| \mathbb{J}^{r, \alpha}_{s}\Big|, 1 \leq j \leq \Big| \mathbb{J}^{r', \beta}_{s}\Big|, l(R^{d_i} R^{d_j}) = l ( R^{d^r_{i, \alpha}}. R^{d^{r'}_{j, \beta}}), l(R^{d^r_{i, \alpha}}  R^{d^{r'}_{j, \beta}})$ denotes the number of connected components which lie in the middle row while multiplying  $R^{d^r_{i, \alpha}}$ with $ R^{d^{r'}_{j, \beta}},$ $R^{d^r_{i, \alpha}} \in \mathbb{J}^{r, \alpha}_{s}$ and $ R^{d^{r'}_{j, \beta}}\in \mathbb{J}^{r', \beta}_{s}$ respectively.
For example,

\begin{center}
\includegraphics{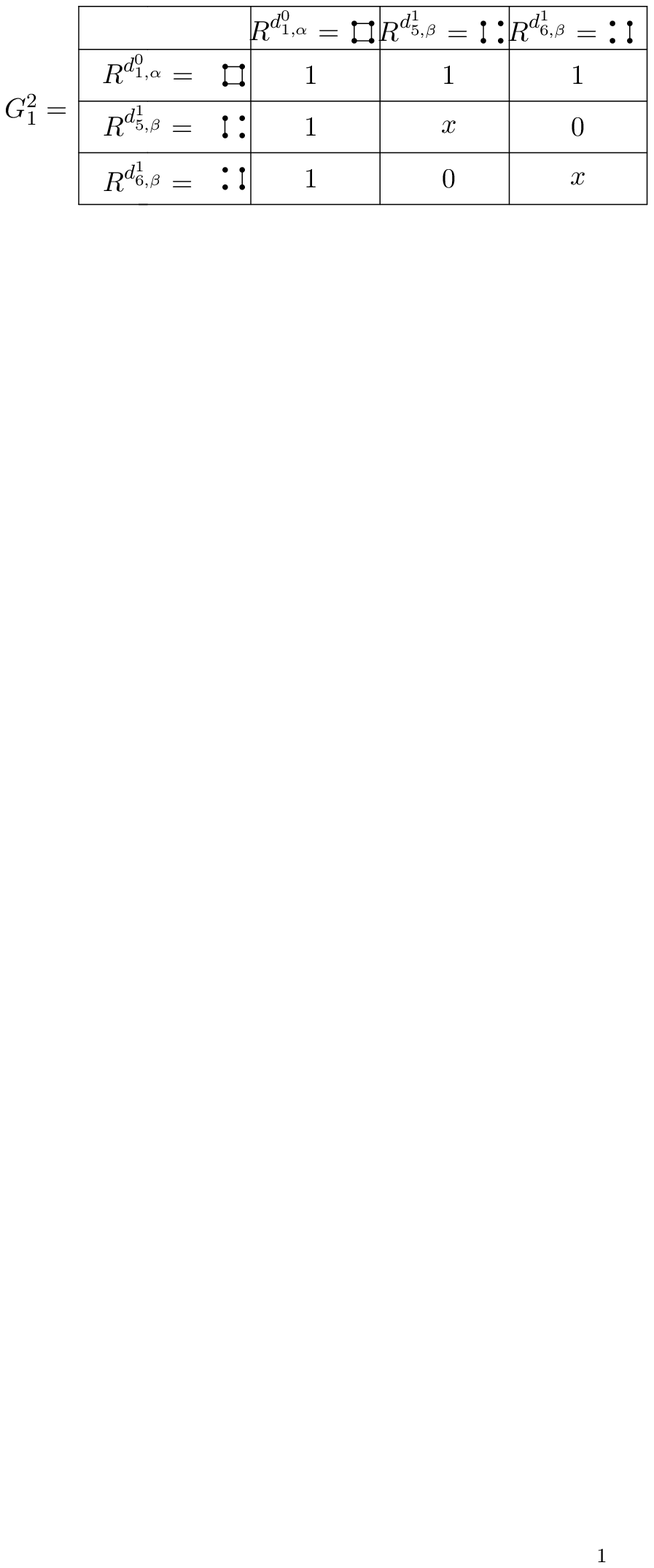}
\end{center}
\vspace{-1cm}
\end{enumerate}

\end{defn}

\NI We establish the non-singularity of the Gram matrices over the field $\mathbb{K}(x)$ where $x$ is an indeterminate.

\begin{lem} \label{L3.9}
\mbox{ }
\begin{enumerate}
  \item[(i)]\begin{enumerate}
              \item[(a)] For the algebra of $\mathbb{Z}_2$-relations, $l \left(d^{r_1, r_2}_{i, \alpha} . d^{r'_1, r'_2}_{j, \beta} \right)  <  l \left(d^{r_1, r_2}_{i, \alpha}. d^{r_1, r_2}_{i, \alpha} \right), \ \ \ \forall (j, \beta, r'_1, r'_2) < (i, \alpha, r_1, r_2),$ where  $l\left(d^{r_1, r_2}_{i, \alpha} . d^{r'_1, r'_2}_{j, \beta}\right)$ is the number of loops which lie in the middle row when $d^{r_1, r_2}_{i, \alpha}$ is multiplied with $d^{r'_1, r'_2}_{j, \beta}$ where $d^{r_1, r_2}_{i, \alpha}, d^{r'_1, r'_2}_{j, \beta} \in J^{2k}_{2s_1+s_2}$ and $J^{2k}_{2s_1+s_2}$ is as in Notation \ref{N3.6}(a).
              \item[(b)]For the signed partition algebras,  $l \left(d^{r_1, r_2}_{i, \alpha} . d^{r'_1, r'_2}_{j, \beta} \right)  <  l \left(d^{r_1, r_2}_{i, \alpha}. d^{r_1, r_2}_{i, \alpha} \right), \ \ \ \forall (j, \beta, r'_1, r'_2) < (i, \alpha, r_1, r_2),$ where  $l\left(d^{r_1, r_2}_{i, \alpha} . d^{r'_1, r'_2}_{j, \beta}\right)$ is the number of loops which lie in the middle row when $d^{r_1, r_2}_{i, \alpha}$ is multiplied with $d^{r'_1, r'_2}_{j, \beta}$ where $d^{r_1, r_2}_{i, \alpha}, d^{r'_1, r'_2}_{j, \beta} \in \overrightarrow{J}^{2k}_{2s_1+s_2}$ and $\overrightarrow{J}^{2k}_{2s_1+s_2}$ is as in Notation \ref{N3.6}(b).
              \item[(c)]For the partition algebras,  $l(R^{d^r_{i, \alpha}} . R^{d_{j, \beta}^{r'}})  <  l(R^{d^r_{i, \alpha}} . R^{d_{i, \alpha}^{r}}), \ \ \ \forall (j,  \beta, r') < (i,  \alpha, r),$ where  $l\left(R^{d^r_{i, \alpha}} . R^{d_{j, \beta}^{r'}}\right)$ is the number of loops which lie in the middle row when $R^{d^r_{i, \alpha}} $ is multiplied with $ R^{d_{j, \beta}^{r'}}$ where $R^{d^r_{i, \alpha}}, R^{d_{j, \beta}^{r'}} \in J^k_s$ and $J^k_s$ is as in Notation \ref{N3.6}(c).
            \end{enumerate}
      \item[(ii)]  $\det \ G_{2s_1+s_2}^k, \det \ \overrightarrow{G}_{2s_1+s_2}^k$ and $\det \ G_s^k$ are  non-zero polynomials with leading coefficient 1.

\end{enumerate}
\end{lem}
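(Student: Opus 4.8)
The plan is to prove (i) as a purely combinatorial statement about diagram multiplication and then deduce (ii) as a leading-term estimate for the determinant. Write $d_i$ for $d^{r_1,r_2}_{i,\alpha}$ and set $h_i:=l(d_i\cdot d_i)$ for the diagonal loop count. First I would record the basic observation underlying everything: any loop created in a product $d_i\cdot d_j$ lies entirely in the identified middle row and, being a closed component meeting neither the top row of $d_i$ nor the bottom row of $d_j$, must alternate between horizontal edges of the bottom of $d_i$ and horizontal edges of the top of $d_j$. Hence every loop consumes at least one bottom-horizontal edge of $d_i$ and at least one top-horizontal edge of $d_j$, and since distinct loops are disjoint this yields the uniform bound
\[
l(d_i\cdot d_j)\ \le\ \min\bigl(h_i,\,h_j\bigr),
\]
where I use that each $d_{i,\alpha}$ is a $\widetilde U$-diagram (top row $=$ bottom row $=(d,P)$), so its self-product aligns every horizontal edge with its identical copy; thus $h_i$ equals the total number of horizontal edges of $d_{i,\alpha}$, a quantity that is constant on, and strictly increasing across, the level sets of the primary ordering statistic $2r_1+r_2$. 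The flip interchanging top and bottom rows also shows the matrix is symmetric, $l(d_i\cdot d_j)=l(d_j\cdot d_i)$.

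With the min-bound in hand, (i) reduces to strictness when $(j,\beta,r'_1,r'_2)<(i,\alpha,r_1,r_2)$. If $2r'_1+r'_2<2r_1+r_2$, then $h_j<h_i$ and the bound already gives $l(d_i\cdot d_j)\le h_j<h_i$. If $2r'_1+r'_2=2r_1+r_2$ but $r'_1+r'_2<r_1+r_2$, an elementary computation forces $r'_2<r_2$, so $d_i$ carries strictly more $\mathbb{Z}_2$-horizontal edges than $d_j$; since a loop preserves its $\{e\}$- or $\mathbb{Z}_2$-type (Remark \ref{R2.4}), the number of $\mathbb{Z}_2$-loops is at most $r'_2<r_2$ while the $\{e\}$-loops number at most $2r_1$, whence $l(d_i\cdot d_j)\le 2r_1+r'_2<2r_1+r_2=h_i$. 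The remaining case is $2r'_1+r'_2=2r_1+r_2$ and $r'_1+r'_2=r_1+r_2$ (so the edge-type counts coincide) with $d_j\neq d_i$: here equality $l(d_i\cdot d_j)=h_i$ would force every bottom-horizontal edge of $d_i$ to close into its own loop, which forces the top configuration of $d_j$ to agree with the bottom configuration of $d_i$ on both the horizontal blocks and the placement of the through-classes $P$ (otherwise a through-strand of $d_i$ would absorb a horizontal edge of $d_j$ and break a loop); as both diagrams are $\widetilde U$-symmetric this agreement is exactly $d_j=d_i$, a contradiction, so $l(d_i\cdot d_j)<h_i$. The partition-algebra statement (c) is the same argument with untyped (linear) edges, so only the first and last cases arise.

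For (ii), order the basis as in Definition \ref{D3.7}, so that $h_i$ is nondecreasing in $i$, and expand the determinant as $\sum_{\pi}\operatorname{sgn}(\pi)\prod_i x^{\,l(d_i\cdot d_{\pi(i)})}$, the $\pi$-term having degree $D_\pi=\sum_i l(d_i\cdot d_{\pi(i)})$. The identity term has degree $\sum_i h_i$ and coefficient $1$, so it suffices to show $D_\pi<\sum_i h_i$ for every $\pi\neq\mathrm{id}$. Using the min-bound together with $\sum_i\min(h_i,h_{\pi(i)})=\sum_i h_i-\tfrac12\sum_i|h_i-h_{\pi(i)}|$, any $\pi$ moving some index to a different $h$-level satisfies $D_\pi\le\sum_i\min(h_i,h_{\pi(i)})<\sum_i h_i$. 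If instead $\pi$ preserves every level set of $h$ but $\pi\neq\mathrm{id}$, pick the largest index $i^\ast$ with $\pi(i^\ast)\neq i^\ast$; then $\pi(i^\ast)<i^\ast$ lies in the same level, so part (i) gives $l(d_{i^\ast}\cdot d_{\pi(i^\ast)})<h_{i^\ast}$ while every other factor is $\le h_i$ by the min-bound, so again $D_\pi<\sum_i h_i$. Hence the identity is the unique top-degree term and $\det G^k_{2s_1+s_2}$ is a nonzero polynomial with leading coefficient $1$; the identical argument for $\overrightarrow{G}^k_{2s_1+s_2}$ and $G^k_s$, whose index sets $\overrightarrow{J}^{2k}_{2s_1+s_2}\subset J^{2k}_{2s_1+s_2}$ and $J^k_s$ carry the same ordering (Notation \ref{N3.6}), gives the other two determinants.

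I expect the genuine obstacle to be the last case of part (i): proving that a maximal loop count forces the two symmetric diagrams to coincide. This is precisely where the interaction between the through-classes $P$ and the horizontal edges must be controlled, using the correspondence $d\leftrightarrow\bigl((d^+,P),(d^-,Q),((f,\sigma_1),\sigma_2)\bigr)$ of Lemma \ref{L2.12} and the type-preserving nature of $\{e\}$- versus $\mathbb{Z}_2$-components; the remaining bookkeeping, including the separate tally of $\{e\}$- and $\mathbb{Z}_2$-loops, is routine.
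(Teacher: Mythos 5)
Your proof follows essentially the same route as the paper's: the key min-bound coming from the fact that every loop consumes at least one bottom horizontal edge of $d^{r_1,r_2}_{i,\alpha}$ and one top horizontal edge of $d^{r'_1,r'_2}_{j,\beta}$, the same three-way case split on the ordering of Definition \ref{D3.7} (with the final case forcing $d_i=d_j$ from a maximal loop count), and the conclusion for (ii) that the diagonal monomial is the unique term of top degree in the determinant expansion. The one soft spot — the assertion that a loop preserves its $\{e\}$- versus $\mathbb{Z}_2$-type (a pair of $\{e\}$-edges can in fact merge into a single $\mathbb{Z}_2$-loop, though the edge count still closes the estimate) — is no less precise than the paper's own treatment of that subcase, so I regard the two arguments as the same.
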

\begin{proof}

\NI \textbf{Proof of (i)(a):}

\NI A loop consists of at least one horizontal edge from the bottom row of $d_{i, \alpha}^{r_1, r_2}$ and one from the top row of $d_{j, \beta}^{r'_1, r'_2},$ hence the number of loops in the middle component of the product $d_{i, \alpha}^{r_1, r_2} . d_{j, \beta}^{r'_1, r'_2}$ is always less than the minimum of number of loops in $\left( d_{i, \alpha}^{r_1, r_2} . d_{i, \alpha}^{r_1, r_2}\right)$ and $\left(d_{j, \beta}^{r'_1, r'_2} . d_{j, \beta}^{r'_1, r'_2} \right).$

\NI Thus, $l(d_{i, \alpha}^{r_1, r_2} . d_{j, \beta}^{r'_1, r'_2}) \leq l(d_{i, \alpha}^{r_1, r_2} . d_{i, \alpha}^{r_1, r_2}) \text{ and } l(d_{i, \alpha}^{r_1, r_2} . d_{j, \beta}^{r'_1, r'_2}) \leq l(d_{j, \beta}^{r'_1, r'_2} . d_{j, \beta}^{r'_1, r'_2}), \ \ \ \forall \ i, j.$

\NI For if $(j, \beta, r'_1, r'_2) < (i, \alpha, r_1, r_2) $

\NI \textbf{Case (i):}  when $2r'_1+r'_2 < 2r_1 + r_2$ where $r_1(r'_1)$ is the number of pairs of $\{e\}$ horizontal edges  and $r_2(r'_2)$ is the number of $\mathbb{Z}_2$-horizontal edges in $d^{r_1, r_2}_{i, \alpha}\left( d^{r'_1, r'_2}_{j, \beta}\right)$ respectively, then

\centerline{$l(d_{i, \alpha}^{r_1, r_2} . d_{j, \beta}^{r'_1, r'_2}) \leq l(d_{j, \beta}^{r'_1, r'_2} . d_{j, \beta}^{r'_1, r'_2}) < l(d_{i, \alpha}^{r_1, r_2} . d_{i, \alpha}^{r_1, r_2}).$}

 \NI \textbf{Case (ii):} when $2r'_1+r'_2 = 2r_1 + r_2$ and $r'_1+r'_2 < r_1+r_2$ where $r_1(r'_1)$ is the number of pairs of $\{e\}$ horizontal edges  and $r_2(r'_2)$ is the number of $\mathbb{Z}_2$-horizontal edges in $d^{r_1, r_2}_{i, \alpha}\left( d^{r'_1, r'_2}_{j, \beta}\right)$ respectively, which implies that

 \NI {\bf Subcase (i):}Suppose that $r'_2 < r_2,$ i.e., atleast two $\mathbb{Z}_2$-horizontal edges of $ d^{r'_1, r'_2}_{j, \beta}$ is connected to a $\mathbb{Z}_2$-horizontal edge of $d^{r_1, r_2}_{i, \alpha}$  to make a loop or one $\mathbb{Z}_2$-horizontal edge of $d^{r_1, r_2}_{i, \alpha}$ is connected to a $\mathbb{Z}_2$-through class of $ d^{r'_1, r'_2}_{j, \beta}$ in the product $d^{r_1, r_2}_{i, \alpha} . d^{r'_1, r'_2}_{j, \beta}.$

  \NI {\bf Subcase (ii):}Suppose that $r'_1 < r_1,$ i.e., atleast two $\{e\}$ horizontal edges of $ d^{r'_1, r'_2}_{j, \beta}$ is connected to a $\{e\}$ or $\mathbb{Z}_2$-horizontal edge of $d^{r_1, r_2}_{i, \alpha}$ to make a loop or one $\{e\}$-horizontal edge of $d^{r_1, r_2}_{i, \alpha}$ is connected to a $\{e\}$ or $\mathbb{Z}_2$-through class of $ d^{r'_1, r'_2}_{j, \beta}$ in the product $d^{r_1, r_2}_{i, \alpha} . d^{r'_1, r'_2}_{j, \beta}.$

   \NI Therefore the number of loops is strictly less than $2r'_1 + r'_2,$ and thus

    \centerline{$l\left(d^{r_1, r_2}_{i, \alpha} . d^{r'_1, r'_2}_{j, \beta} \right) \lvertneqq l\left( d^{r'_1, r'_2}_{j, \beta} d^{r'_1, r'_2}_{j, \beta} \right) = l\left(d^{r_1, r_2}_{i, \alpha} . d^{r_1, r_2}_{i, \alpha} \right)$}

   \NI \textbf{Case (iii):} when $2r'_1+r'_2 = 2r_1+r_2, r'_1+r'_2 = r_1+r_2$ and $\alpha < \beta$ where $r_1(r'_1)$ is the number of pairs of $\{e\}$ horizontal edges  and $r_2(r'_2)$ is the number of $\mathbb{Z}_2$-horizontal edges in $d^{r_1, r_2}_{i, \alpha} \left( d^{r'_1, r'_2}_{j, \beta}\right)$ respectively and $\alpha(\beta)$ is the underlying partition of $d^{r_1, r_2}_{i, \alpha} \left( d^{r'_1, r'_2}_{j, \beta}\right)$, which implies that

    $l\left(d^{r_1, r_2}_{i, \alpha} . d^{r_1, r_2}_{i, \alpha} \right) = l\left(d^{r'_1, r'_2}_{j, \beta} . d^{r'_1, r'_2}_{j, \beta} \right) = 2r_1+r_2 = 2r'_1+r'_2$ and $r_1+r_2 = r'_1+r'_2.$

\NI   Every $\{e\}$-through class of $U^{(d_i, P_i)}_{(d_i, P_i)}$ is uniquely connected to a $\{e\}$-through class of $ d^{r'_1, r'_2}_{j, \beta}$ and vice versa and if $l\left(d^{r_1, r_2}_{i, \alpha} . d^{r'_1, r'_2}_{j, \beta} \right) = l\left(d^{r_1, r_2}_{i, \alpha} . d^{r_1, r_2}_{i, \alpha}  \right) = l\left( d^{r'_1, r'_2}_{j, \beta} . d^{r'_1, r'_2}_{j, \beta} \right)$ then every $\{e\}(\mathbb{Z}_2)$-horizontal edge of $d^{r_1, r_2}_{i, \alpha} $ is connected uniquely to a $\{e\}(\mathbb{Z}_2)$-horizontal edge of $ d^{r'_1, r'_2}_{j, \beta}$ and vice versa which implies that $d^{r_1, r_2}_{i, \alpha} = d^{r'_1, r'_2}_{j, \beta}.$

   \NI Thus, if $d^{r_1, r_2}_{i, \alpha} \neq d^{r'_1, r'_2}_{j, \beta}$ and $2r_1+r_2 = 2r'_1+r'_2$ and $r_1+r_2 = r'_1+r'_2$ then $l\left(d^{r_1, r_2}_{i, \alpha} . d^{r'_1, r'_2}_{j, \beta} \right) < l\left(d^{r_1, r_2}_{i, \alpha} . d^{r_1, r_2}_{i, \alpha}\right) = l\left(d^{r'_1, r'_2}_{j, \beta} . d^{r'_1, r'_2}_{j, \beta} \right).$

\NI   Proof of (i)(b) and (i)(c) are similar to the proof of (i)(a).

\NI \textbf{Proof of (ii):} It follows from (i) of Lemma \ref{L3.9}, that the degree of the monomial $\left\{ \prod a_{i \sigma(i)}\right\}_{\sigma \in \mathfrak{S}_{f_{2s_1+s_2}}}, $ is strictly less than the degree of the monomial $\underset{i=1}{\overset{f_{2s_1+s_2}}{\prod}}  a_{ii}.$

\NI Thus, the determinant of the Gram matrix $G_{2s_1+s_2}^k$ of the algebra of $\mathbb{Z}_2$-relations is a non-zero monic polynomial  with integer coefficients and the roots are all algebraic integers.

\NI Similarly, we can prove for the determinant of the Gram matrices $\overrightarrow{G}_{2s_1+s_2}^k$ and $G_s^k$ of signed partition algebras and partition algebras respectively.

 \end{proof}

\begin{lem}\label{L3.10}

 The Gram matrices $G_{2s_1+s_2}^k, \overrightarrow{G}_{2s_1 + s_2}^k$ and $G_s^k$ are symmetric.
 \end{lem}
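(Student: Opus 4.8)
The plan is to produce a single reflection (transpose) anti-automorphism of the diagram algebra under which every diagonal diagram used to index the Gram matrix is fixed; the symmetry of each matrix then drops out entry by entry, and the same argument covers all three cases at once.

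First I would introduce the reflection map $*$ on $R^{\mathbb{Z}_2}_{\underline{k}\cup\underline{k}'}$ that interchanges each top vertex $(i,h)$ with the corresponding bottom vertex $(i',h)$ for $h\in\mathbb{Z}_2$; in the language of Notation 2.2(iii) this simply swaps $d^+$ and $d^-$. This map sends a $\mathbb{Z}_2$-stable equivalence relation to one of the same type, and it preserves the propagating number, since under reflection a through class stays a through class and a horizontal edge stays a horizontal edge, with the $\{e\}$-type and $\mathbb{Z}_2$-type of each component unchanged. The structural fact I would record is that $*$ reverses products: stacking $d$ above $e$ and reflecting the entire picture through the horizontal axis yields $e^*$ stacked above $d^*$, and this reflection is a bijection on connected components that fixes the middle row setwise. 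Hence $(d\cdot e)^*=e^*\cdot d^*$, the number of pure middle-row loops of $d\cdot e$ equals that of $e^*\cdot d^*$, and $\sharp^p(d\cdot e)=\sharp^p(e^*\cdot d^*)$.

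Next I would observe that each diagonal diagram $d^{r_1,r_2}_{i,\alpha}=\widetilde{U}^{(d_i,P_i)}_{(d_i,P_i)}$ is self-transpose. Indeed, by Remark \ref{R3.4} its top and bottom halves coincide ($d^+=d^-=(d_i,P_i)$) and the element of $(\mathbb{Z}_2\wr\mathfrak{S}_{s_1})\times\mathfrak{S}_{s_2}$ joining the through classes is the identity $((0,id),id)$; reflecting therefore interchanges two identical halves and fixes the trivial connecting permutation, so $(d^{r_1,r_2}_{i,\alpha})^*=d^{r_1,r_2}_{i,\alpha}$. The analogous diagonal diagrams for the signed partition algebra (whose index set is a subset by Definition \ref{D3.7}(b)) and the partition-algebra diagrams $R^{d^r_{i,\alpha}}=U^{R^{d_i}}_{R^{d_i}}$ are self-transpose for the same reason.

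Finally I would combine these facts. Taking $d=d^{r_1,r_2}_{i,\alpha}$ and $e=d^{r'_1,r'_2}_{j,\beta}$, both self-transpose, gives $(d\cdot e)^*=e\cdot d$, whence $\sharp^p(d\cdot e)=\sharp^p(e\cdot d)$ and $l(P_i\vee P_j)=l(P_j\vee P_i)$. Reading off Definition \ref{D3.8}(a), the defining alternative $\sharp^p=2s_1+s_2$ and the recorded value $x^{l(P_i\vee P_j)}$ are both unchanged when $(i,\alpha,r_1,r_2)$ and $(j,\beta,r'_1,r'_2)$ are interchanged, so $a_{(i,\alpha,r_1,r_2),(j,\beta,r'_1,r'_2)}=a_{(j,\beta,r'_1,r'_2),(i,\alpha,r_1,r_2)}$ and $G^k_{2s_1+s_2}$ is symmetric; the identical computation with the relevant diagonal diagrams proves it for $\overrightarrow{G}^k_{2s_1+s_2}$ and $G^k_s$. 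The step demanding the most care is the loop-count comparison: one must check that $*$ carries the components of $d\cdot e$ that are excluded from the union of $P_i$ and $P_j$ exactly onto the corresponding excluded components of $e\cdot d$. I expect this to be the main obstacle, but it reduces to the already-noted fact that the reflection preserves the partition of the product's components into pure middle loops versus through and horizontal components, so the two loop counts coincide.
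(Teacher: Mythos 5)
Your proposal is correct and rests on the same underlying fact as the paper's own (one-line) proof: the diagrams indexing the Gram matrix are of the form $\widetilde{U}^{(d,P)}_{(d,P)}$ with identical top and bottom halves and trivial connecting permutation, so they are fixed by the top--bottom reflection, and hence the product $d^{r_1,r_2}_{i,\alpha}\cdot d^{r'_1,r'_2}_{j,\beta}$ is carried onto $d^{r'_1,r'_2}_{j,\beta}\cdot d^{r_1,r_2}_{i,\alpha}$ with the same propagating number and middle-row loop count. You have merely made explicit, via the transpose anti-automorphism, what the paper leaves implicit in its appeal to Definition \ref{D3.8}.
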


 \begin{proof}
 The proof follows from the Definition \ref{D3.8}, since the top and bottom rows of the diagrams in $J^{2k}_{2s_1+s_2}, \overrightarrow{J}^{2k}_{2s_1+s_2}, J^{k}_{s}$ have the same number of horizontal edges.
 \end{proof}

  \begin{rem}\label{R3.11}
Every partition diagram can be represented as a set partition and in set partition we can talk about subsets.
Thus a connected component of the diagram $ d^{r'_1, r'_2}_{j, \beta}$ is contained in a connected component of $d^{r_1, r_2}_{i, \alpha}$ if the corresponding set partition of $d^{r'_1, r'_2}_{j, \beta}$ is contained in the set partition of $d^{r_1, r_2}_{i, \alpha}.$
\end{rem}

We introduce a finer version of coarser diagrams in order to prove the main result of this paper.

\begin{defn}\label{D3.12}
\mbox{ }
\begin{enumerate}
\item[(a)] Let $d^{r_1, r_2}_{i, \alpha}, d^{r'_1, r'_2}_{j,\beta} \in J^{2k}_{2s_1+s_2}.$

 Define a relation on $J^{2k}_{2s_1+s_2}$ as follows: $d^{r_1, r_2}_{i, \alpha} < d^{r'_1, r'_2}_{j,\beta},$
\begin{enumerate}
   \item[(i)] if each $\{e\}$- through class of $d^{r_1, r_2}_{i, \alpha}$ is contained in a $\{e\}$-through class of $d^{r'_1, r'_2}_{j,\beta}$,
   \item[(ii)]  every $\mathbb{Z}_2$-through class  of $d^{r_1, r_2}_{i, \alpha}$ is contained in a $\mathbb{Z}_2$-through class of $d^{r'_1, r'_2}_{j,\beta}$
   \item[(iii)] every $\{e\}$-horizontal edge of $d^{r_1, r_2}_{i, \alpha}$ is contained in a ($\{e\}$ or $\mathbb{Z}_2$) horizontal edge or ($\{e\}$ or $\mathbb{Z}_2$)-through class of $ d^{r'_1, r'_2}_{j,\beta}$ and
   \item[(iv)] every $\mathbb{Z}_2$-horizontal edge of $d^{r_1, r_2}_{i, \alpha}$ is contained in a $\mathbb{Z}_2$-horizontal edge or $\mathbb{Z}_2$-through class of $ d^{r'_1, r'_2}_{j,\beta}.$
 \end{enumerate}

\NI We say that $d^{r'_1, r'_2}_{j,\beta}$ is a coarser diagram of $d^{r_1, r_2}_{i, \alpha}$ and $(j, \beta, r'_1, r'_2) < (i, \alpha, r_1, r_2).$
\item[(b)] since $\overrightarrow{J}^{2k}_{2s_1+s_2} \subset J^{2k}_{2s_1+s_2}$ the relation defined on $J^{2k}_{2s_1+s_2}$ in (a) holds for the diagrams in $\overrightarrow{J}^{2k}_{2s_1+s_2}.$
\item[(c)]   Define a relation on $J^k_s$ as follows: $R^{d^r_{i, \alpha}} < R^{d^{r'}_{j, \beta}}$,
\begin{enumerate}
  \item[(i)$'$] if each through class of $R^{d^{r}_{i, \alpha}}$ is contained in a through class of $R^{d^{r'}_{j, \beta}},$
  \item[(ii)$'$] if each horizontal edge of $R^{d^r_{i, \alpha}}$ is contained in a horizontal edge or through class of $R^{d^{r'}_{j, \beta}},$
\end{enumerate}
\NI We say that $R^{d^{r'}_{j, \beta}}$ is a coarser diagram of $R^{d^r_{i, \alpha}}$ then $(j, \beta, r') < (i, \alpha, r).$ The relation $ < $ holds for the diagrams in $\widetilde{J}^{k}_{s}.$

\end{enumerate}

\end{defn}

\begin{ex}\label{E3.14}
This example illustrates Definition \ref{D3.12}.
\begin{center}
\includegraphics{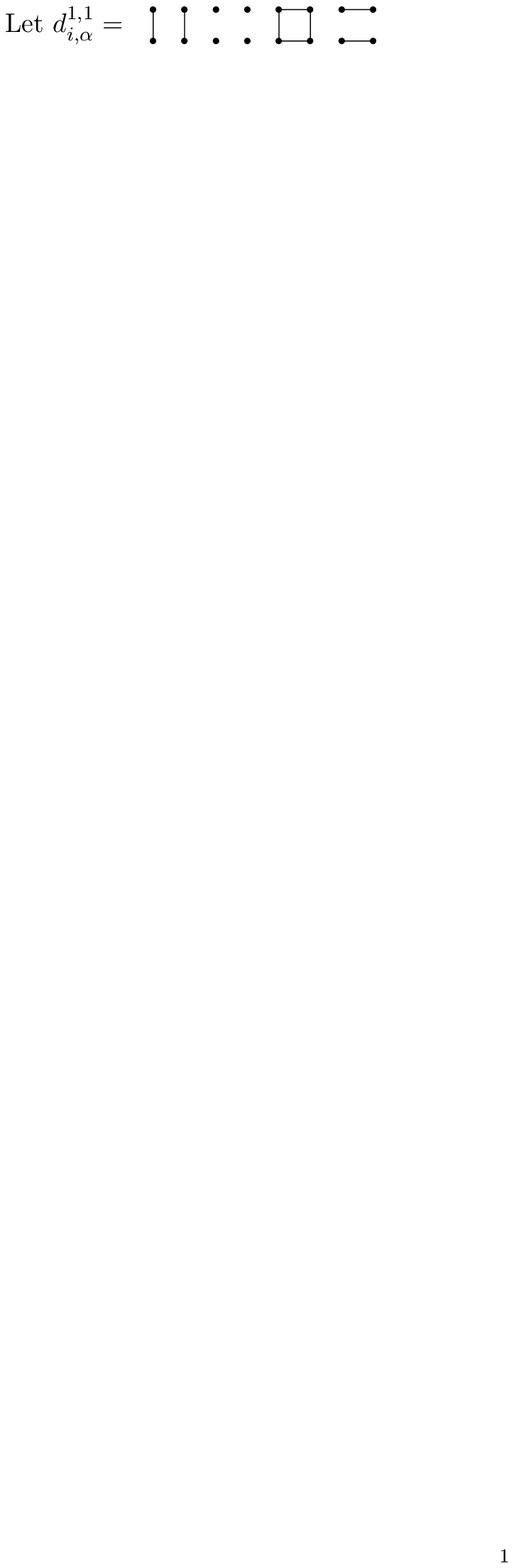}
\end{center}
The following are the diagrams coarser than $d^{1, 1}_{i, \alpha} \in J^8_{2 \times 1+ 1}$ where $\alpha = (1, 1, 1, 1).$
\begin{center}
\includegraphics{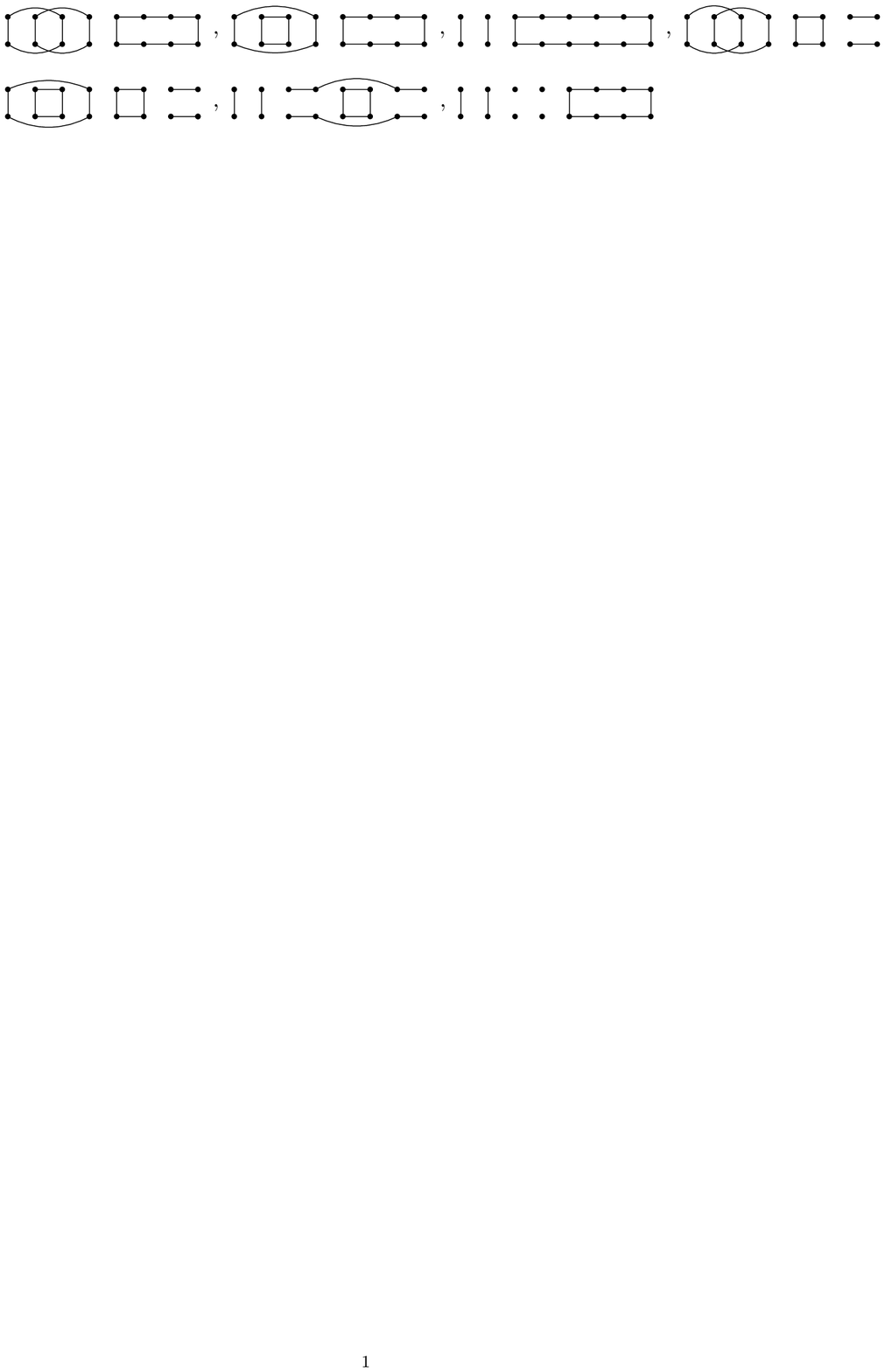}
\end{center}

\end{ex}
\vspace{-0.5cm}
\NI While working in arriving at the main result Theorem \ref{T3.32},  we obtain a beautiful combinatorial result.

\subsection{\textbf{Stirling numbers of second kind of the algebra of $\mathbb{Z}_2$-relations, signed partition algebras and partition algebras:}}

\begin{lem} \label{L3.14}
\mbox{ }
\begin{enumerate}
  \item[(a)]
  In the algebra of $\mathbb{Z}_2$-relations the number of diagrams having $2s_1 + s_2$ through classes and $2 p_1 + p_2$ horizontal edges which lie above and coarser than the diagram $ d^{r_1, r_2}_{i, \alpha}$ whose underlying partition is $\alpha$, where $\alpha \in \Omega^{r_1, r_2}_{s_1, s_2}$   is given by,

\begin{equation}\label{e3.1}
     \underset{ i = p_1}{\overset{r_1}\sum} 	{_{r_1}}C_{i} \ 2^{i - p_1} \ S(i, p_1) \left[ \underset{ j = 0}{\overset{r_1 - i}\sum} {_{r_1 - i}}C_j \ (2s_1 + s_2)^{r_1-i-j} \left[ \sum_{\substack{p_2 - j \leq l \leq r_2\\p_2-j \geq 0}} {_{r_2}}C_l \ s_2^{r_2 - l} \ S(l+j, p_2) \right] \right]
\end{equation}
with $p_1 \leq r_1$ and $r_1 - p_1 \geq p_2 - r_2$ where $S(i, p_1)$ and $S(l+j, p_2)$ are the Stirling numbers of the second kind.

  In particular,
\begin{enumerate}
  \item[(i)] if $r_1 = 0$ then  the number of diagrams having $p_2$ number of $\mathbb{Z}_2$-horizontal edges which lie above and coarser than the given $d^{0, r_2}_{i, \alpha}$ with $r_2$ number of $\mathbb{Z}_2$-horizontal edges is given by

\centerline{$\underset{ i = p_2}{\overset{r_2}\sum} \ {_{r_2}}C_{l} \ s_2^{r_2 - i} \ S(i, p_2).$}

  \item[(ii)] if $s_1 = 0, s_2 = 0$ then the number of diagrams having $2p_1+p_2$ horizontal edges which lie above and coarser than $d^{r_1, r_2}_{i, \alpha}$ is given by

\centerline{$\underset{i=p_1}{\overset{r_1}{\sum}} {_{r_1}}C_i \ 2^{i-p_1} \ S(i, p_1)  \ S(r_2 + r_1-i, p_2).$}
\item[(b)]In the partition algebras, the number of diagrams having $s$ through classes and $p$ horizontal edges which lie above and coarser than the diagram $R^{d^{r}_{i, \alpha}}$  whose underlying partition is $\alpha$, where $\alpha = [\alpha_1]^1 [\alpha_2]^2$ is as in Definition \ref{D3.1}(c) is given by,

\begin{equation}\label{e3.2}
     \underset{ i = p}{\overset{r}\sum} \ {_{r}}C_{l} \ s^{r - i} \ S(i, p).
\end{equation}
with $p \leq r$  where $S(i, p)$ is the Stirling numbers of the second kind.
\end{enumerate}

\end{enumerate}
\end{lem}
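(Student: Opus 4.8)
The plan is to prove both formulas by a direct enumeration of the coarser diagrams, organized according to the fate of each horizontal edge of $d^{r_1,r_2}_{i,\alpha}$ under the coarsening relation of Definition \ref{D3.12}. Since the bijection $(d,P)\mapsto U^{(d,P)}_{(d,P)}$ lets us work with the symmetric half--diagram $(d,P)$, a coarser diagram amounts to a coarsening of the block structure in which the $2s_1+s_2$ through classes (the blocks of $P$) are required to survive as distinct through classes. Because coarsening only merges blocks and, by rules (i) and (ii) of Definition \ref{D3.12}, through classes may only be merged with through classes of the same type, retaining exactly $2s_1+s_2$ through classes forces the original through classes to remain distinct; thus the only freedom is, for each horizontal edge of $d\setminus P$, whether it is absorbed into one of the through classes or fused with other horizontal edges, together with the $\mathbb{Z}_2$--twisting data recording how the $e$-- and $g$--parts are identified. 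The essential input is the asymmetry in rules (iii) and (iv): a $\mathbb{Z}_2$--horizontal edge may only land in a $\mathbb{Z}_2$--horizontal edge or a $\mathbb{Z}_2$--through class, whereas a pair of $\{e\}$--horizontal edges may become an $\{e\}$--horizontal edge, a $\mathbb{Z}_2$--horizontal edge (by identifying its $e$-- and $g$--components), or be absorbed into any through class. This is precisely what produces the nested shape of \eqref{e3.1}.

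Next I would introduce the summation indices and compute the contribution of each choice as a product of independent factors. Let $i$ be the number of $\{e\}$--pairs that remain $\{e\}$--horizontal: selecting them gives $\binom{r_1}{i}$, grouping them into the $p_1$ coarser $\{e\}$--edges gives $S(i,p_1)$, and the twisting contributes $2^{i-p_1}$, since fusing $m$ $\{e\}$--pairs into one $\{e\}$--edge can be done in $2^{m-1}$ ways and $\sum_k(m_k-1)=i-p_1$ over the $p_1$ blocks. Of the remaining $r_1-i$ $\{e\}$--pairs, let $j$ be those converted to $\mathbb{Z}_2$--horizontal edges and $r_1-i-j$ those absorbed into through classes; the absorbed ones contribute $\binom{r_1-i}{j}(2s_1+s_2)^{r_1-i-j}$, because an $\{e\}$--pair entering one of the $s_1$ $\{e\}$--through pairs admits two orientations (yielding $2s_1$ slots) while entering one of the $s_2$ $\mathbb{Z}_2$--through classes admits one. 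Finally, with $l$ the number of $\mathbb{Z}_2$--horizontal edges that stay $\mathbb{Z}_2$--horizontal, the other $r_2-l$ are absorbed into $\mathbb{Z}_2$--through classes in $s_2^{r_2-l}$ ways, and the $l$ surviving $\mathbb{Z}_2$--edges together with the $j$ converted $\{e\}$--pairs are grouped into the $p_2$ coarser $\mathbb{Z}_2$--edges in $S(l+j,p_2)$ ways, with no extra twisting since a $\mathbb{Z}_2$--component already identifies $e$ with $g$. Multiplying these factors and summing over $i,j,l$ reproduces \eqref{e3.1}; the ranges $p_1\le i\le r_1$, $0\le j\le r_1-i$ and $\max(0,p_2-j)\le l\le r_2$ are exactly the feasibility constraints forcing each of the $p_1$ and $p_2$ coarser blocks to be non-empty, equivalently the non-vanishing of the Stirling numbers, which also yields the stated side conditions $p_1\le r_1$ and $r_1-p_1\ge p_2-r_2$.

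It then remains to verify that the assignment (coarser diagram) $\mapsto$ (tuple of block--assignment and twisting data) is a bijection: that the data reconstruct a unique $\mathbb{Z}_2$--stable coarser diagram and that every admissible tuple arises from one. The step I expect to be the most delicate is the twisting bookkeeping, namely that $2^{m-1}$ is the exact count for an $\{e\}$--fusion, that no residual binary choice survives once an $\{e\}$--pair is swallowed by a $\mathbb{Z}_2$--block, and that a block built solely from converted $\{e\}$--pairs is a legitimate $\mathbb{Z}_2$--horizontal edge; each of these follows from the description of $\{e\}$-- and $\mathbb{Z}_2$--components in Remark \ref{R2.4}. Once the bijection is confirmed, the special cases drop out by specialization: putting $r_1=0$ collapses the $i$-- and $j$--sums and leaves (i), while putting $s_1=s_2=0$ forces $j=r_1-i$ and $l=r_2$ through the factors $(2s_1+s_2)^{r_1-i-j}$ and $s_2^{r_2-l}$ and leaves $\sum_{i=p_1}^{r_1}\binom{r_1}{i}2^{i-p_1}S(i,p_1)S(r_1+r_2-i,p_2)$, which is (ii). Part (b) is the same argument with all $\mathbb{Z}_2$--data removed: each of the $r$ horizontal edges either joins one of the $s$ through classes, giving $s^{r-i}$ for the $r-i$ absorbed edges, or is grouped with the $i$ surviving edges into $p$ coarser horizontal edges, giving $\binom{r}{i}S(i,p)$, and summing over $i$ produces \eqref{e3.2}.
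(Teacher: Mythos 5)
Your proposal is correct and follows essentially the same route as the paper's proof: you classify the coarser diagrams by the fate of each horizontal edge (survive as $\{e\}$-type, convert to $\mathbb{Z}_2$-type, or be absorbed into a through class), count the groupings with Stirling numbers of the second kind, and account for the $2^{i-p_1}$ twisting and the $(2s_1+s_2)^{r_1-i-j}$ and $s_2^{r_2-l}$ absorption factors exactly as the paper does in its Steps 1--3. Your treatment is in fact somewhat more careful than the paper's on the $2^{m-1}$-per-block bookkeeping and the bijection verification, but the underlying decomposition is identical.
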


\begin{proof}
\NI \textbf{Step 1:} Reducing $2 r_1$ number of $\{e\}$-horizontal edges to $2 p_1$ number of $\{e\}$-horizontal edges.

\NI Choose $i$ pairs of $\{e\}$-horizontal edges from $r_1$ pair of $\{e\}$-horizontal edges of $d^{r_1, r_2}_{i,\alpha}$ such that $i \geq p_1.$

\NI For given $i$ pairs of $\{e\}$-horizontal edges, the number of ways to partition a set of $i$ pairs of $\{e\}$-horizontal edges into $p_1$ pairs of $\{e\}$-horizontal edges is given by the Stirling number of second kind $S(i, p_1).$

\NI We know that two pairs of $\{e\}$-horizontal edges can be combined together in two ways. Thus, $p_1$ number of $\{e\}$-horizontal edges can be obtained in $2^{i - p_1}$ ways.

\NI \textbf{Step 2:} Reducing $2(r_1 - i)$ number of $\{e\}$-horizontal edges together with $r_2$ number of $\mathbb{Z}_2$horizontal edges to obtain $p_2$ number of $\mathbb{Z}_2$-horizontal edges.

\NI Choose $j$ pair of $\{e\}$-horizontal edges from the $(r_1 - 1)$ pair of $\{e\}$-horizontal edges such that $0 \leq j \leq r_1 - i.$

\NI Choose $l$ number of $\mathbb{Z}_2$-horizontal edges from the $r_2$ number of horizontal edges such that $l \geq r_2 - j.$

\NI The number of ways to partition a set of $j$ pair of $\{e\}$-horizontal edges together with $l$ number of $\mathbb{Z}_2$-horizontal edges into $p_2$ number of $\mathbb{Z}_2$-horizontal edges are given by the Stirling number of the second kind $S(l+j, p_2).$

\NI \textbf{Step 3:} Combining the remaining horizontal edges with through classes.

\NI By combining the remaining $r_1 - i - j$ pair of $\{e\}$-horizontal edges to any one of the through classes, we obtain $(2s_1 + s_2)^{r_1 - i - j}$ number of diagrams.

\NI Also the remaining $r_2 - l$ number of $\mathbb{Z}_2$-horizontal edges can be combined only with $\mathbb{Z}_2$-through classes which can be done in $s_2^{r_2 - l}$ ways.

\NI Proof of (i) and (ii) follows from the proof of (a) and proof of (b) is similar to the proof of (a).
\end{proof}

\begin{notation}\label{N3.15}
\mbox{ }
\begin{enumerate}
  \item[(i)] \begin{enumerate}
               \item[(a)] The \textit{Stirling number of the second kind of algebra of $\mathbb{Z}_2$ relations} is denoted by $B_{2r_1 + r_2, 2p_1 + p_2}^{s_1, s_2}  $ where

                   $B_{2r_1 + r_2, 2p_1 + p_2}^{s_1, s_2} = $

\centerline{$  \underset{ i = p_1}{\overset{r_1}\sum} 	{_{r_1}}C_{i} \ 2^{i - p_1} \ S(i, p_1) \left[ \underset{ j = 0}{\overset{r_1 - i}\sum} {_{r_1 - i}}C_j \ (2s_1 + s_2)^{r_1-i-j} \left[ \underset{\substack{\ds p_2 - j \leq l \leq r_2\\ \ds p_2-j \geq 0}}{\sum} {_{r_2}}C_l \ s_2^{r_2 - l} \ S(l+j, p_2) \right] \right]$}

\NI with $p_1 \leq r_1, r_1 - p_1 \geq p_2 - r_2, 0 \leq s_1 \leq k , 0 \leq s_2 \leq k$ and $r_1+r_2+s_1+s_2 \leq k , 2p_1+p_2 < 2r_1+r_2.$
\item[(b)]The \textit{Stirling number of the second kind of signed partition algebra} is same as the Stirling number of the second kind of the algebra of $\mathbb{Z}_2$-relations.
               \item[(c)] The \textit{Stirling number of the second kind of  partition algebra} is denoted by $B_{r, p}^{s}  $ where

    $B^s_{r, p} =  \underset{ i = p}{\overset{r}\sum} \ {_{r}}C_{l} \ s^{r - i} \ S(i, p)$ with $p \leq r$ and $0 \leq s \leq k.$
             \end{enumerate}

  \item[(i)] $B^{s_1, s_2}_{2r_1+r_2, 2r_1+r_2} = 1$,  and $B^{s}_{r, r} = 1.$
  \item[(ii)] $B^{s_1, s_2}_{2r_1+r_2, 2p_1+p_2} = 0$ and $B^s_{r, p} = 0$ otherwise.
  \end{enumerate}

\end{notation}

\begin{lem}\label{L3.16} Let $B_{2r_1 + r_2, 2p_1 + p_2}^{s_1, s_2}$ and $B^s_{r, p}$ be as in Notation \ref{N3.15}. Then
\begin{enumerate}
  \item[(a)] $B^{s_1, s_2}_{2r_1 + r_2, 2p_1 + p_2} = B^{s_1, s_2}_{2r_1 + r_2-1, 2p_1+ p_2-1} + (s_2 + p_2) B^{s_1, s_2}_{2r_1 + r_2-1, 2p_1 + p_2}, \ \ \ p_1 \leq r_1 \text{ and }r_2 \geq 1.$

In particular, if $r_1 = 0 $ and $p_1 = 0$ then

\centerline{$B^{s_1, s_2}_{0+ r_2, 0+ p_2 } = B^{s_1, s_2}_{0+ r_2 - 1, 0+ p_2 - 1} + (s_2 + p_2) B^{s_1, s_2}_{0+ r_2 - 1, 0 + p_2}.$}

  \item[(b)] $B^{s}_{r, p} = B^{s}_{r-1, p-1} + (s + p) B^{s}_{r-1, p}, \ \ \ p \leq r.$
\end{enumerate}
\end{lem}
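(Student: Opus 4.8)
The plan is to prove both recurrences combinatorially, reading $B^{s}_{r,p}$ and $B^{s_1,s_2}_{2r_1+r_2,\,2p_1+p_2}$ through the counting interpretation supplied by Lemma \ref{L3.14}. Recall that $B^s_{r,p}$ counts the coarser diagrams with $s$ through classes and $p$ horizontal edges lying above a fixed diagram with $r$ horizontal edges; equivalently, using the explicit formula $B^s_{r,p}=\sum_{i=p}^{r}\binom{r}{i}s^{\,r-i}S(i,p)$, it counts the ways of distributing the $r$ (distinguishable) horizontal edges into $p$ unlabelled nonempty blocks --- the surviving horizontal edges --- together with $s$ labelled sinks --- the through classes --- where the sinks may be empty. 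The classical Stirling recurrence $S(n,k)=S(n-1,k-1)+k\,S(n-1,k)$ is the model to emulate.

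For part (b) I would condition on the fate of the last horizontal edge. Either this edge occupies an unlabelled block by itself, in which case the remaining $r-1$ edges must fill $p-1$ nonempty blocks together with the $s$ sinks, contributing $B^s_{r-1,p-1}$; or it is absorbed into a block already occupied by some of the first $r-1$ edges, of which there are $p$ surviving horizontal edges and $s$ through classes available, contributing the factor $(s+p)$ times $B^s_{r-1,p}$. These two cases are disjoint and exhaustive, which yields $B^s_{r,p}=B^s_{r-1,p-1}+(s+p)B^s_{r-1,p}$ for $p\le r$.

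Part (a) is the same argument, now applied to the last of the $r_2$ many $\mathbb{Z}_2$-horizontal edges, so that one needs $r_2\ge 1$. The key structural input is Definition \ref{D3.12}(iv): a $\mathbb{Z}_2$-horizontal edge may only be contained in a $\mathbb{Z}_2$-horizontal edge or a $\mathbb{Z}_2$-through class. Hence the last such edge either forms a new $\mathbb{Z}_2$-horizontal block on its own --- dropping both the edge count and the target count by one and yielding $B^{s_1,s_2}_{2r_1+r_2-1,\,2p_1+p_2-1}$ --- or it merges into one of the $p_2$ surviving $\mathbb{Z}_2$-horizontal edges or one of the $s_2$ $\mathbb{Z}_2$-through classes, yielding $(s_2+p_2)\,B^{s_1,s_2}_{2r_1+r_2-1,\,2p_1+p_2}$. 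Crucially the $\{e\}$-parameters $r_1,p_1,s_1$ are never touched, which is why they sit as spectators in both terms and why the superscript $2r_1$ is unchanged on the right; the specialization $r_1=p_1=0$ then gives the stated particular case.

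The main obstacle I anticipate is purely one of bookkeeping in (a): verifying that conditioning on the last $\mathbb{Z}_2$-edge genuinely partitions the coarser diagrams into exactly these two classes with no overlap, and that the $\{e\}$-sector really decouples from the $\mathbb{Z}_2$-sector under the containment rules of Definition \ref{D3.12}. Should an algebraic verification be preferred, one can instead substitute the defining triple sum from Notation \ref{N3.15}, apply Pascal's identity to $\binom{r_2}{l}$ and the Stirling recurrence to $S(l+j,p_2)$, and reindex; this reproduces the same identity but buries the transparent block-versus-sink dichotomy inside the summation, so I would present the combinatorial argument as the primary proof.
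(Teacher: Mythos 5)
Your proof is correct, but it proceeds by a genuinely different route from the paper's. The paper's proof of both parts is a short algebraic manipulation: it substitutes the defining triple sum from Notation \ref{N3.15}, applies Pascal's identity to ${_{r_2}}C_l$ and the Stirling recurrence $S(l+j,p_2)=S(l+j-1,p_2-1)+p_2\,S(l+j-1,p_2)$, and reindexes to collect the two terms --- exactly the ``algebraic verification'' you mention only as a fallback in your last paragraph. Your primary argument instead conditions combinatorially on the fate of the last ($\mathbb{Z}_2$-)horizontal edge in the counting interpretation of Lemma \ref{L3.14}: alone in a new block versus absorbed into one of the $p_2$ surviving blocks or $s_2$ through classes, with the $\{e\}$-sector decoupled because a $\mathbb{Z}_2$-horizontal edge can only land in a $\mathbb{Z}_2$-object. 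The case split is disjoint and exhaustive, and the decoupling you worry about is indeed guaranteed by the containment rules, so the argument is sound. What each approach buys: the paper's computation is mechanical and requires no re-examination of the combinatorial model, but it hides why the coefficient is $s_2+p_2$; your bijective conditioning makes the recurrence transparent and parallels the classical proof of $S(n,k)=S(n-1,k-1)+k\,S(n-1,k)$, at the cost of having to argue carefully that removing the distinguished edge really induces a bijection onto the two smaller counting problems. Either proof is acceptable; yours is arguably the more illuminating of the two.
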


\begin{proof}
\NI \textbf{Proof of (a):}Consider

$ B^{s_1, s_2}_{2r_1 + r_2, 2p_1 + p_2}$
\begin{eqnarray*}
  &=& \underset{ i = p_1}{\overset{r_1}\sum} 	{_{r_1}}C_{i} \ 2^{i - p_1} \ S(i, p_1) \left[ \underset{ j = 0}{\overset{r_1 - i}\sum} {_{r_1 - i}}C_j \ (2s_1 + s_2)^{r_1-i-j} \left[ \sum_{\substack{p_2 - j \leq l \leq r_2\\p_2-j \geq 0}} {_{r_2}}C_l \ s_2^{r_2 - l} \ S(l+j, p_2) \right] \right]
     \end{eqnarray*}
   By using the identities ${_{r_2}}C_l = {_{r_2 - 1}}C_{l-1} + {_{r_2 - 1}}C_l$ and $S(l+j, p_2) = S(l+j-1, p_2-1) + p_2 S(l+j-1, p_2)$ we have,

 \centerline{$B^{s_1, s_2}_{2r_1 + r_2, 2p_1 + p_2} = B^{s_1, s_2}_{2r_1 + r_2-1, 2p_1+ p_2-1} + (p_2 + s_2) B^{s_1, s_2}_{2r_1 + r_2-1, 2p_1 + p_2}, \ \ \ p_1 \leq r_1 \text{ and } r_2 \geq 1.$}

 \NI  Proof of (b) is same as that of proof of (a).
\end{proof}

By example 3.10.4 in \cite{St}, $B^s_{r, p}$ will be called as \textbf{Generalized Stirling numbers} and $B^{s_1, s_2}_{2r_1 + r_2, 2p_1 + p_2}$ will be called as \textbf{$(s_1, s_2, r_1, r_2, p_1, p_2)$-Stirling numbers of the second kind} and it satisfies the following identity:

\begin{lem} \label{L3.17}
Let $B^{s_1, s_2}_{2r_1 + r_2, 2p_1 + p_2}$  be as in Notation \ref{N3.15}. Then

 \centerline{ $B^{s_1, s_2}_{2r_1 + r_2, 2p_1 + p_2} = B^{s_1, s_2}_{2(r_1-1) + r_2, 2(p_1 - 1)+ p_2} + B^{s_1, s_2}_{2(r_1 - 1) + r_2 + 1, 2p_1 + p_2 }+ (2 p_1 + 2 s_1) B^{s_1, s_2}_{2(r_1 -1) + r_2, 2p_1 + p_2},
$}

\NI with $p_1 \leq r_1 - 1$ and $(r_1 -1 ) - p_1 \geq p_2 - r_2.$

In particular,
\begin{enumerate}
  \item [(i)] if $p_2 = 0$ then

  \centerline{$B^{s_1, s_2}_{2r_1 + r_2, 2p_1 } = B^{s_1, s_2}_{2(r_1-1) + r_2, 2(p_1 - 1)} + (2 p_1 + 2 s_1 + s_2) B^{s_1, s_2}_{2(r_1 -1) + r_2, 2p_1 }, \ \ \ \ p_1 \leq r_1 - 1. $}
  \item[(ii)] if $r_2 = 0$ and $p_2 = 0$ then

  \centerline{$B^{s_1, s_2}_{2r_1 , 2p_1 } = B^{s_1, s_2}_{2(r_1-1) , 2(p_1 - 1)} + (2 p_1 + 2 s_1 + s_2) B^{s_1, s_2}_{2(r_1 -1) , 2p_1}, \ \ \ \ p_1 \leq r_1 - 1. $}
\end{enumerate}

\end{lem}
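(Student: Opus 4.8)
The plan is to prove the identity combinatorially, exploiting the interpretation of $B^{s_1,s_2}_{2r_1+r_2,\,2p_1+p_2}$ furnished by Lemma \ref{L3.14}: it counts the diagrams lying above and coarser than a fixed $d^{r_1,r_2}_{i,\alpha}$ that have $2s_1+s_2$ through classes and exactly $2p_1+p_2$ horizontal edges. First I would single out one of the $r_1$ pairs of $\{e\}$-horizontal edges, say $X=(E,\overline E)$, and classify every coarser diagram according to the block(s) into which $E$ and $\overline E$ fall. Since a coarser diagram is $\mathbb{Z}_2$-stable, either $E$ and $\overline E$ lie in two distinct $\mathbb{Z}_2$-conjugate blocks (so $X$ survives as an $\{e\}$-type component) or they lie in one common $\mathbb{Z}_2$-stable block ($X$ becomes part of a $\mathbb{Z}_2$-type component). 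The resulting trichotomy $-$ $X$ alone as an $\{e\}$-edge, $X$ absorbed into another $\{e\}$-structure, $X$ absorbed into a $\mathbb{Z}_2$-structure $-$ is exactly what produces the three summands on the right.

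Next I would evaluate each case. If $X$ forms an $\{e\}$-horizontal edge by itself, the remaining $r_1-1$ pairs and $r_2$ $\mathbb{Z}_2$-edges must coarsen to $p_1-1$ $\{e\}$-edges and $p_2$ $\mathbb{Z}_2$-edges, giving $B^{s_1,s_2}_{2(r_1-1)+r_2,\,2(p_1-1)+p_2}$. If instead $X$ is merged into one of the already-formed $\{e\}$-horizontal edges or one of the $s_1$ $\{e\}$-through classes, I would first coarsen the other $r_1-1$ pairs and $r_2$ $\mathbb{Z}_2$-edges to the full target $(p_1,p_2)$ and then attach $X$; there are $p_1+s_1$ admissible $\{e\}$-targets and, by $\mathbb{Z}_2$-conjugacy, each attachment can be made in two ways ($E$ joining the $e$-side or the $g$-side), yielding $(2p_1+2s_1)\,B^{s_1,s_2}_{2(r_1-1)+r_2,\,2p_1+p_2}$. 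The third case is $E\sim\overline E$, i.e. $X$ lies in a $\mathbb{Z}_2$-component; here I would use the key bijection that a self-fused $\{e\}$-pair plays the same role inside a $\mathbb{Z}_2$-block as a single $\mathbb{Z}_2$-horizontal edge, so these coarsenings are in bijection with those of the datum obtained by deleting $X$ and adjoining one extra $\mathbb{Z}_2$-edge, i.e. $r_1-1$ pairs and $r_2+1$ $\mathbb{Z}_2$-edges, giving $B^{s_1,s_2}_{2(r_1-1)+r_2+1,\,2p_1+p_2}$. Summing the three disjoint and exhaustive cases gives the stated identity, the side conditions $p_1\le r_1-1$ and $(r_1-1)-p_1\ge p_2-r_2$ being precisely the ranges in which the three counts are non-trivial.

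For the two specializations I would feed the main identity back into Lemma \ref{L3.16}(a). Applying Lemma \ref{L3.16}(a) with $(r_1-1,r_2+1)$ in place of $(r_1,r_2)$ and $p_2=0$ (where the first summand, carrying subscript $2p_1-1$, vanishes by Notation \ref{N3.15}) gives $B^{s_1,s_2}_{2(r_1-1)+r_2+1,\,2p_1}=s_2\,B^{s_1,s_2}_{2(r_1-1)+r_2,\,2p_1}$; substituting this for the second summand merges it into the third and collapses the coefficient to $2p_1+2s_1+s_2$, which is exactly part (i). Part (ii) is then the case $r_2=0$ of part (i).

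The step I expect to be most delicate is the bookkeeping that keeps the three cases disjoint and exhaustive while assigning the right multiplicities. The crux is the asymmetry between an $\{e\}$-target and a $\mathbb{Z}_2$-target: an $\{e\}$-pair joining an $\{e\}$-structure carries a genuine factor of two from the choice of conjugate side, whereas an $\{e\}$-pair falling into a $\mathbb{Z}_2$-structure must be counted without that factor $-$ this is precisely what the self-fusion bijection of the second term encodes, and it is what prevents any overlap between the factor-two term and the $\mathbb{Z}_2$-term. A purely algebraic alternative would start from the closed formula in Notation \ref{N3.15} and apply the Pascal recurrences ${}_{r_1}C_{i}={}_{r_1-1}C_{i}+{}_{r_1-1}C_{i-1}$ and $S(i,p_1)=S(i-1,p_1-1)+p_1\,S(i-1,p_1)$, mirroring the proof of Lemma \ref{L3.16}; this works, but is more tedious because $r_1$ also appears in the inner summation bounds and in the exponent of $2s_1+s_2$, so I would favour the combinatorial argument above.
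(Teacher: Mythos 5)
Your argument is correct, but it is not the route the paper takes. The paper's proof is purely algebraic and very terse: it starts from the closed formula in Notation \ref{N3.15} and applies the recurrences ${}_{r_1}C_i={}_{r_1-1}C_{i-1}+{}_{r_1-1}C_i$ and $S(i,p_1)=S(i-1,p_1-1)+p_1S(i-1,p_1)$ together with Lemma \ref{L3.16} --- precisely the ``tedious alternative'' you mention and decline at the end. Your bijective proof instead exploits the counting interpretation of Lemma \ref{L3.14}: distinguishing one $\{e\}$-pair $X$ and sorting coarser diagrams by whether $X$ survives alone, is glued to an $\{e\}$-target (with the factor $2$ per target from the choice of conjugate side, giving $2p_1+2s_1$), or is self-fused into a $\mathbb{Z}_2$-structure (the bijection with one extra $\mathbb{Z}_2$-edge, giving the $2(r_1-1)+r_2+1$ term) reproduces the three summands exactly; I checked it against the entries of the table in Example \ref{E3.18} and it is consistent. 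What your approach buys is transparency: it explains why the coefficient of the third term is $2p_1+2s_1$ rather than $2p_1+2s_1+s_2$ (the $s_2$ is hiding in the $\mathbb{Z}_2$-absorption term, and reappears in specialization (i) exactly as you derive via Lemma \ref{L3.16} applied at $(r_1-1,r_2+1)$ with the $p_2-1<0$ term vanishing). What the paper's route buys is that it stays entirely inside the already-established formalism of Lemma \ref{L3.16} and needs no new bijection; on the other hand the paper records none of the intermediate manipulation, so your version is in practice the more complete argument. One small point worth making explicit in either approach: the vanishing conventions of Notation \ref{N3.15} must be read on the pair $(p_1,p_2)$ rather than on the numerical value $2p_1+p_2$, since e.g.\ the first summand has index pair $(p_1-1,p_2)$ and must be declared zero when $p_1=0$ even though $2(p_1-1)+p_2$ may be a legitimate nonnegative integer.
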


\begin{proof}
Consider

$B^{s_1, s_2}_{2r_1 + r_2, 2p_1 + p_2}$
\begin{eqnarray*}
   &=&  \underset{ i = p_1}{\overset{r_1}\sum} 	{_{r_1}}C_{i} \ 2^{i - p_1} \ S(i, p_1) \left[ \underset{ j = 0}{\overset{r_1 - i}\sum} {_{r_1 - i}}C_j \ (2s_1 + s_2)^{r_1-i-j} \left[ \sum_{\substack{p_2 - j \leq l \leq r_2\\p_2-j \geq 0}} {_{r_2}}C_l \ s_2^{r_2 - l} \ S(l+j, p_2) \right] \right] \\
   \end{eqnarray*}

By using the identities ${_{r_1}}C_i = {_{r_1 - 1}}C_{i-1} + {_{r_1 - 1}}C_i$, $ S(i, p_1) = S(i - 1, p_1 -1 ) + p_1 S(i - 1, p_1)$ and  Lemma \ref{L3.16} we have,

$B^{s_1, s_2}_{2r_1 + r_2, 2p_1 + p_2} = B^{s_1, s_2}_{2(r_1-1) + r_2, 2(p_1 - 1)+ p_2} + B^{s_1, s_2}_{2(r_1 - 1)\\ + r_2 + 1, 2p_1 + p_2 }+ (2 p_1 + 2 s_1) B^{s_1, s_2}_{2(r_1 -1) + r_2, 2p_1 + p_2}, $

\NI with $p_1 \leq r_1 - 1$ and $(r_1 -1 ) - p_1 \geq p_2 - r_2.$

\end{proof}

\begin{ex}\label{E3.18}

For fixed $s_1, s_2$, the table below gives the value of $B^{s_1, s_2}_{2r_1+r_2, 2p_1+p_2}$ as in Notation \ref
{N3.15}: \\

\hspace{-0.5cm}
\tiny{\begin{tabular}{|l|c|c|c|c|c|c|c|c|}
  \hline
  $2r_1+r_2 \Big \backslash 2p_1+p_2$ & $2.1+2$ & $2.2+0$ & $0+3$ & $2.1+1$ & $2.1+0$ & $0+2$ & $0+1$ & $0+0$ \\\hline
  $2.1+2$ & $1$ & $0$ & $1$ & $2s_2$ & $s_2^2$ & $2s_1+3s_2+3$ & $4s_1s_2+3s_2^2+$ & $2s_1s_2^2+s_2^3$ \\
  & & & & & & &$2s_1+3s_2+1$ & \\\hline
  $2.2+0$ & $0$ &$1$  & $0$ & $2$ & $4s_1+2s_2+2$ & $1$ & $4s_1+2s_2+1$ & $(2s_1+s_2)^2$ \\\hline
  $0+3$ & $0$ & $0$ & $1$ & $0$ & $0$ & $3s_2 + 3$ & $3s_2^2 + 3s_2 +1$ & $s_2^3$ \\\hline
  $2.1+1$ & $0$ & $0$ & $0$ & $1$ & $s_2$ & $1$ & $2s_1+2s_2+1$ &  $(2s_1+s_2)s_2$\\\hline
  $2.1+0$ & $0$ & $0$ & $0$ & $0$ & $1$ & $0$ & $1$ & $2s_1+s_2$ \\\hline
  $0+2$ & $0$ & $0$ & $0$ & $0$ & $0$ & $1$ & $2s_2+1$ & $s_2^2$  \\\hline
  $0+1$ & $0$ & $0$ & $0$ & $0$ & $0$ & $0$ & $1$ & $s_2$  \\
    \hline
\end{tabular}}

\end{ex}

\begin{lem}\label{L3.19}
\mbox{ }
\begin{enumerate}
  \item[(a)] In algebra of $\mathbb{Z}_2$-relations, let $d^{p_1, p_2}_{i, \alpha}, d^{r_1, r_2}_{j, \beta} \in J^{2k}_{2s_1+s_2}$ with $2p_1+p_2 < 2r_1 + r_2$ then $d^{p_1, p_2}_{i, \alpha}$ is coarser than $d^{r_1, r_2}_{j, \beta}$ if and only if $l\left( d^{p_1, p_2}_{i, \alpha} . d^{r_1, r_2}_{j, \beta} \right) = {2p_1+p_2}$ where $J^{2k}_{2s_1+s_2}$ is as in Notation \ref{N3.6}(a).
  \item[(b)] In signed partition algebras, let $d^{p_1, p_2}_{i, \alpha}, d^{r_1, r_2}_{j, \beta} \in \overrightarrow{J}^{2k}_{2s_1+s_2}$ with $2p_1+p_2 < 2r_1 + r_2$ then $d^{p_1, p_2}_{i, \alpha}$ is coarser than $d^{r_1, r_2}_{j, \beta}$ if and only if $l\left( d^{p_1, p_2}_{i, \alpha} . d^{r_1, r_2}_{j, \beta} \right) = {2p_1+p_2}$  where $\overrightarrow{J}^{2k}_{2s_1+s_2}$ is as in Notation \ref{N3.6}(b).
  \item[(c)] In partition algebras, let $ R^{d_{i, \alpha}^{p}}, R^{d_{j, \beta}^{r}} \in J^{k}_{s}$ with $p < r$ then $R^{d_{i, \alpha}^{p}}$ is coarser than $R^{d_{j, \beta}^{r}}$ if and only if $l\left( R^{d_{i, \alpha}^{p}} . R^{d_{j, \beta}^{r}} \right) = {p}$ where $J^{k}_{s}$ is as in Notation \ref{N3.6}(c).
\end{enumerate}
\end{lem}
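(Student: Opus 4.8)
The plan is to prove (a) in full and then obtain (b) and (c) by restriction and specialisation, since $\overrightarrow{J}^{2k}_{2s_1+s_2}\subset J^{2k}_{2s_1+s_2}$ and the partition-algebra case is the ``unsigned'' version in which every component carries a single type. Throughout I would compute the product inside $A_{2k}(x)$ (Proposition \ref{P2.5}), view each $\widetilde{U}$-diagram as a set partition of its vertices (Remark \ref{R3.11}), and use that a loop of $d^{p_1,p_2}_{i,\alpha}\cdot d^{r_1,r_2}_{j,\beta}$ is exactly a connected component living entirely in the middle row, i.e.\ one meeting no through-class endpoint of either factor. First I would record the bound $l(d^{p_1,p_2}_{i,\alpha}\cdot d^{r_1,r_2}_{j,\beta})\le 2p_1+p_2$: the middle row carries the $2p_1+p_2$ horizontal edges of $d^{p_1,p_2}_{i,\alpha}$, each loop must contain at least one of them, and Lemma \ref{L3.9}(i)(a) applied to the self-product gives $l(d^{p_1,p_2}_{i,\alpha}\cdot d^{r_1,r_2}_{j,\beta})\le l(d^{p_1,p_2}_{i,\alpha}\cdot d^{p_1,p_2}_{i,\alpha})=2p_1+p_2$, the last equality holding because in the self-product every horizontal edge closes against its own copy. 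So $2p_1+p_2$ is the maximal loop count, and the lemma amounts to saying this maximum is attained precisely for coarser diagrams.

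For the forward implication I would assume $d^{p_1,p_2}_{i,\alpha}$ is coarser than $d^{r_1,r_2}_{j,\beta}$ and read off Definition \ref{D3.12}: the blocks of $d^{r_1,r_2}_{j,\beta}$ refine those of $d^{p_1,p_2}_{i,\alpha}$, with each through class of $d^{r_1,r_2}_{j,\beta}$ sitting inside a through class of $d^{p_1,p_2}_{i,\alpha}$ of the same $\mathbb{Z}_2$-type and each horizontal edge of $d^{r_1,r_2}_{j,\beta}$ inside a single block of $d^{p_1,p_2}_{i,\alpha}$. Hence every horizontal-edge block $H$ of $d^{p_1,p_2}_{i,\alpha}$ in the middle is an exact union of $d^{r_1,r_2}_{j,\beta}$-blocks, and these must all be horizontal edges (a through class of $d^{r_1,r_2}_{j,\beta}$ lies inside a through class, not inside $H$); so $H$ together with them forms a loop. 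Distinct $H$ give distinct loops, since no $d^{r_1,r_2}_{j,\beta}$-block can bridge two blocks of the coarser diagram, and no further loop can occur because any middle vertex lying in a through class of $d^{p_1,p_2}_{i,\alpha}$ is joined to the outer row. Counting gives exactly $2p_1+p_2$ loops.

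For the converse I would assume $l=2p_1+p_2$, the maximum. By the pigeonhole principle each of the $2p_1+p_2$ horizontal edges of $d^{p_1,p_2}_{i,\alpha}$ then lies in its own loop, each loop contains exactly one of them, and no horizontal edge of $d^{p_1,p_2}_{i,\alpha}$ reaches a through class; in particular no loop joins two distinct blocks of $d^{p_1,p_2}_{i,\alpha}$. Inspecting the middle vertex by vertex forces every horizontal edge of $d^{r_1,r_2}_{j,\beta}$ that meets a loop to be contained in the corresponding horizontal edge of $d^{p_1,p_2}_{i,\alpha}$, and the remaining $d^{r_1,r_2}_{j,\beta}$-blocks, which reach the through endpoints, to be contained in the through classes of $d^{p_1,p_2}_{i,\alpha}$; matching $\mathbb{Z}_2$-types then yields conditions (i)--(iv) of Definition \ref{D3.12}, so $d^{p_1,p_2}_{i,\alpha}$ is coarser. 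Parts (b) and (c) follow by the same argument after restricting to $\overrightarrow{J}^{2k}_{2s_1+s_2}$ and, for the partition algebra, replacing the pair $(2p_1+p_2,2r_1+r_2)$ by $(p,r)$ and dropping the type distinction.

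The step I expect to be the main obstacle is the converse, and within it the exclusion of the degenerate configuration in which a single horizontal edge of $d^{r_1,r_2}_{j,\beta}$ fuses two same-type through classes of $d^{p_1,p_2}_{i,\alpha}$: such a fusion lowers the propagating number of the product below $2s_1+s_2$ while, a priori, leaving the loop count at its maximum, so it has to be ruled out before conditions (i) and (ii) can be asserted. I would handle this by working in the regime where the product retains propagating number $2s_1+s_2$ (the only case in which the Gram entry $x^{l(P_i\vee P_j)}$ is nonzero, by Definition \ref{D3.8}), in which the $\mathbb{Z}_2$-stable, type-preserving refinement forces the through-class correspondence to be bijective and the fusion cannot occur; the careful bookkeeping of $\{e\}$- versus $\mathbb{Z}_2$-types in this step (an $\{e\}$-horizontal edge may be absorbed into either type, a $\mathbb{Z}_2$-horizontal edge only into a $\mathbb{Z}_2$-block) is the delicate part of the verification.
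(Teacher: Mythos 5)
Your proposal is correct and, for the ``only if'' direction, follows the same route as the paper: unwind Definition \ref{D3.12} and observe that each of the $2p_1+p_2$ horizontal edges of $d^{p_1,p_2}_{i,\alpha}$ in the middle row closes into its own loop, while every other middle component meets a through class of $d^{p_1,p_2}_{i,\alpha}$ and so is not a loop. The paper's printed proof consists only of this direction (it restates the definition of coarser and asserts the count $2p_1+p_2$); the converse, which you correctly single out as the real content, is not argued there at all, so your treatment of it is the genuine addition. Your instinct about the degenerate configurations is also well founded: as literally stated the ``if'' direction is false. For instance, when $p_1=p_2=0$ one has $l\left(d^{0,0}_{i,\alpha}\cdot d^{r_1,r_2}_{j,\beta}\right)=0$ for every $d^{r_1,r_2}_{j,\beta}$, coarser or not (take a through class of $d^{r_1,r_2}_{j,\beta}$ straddling two through classes of $d^{0,0}_{i,\alpha}$); what fails in every such example is that $\sharp^p\left(d^{p_1,p_2}_{i,\alpha}\cdot d^{r_1,r_2}_{j,\beta}\right)$ drops below $2s_1+s_2$. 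Adding the hypothesis $\sharp^p=2s_1+s_2$, as you propose, repairs the statement and is exactly the regime in which the lemma is applied (the Gram entries of Definition \ref{D3.8} vanish otherwise, and Lemma \ref{L3.20} carries this hypothesis explicitly); under it your counting argument --- each loop contains exactly one horizontal edge of $d^{p_1,p_2}_{i,\alpha}$, no horizontal edge of $d^{p_1,p_2}_{i,\alpha}$ meets a through class, the through classes of the two diagrams correspond bijectively so no block of $d^{r_1,r_2}_{j,\beta}$ can straddle two blocks of $d^{p_1,p_2}_{i,\alpha}$, and $\mathbb{Z}_2$-stability forces the type conditions (i)--(iv) --- does close the converse. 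In short: same approach as the paper where the paper has an argument, plus a correct completion, with the needed implicit hypothesis made explicit, of the direction the paper leaves unproved.
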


\begin{proof}
\NI \textbf{Proof of (a):} $d_{i, \alpha}^{p_1, p_2}$ is coarser than $d_{j, \beta}^{r_1, r_2}$ if and only if every $\{e\}$- through class of $d_{j, \beta}^{r_1, r_2}$ is contained in a $\{e\}$-through class of $d_{i, \alpha}^{p_1, p_2}$,  every $\mathbb{Z}_2$-through class  of $d_{j, \beta}^{r_1, r_2}$ is contained in a $\mathbb{Z}_2$-through class of $d^{p_1, p_2}_{i, \alpha},$ every $\{e\}$-horizontal edge of $d_{j, \beta}^{r_1, r_2}$ is contained in either a $\{e\}$ or $\mathbb{Z}_2$-horizontal edge or $\{e\}$-through class of $d_{i, \alpha}^{p_1, p_2}$ and every $\mathbb{Z}_2$-horizontal edge of $d_{j, \beta}^{r_1, r_2}$ is contained in a $\mathbb{Z}_2$-horizontal edge or $\mathbb{Z}_2$-through class of $d_{i, \alpha}^{p_1, p_2}.$

Thus, the number of loops in the product $d_{i, \alpha}^{p_1, p_2}. d_{j, \beta}^{r_1, r_2}$ is $2p_1 + p_2.$

\NI Proof of (b) and (c) are similar to  proof of (a).
\end{proof}

\begin{lem} \label{L3.20}\cite{X}
Given any two diagrams $d^{r_1, r_2}_{i, \alpha}$ and $d^{r'_1, r'_2}_{j, \beta}$ such that $\sharp^p(d^{r_1, r_2}_{i, \alpha} . d^{r'_1, r'_2}_{j, \beta}) = 2s_1 + s_2$ then there exists a unique diagram which is the smallest diagram $d^{r''_1, r''_2}_{l, \gamma}$ among the diagrams coarser than both $d^{r_1, r_2}_{i, \alpha}$ and $d^{r'_1, r'_2}_{j, \beta}.$

Also, $l(d^{r''_1, r''_2}_{l, \gamma}. d^{r''_1, r''_2}_{l, \gamma}) = l(d^{r''_1, r''_2}_{l, \gamma} . d^{r_1, r_2}_{i, \alpha}) = l(d^{r''_1, r''_2}_{l, \gamma} . d^{r'_1, r'_2}_{j, \beta}).$
\end{lem}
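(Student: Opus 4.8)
The plan is to identify the claimed smallest common coarsening with a join in a lattice of set partitions. I would regard each of $d^{r_1,r_2}_{i,\alpha}$ and $d^{r'_1,r'_2}_{j,\beta}$ as a $\mathbb{Z}_2$-stable set partition of the $4k$ vertices $(\underline{k}\cup\underline{k}')\times\mathbb{Z}_2$ and order such partitions by refinement, so that ``$\delta$ is coarser than $\epsilon$'' means every block of $\epsilon$ lies in a block of $\delta$. The $\mathbb{Z}_2$-stable partitions form a sublattice of the full partition lattice, since the join (finest common coarsening, realized as the connected components of the union of the two relations) of two $\mathbb{Z}_2$-invariant partitions is again $\mathbb{Z}_2$-invariant; moreover each of our two diagrams is invariant under the top--bottom flip $\iota\colon v\leftrightarrow v'$, because it is of the diagonal type $\widetilde{U}^{(d,P)}_{(d,P)}$. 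I therefore set $d^{r''_1,r''_2}_{l,\gamma}:=d^{r_1,r_2}_{i,\alpha}\vee d^{r'_1,r'_2}_{j,\beta}$, the join in this sublattice; it exists, is unique, and is both $\mathbb{Z}_2$-stable and $\iota$-invariant.

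The heart of the argument is to show that this join actually lies in $J^{2k}_{2s_1+s_2}$, i.e.\ that it again has exactly $2s_1+s_2$ through classes, split into the same $s_1$ pairs of $\{e\}$-classes and $s_2$ $\mathbb{Z}_2$-classes, and is again of diagonal type. The key bookkeeping identity I would establish is
\begin{equation*}
\sharp^p\!\left(d^{r_1,r_2}_{i,\alpha}\vee d^{r'_1,r'_2}_{j,\beta}\right)=\sharp^p\!\left(d^{r_1,r_2}_{i,\alpha}\right)+\sharp^p\!\left(d^{r'_1,r'_2}_{j,\beta}\right)-\sharp^p\!\left(d^{r_1,r_2}_{i,\alpha}\cdot d^{r'_1,r'_2}_{j,\beta}\right),
\end{equation*}
proved by tracking, strand by strand, which through classes of the two factors link up across the identified middle row when forming the product: a through class of one factor that meets only horizontal edges of the other dies in the product but bridges two horizontal blocks into a new through block of the join, and this is the only source of discrepancy between the two sides. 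Feeding the hypothesis $\sharp^p(d^{r_1,r_2}_{i,\alpha}\cdot d^{r'_1,r'_2}_{j,\beta})=2s_1+s_2$, together with $\sharp^p(d^{r_1,r_2}_{i,\alpha})=\sharp^p(d^{r'_1,r'_2}_{j,\beta})=2s_1+s_2$, into this identity forces the join to have exactly $2s_1+s_2$ through classes; equivalently, no horizontal blocks bridge into through blocks. The full propagating number also yields a type-preserving bijection between the through classes of the two factors, so the join merges only matched through classes and preserves the split into $s_1$ pairs and $s_2$ classes. Since then every through block of the join contains a through class of $d^{r_1,r_2}_{i,\alpha}$, it is $\iota$-invariant, so the join is diagonal. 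This step, translating the propagating-number hypothesis into control of the through classes of the join, is the main obstacle.

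It remains to check that the join satisfies the refined order of Definition \ref{D3.12} and is least for it. The type conditions there are automatic here: $\iota$-invariance sends through classes to through classes, so (i)--(ii) hold; $\mathbb{Z}_2$-stability forbids a $\mathbb{Z}_2$-component from being absorbed into an $\{e\}$-block under a $\mathbb{Z}_2$-stable coarsening, giving (iv), while (iii) is unrestricted. Within the fixed sector $J^{2k}_{2s_1+s_2}$, any coarsening merging an $\{e\}$-through class with a $\mathbb{Z}_2$-through class would alter $(s_1,s_2)$ and leave the sector; hence on $J^{2k}_{2s_1+s_2}$ the relation of Definition \ref{D3.12} coincides with the restriction of the refinement order. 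As $d^{r''_1,r''_2}_{l,\gamma}$ is the finest common coarsening in the lattice, it is therefore the smallest diagram coarser than both in the sense of Definition \ref{D3.12}, and it is unique.

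Finally, the loop identity follows from Lemma \ref{L3.19}. Since $d^{r''_1,r''_2}_{l,\gamma}$ is coarser than each factor and has $2r''_1+r''_2$ horizontal edges, Lemma \ref{L3.19}(a) gives $l(d^{r''_1,r''_2}_{l,\gamma}\cdot d^{r_1,r_2}_{i,\alpha})=2r''_1+r''_2=l(d^{r''_1,r''_2}_{l,\gamma}\cdot d^{r'_1,r'_2}_{j,\beta})$ (when $d^{r''_1,r''_2}_{l,\gamma}$ coincides with a factor the equality is trivial), while multiplying the diagonal diagram $d^{r''_1,r''_2}_{l,\gamma}$ by itself loops each horizontal edge with its identical copy, so $l(d^{r''_1,r''_2}_{l,\gamma}\cdot d^{r''_1,r''_2}_{l,\gamma})=2r''_1+r''_2$ as well; all three quantities agree. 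The same argument, with the $\{e\}$/$\mathbb{Z}_2$ bookkeeping suppressed, handles the signed partition algebra (using $\overrightarrow{J}^{2k}_{2s_1+s_2}\subset J^{2k}_{2s_1+s_2}$) and the partition algebra.
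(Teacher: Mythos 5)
Your proposal is correct in its conclusions and is a genuinely different route from the paper's, for the simple reason that the paper gives essentially no argument: its entire proof of Lemma \ref{L3.20} is the sentence ``The proof follows from Definition \ref{D2.13} and \cite{X}.'' Your lattice-theoretic framing --- realize both diagrams as $\mathbb{Z}_2$-stable, top--bottom--flip--invariant set partitions, take the join (finest common coarsening), and show the full-propagating-number hypothesis forces the join to remain diagonal, of type $(s_1,s_2)$, and inside the sector $J^{2k}_{2s_1+s_2}$ --- supplies exactly the content the paper delegates to Xi's article, which treats only ordinary partition algebras and so does not cover the $\{e\}$-versus-$\mathbb{Z}_2$ type bookkeeping. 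Your reduction of the final loop identity to Lemma \ref{L3.19} also matches how the lemma is actually consumed in the column-operation arguments. One caveat worth recording for the signed case: the least common coarsening of two diagrams of $\overrightarrow{J}^{2k}_{2s_1+s_2}$ may fail to lie in $\overrightarrow{J}^{2k}_{2s_1+s_2}$ (this is precisely why the block $\widetilde{\overrightarrow{A}}_{\rho}$ is treated separately later), so the uniqueness statement there must be read inside the larger set $J^{2k}_{2s_1+s_2}$, as your parenthetical inclusion implicitly does.

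One intermediate step should be repaired, although it does not affect your conclusion. The displayed identity $\sharp^p(d\vee d')=\sharp^p(d)+\sharp^p(d')-\sharp^p(d\cdot d')$ is false in general: writing each diagonal diagram as a partition $e$ of $\underline{k}$ with a distinguished set $P$ of through-blocks, the blocks of $e\vee e'$ containing a $P$-block and the through classes of $d$ need not be in bijection, because two through classes of one factor can merge inside a single block of the join (take $d$ with through blocks $\{1\},\{2\}$ and $d'$ with through blocks $\{1,2\},\{3\}$ in $A_3(x)$: the join has $2$ through classes while your formula gives $2+2-1=3$). The correct general statement is the chain $\sharp^p(d\cdot d')\le\#\{\text{blocks of }e\vee e'\text{ containing a through-block of }d\}\le\sharp^p(d)$ (and likewise for $d'$), together with the observation that $\sharp^p(d\vee d')$ counts the blocks containing a through-block of $d$ \emph{or} of $d'$, while $\sharp^p(d\cdot d')$ counts those containing one of each. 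Under the hypothesis $\sharp^p(d\cdot d')=2s_1+s_2=\sharp^p(d)=\sharp^p(d')$ the entire chain collapses to equality, which yields both the count $\sharp^p(d\vee d')=2s_1+s_2$ and the injectivity (no two through classes of one factor share a join block) that you invoke to get the type-preserving bijection and the diagonality of the join. So the argument goes through once the ``identity'' is restated as this inequality-plus-collapse.
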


\begin{proof}
The proof follows from Definition \ref{D2.13} and \cite{X}.
\end{proof}

\subsection{\textbf{Column Operations on the Gram Matrices of the Algebra of $\mathbb{Z}_2$-Relations, Signed Partition Algebras and Partition Algebras }}
\textbf{\\}

We now perform the column operations inductively on the Gram matrices of the algebra of $\mathbb{Z}_2$-relations,  signed partition algebras and partition algebras as follows:

Let $d^{0, 0}_{i, \alpha}$ be coarser than $d^{0, 1}_{j, \beta}.$ Then by Lemma \ref{L3.20},

\centerline{$l(d^{0, 0}_{i, \alpha} . d^{0, 0}_{i, \alpha}) = l(d^{0, 0}_{i, \alpha} . d^{0, 1}_{j, \beta}) = 0.$}

We apply the column operation: $L_{(j, \beta, 0, 1)} \rightarrow L_{(j, \beta, 0, 1)} - L_{(i, \alpha, 0, 0)}$ then the $((i, \alpha, 0, 0), (j, \beta, 0, 1))$-entry becomes

\centerline{$a_{(i, \alpha, 0, 0), (j, \beta, 0, 1)} - a_{(i, \alpha, 0, 0), (i, \alpha, 0, 0)} = 1 - 1 = 0.$}

Similarly, apply the column operations $L_{(j, \beta, r'_1, r'_2)} \rightarrow L_{(j, \beta, r'_1, r'_2)} - L_{(i, \alpha, r_1, r_2)}$ whenever $d^{r_1, r_2}_{i, \alpha}$ is coarser than $d^{r'_1, r'_2}_{j, \beta}.$

Then $b_{(i, \alpha, r_1, r_2), (j, \beta, r'_1, r'_2)}$ denotes the $((i, \alpha, r_1, r_2), (j, \beta, r'_1, r'_2))$-entry after all the column operations are carried out.
\begin{equation}\label{e3.2a}
  i.e.,  b_{(i, \alpha, r_1, r_2), (j, \beta, r'_1, r'_2)} = a_{(i, \alpha, r_1, r_2), (j, \beta, r'_1, r'_2)} - \underset{\substack{d^{r'''_1, r'''_2}_{l, \gamma} > d^{r_1, r_2}_{i, \alpha} \\ d^{r'''_1, r'''_2}_{l, \gamma} > d^{r'_1, r'_2}_{j, \beta}}}{\sum} b_{(i, \alpha, r_1, r_2), (l, \gamma, r'''_1, r'''_2)} - \underset{\substack{ d^{r''''_1, r''''_2}_{k', \delta'} > d^{r'_1, r'_2}_{j, \beta} \\ d^{r''''_1, r''''_2}_{k', \delta'} \not > d^{r_1, r_2}_{i, \alpha}}}{\sum} b_{(i, \alpha, r_1, r_2), (k', \delta', r''''_1, r''''_2)}
\end{equation}

\begin{lem}\label{L3.21}
\begin{enumerate}
  \item[(a)] In the algebra of $\mathbb{Z}_2$-relations and signed partition algebras, let $(i, \alpha, r_1, r_2) < (j, \beta, r'_1, r'_2).$
\begin{enumerate}
\item[(i)] If $d^{r_1, r_2}_{i, \alpha}$ is coarser than $d^{r'_1, r'_2}_{j, \beta}$  then

    \centerline{$b_{(j, \beta, r'_1, r'_2), (i, \alpha, r_1, r_2)} = b_{(i, \alpha, r_1, r_2), (i, \alpha, r_1, r_2)}.$}
  \item[(ii)] If $d^{r_1, r_2}_{i, \alpha}$ is not coarser than $d^{r'_1, r'_2}_{j, \beta}$ and $l(d^{r_1, r_2}_{i, \alpha} . d^{r'_1, r'_2}_{j, \beta}) \geq 0$ then

      \centerline{$b_{(i, \alpha, r_1, r_2), (j, \beta, r'_1, r'_2)} = 0$ and $ b_{(j, \beta, r'_1, r'_2), (i, \alpha, r_1, r_2)} = 0.$}
  \item[(iii)] If $d^{r_1, r_2}_{i, \alpha}$ is coarser than $d^{r'_1, r'_2}_{j, \beta}$ then

  \centerline{$ b_{(i, \alpha, r_1, r_2), (j, \beta, r'_1, r'_2)} = 0 $}

  \NI where $b_{(i, \alpha, r_1, r_2), (j, \beta, r'_1, r'_2)}$ is the $\left( (i, \alpha, r_1, r_2), (j, \beta, r'_1, r'_2)\right)$-th entry after all the column operations are carried out.
\end{enumerate}
\item[(b)] In  partition algebras, let $(i, \alpha, r) < (j, \beta, r').$
\begin{enumerate}
\item[(i)] If $R^{d^{r}_{i, \alpha}}$ is coarser than $R^{d^{r'}_{j, \beta}}$  then

    \centerline{$b_{(j, \beta, r'), (i, \alpha, r)} = b_{(i, \alpha, r), (i, \alpha, r)}.$}
  \item[(ii)] If $R^{d^{r}_{i, \alpha}}$ is not coarser than $R^{d^{r'}_{j, \beta}}$ and $l(R^{d^{r}_{i, \alpha}} . R^{d^{r'}_{j, \beta}}) \geq 0$ then

      \centerline{$b_{(i, \alpha, r), (j, \beta, r')} = 0$ and $ b_{(j, \beta, r'), (i, \alpha, r)} = 0.$}
  \item[(iii)] If $R^{d^{r}_{i, \alpha}}$ is coarser than $R^{d^{r'}_{j, \beta}}$ then

  \centerline{$ b_{(i, \alpha, r), (j, \beta, r')} = 0 $}

  \NI where $b_{(i, \alpha, r), (j, \beta, r')}$ is the $\left( (i, \alpha, r), (j, \beta, r')\right)$-th entry after all the column operations are carried out.
\end{enumerate}
\end{enumerate}

\end{lem}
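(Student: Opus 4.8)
The plan is to prove the three statements of part (a) simultaneously, by induction on the number $2r'_1+r'_2$ of horizontal edges of the \emph{column} diagram (its rank in the poset of Definition \ref{D3.12}), exploiting the recursion \eqref{e3.2a} for the entries $b$ produced by the column operations. I abbreviate ``$d$ is coarser than $d'$'' by $d\succeq d'$, and I write $b_{\gamma,\gamma}$ for a diagonal entry $b_{(l,\gamma,r''_1,r''_2),(l,\gamma,r''_1,r''_2)}$. A \emph{strict} coarsening has strictly fewer horizontal edges while keeping the same number $2s_1+s_2$ of through classes, so every term referenced on the right of \eqref{e3.2a} sits at a strictly lower rank and the induction is well founded. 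The base case is a column with no horizontal edges: then no column is subtracted, $b=a$, and the assertions are immediate.

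Two reductions of the raw entries $a$ drive the argument. By Lemma \ref{L3.19}, if $d^{r_1,r_2}_{i,\alpha}\succeq d^{r'_1,r'_2}_{j,\beta}$ then $l\!\left(d^{r_1,r_2}_{i,\alpha}.d^{r'_1,r'_2}_{j,\beta}\right)=2r_1+r_2=l\!\left(d^{r_1,r_2}_{i,\alpha}.d^{r_1,r_2}_{i,\alpha}\right)$, so $a_{(i,\alpha,r_1,r_2),(j,\beta,r'_1,r'_2)}=a_{(i,\alpha,r_1,r_2),(i,\alpha,r_1,r_2)}$. By Lemma \ref{L3.20}, if the two diagrams are incomparable but $\sharp^p$ of their product is $2s_1+s_2$, their common coarsenings are exactly the coarsenings of the unique smallest common coarsening $d^{r''_1,r''_2}_{l,\gamma}=:d_\mu$, and the same lemma gives $a_{(i,\alpha,r_1,r_2),(j,\beta,r'_1,r'_2)}=a_{\mu,\mu}$. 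Feeding the inductive hypothesis into \eqref{e3.2a}, the surviving summands are precisely those $b_{(i,\alpha,r_1,r_2),(l,\gamma,\ldots)}$ for which $d_\gamma$ is a common coarsening of the row and column diagrams, each equal to the diagonal $b_{\gamma,\gamma}$ by part (i) at lower rank, while parts (ii),(iii) at lower rank kill the rest; the symmetry of the Gram matrix (Lemma \ref{L3.10}) lets one pass between the two triangles. Specialising \eqref{e3.2a} to the diagonal then yields the key identity $\sum_{d_\gamma\succeq d^{r_1,r_2}_{i,\alpha}}b_{\gamma,\gamma}=a_{(i,\alpha,r_1,r_2),(i,\alpha,r_1,r_2)}$, the sum running over all coarsenings.

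With this identity the three statements drop out. For (iii), $d^{r_1,r_2}_{i,\alpha}\succeq d^{r'_1,r'_2}_{j,\beta}$ makes the first reduction applicable and turns the coarsening sum in \eqref{e3.2a} into $\sum_{d_\gamma\succeq d^{r_1,r_2}_{i,\alpha}}b_{\gamma,\gamma}$, so $b_{(i,\alpha,r_1,r_2),(j,\beta,r'_1,r'_2)}=a_{(i,\alpha,r_1,r_2),(i,\alpha,r_1,r_2)}-a_{(i,\alpha,r_1,r_2),(i,\alpha,r_1,r_2)}=0$. For (ii) the second reduction applies with $d_\mu$ the smallest common coarsening, and each of $b_{(i,\alpha,r_1,r_2),(j,\beta,r'_1,r'_2)}$ and $b_{(j,\beta,r'_1,r'_2),(i,\alpha,r_1,r_2)}$ reduces to $a_{\mu,\mu}-\sum_{d_\gamma\succeq d_\mu}b_{\gamma,\gamma}=0$. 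For (i), running \eqref{e3.2a} for the two entries $b_{(j,\beta,r'_1,r'_2),(i,\alpha,r_1,r_2)}$ and $b_{(i,\alpha,r_1,r_2),(i,\alpha,r_1,r_2)}$, which lie in the same column $(i,\alpha,r_1,r_2)$, produces in both cases the identical expression $a_{(i,\alpha,r_1,r_2),(i,\alpha,r_1,r_2)}-\sum_{d_\gamma\succ d^{r_1,r_2}_{i,\alpha}}b_{\gamma,\gamma}$, whence the two are equal.

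The delicate point is the bookkeeping behind the second paragraph: one must check that in each sum of \eqref{e3.2a} the non-vanishing entries are indexed exactly by the common coarsenings of the row and column diagrams, that each such coarsening is a \emph{strict} coarsening of the lower-indexed diagram (so that it genuinely occurs as a subtracted column), and that the two separate sums in \eqref{e3.2a} together reconstitute the full coarsening sum $\sum_{d_\gamma\succeq d_\mu}$. Lemma \ref{L3.20} is what makes this work, since it supplies both the uniqueness of the join---so the common-coarsening set is exactly $\{d_\gamma\succeq d_\mu\}$---and the loop-number equalities that collapse $a_{(i,\alpha,r_1,r_2),(j,\beta,r'_1,r'_2)}$ onto a diagonal value. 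Part (b) for the partition algebras is proved by the same argument, using parts (c) of Lemmas \ref{L3.19} and \ref{L3.20} and the coarser relation of Definition \ref{D3.12}(c); the absence of the $\{e\}$/$\mathbb{Z}_2$ dichotomy only simplifies the combinatorics.
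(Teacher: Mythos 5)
Your proposal is correct and follows essentially the same route as the paper: induction through the recursion \eqref{e3.2a}, with Lemma \ref{L3.19} collapsing $a_{(i,\alpha,r_1,r_2),(j,\beta,r'_1,r'_2)}$ onto a diagonal value in the comparable case and Lemma \ref{L3.20} supplying the unique smallest common coarsening in the incomparable case, so that the subtracted columns telescope against the raw entry. Your explicit isolation of the identity $\sum_{d_\gamma\succeq d^{r_1,r_2}_{i,\alpha}}b_{\gamma,\gamma}=a_{(i,\alpha,r_1,r_2),(i,\alpha,r_1,r_2)}$ is only a cleaner packaging of the same cancellation the paper performs term by term.
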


\begin{proof}
\NI \textbf{proof of a(i):} It follows from equation (\ref{e3.2a} ), for
\begin{eqnarray*}
  b_{(j, \beta, r'_1, r'_2), (i, \alpha, r_1, r_2)} &=& a_{(j, \beta, r'_1, r'_2), (i, \alpha, r_1, r_2)} - \underset{d^{r''_1, r''_2}_{l, \gamma} > d_{i, \alpha}^{r_1, r_2} > d^{r'_1, r'_2}_{j, \beta}} {\sum} b_{(j, \beta, r'_1, r'_2), (l, \gamma, r''_1, r''_2)} \\
   &=& a_{(i, \alpha, r_1, r_2), (i, \alpha, r_1, r_2)} - \underset{d^{r''_1, r''_2}_{l, \gamma} > d_{i, \alpha}^{r_1, r_2} } {\sum} b_{(l, \gamma, r''_1, r''_2), (l, \gamma, r''_1, r''_2)} \ \ \ (\text{ By Lemma }\ref{L3.19} \text{ and induction})\\
   &=&  b_{(i, \alpha, r_1, r_2), (i, \alpha, r_1, r_2)}
\end{eqnarray*}

\NI  We prove the result by induction on $(i, \alpha, r_1, r_2).$

Let $d_{i, \alpha}^{0, 0}$ be coarser than $d^{r'_1, r'_2}_{j, \beta},$ by lemma \ref{L3.19} we have,

\begin{equation}\label{e3.3}
 l(d^{0, 0}_{i, \alpha} . d^{0, 0}_{i, \alpha}) = l(d^{0, 0}_{i, \alpha} . d^{r'_1, r'_2}_{j, \beta}) = 0
\end{equation}

for any diagram $d^{r''_1, r''_2}_{l, \gamma}$ which is coarser than $d^{r'_1, r'_2}_{j, \beta}$ and $d^{r''_1, r''_2}_{l, \gamma}$ but not coarser than $d^{0, 1}_{i, \alpha}, $  we have

\centerline{$b_{(i, \alpha, 0, 1), (l, \gamma, r'_1, r'_2)} = 0.$}

Thus, by applying the column operations $L_{(j, \beta, r'_1, r'_2)} \rightarrow L_{(j, \beta, r'_1, r'_2)} - L_{((i, \alpha, 0, 1)}$ and equation (\ref{e3.2a}) $((i, \alpha, 0, 0), (j, \beta, r'_1, r'_2))$-entry becomes

$b_{(i, \alpha, 0, 0), (j, \beta, r'_1, r'_2)} = a_{(i, \alpha, 0, 0), (j, \beta, r'_1, r'_2)} - a_{(i, \alpha, 0,0), (i, \alpha, 0, 0)} = 1- 1 = 0 \ \ (\text{by equation} (\ref{e3.3}))$

(ii) Suppose $d^{0, 1}_{i, \alpha}$ and  $d^{0, 1}_{j, \beta}$ such that $\sharp^p(d^{0, 1}_{i, \alpha} . d^{0, 1}_{j, \beta}) = 2s_1+s_2$ then by Lemma \ref{L3.19} $l(d^{0, 1}_{i, \alpha} . d^{0, 1}_{r_1, r_2}) = 0$ then there exists a unique diagram $d^{0, 0}_{k, \delta}$ coarser than both $d^{0, 1}_{i, \alpha}$ and $d^{0, 1}_{j, \beta}$ such that

\centerline{$l(d^{0, 0}_{k, \delta} . d^{0, 0}_{k, \delta}) = l(d^{0, 0}_{k, \delta} . d^{0, 1}_{i, \alpha}) = l(d^{0, 0}_{k, \delta} . d^{0, 1}_{j, \beta}) = 0.$}

Thus, when the column operation $L_{(j, \beta, 0, 1)} \rightarrow L_{(j, \beta, 0, 1)} - L_{(k, \delta, 0, 0)}$ is carried out,
\begin{equation}\label{e3.4}
    b_{(i, \alpha, 0, 1), (j, \beta, 0, 1)} = a_{(i, \alpha, 0, 1), (j, \beta, 0, 1)}  - a_{(i, \alpha, 0, 1), (k, \delta, 0, 0)} = 1- 1 = 0.
\end{equation}

\textbf{Proof of a(ii):} In general, Let   $d_{i,\alpha}^{r_1, r_2}$ be not coarser than $d^{r'_1, r'_2}_{j, \beta}$ such that $l(d^{r_1, r_2}_{i, \alpha} . d^{r'_1, r'_2}_{j, \beta}) \geq 0.$

Then by Lemma \ref{L3.20} there is a unique diagram $d^{r''_1, r''_2}_{k, \delta}$ coarser than both $d^{r_1, r_2}_{i, \alpha}$ and $d^{r'_1, r'_2}_{j, \beta}$ such that

\centerline{$l(d^{r''_1, r''_2}_{k, \delta} . d^{r''_1, r''_2}_{k, \delta}) = l(d^{r''_1, r''_2}_{k, \delta} . d^{r_1, r_2}_{i, \alpha}) = l(d^{r''_1, r''_2}_{k, \delta} . d^{r'_1, r'_2}_{j, \beta})$}

When the column operations are carried out inductively,

$b_{(i, \alpha, r_1, r_2), (j, \beta, r'_1, r'_2)} = a_{(i, \alpha, r_1, r_2), (j, \beta, r'_1, r'_2)} - \underset{\substack{d^{r'''_1, r'''_2}_{l, \gamma} > d^{r_1, r_2}_{i, \alpha} \\ d^{r'''_1, r'''_2}_{l, \gamma} > d^{r'_1, r'_2}_{j, \beta}}}{\sum} b_{(i, \alpha, r_1, r_2), (l, \gamma, r'''_1, r'''_2)} - \underset{\substack{ d^{r''''_1, r''''_2}_{k', \delta'} > d^{r'_1, r'_2}_{j, \beta} \\ d^{r''''_1, r''''_2}_{k', \delta'} \not > d^{r_1, r_2}_{i, \alpha}}}{\sum} b_{(i, \alpha, r_1, r_2), (k', \delta', r''''_1, r''''_2)}$

By induction hypothesis, each entry in the second summation becomes zero.

Thus, we have

$b_{(i, \alpha, r_1, r_2), (j, \beta, r'_1, r'_2)} = a_{(i, \alpha, r_1, r_2), (j, \beta, r'_1, r'_2)} - \underset{\substack{d^{r''_1, r''_2}_{l, \gamma} > d^{r_1, r_2}_{i, \alpha} \\ d^{r''_1, r''_2}_{l, \gamma} > d^{r'_1, r'_2}_{j, \beta}}}{\sum} b_{(i, \alpha, r_1, r_2), (l, \gamma, r''_1, r''_2)}.$

Also, by induction,
\begin{equation}\label{e3.5}
b_{(i, \alpha, r_1, r_2), (i', \alpha', r''''_1, r''''_2)} = b_{(i', \alpha', r''''_1, r''''_2), (i', \alpha', r''''_1, r''''_2)}.
\end{equation}

Thus,
\begin{eqnarray*}
  b_{(i, \alpha, r_1, r_2), (j, \beta, r'_1, r'_2)} &=& ( a_{(i, \alpha, r_1, r_2), (j, \beta, r'_1, r'_2)} - \underset{\substack{d^{r'''_1, r'''_2}_{l, \gamma} > d^{r_1, r_2}_{i, \alpha} \\ d^{r'''_1, r'''_2}_{l, \gamma} > d^{r'_1, r'_2}_{j, \beta} \\ d^{r'''_1, r'''_2}_{l, \gamma} \neq d^{r''_1, r''_2}_{k, \delta}}}{\sum} b_{(i, \alpha, r_1, r_2), (l, \gamma, r'''_1, r'''_2)}) - b_{(k, \delta, r''_1, r''_2), (k, \delta, r''_1,r''_2)} \\
  &=& b_{(k, \delta, r''_1, r''_2), (k, \delta, r''_1,r''_2)} - b_{(k, \delta, r''_1, r''_2), (k, \delta, r''_1,r''_2)} \\
   &=& b_{(k, \delta, r''_1, r''_2), (k, \delta, r''_1,r''_2)} - b_{(k, \delta, r''_1, r''_2), (k, \delta, r''_1,r''_2)} \ \ \ (\text{by equation } (\ref{e3.5}))\\
   &=& 0
\end{eqnarray*}

Thus, $\left((i, \alpha,, r_1, r_2), (j, \beta, r'_1, r'_2) \right)$-entry becomes zero after applying the column operations when $d^{r_1, r_2}_{i, \alpha}$ is not coarser than $d^{r'_1, r'_2}_{j, \beta}$ such that $l(d^{r_1, r_2}_{i, \alpha} . d^{r'_1, r'_2}_{j, \beta}) \geq 0. $

Also, $b_{(j, \beta, r'_1, r'_2), (i, \alpha, r_1, r_2)} = a_{(j, \beta, r'_1, r'_2), (i, \alpha, r_1, r_2)} - \underset{\substack{d^{r''_1, r''_2}_{l, \gamma} > d^{r_1, r_2}_{i, \alpha} \\ d^{r''_1, r''_2}_{l, \gamma} > d^{r'_1, r'_2}_{j, \beta}}}{\sum} b_{(j, \beta, r'_1, r'_2), (l, \gamma, r''_1, r''_2)}.$

\NI since $b_{(j, \beta, r'_1, r'_2), (k,\delta, r'''_1, r'''_2)}$ becomes zero by induction for all $d_{k, \delta}^ {r'''_1, r'''_2}$ coarser than $d^{r_1, r_2}_{i, \alpha}$ and not coarser than $d^{r'_1, r'_2}_{j, \beta}$  arguing as in proof of (ii),

\centerline{$b_{(j, \beta, r'_1, r'_2), (i, \alpha, r_1, r_2)} = 0.$}

\textbf{Proof of a(iii):} In general, let $d^{r_1, r_2}_{i, \alpha}$ be coarser than $d^{r'_1, r'_2}_{j, \beta}$, by Lemma \ref{L3.19}

\centerline{$ l(d^{r_1, r_2}_{i, \alpha} . d^{r_1, r_2}_{i, \alpha}) = l(d^{r_1, r_2}_{i, \alpha} . d^{r'_1, r'_2}_{j, \beta}) = 2r_1+r_2.$}

By induction hypothesis,

\begin{equation}\label{e3.6}
b_{(i,\alpha, r_1, r_2), (j, \beta, r'_1, r'_2)} = a_{(i, \alpha, r_1, r_2), (j, \beta, r'_1, r'_2)} - \underset{\substack{d^{r''_1, r''_2}_{l, \gamma} > d^{i, \alpha}_{r_1, r_2} \\ d^{r''_1, r''_2}_{l, \gamma} > d^{r'_1, r'_2}_{j, \beta}}}{\sum} b_{(i, \alpha, r_1, r_2), (l, \gamma, r''_1, r''_2)} \text{ and }
\end{equation}
\begin{equation}\label{e3.7}
b_{(i, \alpha, r_1, r_2), (i, \alpha, r_1, r_2)} = a_{(i, \alpha, r_1, r_2), (i, \alpha, r_1, r_2)} - \underset{d^{r''_1, r''_2}_{l, \gamma} > d^{r_1, r_2}_{i, \alpha}}{\sum} b_{(i, \alpha, r_1, r_2), (l, \gamma, r''_1, r''_2)}
\end{equation}

Thus, when the column operation $L_{(j, \beta, r'_1, r'_2)} \rightarrow L_{(j, \beta, r'_1, r'_2)} - L_{(i, \alpha, r_1, r_2)}$ is carried out the

\NI $((i, \alpha, r_1, r_2), (j, \beta, r'_1, r'_2))$-th entry of the block matrix $A_{2r_1+r_2, 2r'_1+r'_2}$ becomes zero.

i.e., $b_{(i, \alpha, r_1, r_2), (j, \beta, r'_1, r'_2)} = 0.$

\NI Proof of (b) is similar to proof of (a).
\end{proof}

\begin{thm}\label{T3.22}
\mbox{ }
\begin{enumerate}
  \item[(a)] After applying the column operations the diagonal entry $x^{2r_1+r_2}$ in the block matrix $A_{2r_1+r_2, 2r_1+r_2}$ for $ 0 \leq r_1+r_2 \leq k-s_1-s_2$ and the block matrix $ \overrightarrow{A}_{2r_1+r_2, 2r_1+r_2}$ for $0 \leq r_1+r_2 \leq k-s_1-s_2-1$ of the algebra of $\mathbb{Z}_2$-relations and signed partition algebras respectively are replaced by

 \begin{enumerate}
  \item[(i)] $ \underset{j=0}{\overset{r_1-1}{\prod}}[x^2-x-2(s_1+j)] \underset{l=0}{\overset{r_2-1}{\prod}} [x-(s_2+l)] $ \ \ \ \ \ \ if $r_1 \geq 1 $ and $r_2 \geq 1,$
  \item[(ii)] $ \underset{j=0}{\overset{r_2-1}{\prod}} [x-(s_2+j)]$ \ \ \ \ \ \ if $ r_1 = 0$ and $r_2 \neq 0,$
  \item[(iii)] $ \underset{j=0}{\overset{r_1-1}{\prod}}[x^2-x-2(s_1+j)]$ \ \ \ \ \ \ if $r_1 \neq 0$ and $r_2 = 0.$
\end{enumerate}
\NI Also, the diagonal elements in the block matrix $A_{2r_1+r_2, 2r_1+r_2}$ and $\overrightarrow{A}_{2r_1 +r_2, 2r_1+r_2}$ are the same.

  \item[(b)] After applying the column operations the diagonal entry $x^{r}$ in the block matrix $A_{r, r}$ for $0 \leq r \leq k $ is replaced by

      \centerline{$\underset{j=0}{\overset{r-1}{\prod}} [x-(s+j)]$ if $r \geq 1$ and $1$ if $r = 0.$}

 \NI Also, the diagonal elements in the block matrix $A_{r, r} $ are the same.

\end{enumerate}

\end{thm}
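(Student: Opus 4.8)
The plan is to first distill the column–operation machinery of Lemma \ref{L3.21} into a clean recursion for the diagonal entries, and then to recognise that recursion as a Stirling–type inversion that I verify by a double induction. Before the operations the diagonal entry is $a_{(i,\alpha,r_1,r_2),(i,\alpha,r_1,r_2)}=x^{l(d^{r_1,r_2}_{i,\alpha}\cdot d^{r_1,r_2}_{i,\alpha})}=x^{2r_1+r_2}$. Combining equation (\ref{e3.7}) with Lemma \ref{L3.21}(a)(i), which identifies each entry $b_{(i,\alpha,r_1,r_2),(l,\gamma,p_1,p_2)}$ with the \emph{diagonal} entry of the coarser diagram $d^{p_1,p_2}_{l,\gamma}$, I obtain
\[
 b_{(i,\alpha,r_1,r_2),(i,\alpha,r_1,r_2)}=x^{2r_1+r_2}-\sum_{d^{p_1,p_2}_{l,\gamma}>d^{r_1,r_2}_{i,\alpha}} b_{(l,\gamma,p_1,p_2),(l,\gamma,p_1,p_2)}.
\]
By the remarks preceding Lemma \ref{L3.19} a strictly coarser diagram has strictly fewer horizontal edges, so the sum ranges over $2p_1+p_2<2r_1+r_2$, and by Lemma \ref{L3.14}(a) the number of coarser diagrams with edge–profile $(p_1,p_2)$ is exactly $B^{s_1,s_2}_{2r_1+r_2,2p_1+p_2}$, which depends only on $(s_1,s_2,r_1,r_2)$ and not on the chosen diagram. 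An induction on $2r_1+r_2$ then shows simultaneously that the diagonal entry depends only on $(r_1,r_2)$ — this is the ``diagonal elements are the same'' assertion, valid verbatim for $A_{2r_1+r_2,2r_1+r_2}$ and for $\overrightarrow{A}_{2r_1+r_2,2r_1+r_2}$ — and that, writing $P_{r_1,r_2}$ for this common value,
\[
 P_{r_1,r_2}=x^{2r_1+r_2}-\sum_{2p_1+p_2<2r_1+r_2}B^{s_1,s_2}_{2r_1+r_2,2p_1+p_2}\,P_{p_1,p_2}.
\]

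Writing $Q_{r_1,r_2}=\prod_{j=0}^{r_1-1}[x^2-x-2(s_1+j)]\prod_{l=0}^{r_2-1}[x-(s_2+l)]$ for the claimed product (cases (ii) and (iii) being the degenerations $r_1=0$ and $r_2=0$), the target $P_{r_1,r_2}=Q_{r_1,r_2}$ is, since $B^{s_1,s_2}_{2r_1+r_2,2r_1+r_2}=1$, equivalent to the single orthogonality identity
\[
 \sum_{p_1,p_2}B^{s_1,s_2}_{2r_1+r_2,2p_1+p_2}\,Q_{p_1,p_2}=x^{2r_1+r_2}.
\]
I will prove this by peeling off the $\mathbb{Z}_2$–edges first and the $\{e\}$–edges second. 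For the $r_2$–step ($r_2\ge 1$) I substitute Lemma \ref{L3.16}(a); re-indexing $p_2\mapsto p_2+1$ in the first resulting sum and using $Q_{p_1,p_2+1}=[x-(s_2+p_2)]\,Q_{p_1,p_2}$, the two sums merge through the telescoping $[x-(s_2+p_2)]+(s_2+p_2)=x$, so the left side equals $x$ times the level-$(r_1,r_2-1)$ sum, i.e. $x\cdot x^{2r_1+(r_2-1)}$. This reduces everything to $r_2=0$.

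For the $r_1$–step I apply the three–term recursion of Lemma \ref{L3.17}. Its first and third terms share upper index $2(r_1-1)+r_2$ and combine, after re-indexing $p_1\mapsto p_1+1$ and using $Q_{p_1+1,p_2}=[x^2-x-2(s_1+p_1)]\,Q_{p_1,p_2}$, through the telescoping $[x^2-x-2(s_1+p_1)]+(2p_1+2s_1)=x^2-x$; the middle term is precisely the level-$(r_1-1,r_2+1)$ sum, which the already-proved $r_2$–step evaluates as $x$ times the level-$(r_1-1,r_2)$ sum. Since $(x^2-x)+x=x^2$, the whole expression reduces to $x^2$ times the level-$(r_1-1,r_2)$ sum, and the induction closes with base case $Q_{0,0}=1=x^0$. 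Part (b) for partition algebras is the one–variable specialisation: the same reasoning yields $P_r=x^r-\sum_{p<r}B^s_{r,p}P_p$, and Lemma \ref{L3.16}(b) collapses $\sum_p B^s_{r,p}\prod_{j=0}^{p-1}[x-(s+j)]$ to $x\cdot x^{r-1}=x^r$ by the identical telescoping, giving $P_r=\prod_{j=0}^{r-1}[x-(s+j)]$.

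I expect the main obstacle to be bookkeeping rather than conceptual: one must check that the telescoping identities remain valid at the boundary values of the summation indices, where $B^{s_1,s_2}$ with a negative or out-of-range argument must be read as $0$ via Notation \ref{N3.15}, and one must confirm that the passage from the algebra of $\mathbb{Z}_2$–relations to the signed partition algebra leaves both the recursion and the diagonal polynomial unchanged. The latter holds because every diagram coarser than one in $\overrightarrow{J}^{2k}_{2s_1+s_2}$ again lies in $\overrightarrow{J}^{2k}_{2s_1+s_2}$, so the coarser-diagram counts of Lemma \ref{L3.14} — and hence the entire induction — are identical in the two settings; everything else is routine propagation of the induction.
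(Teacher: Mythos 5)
Your proposal is correct and follows essentially the same route as the paper: both derive the recursion $P_{r_1,r_2}=x^{2r_1+r_2}-\sum_{2p_1+p_2<2r_1+r_2} B^{s_1,s_2}_{2r_1+r_2,\,2p_1+p_2}P_{p_1,p_2}$ from the column operations together with the coarser-diagram counts of Lemma \ref{L3.14}, and both verify the resulting inversion identity by induction using the Stirling recursions of Lemmas \ref{L3.16} and \ref{L3.17}. The only real difference is presentational: you telescope directly at the level of the polynomials $Q_{p_1,p_2}$, whereas the paper extracts the coefficient of $x^{s}$ (the quantities $H_{2r_1+r_2,s}$ and $C_{2r_1+r_2,s}$) and runs the same induction coefficientwise.
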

\begin{proof}
\NI \textbf{Proof of (a)(i):} The proof is by induction on the number of horizontal edges.

    Let  $d^{r_1, r_2}_{i, \alpha}$  be any diagram corresponding to the diagonal entry $x^{2r_1+r_2}$ in block matrix $A_{2r_1+r_2, 2r_1+r_2}$  having $2s_1 + s_2$ number of through classes and $r_1$ pairs of $\{e\}$-horizontal edges and $r_2$ number of $\mathbb{Z}_2$-horizontal edges.

   After applying column operations as mentioned earlier to eliminate the entries which lie above  corresponding to the  diagrams coarser than  $d^{r_1, r_2}_{i, \alpha}$, then by Lemma \ref{L3.19} and induction the diagonal entry $x^{2r_1+r_2}$ is replaced as
\begin{equation}\label{e3.8}
     x^{2r_1 + r_2} -\underset{\substack{\ds 0 \leq j \leq r_1 \\ \ds -r_2 \leq j' \leq r_1  \\ \ds -2j + j' < 0}}{\sum} B^{s_1, s_2}_{2r_1 + r_2, 2[r_1 - j] + r_2  + j'} \underset{l=0}{\overset{r_1-j-1}{\prod}} [x^2-x-2(s_1+l)]   \underset{f=0}{\overset{r_2 +j'-1}{\prod}} [x-(s_2+f)]
      \end{equation}

\NI where $B^{s_1, s_2}_{2r_1+r_2, 2p_1+p_2}$ gives the number of diagrams which  has $p_1$ pairs of $\{e\}$ horizontal edges and $p_2$ number of $\mathbb{Z}_2$ horizontal edges which lie above and coarser than $d^{r_1, r_2}_{i, \alpha}.$

 Fix $s$  and put  \begin{equation}\label{e3.9}
H_{2r_1 + r_2, s} = - \ \sum_{\substack{\ds 0 \leq j \leq r_1 \\ \ds -r_2 \leq j' \leq r_1 \\ \ds -2j + j' < 0 \text{ and }m- 2j + j' \geq 0}} \left( -1 \right)^{2j- j'} B^{s_1, s_2}_{2r_1 + r_2, 2[r_1 - j] + r_2 + j'} C_{2[r_1-j] + r_2 + j',s}
 \end{equation}
  where $C_{2r'_1 + r'_2, s}$ denote the coefficient of $x^s$ in $\underset{j=0}{\overset{r'_1-1}{\prod}}[x^2-x-2(s_1 +j)] \underset{l=0}{\overset{r'_2-1}{\prod}}[x-(s_2-l)]$ where $m = 2r_1+r_2-s.$

We shall claim that,

 \centerline{$H_{2r_1 + r_2, s} =(-1)^m C_{2r_1 + r_2 - 1, s} .$}

We shall prove this by using induction on $2r_1 + r_2.$
\begin{eqnarray*}
   H_{2r_1 + r_2, s} &=& - \ \ds \sum_{\substack{0 \leq j \leq r_1 \\ -r_2 \leq j' \leq r_1 \\ -2j + j' < 0 \text{ and } m-2j +j' \geq 0}} \left( -1 \right)^{2j - j'} B^{s_1, s_2}_{2r_1 + r_2 , 2[r_1 - j] + r_2 + j'} C_{2[r_1-j] + r_2 + j',s}\\
 \end{eqnarray*}
 where $m = 2r_1 + r_2 - s.$

 By using Lemma \ref{L3.16} and induction hypothesis,  equation (\ref{e3.9}) becomes,

 $ H_{2r_1+r_2, s} = $
 \begin{eqnarray*}
    & & - \ds \sum_{\substack{\ds 0 \leq j \leq r_1 \\ -r_2 \leq j' \leq r_1 \\ -2j + j' < 0 \text{ and } m-2j + j' \geq 0}} \left( -1 \right)^{2j - j'} \left\{ B^{s_1, s_2}_{2r_1 + r_2-1, 2[r_1 -j] + r_2 +  j'-1}  + (s_2 +r_2 +j') \ B^{s_1, s_2}_{2r_1+r_2-1, 2[r_1-j]+r_2+j' } \right\} \\
     & & \hspace{6.5cm} \ \left\{ C_{2[r_1-j] + r_2 + j'-1,s-1} + (s_2+r_2+j'-1) \  C_{2[r_1-j] + r_2 +j'-1,s} \right\}
 \end{eqnarray*}
The equation (\ref{e3.9}) can be rewritten as follows:

$ H_{2r_1+r_2, s} = $
\begin{eqnarray*}
   & & - \ \ds \sum_{\substack{\ds 0 \leq j \leq r_1 \\ \ds -r_2 \leq j' \leq r_1 \\ \ds  -2j + j' < 0 \text{ and } m-2j+j' \geq 0}} \ds \left( -1 \right)^{2j - j'} B^{s_1, s_2}_{2r_1 + r_2-1, 2[r_1 - j] + r_2 +  j'-1}    C_{2[r_1-j] + r_2 + j'-1,s-1} \\
    & & -  \sum_{\substack{\ds 0 \leq j \leq r_1 \\ \ds -r_2 \leq j' \leq r_1 \\ \ds  -2j + j' < 0 \text{ and } m-2j+j' \geq 0}} \ds (-1)^{2j -j'}(s_2 +r_2 +j'-1) B^{s_1, s_2}_{2r_1+r_2-1, 2[r_1-j]+r_2+j'-1 }  C_{2[r_1-j] + r_2 + j'-1, s}
      \end{eqnarray*}
      \begin{eqnarray*}
      & & - \ds \sum_{\substack{\ds 0 \leq j \leq r_1 \\ \ds -r_2 \leq j' \leq r_1 \\ \ds  -2j + j' < 0 \text{ and } m-2j+ j' \geq 0}} \ds (-1)^{2j-j'} (s_2+r_2+j') B^{s_1, s_2}_{2r_1 + r_2-1, 2[r_1 - j] + r_2 +  j'}  C_{2[r_1-j] + r_2 + j', s}\\
         & = & H_{2r_1+r_2-1, s-1} + (-1)^m (s_2+r_2-1) C_{2r_1+r_2-1, s}  \ \ \ \text{ (by canceling common terms)}\\
    & = & (-1)^m C_{2r_1+r_2-1, s-1} + (-1)^m (s_2+r_2-1) C_{2r_1+r_2-1, s}\ \ \ \text{ (by induction)}
\end{eqnarray*}

\NI Thus, equation (\ref{e3.9}) reduces to

 \centerline{$H_{2r_1 + r_2, s} = (-1)^m \ C_{2r_1 + r_2 -1, s-1} + (-1)^m \ (s_2 + r_2-1) \ C_{2r_1 + r_2-1, s} = (-1)^m C_{2r_1+ r_2, s}$}

\NI where $C_{2r_1 + r_2, s} =  C_{2r_1 + r_2 -1, s-1} + (s_2 + r_2-1) C_{2r_1 + r_2-1, s}.$

\NI The same proof works for the diagonal element in the block matrix $\overrightarrow{A}_{2r_1+r_2, 2r_1+r_2}$ for $0 \leq r_1+r_2 \leq k-s_1-s_2-1$ in signed partition algebras.

\textbf{Proof of (a)(iii):}  (a)(iii) can be proved in similar fashion as that of  (a)(i) by using Lemma \ref{L3.17} and $C_{2r_1, s} = (-1)^m \ C_{2(r_1-1), s-2} + (-1)^m \ C_{2(r_1-1), s-1}- (-1)^m \ 2 \ (s_1+r_1-1) \ C_{2(r_1-1), s}. $

 \NI Proof of (b) is same as that of proof of (a).
\end{proof}

\begin{lem}\label{L3.23}
 Let $d^{r_1, r_2}_{i, \alpha}, d^{r'_1, r'_2}_{j, \beta} \in J^{2k}_{2s_1+s_2}$ and $d^{r_1, r_2}_{i, \alpha}, d^{r'_1, r'_2}_{j, \beta} \in \overrightarrow{J}^{2k}_{2s_1+s_2}.$ The $((i, \alpha, r_1, r_2), (j,\beta, r'_1, r'_2))$-entry of the Gram matrices $G_{2s_1+s_2}^k$ of the algebra of $\mathbb{Z}_2$-relations and  $ \overrightarrow{G}_{2s_1+s_2}^k$ of the signed partition algebras remains zero even after applying  column operations inductively if the $\mathbb{Z}_2$-horizontal edge of $d^{r_1, r_2}_{i, \alpha}$ coincides with the $\{e\}$-through class of $d^{r'_1, r'_2}_{j, \beta}$ and vice versa.
\end{lem}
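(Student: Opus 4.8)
The plan is to establish two things: that the entry vanishes before any column operations are carried out, and that the inductive subtraction formula (\ref{e3.2a}) cannot reintroduce a nonzero value. The entire argument turns on a single structural fact, namely that the coincidence hypothesis is inherited by every coarser diagram, which is precisely what powers an induction on the column index in the spirit of Lemma \ref{L3.21}.

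First I would verify that the raw Gram entry $a_{(i,\alpha,r_1,r_2),(j,\beta,r'_1,r'_2)}$ is zero. Suppose the $\mathbb{Z}_2$-horizontal edge of $d^{r_1,r_2}_{i,\alpha}$ coincides, in the middle row of the product $d^{r_1,r_2}_{i,\alpha}\,.\,d^{r'_1,r'_2}_{j,\beta}$, with the $\{e\}$-through class of $d^{r'_1,r'_2}_{j,\beta}$. By Remark \ref{R2.4} an $\{e\}$-through class occurs as a pair $(C^e,C^g)$ with $(t,e)\not\sim(t,g)$, whereas a $\mathbb{Z}_2$-horizontal edge is a single $\mathbb{Z}_2$-component on whose vertices $(t,e)\sim(t,g)$. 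Hence sharing a middle-row vertex forces $C^e$ and $C^g$ to be joined through the $\mathbb{Z}_2$-edge, so this pair can no longer survive as a pair of $\{e\}$-through classes of the product; the number $s_1$ of such pairs therefore drops and, by Definition \ref{D2.6}, $\sharp^p(d^{r_1,r_2}_{i,\alpha}\,.\,d^{r'_1,r'_2}_{j,\beta})<2s_1+s_2$. By Definition \ref{D3.8} this gives $a_{(i,\alpha,r_1,r_2),(j,\beta,r'_1,r'_2)}=0$. The ``vice versa'' orientation is identical, and since the Gram matrix is symmetric (Lemma \ref{L3.10}) it suffices to treat one of the two entries.

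Next I would run an induction, with the row index $(i,\alpha,r_1,r_2)$ fixed, on the column index $(j,\beta,r'_1,r'_2)$ in the ordering of Definition \ref{D3.7}. In the base case the column is minimal, the two correction sums in (\ref{e3.2a}) are empty, and $b=a=0$. For the inductive step the crucial claim is that the coincidence is inherited by each coarser diagram appearing in (\ref{e3.2a}): if $d^{r'''_1,r'''_2}_{l,\gamma}$ is coarser than $d^{r'_1,r'_2}_{j,\beta}$, then by Definition \ref{D3.12}(a)(i) the $\{e\}$-through class of $d^{r'_1,r'_2}_{j,\beta}$ is contained in a $\{e\}$-through class of $d^{r'''_1,r'''_2}_{l,\gamma}$, so the middle-row vertex where the $\mathbb{Z}_2$-horizontal edge of $d^{r_1,r_2}_{i,\alpha}$ sits still lies on a $\{e\}$-through class of $d^{r'''_1,r'''_2}_{l,\gamma}$ and the same coincidence persists. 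As each such index precedes $(j,\beta,r'_1,r'_2)$, the induction hypothesis yields $b_{(i,\alpha,r_1,r_2),(l,\gamma,r'''_1,r'''_2)}=0$; the identical reasoning disposes of the second sum in (\ref{e3.2a}), taken over the diagrams coarser than $d^{r'_1,r'_2}_{j,\beta}$ but not coarser than $d^{r_1,r_2}_{i,\alpha}$. Thus both sums vanish term by term and (\ref{e3.2a}) collapses to $b_{(i,\alpha,r_1,r_2),(j,\beta,r'_1,r'_2)}=a_{(i,\alpha,r_1,r_2),(j,\beta,r'_1,r'_2)}=0$.

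The main obstacle is the inheritance claim: one must be sure that coarsening cannot convert the offending $\{e\}$-through class into a $\mathbb{Z}_2$-through class, which would destroy the coincidence and break the induction. This is guaranteed exactly by the type-respecting clauses of Definition \ref{D3.12}, under which an $\{e\}$-through class can only be absorbed into an $\{e\}$-through class of a coarser diagram. Finally, since $\overrightarrow{J}^{2k}_{2s_1+s_2}\subset J^{2k}_{2s_1+s_2}$ and both the coarser relation and the column operations on $\overrightarrow{J}^{2k}_{2s_1+s_2}$ are the restrictions of those on $J^{2k}_{2s_1+s_2}$ (Definition \ref{D3.12}(b)), the same induction proves the statement for $\overrightarrow{G}^k_{2s_1+s_2}$ without change.
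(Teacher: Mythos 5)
Your proposal is correct, and it is considerably more explicit than the paper's own one-line proof, which simply cites Definition \ref{D3.7} together with the observation that \emph{no diagram exists that is coarser than both} $d^{r_1,r_2}_{i,\alpha}$ and $d^{r'_1,r'_2}_{j,\beta}$. The two arguments diverge in how they kill the correction terms of (\ref{e3.2a}). The paper's route is structural: a common coarser diagram would have to contain the $\mathbb{Z}_2$-horizontal edge of $d^{r_1,r_2}_{i,\alpha}$ inside a $\mathbb{Z}_2$-type component (Definition \ref{D3.12}(a)(iv)) and the overlapping $\{e\}$-through class of $d^{r'_1,r'_2}_{j,\beta}$ inside an $\{e\}$-through class (Definition \ref{D3.12}(a)(i)), forcing a single connected component to be of both types at once; hence the first sum in (\ref{e3.2a}) is empty outright. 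Your route instead fixes the row, inducts on the column index, and shows every term of \emph{both} sums vanishes because the coincidence is inherited by every coarser diagram of $d^{r'_1,r'_2}_{j,\beta}$ — which has the advantage of treating the two sums uniformly and of making explicit why the second sum (coarser than $d^{r'_1,r'_2}_{j,\beta}$ but not than $d^{r_1,r_2}_{i,\alpha}$) contributes nothing, a point the paper passes over in silence. Your opening verification that the raw entry is zero (the $\mathbb{Z}_2$-edge identifies $(t,e)$ with $(t,g)$ and so fuses the pair $(C^e,C^g)$ into one $\mathbb{Z}_2$-class, dropping $\sharp^p$ below $2s_1+s_2$) is also left implicit in the paper and is worth having on record. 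The only thing you lose by not proving the emptiness of the common-coarser set is that observation itself, which the paper reuses rhetorically; your induction makes it unnecessary.
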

\begin{proof}
The proof follows from Definition \ref{D3.7} and there is no diagram in common which is  coarser than both $d^{r_1, r_2}_{i, \alpha}, d^{r'_1, r'_2}_{j, \beta}  \in \mathbb{J}^{2r_1+r_2}_{2s_1+s_2}.$
 \end{proof}

\begin{rem} \label{R3.24}
\begin{enumerate}
  \item[(a)] Let $d^{r_1, r_2}_{i, \alpha}, d^{r'_1, r'_2}_{j, \beta} \in J^{2k}_{2s_1+s_2}$ such that $\sharp^p(d^{r_1, r_2}_{i, \alpha} . d^{r'_1, r'_2}_{j, \beta}) < 2s_1+s_2.$ Place $d^{r_1, r_2}_{i, \alpha}$ above $d^{r'_1, r'_2}_{j, \beta}.$ Choose sub diagrams $d^{r_1-t'_1, r_2-t'_2} \in J^{2f}_{2(s_1-t_1)+s_2-t_2}$ of $d^{r_1, r_2}_{i, \alpha}$ and $d^{r'_1-t''_1, r'_2-t''_2} \in J^{2f}_{2(s_1-t_1)+s_2-t_2}$ of $d^{r'_1, r'_2}_{j, \beta}$ such that $l(d^{r_1 - t'_1, r_2-t'_2} . d^{r'_1-t''_1, r'_2-t''_2}) \geq 0$  with

      \NI $\sharp^p\left( \left(d^{r_1, r_2}_{i, \alpha} \setminus d^{r_1-t'_1, r_2-t'_2} \right) . \left(d^{r'_1, r'_2}_{j, \beta} \setminus d^{r'_1-t''_1, r'_2-t''_2} \right) \right) < 2t_1+t_2.$

\NI For the sake of convenience, we shall write $d^{r_1, r_2}_{i, \alpha} = d^{r_1-t'_1, r_2-t'_2} \otimes d^{l_1-f}_{l_1-f} , d^{r'_1, r'_2}_{j, \beta} = d^{r_1-t''_1, r_2-t''_2} \otimes d^{l_2-f}_{l_2-f}$ where $d^{l_1-f}_{l_1-f} = d^{r_1, r_2}_{i, \alpha} \setminus d^{r'_1-t''_1, r'_2-t''_2} $ and $ d^{l_2-f}_{l_2-f} = d^{r'_1, r'_2}_{j, \beta} \setminus d^{r'_1-t''_1, r'_2-t''_2}.$
  \item[(b)] Let $R^{d^{r}_{i, \alpha}}, R^{d^{r'}_{j, \beta}} \in J^{k}_{s}$ such that $\sharp^p(R^{d^{r}_{i, \alpha}} . R^{d^{r'}_{j, \beta}}) < 2.$ Place $R^{d^{r}_{i, \alpha}}$ above $R^{d^{r'}_{j, \beta}}.$ Choose sub diagrams $R^{d^{r-t'}} \in J^{f}_{s-t}$ of $R^{d^{r}_{i, \alpha}}$ and $R^{d^{r'-t''}} \in J^{f}_{s-t}$ of $R^{d^{r'}_{j, \beta}}$ such that $l(R^{d^{r-t'}} . R^{d^{r'-t''}}) \geq 0$  with $\sharp^p\left( \left(R^{d^{r}_{i, \alpha}} \setminus R^{d^{r-t'}} \right) . \left(R^{d^{r'}_{j, \beta}} \setminus R^{d^{r'-t''}} \right) \right) < t.$

\NI For the sake of convenience, we shall write $R^{d^{r}_{i, \alpha}} = R^{d^{r-t'}} \otimes d^{l_1-f}_{l_1-f} , R^{d^{r'}_{j, \beta}} = R^{d^{r-t''}} \otimes d^{l_2-f}_{l_2-f}$ where $d^{l_1-f}_{l_1-f} = R^{d^{r}_{i, \alpha}} \setminus R^{d^{r'-t''}} $ and $ d^{l_2-f}_{l_2-f} = R^{d^{r'}_{j, \beta}} \setminus R^{d^{r'-t''}}.$
\end{enumerate}
\end{rem}

\begin{notation} \label{N3.25}
\mbox{ }
\begin{enumerate}
  \item[(a)] Let $d^{r_1, r_2}_{i, \alpha}, d^{r_1, r_2}_{j, \beta}$ be as in Remark \ref{R3.24}(a) such that $\sharp^p \left( d^{r_1, r_2}_{i, \alpha}. d^{r_1, r_2}_{j, \beta}\right) < 2s_1 + s_2,$ so that the $((i, \alpha, r_1, r_2), (j, \beta, r_1, r_2))$-entry of the block matrix $A_{2r_1 + r_2, 2r_1 + r_2}$ in algebra of $\mathbb{Z}_2$-relations is zero and $0 \leq r_1 + r_2 \leq k - s_1 - s_2.$

\NI If $t'_1 = t''_1 = t_1, t'_2 = t''_2= t_2, $ put $d^{r_1, r_2}_{i, \alpha} = d^{l_1^{f}}_{l_1^{f}} \otimes d^{l_1-f}_{l_1-f}$ and $d^{r_1, r_2}_{j, \beta} = d^{l_2^{f}}_{l_2^{f}} \otimes d^{l_2-f}_{l_2-f}$ where $d^{l_1^f}_{l_1^f} \left(d^{l_2^{f}}_{l_2^{f}} \right)$ is the sub diagram of $d^{r_1, r_2}_{i, \alpha}\left( d^{r_1, r_2}_{j, \beta} \right), d^{l_1^f}_{l_1^f}, d^{l_2^f}_{l_2^f} \in \mathbb{J}^{2t_1+t_2}_{2t_1+t_2}$ and every $\{e\}$-through class $\left( \mathbb{Z}_2-\text{through class} \right)$ of $d^{l_1^f}_{l_1^f}$ is replaced by a $\{e\}$-horizontal edge $\left( \mathbb{Z}_2-\text{horizontal edge} \right)$ and vice versa.
  \item[(b)] Let $d^{r_1, r_2}_{i, \alpha}, d^{r_1, r_2}_{j, \beta} $ be as in Remark \ref{R3.24}(b) such that $\sharp^p \left( d^{r_1, r_2}_{i, \alpha}. d^{r_1, r_2}_{j, \beta}\right) < 2s_1 + s_2,$ so that the $((i, \alpha, r_1, r_2), (j, \beta, r_1, r_2))$-entry of the block matrix $\overrightarrow{A}_{2r_1 + r_2, 2r_1 + r_2}$ in algebra of $\mathbb{Z}_2$-relations is zero and $0 \leq r_1 + r_2 \leq k - s_1 - s_2 - 1.$

\NI  If $t'_1 = t''_1 = t_1, t'_2 = t''_2= t_2, $ put $d^{r_1, r_2}_{i, \alpha} = d^{l_1^{f}}_{l_1^{f}} \otimes d^{l_1-f}_{l_1-f}$ and $d^{r_1, r_2}_{j, \beta} = d^{l_2^{f}}_{l_2^{f}} \otimes d^{l_2-f}_{l_2-f}$ where $d^{l_1^f}_{l_1^f} \left(d^{l_2^{f}}_{l_2^{f}} \right)$ is the sub diagram of $d^{r_1, r_2}_{i, \alpha}\left( d^{r_1, r_2}_{j, \beta} \right), d^{l_1^f}_{l_1^f}, d^{l_2^f}_{l_2^f} \in \overrightarrow{\mathbb{J}}^{2t_1+t_2}_{2t_1+t_2}$ and every $\{e\}$-through class $\left( \mathbb{Z}_2-\text{through class} \right)$ of $d^{l_1^f}_{l_1^f}$ is replaced by a $\{e\}$-horizontal edge $\left( \mathbb{Z}_2-\text{horizontal edge} \right)$ and vice versa.
 \item[(c)]Let $R^{d^{r}_{i, \alpha}}, R^{d^{r}_{j, \beta}} \in \mathbb{J}^{r}_{s}$ such that $\sharp^p \left( R^{d^{r}_{i, \alpha}}. R^{d^{r}_{j, \beta}}\right) < s,$ so that the $((i, \alpha, r), (j, \beta, r))$-entry of the block matrix $A_{r, r}$ in the partition algebra is zero and $0 \leq r \leq k - s.$

 Put $  R^{d^{r}_{i, \alpha}}= d^{l_1}_{l_1} \otimes d^{l_1-f}_{l_1-f}$ and $ R^{d^{r}_{j, \beta}}= d^{l_2}_{l_2} \otimes d^{l_2-f}_{l_2-f}$ where $d^{l_1}_{l_1} \left(d^{l_2}_{l_2} \right)$ is the sub diagram of $ R^{d^{r}_{i, \alpha}} \left( R^{d^{r}_{j, \beta}} \right), d^{l_1}_{l_1}, d^{l_2}_{l_2} \in \mathbb{J}^{t}_{t}$ and every through class of $d^{l_1}_{l_1}$ is replaced by a  horizontal edge and vice versa.
                    \end{enumerate}
\end{notation}

\begin{ex}\label{E3.26}
This example illustrates Notation \ref{N3.25}.

\begin{center}
\includegraphics[height=2.5cm, width=15cm]{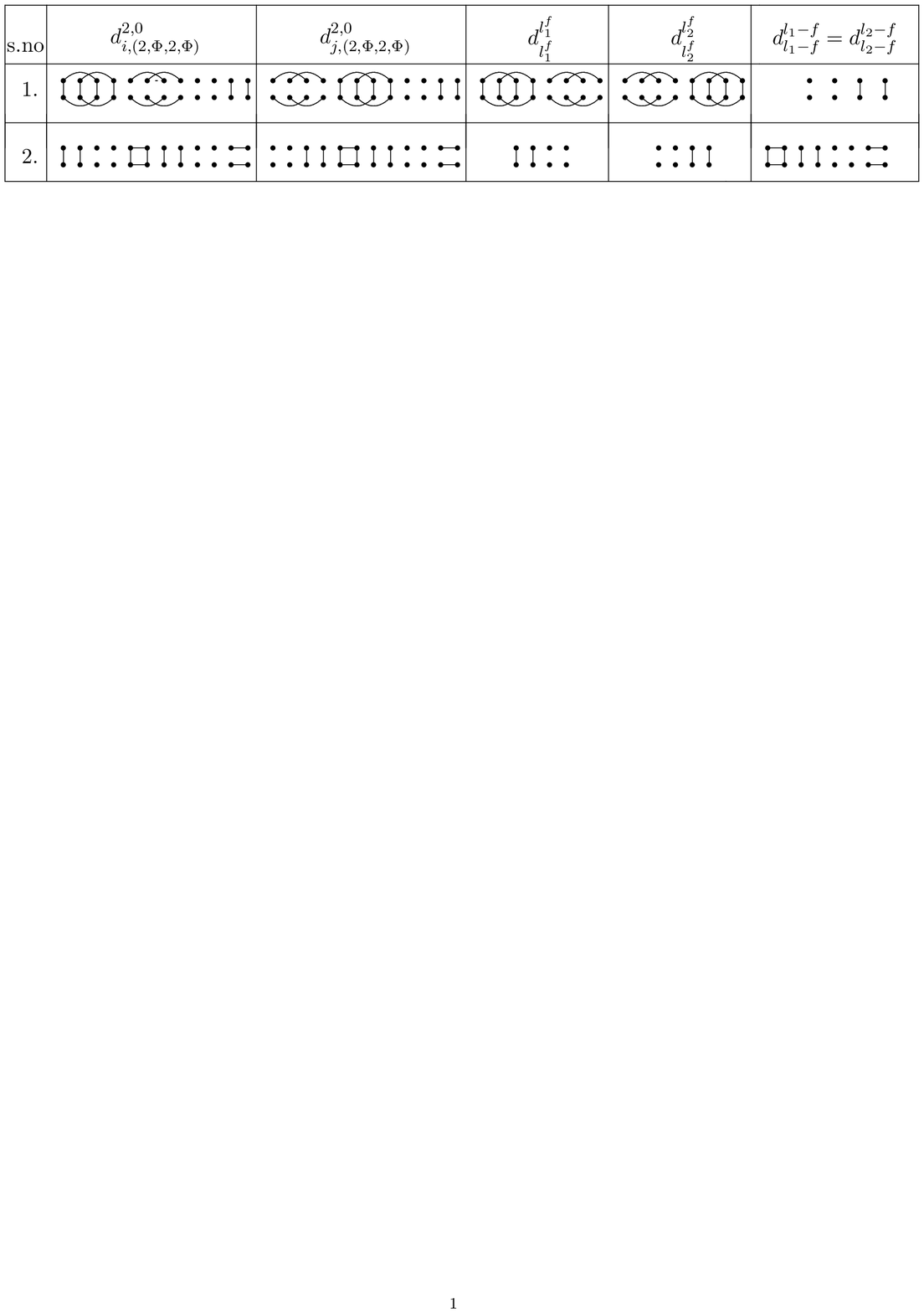}
\end{center}
\end{ex}

\begin{lem}\label{L3.27}
Let $(i, \alpha, r_1, r_2) < (j, \beta, r'_1, r'_2).$
\begin{enumerate}
  \item[(a)] Let $d^{r_1, r_2}_{i, \alpha} , d^{r'_1, r'_2}_{j, \beta}  \in  J^{2k}_{2s_1 + s_2} $ such that

      $\sharp^p \left( d^{r_1, r_2}_{i, \alpha} . d^{r'_1, r'_2}_{j, \beta} \right) < 2s_1 + s_2$ with $d^{r_1, r_2}_{i, \alpha}  = d^{r_1-t'_1, r_2-t'_2} \otimes d^{l_1-f}_{l_1-f}$ and $ d^{r'_1, r'_2}_{j, \beta} = d^{r'_1-t''_1, r'_2-t''_2 } \otimes d^{l_2-f}_{l_2-f}$ where $d^{r_1-t'_1, r_2-t'_2}, d^{r'_1-t''_1, r'_2-t''_2}$ are as in Remark \ref{R3.24}(a).
  \item[(b)]Let $d^{r_1, r_2}_{i, \alpha} , d^{r'_1, r'_2}_{j, \beta}  \in  \overrightarrow{J}^{2k}_{2s_1 + s_2} $ such that $\sharp^p \left( d^{r_1, r_2}_{i, \alpha} . d^{r'_1, r'_2}_{j, \beta} \right) < 2s_1 + s_2$ with $d^{r_1, r_2}_{i, \alpha}  = d^{r_1-t'_1, r_2-t'_2} \otimes d^{l_1-f}_{l_1-f}$ and $ d^{r'_1, r'_2}_{j, \beta} = d^{r'_1-t''_1, r'_2-t''_2 } \otimes d^{l_2-f}_{l_2-f}$ where $d^{r_1-t'_1, r_2-t'_2}, d^{r'_1-t''_1, r'_2-t''_2}$ are as in Remark \ref{R3.24}(a).
      Then

\centerline{$b_{(i, \alpha, r_1, r_2), (j, \beta, r'_1, r'_2)} = 0,$} 
\NI if any one of the following conditions hold:
\begin{enumerate}
  \item[(i)] $2r_1+r_2 < 2r'_1+r'_2$ or
  \item[(ii)] if $2r_1+r_2 = 2r'_1+r'_2$ then $r_1+r_2 < r'_1+r'_2$ or
  \item[(iii)] $t''_1 \neq t_1$ or $t''_2 \neq t_2$ or
  \item[(iv)] $2r_1+r_2-(2t'_1+t'_2) < 2r'_1+r_2 - (2t''_1+t''_2)$
\end{enumerate}

  \item[(c)] Let $R^{d^{r}_{i, \alpha}}, R^{d^{r'}_{j, \beta}} \in \mathbb{J}^{r'}_{s} $ such that $\sharp^p \left( R^{d^{r}_{i, \alpha}}, R^{d^{r'}_{j, \beta}} \right) < s$ with $R^{d^{r}_{i, \alpha}} = d^{r-t'} \otimes R^{d^r_{i, \alpha}} \setminus d^{r-t'}$ and $ R^{d^{r'}_{j, \beta}} = d^{r'-t''} \otimes R^{d^{r'}_{j, \beta}} \setminus d^{r'-t''}$ where $d^{r-t'} \in \mathbb{J}^{t'}_{t}, d^{r'-t''} \in \mathbb{J}^{t''}_{t}, R^{d^r_{i, \alpha}} \setminus d^{r-t'} \in \mathbb{J}^{r - t'}_{s-t}$ and $ R^{d^{r'}_{j, \beta}} \setminus d^{r'-t''} \in \mathbb{J}^{r'- t'' }_{s-t}.$

     Then

\centerline{$b_{(i, \alpha, r), (j, \beta, r')} = 0,$}
\NI if any one of the following conditions hold:
\begin{enumerate}
  \item[(i)] $r' < r$
  \item[(ii)] $t'' \neq t$
  \item[(iii)] $r-t' < r' - t''$
\end{enumerate}

\end{enumerate}
\end{lem}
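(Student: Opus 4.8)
The plan is to prove (a), (b), and (c) together by induction on the index $(i,\alpha,r_1,r_2)$ (respectively $(i,\alpha,r)$) with respect to the ordering of Definition \ref{D3.7}, in the same spirit as the proof of Lemma \ref{L3.21}. The one structural difference is that here the raw entry already vanishes: from $\sharp^p(d^{r_1,r_2}_{i,\alpha}.d^{r'_1,r'_2}_{j,\beta}) < 2s_1+s_2$ and Definition \ref{D3.8} we get $a_{(i,\alpha,r_1,r_2),(j,\beta,r'_1,r'_2)}=0$, so what must be shown is that the inductive subtractions in the column-operation recursion (\ref{e3.2a}) never reintroduce a nonzero value under any of the hypotheses (i)--(iv). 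As in Lemma \ref{L3.21}, the second summation in (\ref{e3.2a}) is killed termwise by the induction hypothesis, so the task reduces to controlling the first summation, indexed by the common coarser diagrams lying above both $d^{r_1,r_2}_{i,\alpha}$ and $d^{r'_1,r'_2}_{j,\beta}$.

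The essential tool is the tensor decomposition of Remark \ref{R3.24} and Notation \ref{N3.25}. Because $\sharp^p$ of the product is strictly smaller than $2s_1+s_2$, Lemma \ref{L3.20} does not apply to the pair $(d^{r_1,r_2}_{i,\alpha}, d^{r'_1,r'_2}_{j,\beta})$ directly. I would therefore write $d^{r_1,r_2}_{i,\alpha}=d^{r_1-t'_1,r_2-t'_2}\otimes d^{l_1-f}_{l_1-f}$ and $d^{r'_1,r'_2}_{j,\beta}=d^{r'_1-t''_1,r'_2-t''_2}\otimes d^{l_2-f}_{l_2-f}$, where the matched factors multiply so as to \emph{preserve} their through classes (so that Lemma \ref{L3.20} does apply to them), while the factors $d^{l_1-f}_{l_1-f}$ and $d^{l_2-f}_{l_2-f}$ absorb precisely the through classes destroyed in the product. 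Any common coarser diagram must, by the edge-type containment rules of Definition \ref{D3.12}, respect this splitting; combined with the uniqueness of the smallest common coarser diagram from Lemma \ref{L3.20} applied to the matched factors, this pins down the entire contribution of the first summation.

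With this in place I would run the four alternatives separately. Under (i) or (ii), the size ordering on horizontal edges ($2r_1+r_2<2r'_1+r'_2$, or equality with $r_1+r_2<r'_1+r'_2$) forces every common coarser diagram to be strictly smaller in the order, so the entries of the first summation are governed by Lemma \ref{L3.21} and the induction hypothesis and the recursion (\ref{e3.2a}) reduces to $0$, exactly as in the proof of Lemma \ref{L3.21}(a)(ii). Under (iii), when $t''_1\neq t_1$ or $t''_2\neq t_2$, the matched factors carry incompatible numbers of $\{e\}$- or $\mathbb{Z}_2$-through classes, so forming a common coarser diagram would require a $\mathbb{Z}_2$-horizontal edge of one diagram to coincide with an $\{e\}$-through class of the other; by Lemma \ref{L3.23} the corresponding entry stays identically zero throughout the column operations. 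Under (iv), the inequality $2r_1+r_2-(2t'_1+t'_2)<2r'_1+r_2-(2t''_1+t''_2)$ transplants a gap of type (i)/(ii) onto the non-matched factors $d^{l_i-f}_{l_i-f}$; applying the induction hypothesis to those factors and invoking uniqueness from Lemma \ref{L3.20} on the matched part again makes the single surviving candidate cancel against its own diagonal entry. The symmetry of the Gram matrix (Lemma \ref{L3.10}) delivers the transposed entry at no extra cost, and part (c) is proved identically, with the single-index bookkeeping of Notation \ref{N3.25}(c) in place of the paired $(t_1,t_2)$ data.

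The step I expect to be the main obstacle is the compatibility claim underlying cases (iii) and (iv): one must verify that \emph{every} common coarser diagram of $d^{r_1,r_2}_{i,\alpha}$ and $d^{r'_1,r'_2}_{j,\beta}$ splits along the tensor decomposition, i.e. that coarsening can never merge the matched and non-matched factors so as to recover a lost through class. Establishing this requires a careful simultaneous use of the containment rules of Definition \ref{D3.12} (which restrict how $\{e\}$- and $\mathbb{Z}_2$-edges may be absorbed) together with the uniqueness in Lemma \ref{L3.20}, and it is precisely here that the inequalities in (iii) and (iv) must be invoked to exclude the stray coarser diagrams that would otherwise leave a nonzero residue.
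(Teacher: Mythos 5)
Your proposal is correct and follows essentially the same route as the paper's proof: induction on the ordering and on conditions (i)--(iv), starting from $a_{(i, \alpha, r_1, r_2), (j, \beta, r'_1, r'_2)}=0$, killing the second summation in (\ref{e3.2a}) termwise by induction, and then using the tensor decomposition of Remark \ref{R3.24} together with Lemma \ref{L3.20} to produce the unique common coarser diagram $d^{l_3^f}_{l_3^f}\otimes d^{l_3-f}_{l_3-f}$ against which the remaining contribution cancels. The compatibility issue you flag (that coarsening respects the splitting into matched and non-matched factors) is exactly the point the paper also relies on, asserting it via Lemma \ref{L3.20} without further elaboration.
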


\begin{proof}
\NI \textbf{Proof of (a):}The proof is by induction on the conditions

\begin{enumerate}
  \item[(i)] $2r_1+r_2 < 2r'_1+r'_2$ or
  \item[(ii)] if $2r_1+r_2 = 2r'_1+r'_2$ then $r_1+r_2 < r'_1+r'_2$ or
  \item[(iii)] $t''_1 \neq t_1$ or $t''_2 \neq t_2$ or
  \item[(iv)] $2r_1+r_2-(2t'_1+t'_2) < 2r'_1+r_2 - (2t''_1+t''_2)$
\end{enumerate}

Since $\sharp^p \left( d^{r_1, r_2}_{i, \alpha} . d^{r'_1, r'_2}_{j, \beta}\right) < 2s_1+s_2$ which implies that $a_{(i, \alpha, r_1, r_2), (j, \beta, r'_1, r'_2)} =0.$

After applying column operations inductively we get,

\begin{equation}\label{e3.10}
 b_{(i, \alpha, r_1, r_2), (j, \beta, r'_1, r'_2)}= - \underset{\substack{\ds d^{r''_1, r''_2}_{l, \gamma} > d^{r_1, r_2}_{i, \alpha}\\ \ds d^{r''_1, r''_2}_{l, \gamma} > d^{r'_1, r'_2}_{j, \beta} }}\sum b_{(l, \gamma, r''_1, r''_2), (l, \gamma, r''_1, r''_2)} - \underset{\substack{\ds d^{r''_1, r''_2}_{l, \gamma} > d^{r'_1, r'_2}_{j, \beta} \\ \ds d^{r''_1, r''_2}_{l, \gamma} \ngtr d^{r_1, r_2}_{i, \alpha}}}\sum b_{(i, \alpha, r_1, r_2), (l, \gamma, r''_1, r''_2)}
\end{equation}

\NI  Suppose that $\sharp^p \left( d^{r''_1, r''_2}_{l, \gamma} . d^{r_1, r_2}_{i, \alpha}\right) = 2s_1+s_2$ then by  Lemma \ref{L3.20} and induction hypothesis,

\centerline{$b_{(i, \alpha, r_1, r_2), (l, \gamma, r''_1, r''_2)} = 0.$}

\NI  Suppose that $\sharp^p \left(d^{r''_1, r''_2}_{l, \gamma} . d^{r_1, r_2}_{i, \alpha}\right) < 2s_1+s_2$ then by using induction on any one of the conditions (i), (ii), (iii) and (iv)

\centerline{$b_{(i, \alpha, r_1, r_2), (l, \gamma, r''_1, r''_2)} = 0$,}

By Lemma \ref{L3.20},  there exists a unique diagram  $d^{l_3-f}_{l_3-f}$  coarser than both $d^{l_2-f}_{l_2-f}$ and $d^{l_1-f}_{l_1-f}$ and  $d^{l_3^f}_{l_3^f} \in \mathbb{J}^{2t_1+t_2}_{2t_1+t_2}$ which is coarser than $d^{r'_1-t''_1, r'_2-t''_2}.$ Denote $d^{l_3^f}_{l_3^f} \otimes d^{l_3-f}_{l_3-f}$ by $d^{r''''_1, r''''_2}_{k, \delta}.$

It is clear that, $d^{r''''_1, r''''_2}_{k, \delta} $ is coarser than $d^{r'_1, r'_2}_{j, \beta}.$

Thus, after applying the column operations $L_{(j, \beta, r'_1, r'_2)} \rightarrow L_{(j, \beta, r'_1, r'_2)} - L_{(k, \delta, r''''_1, r''''_2)}$ we get,

$ b_{(i, \alpha, r_1, r_2), (j, \beta, r'_1, r'_2)}$
\begin{eqnarray*}
  &=& - \underset{\substack{\ds d^{r''_1, r''_2}_{l, \gamma} > d^{r_1, r_2}_{i, \alpha} \\ \ds d^{r''_1, r''_2}_{l, \gamma} > d^{r''''_1, r''''_2}_{k, \delta}}} \sum b_{(l, \gamma, r'''_1, r'''_2), (l, \gamma, r'''_1, r'''_2)} -  \underset{\substack{\ds d^{r''_1, r''_2}_{l, \gamma} > d^{r''''_1, r''''_2}_{k, \delta} \\ \ds d^{r''_1, r''_2}_{l, \gamma} \ngtr d^{r_1, r_2}_{i, \alpha}}} \sum b_{(i, \alpha, r_1, r_2), (l, \gamma, r''_1, r''_2)} - b_{(i, \alpha, r_1, r_2), (k, \delta, r''''_1, r''''_2)} \\
   &=& b_{(i, \alpha, r_1, r_2), (k, \delta, r''''_1, r''''_2)} - b_{(i, \alpha, r_1, r_2), (k, \delta, r''''_1, r''''_2)} \\
   &=& 0.
\end{eqnarray*}
Proof of (b) and (c) are same as that of proof of (a).
\end{proof}

\begin{notation}\label{N3.28}
Put,

\begin{enumerate}
  \item[(i)] $\phi^{s_1, s_2}_{2r_1 + r_2}(x) = \underset{j=0}{\overset{r_1-1}{\prod}}[ x^2-x-2(s_1+j)] \underset{l=0}{\overset{r_2-1}{\prod}} [x-(s_2+l)], \ \ \ r_1 \geq 1, r_2 \geq 1. $
      \item[(ii)] $\phi^{s_1, s_2}_{2r_1 + 0}(x) = \underset{j=0}{\overset{r_1-1}{\prod}}[ x^2-x-2(s_1+j)], \ \ \ r_2 = 0. $
          \item[(iii)] $\phi^{s_1, s_2}_{2.0 + r_2}(x) =  \underset{l=0}{\overset{r_2-1}{\prod}} [x-(s_2+l)], \ \ \  r_1 = 0. $
  \item[(iv)] $\phi^{s_1, s_2}_{0+0}(x) = 1$ and $\phi^{s_1, s_2}_{2r_1 + r_2}(x) = 0$ if any one of $r_1, r_2 < 0.$
  \item[(v)] $\phi^s_r(x) = \underset{l=0}{\overset{r-1}{\prod}} [x-(s+l)], \ \ \ r \geq 1$
\item[(vi)] $\phi^s_0(x) = 1$ and $\phi^s_r = 0$ if $r < 0.$
\end{enumerate}
\end{notation}

\NI Now, we derive the following relation between the polynomials which are needed in the following Lemmas:

\begin{lem} \label{L3.29}
\mbox{ }
\begin{enumerate}
  \item[(i)] $\phi^{s_1+t, s_2}_{2(r_1-t)+r_2}(x) = \phi^{s_1-t, s_2}_{2(r_1-t)+r_2}(x) - \underset{m=1}{\overset{2t}{\sum}} {_{2t}}C_m \ {_{r_1-t}}C_m \ 2^m \ m! \ \phi^{s_1+t, s_2}_{2(r_1-t-m)+ r_2}(x).$
  \item[(ii)] $\phi^{s_1, s_2+t}_{2r_1+r_2 -t}(x) = \phi^{s_1, s_2-t}_{2r_1+ r_2-t}(x) - \underset{m=1}{\overset{2t}{\sum}} {_{2t}}C_m \ {_{r_2-t}}C_m  \ m! \ \phi^{s_1, s_2+t}_{2r_1+ r_2 -t -m}(x).$
      \item[(iii)] In general,
      \begin{eqnarray*}
        \phi^{s_1+t_1, s_2+t_2}_{2(r_1-t_1)+r_2-t_2}(x) &=& \phi^{s_1-t_1, s_2-t_2}_{2(r_1-t_1)+r_2-t_2}(x) - \underset{k=1}{\overset{2t_1}{\sum}} 2t_1C_k \ (r_1-t_1)C_k \ 2^k \ k! \ \phi^{s_1+t_1, s_2-t_2}_{2(r_1-t_1-k) + r_2-t_2}(x) \\
         & & - \underset{k'=1}{\overset{2t_2}{\sum}} 2t_2C_{k'} \ (r_2-t_2)C_{k'} \ k'! \ \phi^{s_1-t_1, s_2+t_2}_{2(r_1-t_1) + r_2-t_2-k'}(x) \\
         & &  - \underset{k=1}{\overset{2t_1}{\sum}} \underset{k'=1}{\overset{2t_2}{\sum}}2t_1C_k \ (r_1-t_1)C_k \ 2^k \ k! \ 2t_2C_{k'}   \ (r_2-t_2)C_{k'} \ k'! \\
          & & \phi^{s_1+t_1, s_2+t_2}_{2(r_1-t_1-k) + r_2-t_2-k'}(x)
      \end{eqnarray*}

\end{enumerate}
\NI where $\phi^{s_1+t, s_2}_{2(r_1-t)+r_2}(x) = \underset{l=0}{\overset{r_1-t-1}{\prod}} [x^2-x-2(s_1+t+l)] \underset{l'=0}{\overset{r_2-1}{\prod}} [x-(s_2 + l')]$ and \\ $\phi^{s_1, s_2+t}_{2r_1+r_2-t}(x) = \underset{l=0}{\overset{r_1-1}{\prod}} [x^2-x-2(s_1+l)] \underset{l'=0}{\overset{r_2-t-1}{\prod}} [x-(s_2 +t+ l')].$
\end{lem}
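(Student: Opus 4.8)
The plan is to exploit the product structure of the polynomials in Notation~\ref{N3.28}. Writing $\phi^{s_1,s_2}_{2r_1+r_2}(x)=E(s_1,r_1)\,Z(s_2,r_2)$ with $E(s_1,r_1)=\prod_{j=0}^{r_1-1}\bigl(x^2-x-2(s_1+j)\bigr)$ and $Z(s_2,r_2)=\prod_{l=0}^{r_2-1}\bigl(x-(s_2+l)\bigr)$, I would first make the substitution $z=(x^2-x)/2$, under which each quadratic factor becomes $x^2-x-2(s_1+j)=2\,(z-(s_1+j))$. Hence $E(s_1,r_1)=2^{r_1}\prod_{j=0}^{r_1-1}(z-(s_1+j))=2^{r_1}(z-s_1)^{\underline{r_1}}$ is, up to the scalar $2^{r_1}$, a falling factorial in $z$, while $Z(s_2,r_2)=(x-s_2)^{\underline{r_2}}$ is already a falling factorial in $x$. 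The whole lemma then becomes a pair of falling-factorial identities, one in the variable $z$ (the $\{e\}$-part) and one in $x$ (the $\mathbb{Z}_2$-part), each an instance of the Chu--Vandermonde convolution $\binom{a+b}{n}=\sum_{m}\binom{a}{m}\binom{b}{n-m}$.

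For part~(i) the factor $Z(s_2,r_2)$ is common to every term, so I would cancel it and reduce to an identity purely among the $E$'s. After the substitution $z=(x^2-x)/2$ and division by the common scalar $2^{r_1-t}$, the claim is equivalent to $(z-s_1+t)^{\underline{r_1-t}}=\sum_{m=0}^{2t}\binom{2t}{m}\binom{r_1-t}{m}\,m!\,(z-s_1-t)^{\underline{r_1-t-m}}$, which is exactly Vandermonde's identity written with falling factorials, taking $a=2t$ and $b=r_1-t$; the upper limit $2t$ is forced because $\binom{2t}{m}=0$ for $m>2t$, and the factor $2^m$ in the stated formula is precisely what restores the normalisation $2^{r_1-t}=2^m\cdot 2^{r_1-t-m}$ when one passes from $(z-s_1-t)^{\underline{r_1-t-m}}$ back to $\phi^{s_1+t,s_2}_{2(r_1-t-m)+r_2}(x)$. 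Part~(ii) is proved in the same way, now cancelling the common factor $E(s_1,r_1)$ and applying Vandermonde to the linear factors $(x-s_2)^{\underline{r_2}}$; here no power of $2$ appears, which accounts for the absence of the $2^m$ weight in (ii).

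For part~(iii) I would use that the $\{e\}$-data $(s_1,t_1,r_1)$ and the $\mathbb{Z}_2$-data $(s_2,t_2,r_2)$ enter $\phi$ through the two independent factors $E$ and $Z$. Starting from $\phi^{s_1-t_1,s_2-t_2}_{2(r_1-t_1)+(r_2-t_2)}(x)=2^{r_1-t_1}(z-s_1+t_1)^{\underline{r_1-t_1}}(x-s_2+t_2)^{\underline{r_2-t_2}}$, I would expand each falling factorial by the Vandermonde identity of the previous step (the $E$-expansion for the first factor, the $Z$-expansion for the second), multiply the two finite sums, and isolate the leading $(m,m')=(0,0)$ term, which is exactly $\phi^{s_1+t_1,s_2+t_2}_{2(r_1-t_1)+(r_2-t_2)}(x)$. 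Transposing the remaining terms and grouping them according to whether only the $\{e\}$-index is shifted ($m\ge 1,\,m'=0$), only the $\mathbb{Z}_2$-index is shifted ($m=0,\,m'\ge 1$), or both are shifted ($m,m'\ge 1$) produces the three correction sums in the statement, with the weight $2^m$ attached to every $\{e\}$-shift and no power of $2$ on the $\mathbb{Z}_2$-shifts. The main obstacle is the purely combinatorial bookkeeping in this last step: one must keep the powers of $2$ attached only to the $E$-expansion, verify that the binomial and factorial coefficients and the signs combine correctly when the non-leading terms are moved across the equality, and dispose of the degenerate cases ($r_1=0$, $r_2=0$, or any lowered index becoming negative, where the relevant $\phi$ vanishes by Notation~\ref{N3.28}(iv),(vi)) so that the truncations of the sums are legitimate.
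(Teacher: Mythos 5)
Your treatment of (i) and (ii) is correct and takes a genuinely different route from the paper. The paper proves (i) by induction on $r_1-t$: it peels off the last quadratic factor, applies the inductive hypothesis to the shorter product, and telescopes using the Pascal-type recurrence ${_{n}}C_m={_{n-1}}C_m+{_{n-1}}C_{m-1}$. Your substitution $z=(x^2-x)/2$ turns each quadratic factor into $2\bigl(z-(s_1+j)\bigr)$, so after cancelling the common $\mathbb{Z}_2$-factor and the common power of $2$ the claim becomes $(w+2t)^{\underline{n}}=\sum_{m\geq 0}\binom{2t}{m}\binom{n}{m}\,m!\,w^{\underline{n-m}}$ with $w=z-s_1-t$ and $n=r_1-t$, which is exactly Chu--Vandermonde for falling factorials (using $(2t)^{\underline{m}}=\binom{2t}{m}m!$); your accounting of the weights $2^m$ and of the truncation at $m=2t$ is right, and the same argument in the variable $x$ gives (ii). This explains, rather than merely verifies by induction, where the coefficients ${_{2t}}C_m\,{_{n}}C_m\,m!$ come from.

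For (iii), however, the grouping you describe does not land on the identity as printed. In your own notation write $A=E(s_1-t_1,r_1-t_1)$, $A'_m=E(s_1+t_1,r_1-t_1-m)$, $B=Z(s_2-t_2,r_2-t_2)$, $B'_{m'}=Z(s_2+t_2,r_2-t_2-m')$, with coefficients $c_m,c'_{m'}$ and $c_0=c'_0=1$. Multiplying the two expansions $A=\sum_m c_mA'_m$ and $B=\sum_{m'}c'_{m'}B'_{m'}$ and isolating the $(0,0)$ term gives $A'_0B'_0=AB-\sum_{m\geq1}c_mA'_mB'_0-\sum_{m'\geq1}c'_{m'}A'_0B'_{m'}-\sum_{m,m'\geq1}c_mc'_{m'}A'_mB'_{m'}$, in which \emph{every} single-sum correction term carries the superscript $(s_1+t_1,s_2+t_2)$. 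The lemma's single sums instead involve the mixed superscripts $\phi^{s_1+t_1,\,s_2-t_2}$ and $\phi^{s_1-t_1,\,s_2+t_2}$, i.e.\ $A'_kB$ and $AB'_{k'}$, and the two forms are not interchangeable: expanding $\bigl(A-\sum_kc_kA'_k\bigr)\bigl(B-\sum_{k'}c'_{k'}B'_{k'}\bigr)$ shows that with the mixed terms the double sum must enter with a \emph{plus} sign. (A direct check at $r_1=r_2=2$, $t_1=t_2=1$ shows the printed right-hand side differs from the left-hand side by twice the double sum.) So your claim that the grouping ``produces the three correction sums in the statement'' is not accurate; you should either state and use (iii) in the all-$(s_1+t_1,s_2+t_2)$ form your derivation actually yields, or record that the mixed-superscript version requires the opposite sign on the double sum.
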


\begin{proof}

\NI \textbf{Proof of (i):}We shall prove this by using induction on $r_1-t$ and $r_2.$
Consider
\begin{equation}\label{e3.11}
     \phi^{s_1-t, s_2}_{2(r_1-t)+r_2}(x) - \underset{m=1}{\overset{2t}{\sum}} {_{2t}}C_m \ {_{(r_1-t)}}C_m \ 2^m \ m! \ \phi^{s_1+t, s_2}_{2[r_1-t-m]+r_2}(x).
 \end{equation}
 \begin{eqnarray*}
    &=& \phi^{s_1-t, s_2}_{2(r_1-t-1)+r_2}(x)(x^2-x-2(s_1+r_1-2t-1)) \\
     & & - \underset{m=1}{\overset{2t}{\sum}} {_{2t}}C_m \ {_{(r_1-t)}}C_m \ 2^m \ m! \ \phi^{s_1+t, s_2}_{2(r_1-t-m)+r_2}(x) \\
     &=& \left(\phi^{s_1-t, s_2}_{2(r_1-t-1)+r_2}(x) +  \underset{m=1}{\overset{2t}{\sum}} {_{2t}}C_m \ {_{(r_1-t-1)}}C_m \ 2^m \ m! \ \phi^{s_1+t, s_2}_{2(r_1-t-m-1)+r_2}(x)\right)\\
     & & (x^2-x-2(s_1+r_1-2t-1)) - \underset{m=1}{\overset{2t}{\sum}} {_{2t}}C_m \ {_{(r_1-t)}}C_m \ 2^m \ m! \ \phi^{s_1+t, s_2}_{2(r_1-t-m)+r_2}(x)  \\
     & & \hspace{6cm} \text{ (by induction)}\\
    &=& \left(\phi^{s_1-t, s_2}_{2(r_1-t-1)+r_2}(x) +  \underset{m=1}{\overset{2t}{\sum}} {_{2t}}C_m \ {_{(r_1-t-1)}}C_m \ 2^m \ m! \ \phi^{s_1+t, s_2}_{2(r_1-t-m-1)+r_2}(x)\right)\\
    & & \hspace{4cm}(x^2-x-2(s_1+r_1-2t-1)) \\
    & & - \underset{m=1}{\overset{2t}{\sum}} {_{2t}}C_m \ \left({_{(r_1-t-1)}}C_m + {_{r_1-t-1}}C_{m-1} \right) \ 2^m \ m! \ \phi^{s_1+t, s_2}_{2(r_1-t-m-1)+r_2}(x)\\
    & & \hspace{4cm} (x^2-x-2(s_1+r_1-m-1))
     \end{eqnarray*}
       \begin{eqnarray*}
    &=&  \phi^{s_1+t, s_2}_{2(r_1-t-1)+r_2}(x) (x^2-x-2(s_1+r_1-2t-1)) \\
    & &- \underset{m=1}{\overset{2t}{\sum}} {_{2t}}C_m \ {_{(r_1-t-1)}}C_{m-1}  \ 2^m \ m!  \phi^{s_1+t, s_2}_{2(r_1-t-m)+r_2}(x)
 \\
     & &  + \underset{m=1}{\overset{2t}{\sum}} {_{2t}}C_m \ {_{(r_1-t-1)}}C_m  \ 2^m \ m! \ (4t-2m) \phi^{s_1+t, s_2}_{2(r_1-t-m-1)+r_2}(x)\\
            &=& \phi^{s_1+t, s_2}_{2(r_1-t-1)+r_2}(x) (x^2-x-2(s_1+r_1-2t-1)) - 4t \phi^{s_1+t, s_2}_{2(r_1-t-1)+ r_2}(x) \\
    &  & \hspace{0.5cm} \text{ (by canceling the common terms)}\\
    &=& \phi^{s_1+t, s_2}_{2(r_1-t-1)+r_2}(x) \left( x^2-x-2(s_1+r_1-1)\right)\\
    &=& \phi^{s_1+t, s_2}_{2(r_1-t)+r_2}(x)
 \end{eqnarray*}

\NI Proof of (ii) is similar to the proof of (i) and proof of (iii) follows from (i) and (ii).
\end{proof}

\begin{lem}\label{L3.30}
\mbox{ }
\begin{enumerate}
  \item[(a)]After performing the column operations to eliminate the non-zero entries corresponding to the diagrams coarser than both $d^{r_1, r_2}_{i, \alpha}$ and $d^{r_1, r_2}_{j, \alpha} $, the zero in the $((i, \alpha, r_1, r_2), (j, \beta, r_1, r_2))$ entry of the block matrix $A_{2r_1+r_2, 2r_1+r_2} $ for $0 \leq  r_1 + r_2 \leq k-s_1-s_2$ in algebra of $\mathbb{Z}_2$-relations  is replaced by

       \centerline{$-2^{t_1} \ t_1! \ t_2! \ x^{2(r_1-t_1)+r_2-t_2}$}
       \NI where $d^{r_1, r_2}_{i, \alpha}$ and $ d^{r_1, r_2}_{j, \beta}$ are as in Notation \ref{N3.25}(a).
      \item[(b)] After performing the column operations to eliminate the non-zero entries corresponding to the diagrams coarser than both $d^{r_1, r_2}_{i, \alpha}$ and $d^{r_1, r_2}_{j, \beta}$, the zero in the $((i,, \alpha, r_1, r_2), (j, \beta, r_1, r_2))$ entry of the block matrix $ \overrightarrow{A}_{2r_1+r_2, 2r_1+r_2}$ for $0 \leq r_1, r_2, r_1+r_2 \leq k -s_1-s_2 -1$ in the signed partition algebra  is replaced by

          \centerline{$-2^{t_1} \ t_1! \ t_2! \ x^{2(r_1-t_1)+r_2-t_2}.$}

          \NI where $d^{r_1, r_2}_{i, \alpha}$ and $ d^{r_1, r_2}_{j, \beta}$ are as in Notation \ref{N3.25}(b).
  \item[(c)]After performing the column operations to eliminate the non-zero entries corresponding to the diagrams coarser than both $R^{d^r_{i, \alpha}} $ and $R^{d^{r}_{j, \beta}} $, the zero in the $((i, \alpha, r), (j, \beta, r))$ entry of the block matrix $A_{r, r} $ for $o \leq r \leq k-s$ in partition algebra is replaced by

      \centerline{$ - \ t! \ x^{r-t}.$}

      \NI where  $R^{d^r_{i, \alpha}}$ and $R^{d^r_{j, \beta}}$ are as in Notation \ref{N3.25}(c).
\end{enumerate}
\end{lem}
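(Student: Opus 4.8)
The plan is to prove all three parts by a single induction on the number $2t_1+t_2$ of flipped through-classes, driven by the recursion \eqref{e3.2a} for the entries after the column operations. Since $\sharp^p\left(d^{r_1,r_2}_{i,\alpha}\cdot d^{r_1,r_2}_{j,\beta}\right)<2s_1+s_2$, the original Gram entry $a_{(i,\alpha,r_1,r_2),(j,\beta,r_1,r_2)}$ vanishes, so by \eqref{e3.2a} the surviving value $b_{(i,\alpha,r_1,r_2),(j,\beta,r_1,r_2)}$ is exactly the negative of a sum of previously computed entries indexed by diagrams coarser than $d^{r_1,r_2}_{i,\alpha}$ and $d^{r_1,r_2}_{j,\beta}$. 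First I would invoke Lemma \ref{L3.27} to kill every term whose tensor decomposition fails to match the common part $d^{l_1-f}_{l_1-f}$, and Lemma \ref{L3.21} to replace each surviving off-diagonal contribution by the corresponding diagonal entry $b_{(l,\gamma,r''_1,r''_2),(l,\gamma,r''_1,r''_2)}$, which by Theorem \ref{T3.22} is a product of the polynomials $\phi^{s_1,s_2}_{2r''_1+r''_2}(x)$ of Notation \ref{N3.28}.

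The combinatorial content then comes from Lemma \ref{L3.20}: among all diagrams coarser than both factors there is a unique smallest one, and the decomposition of Notation \ref{N3.25} shows that the only way to restore the propagating number is to re-fuse the flipped through-classes of $d^{l_1^f}_{l_1^f}$ against those of $d^{l_2^f}_{l_2^f}$. Counting these fusions exactly as in Example \ref{E2.15}, there are $2^{t_1}\,t_1!$ matchings of the $t_1$ pairs of $\{e\}$-through classes and $t_2!$ matchings of the $t_2$ $\mathbb{Z}_2$-through classes, so the raw combinatorial factor is $2^{t_1}\,t_1!\,t_2!$; meanwhile the common part $d^{l_1-f}_{l_1-f}$ contributes $2(r_1-t_1)+r_2-t_2$ middle loops, i.e. the power $x^{2(r_1-t_1)+r_2-t_2}$.

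It then remains to show that the full alternating sum over coarser diagrams collapses to a single term. This is where I would feed the diagonal values from Theorem \ref{T3.22} into the polynomial identity Lemma \ref{L3.29}: the weighted sum of products of $\phi$-polynomials telescopes, the cross terms involving the shifted arguments $s_1+t_1$ and $s_2+t_2$ cancelling against the binomial–factorial weights, and what remains is precisely $-2^{t_1}\,t_1!\,t_2!\,x^{2(r_1-t_1)+r_2-t_2}$. Part (b) is identical once $\overrightarrow{\mathbb{J}}$ replaces $\mathbb{J}$, since the extra restriction in Notation \ref{N3.25}(b) only discards diagrams that never reach the full propagating number and so does not alter the computation, and part (c) is the unsigned single-colour specialisation, where the factor $2^{t_1}$ disappears and $\phi^s_r$ replaces the quadratic–linear product.

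The main obstacle will be the last step: verifying that the alternating sum genuinely telescopes to one term rather than leaving residual lower-degree corrections. This forces one to match the $(s_1,s_2,r_1,r_2,p_1,p_2)$-Stirling coefficients $B^{s_1,s_2}_{2r_1+r_2,2p_1+p_2}$ counting the coarser diagrams (Lemma \ref{L3.14}) against the coefficients produced by Lemma \ref{L3.29}. The delicate part is therefore to organise the double sum over the $(t_1,t_2)$-type mergings so that the recurrences of Lemma \ref{L3.16} and Lemma \ref{L3.17} apply cleanly and every intermediate-degree term cancels in pairs.
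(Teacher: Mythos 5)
Your overall skeleton matches the paper's: since $\sharp^p\bigl(d^{r_1,r_2}_{i,\alpha}\cdot d^{r_1,r_2}_{j,\beta}\bigr)<2s_1+s_2$ the raw entry vanishes, so after the column operations the entry equals minus the sum of the diagonal values $b_{(l,\gamma,r''_1,r''_2),(l,\gamma,r''_1,r''_2)}=\phi^{s_1,s_2}_{2r''_1+r''_2}(x)$ over all diagrams coarser than both; the minimal re-fusions of the flipped part $d^{l_1^f}_{l_1^f}$ against $d^{l_2^f}_{l_2^f}$ contribute the factor $2^{t_1}\,t_1!\,t_2!$, and the remaining freedom lives entirely in coarsenings of the common part $d^{l_1-f}_{l_1-f}$. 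Up to that point your counting and your power of $x$ agree with the paper (which runs the same induction on $t_1,t_2$, starting from $t_1=t_2=1$).

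Where you go astray is the closing step, which you flag as the main obstacle and propose to attack with Lemma \ref{L3.29} and the Stirling recurrences of Lemmas \ref{L3.16} and \ref{L3.17}. That machinery is not needed here, and the sum you are worried about is not alternating: every coarser diagram of the common part enters with the same sign, weighted by the nonnegative count $B^{s_1,s_2}_{2(r_1-t_1)+r_2-t_2,\,2p_1+p_2}$ from Lemma \ref{L3.14}. The collapse is immediate from Theorem \ref{T3.22} itself: equation \eqref{e3.8} (rearranged as equation \eqref{e3.12}) says precisely that
$x^{2(r_1-t_1)+r_2-t_2}=\sum_{l,l'}B^{s_1,s_2}_{2(r_1-t_1)+r_2-t_2,\,2(r_1-t_1-l)+r_2-t_2+l'}\,\phi^{s_1,s_2}_{2(r_1-t_1-l)+r_2-t_2+l'}(x)$,
the $(l,l')=(0,0)$ term being $\phi^{s_1,s_2}_{2(r_1-t_1)+r_2-t_2}(x)$ since $B_{m,m}=1$. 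Substituting this identity turns your sum directly into $-2^{t_1}\,t_1!\,t_2!\,x^{2(r_1-t_1)+r_2-t_2}$ with no residual terms to cancel. Lemma \ref{L3.29}, with its parameter shifts $s_1\pm t_1$, $s_2\pm t_2$ and genuinely alternating signs, is the tool for Proposition \ref{P3.31}, where diagrams coarser than only one of the two factors contribute shifted polynomials; importing it into Lemma \ref{L3.30} would be organising a cancellation that never has to happen.
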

\begin{proof}
\NI \textbf{Proof of (a):}
We shall prove this by induction on $t_1$ and  $t_2.$

\NI \textbf{Case (i):} Let $t_1 =1$ and $t_2 = 1.$

We know that the diagrams coarser than both $d^{r_1, r_2}_{i, \alpha} $ and $d^{r_1, r_2}_{j, \beta} $ are obtained if and only if the pair of $\{e\}$-through classes and the pair of $\{e\}$-horizontal edges of $d^{l_1^{f}}_{l_1^{f}}$ or $d^{l_2^{f}}_{l_2^{f}}$ is connected by an $\{e\}$-horizontal edge which can be done in two ways and $\mathbb{Z}_2$ horizontal edge and $\mathbb{Z}_2$-through class of $d^{l_1^{f}}_{l_1^{f}}$ or $d^{l_2^{f}}_{l_2^{f}}$ is connected by a $\mathbb{Z}_2$-edge which can be done in one way. Also $d^{l_1-f}_{l_1-f}$ and $d^{l_2-f}_{l_2-f}$ have $2(r_1-1)+r_2-1$ horizontal edges then after performing the column operations the zero in the $((i, \alpha, r_1, r_2), (j, \beta, r_1, r_2))$-entry of the block matrix $A_{2r_1+r_2, 2r_1+r_2}$ is replaced by

 \centerline{$ -2 \underset{l=0}{\overset{r_1-1}{\sum}} \underset{l'=0}{\overset{r_2-1+l}{\sum}} B^{s_1, s_2}_{2(r_1-1)+r_2-1, 2(r_1-1-l)+r_2-1+l'} \ \ \phi^{s_1, s_2}_{2[r_1-1-l]+ r_2-1+l'}(x) $}

 \NI which is equal to

 $-2 \phi^{s_1, s_2}_{2[r_1-1]+r_2-1}(x) -2 \underset{l=1}{\overset{r_1-1}{\sum}} \underset{l'=1}{\overset{r_2-1+l}{\sum}} B^{s_1, s_2}_{2(r_1-1)+r_2, 2(r_1-1-l)+r_2-1+l'} \ \ \phi^{s_1, s_2}_{2[r_1-1-l]+r_2-1+l'}(x)$

 \NI By Theorem \ref{T3.22} we know that,

 \begin{equation}\label{e3.12}
  \phi^{s_1, s_2}_{2[r_1-1]+r_2-1}(x) = x^{2(r_1-1)+r_2-1} - \underset{l=1}{\overset{r_1-1}{\sum}} \underset{l'=1}{\overset{r_2-1+l}{\sum}} B^{s_1, s_2}_{2(r_1-1)+r_2-1, 2(r_1-1-l)+ r_2-1+l'}
  \phi^{s_1, s_2}_{2[r_1-1-l]+r_2-1+l'}(x)
 \end{equation}

 Substituting equation (\ref{e3.12}) in the above expression and canceling the common terms we get,

\centerline{$-2x^{2(r_1-1)+r_2-1}.$}

 In general, the diagrams coarser than both $d^{r_1, r_2}_{i, \alpha} $ and $d^{r_1, r_2}_{j, \beta}$ are obtained if and only if $t_1$ pairs of $\{e\}$-through classes($t_2$ number of $(\mathbb{Z}_2)$-through classes) and $t_1$ pairs of $\{e\}$-horizontal edges($t_2$ number of $(\mathbb{Z}_2)$-horizontal edges) of $d^{l_1^{f}}_{l_1^{f}}$ or $d^{l_2^{f}}_{l_2^{f}} $ is connected by an $\{e\}$-horizontal edges($(\mathbb{Z}_2)$-horizontal edges) which can be done in $2^{t_1} \ t_1! \ t_2!$ ways. Also $d^{l_1-f}_{l_1-f}$ and $d^{l_2-f}_{l_2-f}$ have $2(r_1-t_1)+r_2-t_2$ horizontal edges then after performing the column operations to eliminate the non-zero entries corresponding to the diagrams coarser than both $d^{r_1, r_2}_{i, \alpha}$ and $d^{r_1, r_2}_{j, \alpha}$ the zero in the $((i, \alpha, r_1, r_2), (j, \beta, r_1, r_2))$-entry of the block matrix $A_{2r_1+r_2, 2r_1+r_2}$ is replaced by

 \centerline{$ -2^{t_1} \ t_1! \ t_2! \ \underset{l=0}{\overset{r_1-t_1}{\sum}} \underset{l'=0}{\overset{r_2-t_2+l}{\sum}} B^{s_1, s_2}_{2(r_1-t_1)+r_2-t_2, 2(r_1-t_1-l)+r_2-t_2+l'} \ \ \phi^{s_1, s_2}_{2[r_1-t_1-l]+ r_2-t_2+l'}(x) $}

 \NI which is equal to

 $-2^{t_1} \ t_1! \ t_2! \left( \phi^{s_1, s_2}_{2[r_1-t_1]+r_2-t_2}(x) - \underset{l=1}{\overset{r_1-t_1}{\sum}} \underset{l'=1}{\overset{r_2-t_2+l}{\sum}} B^{s_1, s_2}_{2(r_1-t_1)+r_2-t_2, 2(r_1-t_1-l)+r_2-t_2+l'} \\  \phi^{s_1, s_2}_{2[r_1-t_1-l]+r_2-t_2+l'}(x)\right)$

 Substituting equation (\ref{e3.12}) in the above expression and canceling the common terms we get,

\centerline{$-2^{t_1} \ t_1! \ t_2! \ x^{2(r_1-t_1)+r_2-t_2}.$}

\NI Proof of (b) and (c) are similar to the proof of (a).
\end{proof}

\begin{prop} \label{P3.31}
\mbox{ }
\begin{enumerate}
\item[(a)] For $0 \leq r_1+r_2 \leq k-s_1-s_2,$ after performing the column operations  to eliminate the non-zero entries which lie above corresponding to the diagrams coarser than $d^{r_1, r_2}_{j, \beta}$, then the $((i, \alpha, r_1, r_2), (j, \beta, r_1, r_2))$-entry of the block matrix $A_{2r_1+r_2, 2r_1+r_2}$  and

    \item[(b)] For $0 \leq r_1+r_2 \leq k-s_1-s_2-1,$ after performing the column operations  to eliminate the non-zero entries which lie above corresponding to the diagrams coarser than $d^{r_1, r_2}_{j, \beta}$, then the $((i, \alpha, r_1, r_2), (j, \beta, r_1, r_2))$-entry of the block matrix $\overrightarrow{A}_{2r_1+r_2, 2r_1+r_2}$ for $0 \leq r_1+r_2, r_1, r_2 \leq k - s_1-s_2 - 1$ are replaced as
\begin{enumerate}
  \item[(i)] $(-1)^{t_1+t_2}\ (t_1)! \ (t_2)! \ 2^{t_1} \underset{j=t_1}{\overset{r_1-1}{\prod}}[x^2-x-2(s_1+j)] \underset{l=t_2}{\overset{r_2-1}{\prod}} [x-(s_2+l)]$ \ \ \ \ if $r_1 \geq 1$ and $r_2 \geq 1,$
  \item[(ii)] $(-1)^{t_2} \  \ (t_2)! \ \underset{l=t_2}{\overset{r_2-1}{\prod}} [x-(s_2+l)]$ \ \ \ \ if $r_1 = 0$ and $r_2 \neq 0,$
  \item[(iii)] $(-1)^{t_1}\ (t_1)!  \ 2^{t_1} \underset{j=t_1}{\overset{r_1-1}{\prod}}[x^2-x-2(s_1+j)]$ \ \ \ \ if $r_1 \neq 0$ and $r_2 = 0,$
\end{enumerate}
 \NI where $ d^{r_1, r_2}_{j, \beta}$ is  as in Notation \ref{N3.25}.
\item[(c)] After performing the column operations  to eliminate the non-zero entries which lie above corresponding to the diagrams coarser than $R^{d^{r'}_{j, \beta}}$, then the $((i, \alpha, r), (j, \beta, r))$-entry is replaced by

    \centerline{$(-1)^{t}\ t!  \underset{j=t}{\overset{r-1}{\prod}} [x-(s+l)].$}
    \NI where $R^{d^{r}_{j, \beta}}$ is as in Notation \ref{N3.25}.
\end{enumerate}
\end{prop}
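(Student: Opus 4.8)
The plan is to bootstrap from Lemma \ref{L3.30} and then finish the column reduction exactly as in Theorem \ref{T3.22}, using the Stirling recursions of Lemmas \ref{L3.16} and \ref{L3.17} together with the polynomial identities of Lemma \ref{L3.29} to rewrite the outcome in the asserted product form. Fix the block $A_{2r_1+r_2,2r_1+r_2}$ (or $\overrightarrow{A}_{2r_1+r_2,2r_1+r_2}$) and the entry indexed by $((i,\alpha,r_1,r_2),(j,\beta,r_1,r_2))$, and decompose $d^{r_1,r_2}_{i,\alpha}=d^{l_1^{f}}_{l_1^{f}}\otimes d^{l_1-f}_{l_1-f}$ and $d^{r_1,r_2}_{j,\beta}=d^{l_2^{f}}_{l_2^{f}}\otimes d^{l_2-f}_{l_2-f}$ as in Notation \ref{N3.25}, with common parameters $t_1,t_2$. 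Since $\sharp^p(d^{r_1,r_2}_{i,\alpha}\cdot d^{r_1,r_2}_{j,\beta})<2s_1+s_2$, the raw entry $a_{(i,\alpha,r_1,r_2),(j,\beta,r_1,r_2)}$ is $0$ by Definition \ref{D3.8}.

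First I would invoke Lemma \ref{L3.30}: after the column operations that kill the entries corresponding to the diagrams coarser than \emph{both} $d^{r_1,r_2}_{i,\alpha}$ and $d^{r_1,r_2}_{j,\beta}$, this zero becomes the monomial $-2^{t_1}\,t_1!\,t_2!\,x^{2(r_1-t_1)+r_2-t_2}$. The operations still demanded by the statement are those for diagrams coarser than $d^{r_1,r_2}_{j,\beta}$ alone; by Lemma \ref{L3.27} these split into two families, according to whether the coarsening stays inside the free factor $d^{l_2-f}_{l_2-f}$ (which carries $r_1-t_1$ pairs of $\{e\}$-horizontal edges, $r_2-t_2$ $\mathbb{Z}_2$-horizontal edges and the remaining $s_1-t_1$ pairs of $\{e\}$-through classes and $s_2-t_2$ $\mathbb{Z}_2$-through classes), or whether it attaches a free horizontal edge to one of the $t_1,t_2$ resolved components of $d^{l_2^{f}}_{l_2^{f}}$.

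The first family reduces $x^{2(r_1-t_1)+r_2-t_2}$ exactly as in Theorem \ref{T3.22}, but with base through-class counts $s_1-t_1,s_2-t_2$: the $(s_1,s_2,r_1,r_2,p_1,p_2)$-Stirling weights $B^{s_1-t_1,s_2-t_2}_{2(r_1-t_1)+r_2-t_2,\,2p_1+p_2}$ telescope under Lemmas \ref{L3.16} and \ref{L3.17} (the $H$--$C$ computation of Theorem \ref{T3.22}) to produce $\phi^{s_1-t_1,s_2-t_2}_{2(r_1-t_1)+r_2-t_2}(x)$. The second family contributes precisely the correction sums on the right-hand side of Lemma \ref{L3.29}(iii): attaching $k$ of the free $\{e\}$-edges to the $t_1$ resolved $\{e\}$-components (in $2^k k!$ ways, weighted by $\,{}_{2t_1}C_{k}\,{}_{r_1-t_1}C_{k}$) and $k'$ of the free $\mathbb{Z}_2$-edges to the $t_2$ resolved $\mathbb{Z}_2$-components (weighted by $\,{}_{2t_2}C_{k'}\,{}_{r_2-t_2}C_{k'}\,k'!$). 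Feeding these into Lemma \ref{L3.29}(iii) rewrites $\phi^{s_1-t_1,s_2-t_2}_{2(r_1-t_1)+r_2-t_2}(x)$ as $\phi^{s_1+t_1,s_2+t_2}_{2(r_1-t_1)+r_2-t_2}(x)=\prod_{j=t_1}^{r_1-1}[x^2-x-2(s_1+j)]\prod_{l=t_2}^{r_2-1}[x-(s_2+l)]$, the shift of the base from $s_1-t_1,s_2-t_2$ up to $s_1+t_1,s_2+t_2$ being exactly the content of that identity; combining with the $-2^{t_1}t_1!t_2!$ of the monomial yields the prefactor $(-1)^{t_1+t_2}\,t_1!\,t_2!\,2^{t_1}$, proving (i). Cases (ii) and (iii) are the specializations $r_1=0$ and $r_2=0$ obtained by deleting the empty product, just as in Theorem \ref{T3.22}; part (c) is the same argument run with the linear polynomials $\phi^s_r$, Lemma \ref{L3.16}(b), and the partition-algebra instances of Lemmas \ref{L3.27}, \ref{L3.29} and \ref{L3.30}, the factor $2^{t_1}$ being absent.

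The step I expect to be the main obstacle is the sign-and-multiplicity reconciliation at the end: the monomial delivered by Lemma \ref{L3.30} carries a single factor $-1$, whereas the target prefactor is $(-1)^{t_1+t_2}$, so the two cannot simply agree on the leading term. The resolution must come from the second family of coarsenings, which feeds same-degree contributions through the $d^{l_2^{f}}_{l_2^{f}}$-attachments; verifying that these, once organized by Lemma \ref{L3.29}(iii), correct the leading coefficient from $-2^{t_1}t_1!t_2!$ to $(-1)^{t_1+t_2}2^{t_1}t_1!t_2!$ and simultaneously collapse all lower terms into the clean product is the delicate bookkeeping. Everything else is a routine double induction on $t_1,t_2$ (and on $2r_1+r_2$) mirroring the proof of Theorem \ref{T3.22}.
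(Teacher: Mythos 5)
Your plan is the paper's own proof: Lemma \ref{L3.30} for the contribution of diagrams coarser than both factors, a count of the diagrams coarser than $d^{r_1,r_2}_{j,\beta}$ alone, and Lemma \ref{L3.29}(iii) to convert $\phi^{s_1-t_1,s_2-t_2}_{2(r_1-t_1)+r_2-t_2}$ into $\phi^{s_1+t_1,s_2+t_2}_{2(r_1-t_1)+r_2-t_2}$, all wrapped in a double induction on $t_1,t_2$ and on the number of horizontal edges.

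The one step you flag but do not execute --- the passage from the leading coefficient $-2^{t_1}t_1!t_2!$ of Lemma \ref{L3.30} to the prefactor $(-1)^{t_1+t_2}2^{t_1}t_1!t_2!$ --- is exactly where your two-family decomposition is too coarse. The diagrams coarser than $d^{r_1,r_2}_{j,\beta}$ but not than $d^{r_1,r_2}_{i,\alpha}$ are indexed not only by a coarsening of $d^{l_2-f}_{l_2-f}$ (your first family) and by cross-attachments of free horizontal edges to $d^{l_2^f}_{l_2^f}$ (your second family), but also by a \emph{partial resolution} of $d^{l_2^f}_{l_2^f}$ itself: joining $f'$ of its $t_1$ pairs of $\{e\}$-through classes to $f'$ of its $\{e\}$-horizontal edges and $f''$ of its $t_2$ $\mathbb{Z}_2$-through classes to $\mathbb{Z}_2$-horizontal edges, in $(\,{}_{t_1}C_{f'})^2 2^{f'} f'!\,(\,{}_{t_2}C_{f''})^2 f''!$ ways, for all $(f',f'')$ with $0\leq f'\leq t_1$, $0\leq f''\leq t_2$ (the paper's equation (\ref{e3.14})). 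Each such intermediate diagram pairs with $d^{r_1,r_2}_{i,\alpha}$ with parameters $(t_1-f',t_2-f'')$, so by the induction hypothesis its column contributes $(-1)^{t_1-f'}(-1)^{t_2-f''}2^{t_1-f'}(t_1-f')!(t_2-f'')!\,\phi^{s_1+t_1-f',s_2+t_2-f''}_{\cdots}(x)$; it is the alternating sum over $(f',f'')$ of these terms, via $\sum_{f'}(\,{}_{t_1}C_{f'})^2 f'! (t_1-f')!(-1)^{t_1-f'}2^{t_1}=(-1)^{t_1}\cdot(\cdots)$ after the $x^{2(r_1-t_1)+r_2-t_2}$ terms are isolated, that produces the sign $(-1)^{t_1+t_2}$ and cancels everything else down to the single product. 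Without this $(f',f'')$ layer your two families give the wrong leading coefficient, as you yourself observe, so the outline as written does not close; the paper's multi-page cancellation starting from (\ref{e3.14}) is the missing content, not a routine check.
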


\begin{proof}
\NI \textbf{Proof of (a):}
We shall prove this by using induction on $t_1, t_2$,  the number of horizontal edges and the index of the diagram $(j, \beta, r_1, r_2).$

\NI By Lemma \ref{L3.27} the $((i, \alpha, r_1, r_2), (j, \beta, r_1, r_2))$ entry $b_{(i, \alpha, r_1, r_2), (j, \beta, r_1, r_2)}$ is given by
\begin{equation}\label{e3.13}
   b_{(i, \alpha, r_1, r_2), (j, \beta, r_1, r_2)}= - \underset{\substack{\ds d^{r''_1, r''_2}_{l, \gamma} > d^{r_1, r_2}_{i, \alpha} \\ \ds d^{r''_1, r''_2}_{l, \gamma} > d^{r'_1, r'_2}_{j, \beta}}}{\sum} b_{(l, \gamma, r''_1, r''_2), (l, \gamma, r''_1, r''_2)} -  \underset{\substack{\ds d^{r''_1, r''_2}_{l, \gamma} > d^{r_1, r_2}_{j, \beta} \\ \ds d^{r''_1, r''_2}_{l, \gamma} \ngtr d^{r_1, r_2}_{i, \alpha}}}{\sum} b_{(i, \alpha, r_1, r_2), (l, \gamma, r''_1, r''_2)}
\end{equation}

\NI \textbf{Case (i):}Let $t_1 = 0, t_2 = 1, r_1=0, r_2 = 1$ and $d^{l_1-f}_{l_1-f}$ and $d^{l_2-f}_{l_2-f}$ have $2s_1+s_2-1$ through classes and no horizontal edge. After applying column operations to eliminate the non-zero entries corresponding to the diagrams coarser than both $d^{l_1-f}_{l_1-f}$ and $d^{l_2-f}_{l_2-f}$ then by Lemma \ref{L3.30} and equation (\ref{e3.13}) the $((i, \alpha, 0, 1), (j, \beta, 0, 1))$-entry $b_{(i, \alpha, 0, 1), (j, \beta, 0, 1)}$ of the block matrix $A_{2\times 0+1, 2 \times 0+1}$ is given by

\centerline{$b_{(i, \alpha, 0, 1), (j, \beta, 0, 1)} = (-1) \ 1!.$}
\NI since there is no diagram coarser than $d^{0, 1}_{j, \beta}$ alone.

\NI \textbf{Case (ii):} Let $t_1 = 1, t_2 = 0, r_1 = 1, r_2 = 0$ and $d^{l_1-f}_{l_1-f}$ and $d^{l_2-f}_{l_2-f}$ have $2(s_1-1)+s_2$ through classes and no horizontal edge. After applying column operations to eliminate the non-zero entries corresponding to the diagrams coarser than both $d^{1, 0}_{i, \alpha}$ and $d^{1, 0}_{j, \beta}$ then by Lemma \ref{L3.30} and equation (\ref{e3.13}) the $((i, \alpha, 1, 0), (j, \beta, 1, 0))$-entry $b_{(i, \alpha, 1, 0), (j, \beta, 1, 0)}$ of the block matrix $A_{2 \times 1+0, 2 \times 1+0}$ is given by

\centerline{$b_{(i, \alpha, 1, 0), (j, \beta, 1, 0)} = (-1) \ 2 \ 1!.$}
\NI Since there is no diagram coarser than $d^{1, 0}_{j, \beta}$ alone.

   In general, suppose that the diagrams $d^{l_1-f}_{l_1-f}$ and $d^{l_2-f}_{l_2-f}$ have $2(s_1-t_1) + s_2 -t_2$ through classes and have $r_1-t_1$ pair of $\{e\}$-horizontal edges and $r_2 - t_2$ number of $\mathbb{Z}_2$-horizontal edges then after performing column operations to eliminate the coarser elements of $d^{r_1, r_2}_{i, \alpha}$ and $d^{r_1, r_2}_{j, \beta}$ having $t'$ pair of $\{e\}$-through classes ($\{e\}$-horizontal edges) with $t'< t,$ the $0$ in the $((i, \alpha, r_1, r_2), (j, \beta, r_1, r_2))$-entry $b_{(i, \alpha, r_1, r_2), (j, \beta, r_1, r_2)}$ of the block matrix $A_{2r_1+r_2, 2r_1+r_2}$ is replaced by $- \ (t_1)! \ (t_2)! \ 2^{t_1}\ x^{2(r_1 - t_1)+r_2-t_2}$ inductively.

      For, $0 \leq f' \leq t_1$ and $0 \leq f'' \leq t_2,$ the number of diagrams obtained by joining $f'$ pairs of $\{e\}$ through classes ($f''$ numbers of $\mathbb{Z}_2$ through classes) with $f'$ pairs of $\{e\}$-horizontal edges($f''$ numbers $\mathbb{Z}_2$ horizontal edges)  in $d^{l_2^f}_{l_2^f}$ is given by $(t_1C_{f'})^2 \ (t_2C_{f''})^2 \ 2^{f'} \ f'! \ f''!.$ The number of diagrams which are coarser than $d^{r_1, r_2}_{j, \beta}$ but not coarser than $d^{r_1, r_2}_{i, \alpha}$ having $(r_1 - t_1-l)$-pairs of $\{e\}$-horizontal edges and $r_2-t_2-l'$ number of $\mathbb{Z}_2$-horizontal edges is given by

   \NI \centerline{$\ds \underset{m=0}{\overset{2t_1 - 2f'}{\sum}} \ \underset{m'=0}{\overset{2t_2 - 2f''}{\sum}} \ (r_1-t_1-l+m)C_m (2t_1 - 2f')C_m \ 2^m \ m! \ (r_2-t_2-l'+m')C_{m'} \ (2t_2 - 2f'')C_{m'}   $}
  \begin{equation}\label{e3.14}
  \ds (m')! \ \left(t_1C_{f'}\right)^2 \ f'! \ 2^{f'} \ \left( t_2C_{f''} \right)^2 \ f''!  \ B^{s_1-t_1, s_2-t_2}_{2(r_1-t_1)+r_2-t_2, 2(r_1-t_1-l+m)+r_2-t_2-l'+m'}
   \end{equation}

    (\ref{e3.14}) is obtained by choosing $m$ pairs of $\{e\}$-horizontal edges  $( m' \text{ number of } \mathbb{Z}_2$-horizontal edges) for every diagram coarser than $d^{l_2-f}_{l_2-f}$ having $r_1-t_1-(l-m)$ pairs of $\{e\}$-horizontal edges $(r_2-t_2-(l'-m')$ number of $\mathbb{Z}_2$-horizontal edges) and choose $m$ pairs of $\{e\}$-connected components($m'$ number of $\mathbb{Z}_2$-connected components) from $d^{l_2^f}_{l_2^f}.$ Connecting the chosen $m$ pairs of $\{e\}$-horizontal edges  from $d^{l_2-f}_{l_2-f}$ to the $m$ pairs of $\{e\}$-connected components of $d^{l_2^f}_{l_2^f}$ by $\{e\}$-horizontal edge will give $2^m \ m! \ (m')!$ number of diagrams having $r_1-t_1-l$ pairs of $\{e\}$-horizontal edges. $m$ and $m'$ cannot exceed $2t_1 - 2f'$ and $2t_2-2f''$ respectively, since $d^{l_2^f}_{l_2^f}$ has $2t_1 - 2f'$-pairs of $\{e\}$-components and $2t_2-f''$ number of $\mathbb{Z}_2$-components.

$ b_{(i, \alpha, r_1, r_2), (j, \beta, r_1, r_2)}  $
   \begin{eqnarray*}
      & = & -2^{t_1} \ t_1! \ t_2! \ x^{2(r_1-t_1)+r_2-t_2}   - (-1)^{t_1+t_2} \ (t_1)! \ (t_2)! \ 2^{t_1} \Big \{ \underset{(l, l') \neq (0, 0)}{\underset{l=0}{\overset{r_1-t_1}{\sum}} \underset{l'=-l}{\overset{r_2-t_2}{\sum}}} \underset{m=0}{\overset{2t_1}{\sum}} \underset{m'=0}{\overset{2t_2}{\sum}} 2t_1C_m \\
      & & \hspace{3cm} (r_1-t_1-l+m)C_m \ 2^m \ m! \ 2t_2C_{m'} \  (r_2-t_2-l'+m')C_{m'} \ (m')! \  \\
       & & \hspace{3cm} B^{s_1-t_1, s_2-t_2}_{2(r_1-t_1)+r_2-t_2, 2(r_1-t_1-l+m)+r_2-t_2-l'+m'} \ \phi^{s_1+t_1, s_2+t_2}_{2(r_1-t_1-l)+r_2-t_2-l'}(x) \Big\} \\
      & & - \underset{\substack{(f', f'') \neq (0, 0) \\ (f', f'') \neq (t_1, t_2)}}{\underset{f'=0}{\overset{t_1}{\sum}} \underset{f''=0}{\overset{t_2}{\sum}}} \underset{l=0}{\overset{r_1-t_1}{\sum}} \underset{l'=-l}{\overset{r_2-t_2}{\sum}} \underset{m=0}{\overset{2t_1-2f'}{\sum}} \underset{m'=0}{\overset{2t_2-f''}{\sum}} \left({_{t_1}}C_{f'} \right)^2 \ 2^{f'} \ f'! \ \left({_{t_2}}C_{f''} \right)^2  \ f''!  (-1)^{t_1-f'} \  2^{t_1-f'} \ (t_1-f')!  \\
      & & (-1)^{t_2-f''} \ (t_2-f'')! \ 2(t_1-f')C_m \ (r_1-t_1-l+m)C_m \ 2^m \ m! \  2( t_2-f'')C_{m'}
              \end{eqnarray*}
      \begin{eqnarray*}
            & & (r_2-t_2-l'+m')C_{m'} \ (m')! \ B^{s_1-t_1+f', s_2-t_2+f''}_{2(r_1-t_1)+r_2-t_2, 2(r_1-t_1-l+m)+r_2-t_2-l'+m'}  \phi^{s_1+t_1-f', s_2+t_2-f''}_{2(r_1-t_1-l)+r_2-t_2-l'}(x)\\
            & = &  -2^{t_1} \ t_1! \ t_2! \ x^{2(r_1-t_1)+r_2-t_2}   - (-1)^{t_1+t_2} \ (t_1)! \ (t_2)! \ 2^{t_1} \Big \{ \underset{(l, l') \neq (0, 0)}{ \underset{l=0}{\overset{r_1-t_1}{\sum}} \underset{l'=-l}{\overset{r_2-t_2}{\sum}}} \underset{m=0}{\overset{2t_1}{\sum}} \underset{m'=0}{\overset{2t_2}{\sum}} 2t_1C_m \\
      & & \hspace{3cm} (r_1-t_1-l+m)C_m \ 2^m \ m! \ 2t_2C_{m'} \  (r_2-t_2-l'+m')C_{m'} \ (m')! \  \\
       & & \hspace{3cm} B^{s_1-t_1, s_2-t_2}_{2(r_1-t_1)+r_2-t_2, 2(r_1-t_1-l+m)+r_2-t_2-l'+m'} \ \phi^{s_1+t_1, s_2+t_2}_{2(r_1-t_1-l)+r_2-t_2-l'}(x) \Big\} \\
       & & - \underset{\substack{(f', f'') \neq (0, 0) \\ (f', f'') \neq (t_1, t_2)}}{\underset{f'=0}{\overset{t_1}{\sum}} \underset{f''=0}{\overset{t_2}{\sum}} } \left({_{t_1}}C_{f'} \right)^2 \ 2^{f'} \ f'! \ \left({_{t_2}}C_{f''} \right)^2  \ f''!  (-1)^{t_1-f'} \  2^{t_1-f'} \ (t_1-f')!\ (-1)^{t_2-f''} \ (t_2-f'')!              \\
      & & \Big\{\underset{l=0}{\overset{r_1-t_1}{\sum}} \underset{l'=-l}{\overset{r_2-t_2}{\sum}} \underset{m=0}{\overset{2t_1-2f'}{\sum}} \underset{m'=0}{\overset{2t_2-f''}{\sum}}  \ 2(t_1-f')C_m \ (r_1-t_1-l+m)C_m \ 2^m \ m! \  2( t_2-f'')C_{m'}\\
            & &  (r_2-t_2-l'+m')C_{m'} \ (m')! \ B^{s_1-t_1+f', s_2-t_2+f''}_{2(r_1-t_1)+r_2-t_2, 2(r_1-t_1-l+m)+r_2-t_2-l'+m'}  \phi^{s_1+t_1-f', s_2+t_2-f''}_{2(r_1-t_1-l)+r_2-t_2-l'}(x) \\
       & & \hspace{10cm}-  \phi^{s_1+t_1-f', s_2+t_2-f''}_{2(r_1-t_1)+r_2-t_2}(x) \Big\} \\
       & = &  -2^{t_1} \ t_1! \ t_2! \ x^{2(r_1-t_1)+r_2-t_2}  - (-1)^{t_1+t_2} \ (t_1)! \ (t_2)! \ 2^{t_1} \Big \{ \underset{(l, l') \neq (0, 0)}{\underset{l=0}{\overset{r_1-t_1}{\sum}} \underset{l'=-l}{\overset{r_2-t_2}{\sum}}} \underset{m=0}{\overset{2t_1}{\sum}} \underset{m'=0}{\overset{2t_2}{\sum}} 2t_1C_m \\
        & & (r_1-t_1-l+m)C_m \ 2^m \ m! \ 2t_2C_{m'} \ (r_2-t_2-l'+m')C_{m'} \ (m')! \\
      & & B^{s_1-t_1, s_2-t_2}_{2(r_1-t_1)+r_2-t_2, 2(r_1-t_1-l+m)+r_2-t_2-l'+m'} \  \phi^{s_1+t_1, s_2+t_2}_{2(r_1-t_1-l)+r_2-t_2-l'}(x) \Big\}\\
       & & - \underset{f'=0}{\overset{t_1}{\sum}} \underset{f''=0}{\overset{t_2}{\sum}} \ (t_1C_{f'})^2 \ (-1)^{t_1-f'} \ 2^{f'} \ f'! \  (-1)^{t_2-f''} (t_2C_{f''})^2 \ f''! \ 2^{t_1-f'} \ (t_1-f')! \  (t_2-f'')! \ x^{2(r_1-t_1)+r_2-t_2}\\
      & &  + (-1)^{t_1+t_2} \ 2^{t_1} \ t_1! \ t_2!   x^{2(r_1-t_1)+r_2-t_2}  + 2^{t_1} \ t_1! \ t_2! \ x^{2(r_1-t_1)+r_2-t_2}\\
      & = & (-1)^{t_1+t_2} \ (t_1)! \ (t_2)! \ 2^{t_1} \left\{ x^{2(r_1-t_1)+r_2-t_2}  -  \underset{(l, l') \neq (0, 0)}{\underset{l=0}{\overset{r_1-t_1}{\sum}} \underset{l'=-l}{\overset{r_2-t_2}{\sum}} } \underset{m=0}{\overset{2t_1}{\sum}} \underset{m'=0}{\overset{2t_2}{\sum}} 2t_1C_m \ (r_1-t_1-l+m)C_m \ 2^m \ m! \ \right. \\
      & & \left. 2t_2C_{m'} \  (r_2-t_2-l'+m')C_{m'} \ (m')! \ B^{s_1-t_1, s_2-t_2}_{2(r_1-t_1)+r_2-t_2, 2(r_1-t_1-l+m)+r_2-t_2-l'+m'} \ \phi^{s_1+t_1, s_2+t_2}_{2(r_1-t_1-l)+r_2-t_2-l'}(x) \right\} \\
            \end{eqnarray*}
            expanding and using Lemma \ref{L3.30} we get,
            \begin{eqnarray}
             &=& (-1)^{t_1+t_2} \ (t_1)! \ (t_2)! \ 2^{t_1} \left\{ x^{2(r_1-t_1)+r_2-t_2} - \underset{(l, l') \neq (0, 0)}{\underset{l=0}{\overset{r_1-t_1}{\sum}} \underset{l' = -l}{\overset{r_2-t_2}{\sum}}} B^{s_1-t_1, s_2-t_2}_{2(r_1-_1)+r_2-t_2, 2(r_1-t_1-l)+r_2-t_2-l'} \right.\nonumber \\
       & & \hspace{8cm} \phi^{s_1-t_1, s_2-t_2}_{2(r_1-t_1-l)+r_2-t_2-l'}(x)  \ \ \ \ \text{putting } m=0, m' = 0 \nonumber
         \end{eqnarray}
         \begin{eqnarray}
         & & + \underset{(l, l') \neq (0, 0)}{\underset{l=0}{\overset{r_1-t_1}{\sum}} \underset{l' = -l}{\overset{r_2-t_2}{\sum}}} \underset{k'=1}{\overset{2t_2}{\sum}} 2t_2C_{k'} \ (r_2-t_2-l')C_{k'} \ k'!    \nonumber \\
         & & \hspace{3cm} B^{s_1-t_1, s_2-t_2}_{2(r_1-t_1)+r_2-t_2, 2(r_1-t_1-l)+r_2-t_2-l'} \ \phi^{s_1-t_1, s_2+t_2}_{2(r_1-t_1-l)+r_2-t_2-l'-k'}(x)\\
                & & - \underset{(l, l') \neq (0, 0)}{\underset{l=0}{\overset{r_1-t_1}{\sum}} \underset{l' = -l}{\overset{r_2-t_2}{\sum}}}  \underset{m'=1}{\overset{2t_2}{\sum}} 2t_2C_{m'} \ (r_2-t_2-l'+m')C_{m'} \ m'! \nonumber \\
        & & \hspace{3cm}B^{s_1-t_1, s_2-t_2}_{2(r_1-t_1)+r_2-t_2, 2(r_1-t_1-l)+r_2-t_2-l'+m'} \ \phi^{s_1-t_1, s_2+t_2}_{2(r_1-t_1-l)+r_2-t_2-l'}(x)\\
               & & + \underset{(l, l') \neq (0, 0)}{\underset{l=0}{\overset{r_1-t_1}{\sum}} \underset{l' = -l}{\overset{r_2-t_2}{\sum}}} \underset{k=1}{\overset{2t_1}{\sum}} 2t_1C_{k} \ (r_1-t_1-l)C_{k} \ 2^k \ k! \nonumber \\
       & & \hspace{2cm} B^{s_1-t_1, s_2-t_2}_{2(r_1-t_1)+r_2-t_2, 2(r_1-t_1-l)+r_2-t_2-l'} \ \phi^{s_1+t_1, s_2-t_2}_{2(r_1-t_1-l-k)+r_2-t_2-l'}(x)\\
        & & - \underset{(l, l') \neq (0, 0)}{\underset{l=0}{\overset{r_1-t_1}{\sum}} \underset{l' = -l}{\overset{r_2-t_2}{\sum}}} \underset{m=1}{\overset{2t_1}{\sum}}  2t_1C_{m} \ (r_1-t_1-l+m)C_{m} \ 2^m \ m! \nonumber\\
              & & \hspace{2cm} B^{s_1-t_1, s_2-t_2}_{2(r_1-t_1)+r_2-t_2, 2(r_1-t_1-l+m)+r_2-t_2-l'} \ \phi^{s_1+t_1, s_2-t_2}_{2(r_1-t_1-l)+r_2-t_2-l'}(x)\\
              & & - \underset{(l, l') \neq (0, 0)}{\underset{l=0}{\overset{r_1-t_1}{\sum}} \underset{l' = -l}{\overset{r_2-t_2}{\sum}}} \underset{k=1}{\overset{2t_1}{\sum}} \underset{k'=1}{\overset{2t_2}{\sum}} 2t_1C_{k} \ (r_1-t_1-l)C_{k} \ 2^k \ k! \ 2t_2C_{k'} \ (r_2-t_2-l')C_{k'} \ k'!  \nonumber \\
        & & \hspace{2cm} B^{s_1-t_1, s_2-t_2}_{2(r_1-t_1)+r_2-t_2, 2(r_1-t_1-l)+r_2-t_2-l'} \ \phi^{s_1+t_1, s_2+t_2}_{2(r_1-t_1-l-k)+r_2-t_2-l'-k'}(x)\\
                                 & & - \underset{(l, l') \neq (0, 0)}{\underset{l=0}{\overset{r_1-t_1}{\sum}} \underset{l' = -l}{\overset{r_2-t_2}{\sum}}} \underset{m=1}{\overset{2t_1}{\sum}} \underset{k'=1}{\overset{2t_2}{\sum}} 2t_1C_{m} \ (r_1-t_1-l+m)C_{m} \ 2^m \ m! \ 2t_2C_{k'} \ (r_2-t_2-l')C_{k'} \ k'!  \nonumber \\
      & &  B^{s_1-t_1, s_2-t_2}_{2(r_1-t_1)+r_2-t_2, 2(r_1-t_1-l+m)+r_2-t_2-l'} \ \phi^{s_1+t_1, s_2+t_2}_{2(r_1-t_1-l)+r_2-t_2-l'-k'}(x)\\
                            & & + \underset{(l, l') \neq (0, 0)}{\underset{l=0}{\overset{r_1-t_1}{\sum}} \underset{l' = -l}{\overset{r_2-t_2}{\sum}}} \underset{k=1}{\overset{2t_1}{\sum}} \underset{m'=1}{\overset{2t_2}{\sum}} 2t_1C_{k} \ (r_1-t_1-l)C_{k} \ 2^k \ k! \ 2t_2C_{m'} \ (r_2-t_2-l'+m')C_{m'} \ m'! \nonumber  \\
     & &  B^{s_1-t_1, s_2-t_2}_{2(r_1-t_1)+r_2-t_2, 2(r_1-t_1-l)+r_2-t_2-l'+m'} \ \phi^{s_1+t_1, s_2+t_2}_{2(r_1-t_1-l-k)+r_2-t_2-l'}(x)  \\
      & & + \underset{(l, l') \neq (0, 0)}{\underset{l=0}{\overset{r_1-t_1}{\sum}} \underset{l' = -l}{\overset{r_2-t_2}{\sum}}} \underset{m=1}{\overset{2t_1}{\sum}} \underset{m'=1}{\overset{2t_2}{\sum}} 2t_1C_{m} \ (r_1-t_1-l+m)C_{m} \ 2^m \ m! \ 2t_2C_{m'} \ (r_2-t_2-l'+m')C_{m'} \ m'! \nonumber \\
      & &B^{s_1-t_1, s_2-t_2}_{2(r_1-t_1)+r_2-t_2, 2(r_1-t_1-l+m)+r_2-t_2-l'+m'} \ \phi^{s_1+t_1, s_2+t_2}_{2(r_1-t_1-l)+r_2-t_2-l'}(x)
      \end{eqnarray}

     \NI  Putting $(l= 0, l' = m')$ in equation (3.17), $(l=m, l' =0)$ in equation (3.19), $(l=m, l' = m')$ in equation  (3.23) and canceling the common terms , we get

\NI $ b_{(i, \alpha, r_1, r_2), (j, \beta, r_1, r_2)} = $
        \begin{eqnarray*}
& & (-1)^{t_1+t_2} \ (t_1)! \ (t_2)! \ 2^{t_1} \left\{ \phi^{s_1-t_1, s_2-t_2}_{2(r_1-t_1)+r_2-t_2}(x)\right.\\
  & & - \underset{m=1}{\overset{2t_1}{\sum}} 2t_1C_m \ (r_1-t_1)C_m \ 2^m \ m! \ \phi^{s_1+t_1, s_2-t_2}_{2(r_1-t_1-m)+r_2-t_2}(x)  \\
            & & -  \underset{m'=1}{\overset{2t_2}{\sum}} 2t_2C_{m'} \ (r_2-t_2)C_{m'}  \ m'! \ \phi^{s_1-t_1, s_2+t_2}_{2(r_1-t_1)+r_2-t_2-m'}(x) \\
            & & \left. - \underset{m=1}{\overset{2t_1}{\sum}} \underset{m'=1}{\overset{2t_2}{\sum}} 2t_1C_m \ (r_1-t_1)C_m \ 2^m \ m! \ 2t_2C_{m'} \ (r_2-t_2)C_{m'} \ m'! \ \phi^{s_1+t_1, s_2+t_2}_{2(r_1-t_1-m)+r_2-t_2-m'}(x)  \right\}\\
            &=&  (-1)^{t_1+t_2} \ (t_1)! \ (t_2)! \ 2^{t_1}  \phi^{s_1+t_1, s_2+t_2}_{2(r_1-t_1)+r_2-t_2}(x)
         \end{eqnarray*}
     \NI  Therefore the $((i, \alpha, r_1, r_2), (j, \beta, r'_1, r'_2))$-entry in the block matrix $A_{2r_1+r_2, 2r_1+r_2}$ is replaced as

    $$(-1)^{t_1+t_2}\ (t_1)! \ (t_2)! \ 2^{t_1} \underset{l=t_1}{\overset{r_1-1}{\prod}}[x^2-x-2(s_1+l)] \underset{m=t_2}{\overset{r_2-1}{\prod}} [x-(s_2+m)].$$

\NI Proof (b) and (c) are similar to the proof of (a).
\end{proof}

Now, we have the main theorem of this section.

\subsection{Main Theorem}
\begin{thm}\label{T3.32}
\mbox{ }
\begin{enumerate}
\item[(a)] Let $\widetilde{G}^k_{2s_1+s_2}$ be the matrix similar to the Gram matrix $G_{2s_1+s_2}^k$ of the algebra of $\mathbb{Z}_2$-relations which is obtained after the column operations and the corresponding row operations on $G_{2s_1+s_2}^k.$ Then

 \centerline{$\widetilde{G}^k_{2s_1 + s_2} = \left(\underset{\ds 0 \leq r_1 + r_2 \leq k-s_1-s_2 }{\bigoplus} \widetilde{A}_{2r_1+r_2, 2r_1+r_2} \right)$}

 \item[(b)]Let $\widetilde{\overrightarrow{G}}^k_{2s_1+s_2}$ be the matrix similar to the Gram matrix $\overrightarrow{G}_{2s_1+s_2}^k$ of signed partition algebras which is obtained after the column operations and the corresponding row operations on $\overrightarrow{G}_{2s_1+s_2}^k.$ Then

 \centerline{$\widetilde{\overrightarrow{G}}^k_{2s_1 + s_2} = \left(\underset{\substack{\ds 0 \leq r_1 \leq k-s_1-s_2-1 \\ \ds 0 \leq r_2 < k-s_1-s_2-1 \\ \ds 2r_1+r_2 \leq 2k - 2s_1 - 2s_2-1}}{\bigoplus} \widetilde{\overrightarrow{A}}_{2r_1+r_2, 2r_1+r_2} \right)\bigoplus \widetilde{\overrightarrow{A}}_{\rho}$}
\NI where
   \begin{enumerate}
   \item[(i)] the diagonal element of $\widetilde{A}_{2r_1+r_2, 2r_1+r_2}$ and $\widetilde{\overrightarrow{A}}_{2r_1+r_2, 2r_1+r_2}$ are given by

$\begin{array}{llll}
   1. & \underset{j=0}{\overset{r_1-1}{\prod}} [x^2-x-2(s_1+j)] \underset{l=0}{\overset{r_2-1}{\prod}} [x-(s_2+l)] & if & r_1 \geq 1, r_2 \geq 1 \\
   2. & \underset{j=0}{\overset{r_1-1}{\prod}} [x^2-x-2(s_1+j)]  & if & r_2 = 0 \\
   3. & \underset{l=0}{\overset{r_2-1}{\prod}} [x-(s_2+l)] & if &  r_1 = 0
 \end{array}
$
     \item[(ii)] The entry $b_{(i, \alpha, r_1, r_2), (j, \beta, r_1, r_2)}$ of the block matrix $\widetilde{A}_{2r_1 +r_2,2r_1+r_2}$ and $\widetilde{\overrightarrow{A}}_{2r_1+r_2, 2r_1+r_2}$ are replaced by

\hspace{-1cm}$\begin{array}{llll}
  1. & (-1)^{t_1 + t_2} \ 2^{t_1} \ (t_1)! \ (t_2)! \underset{j=0}{\overset{r_1-t_1-1}{\prod}} [x^2-x-2(s_1 + t_1+ j)] \underset{l=0}{\overset{r_2-t_2-1}{\prod}} [x-(s_2 + t_2 + l)]  & if & r_1 \geq 1, r_2 \geq 1 \\
  2. & (-1)^{t_1 } \ 2^{t_1} \ (t_1)! \ \underset{j=0}{\overset{r_1-t_1-1}{\prod}} [x^2-x-2(s_1 + t_1+ j)] & if & r_2 = 0 \\
  3. & (-1)^{t_2} \  (t_2)! \underset{l=0}{\overset{r_2-t_2-1}{\prod}} [x-(s_2 + t_2 + l)]  & if &  r_1 = 0
 \end{array}
$

 \NI whenever $d^{r_1, r_2}_{i, \alpha}$ and $d^{r_1, r_2}_{j, \beta}$ is  as in Remark \ref{R3.24}(a), Notation \ref{N3.25} and Proposition \ref{P3.31}.

\item[(iii)] All other entries  in the block matrix $\widetilde{A}_{2r_1+r_2, 2r_1+r_2}$ and $\widetilde{\overrightarrow{A}}_{2r_1+r_2, 2r_1+r_2}$ are zero.
   \end{enumerate}

  The underlying partitions of the diagrams corresponding to the entries of the block matrix $\widetilde{\overrightarrow{A}}_{2r_1+r_2, 2r_1+r_2}$ are $\alpha = [\alpha_1]^1 [\alpha_2]^2 [\alpha_3]^3 [ \alpha_4]^4$ with $\alpha_1 = \left( \alpha_{11}, \cdots, \alpha_{1s_1}\right), \alpha_2 = \left(  \alpha_{21}, \cdots, \alpha_{2s_2} \right), \alpha_{3} = \left(\alpha_{31}, \cdots, \alpha_{3r_1} \right), \alpha_{4} = \left(  \alpha_{41}, \cdots,  \alpha_{4r_2}\right)$ such that atleast one of $\alpha_{1i}, \alpha_{2j}, \alpha_{3l}, \alpha_{4m}$ is greater than $1$  for $1 \leq i \leq s_1, 1 \leq j \leq s_2, 1 \leq l \leq r_1$ and $1 \leq m\leq r_2.$

  Since the diagrams corresponding to the partition $\rho = [\rho_1]^1 [\rho_2]^2 [\rho_3]^3 [\rho_4]^4$ with $|\rho_{1i}| = 1, \ \forall 1 \leq i \leq s_1, |\rho_{2j}| = 1 \ \forall 1 \leq j \leq s_2, |\rho_{3m}| = 0 \ \forall 1 \leq m \leq r_1$ and $|\rho_{4l}| = 1 \ \forall 1 \leq l \leq r_2$ does not belong to the signed partition algebra. Thus the block corresponding to the diagrams whose underlying partition is $\rho$ is studied separately.

 \item[(b)$'$]Let $\widetilde{\overrightarrow{A}}_{\rho}$ where  the partition $\rho$ is such that each $\rho_{1i}, \rho_{2j}, \rho_{3l}, \rho_{4m}$ is equal to $1$ for $1 \leq i \leq s_1, 1 \leq j \leq s_2, 1 \leq l \leq r_1$ and $1 \leq m\leq r_2$ and $\widetilde{\overrightarrow{A}}_{\rho}$ is the block sub matrix corresponding to the diagrams whose underlying partition is $\rho.$

\begin{enumerate}
  \item[(i)] The $((i, \rho, r'_1, r'_2), (i, \rho, r'_1, r'_2))$-entry $x^{2r'_1+r'_2}$ in the matrix $\widetilde{\overrightarrow{A}}_{\rho}$  is replaced by

\centerline{$\underset{j=0}{\overset{r'_1-1}{\prod}} [x^2-x-2(s_1+j)] \underset{l=0}{\overset{l=r'_2-1}{\prod}} [x-(s_2+l)] + \underset{l=0}{\overset{k-s_1-s_2-1}{\prod}} [x-(s_2+l)]$}

\NI where $1 \leq r'_1 \leq k - s_1 - s_2 $ and  $r'_2 = k - s_1 - s_2 - r'_1.$
  \item[(ii)] The zero in the $((i, \rho, r'_1, r'_2), (j, \rho, r'_1, r'_2))$-entry  in the block matrix $\widetilde{\overrightarrow{A}}_{\rho}$ is replaced by

  $(-1)^{t_1+t_2} \ 2^{t_1} \ (t_1)! \ (t_2)! \ \underset{j=0}{\overset{r'_1-t_1-1}{\prod}} [x^2-x-2(s_1+t_1+j)]  \underset{l=0}{\overset{r'_2-t_2-1}{\prod}} [x-(s_2+l+t_2)]\\ + \underset{l=0}{\overset{k-s_1-s_2-1}{\prod}} [x-(s_2+l)]$

  \NI where $d^{r'_1, r'_2}_{i, \rho}$ and $d^{r'_1, r'_2}_{j, \rho}$  are as in Remark \ref{R3.24}(a),  Notation \ref{N3.25} and Proposition \ref{P3.31} where $1 \leq i, j \leq 2k - 2s_1 - 2s_2$ and $i \neq j.$

  \item[(iii)] If $\sharp^p \left(d^{r'_1, k-s_1-s_2-r'_1}_{i, \rho} . d^{r_1, k-s_1-s_2-r_1}_{j, \rho}\right) = 2s_1 + s_2$ then the  $((i, \rho, r'_1, k-s_1-s_2-r'_1), (j, \rho, r_1, k-s_1-s_2-r_1))$-entry in the block matrix $\widetilde{\overrightarrow{A}}_{\rho}$ is  replaced by

  \centerline{$ (-1)^{r_1 + r'_1} \ \underset{l=0}{\overset{k-s_1-s_2-1}{\prod}} [x-(s_2+l)]$}

\NI where  $1 \leq i, j \leq 2k - 2s_1 - 2s_2$ and $i \neq j.$

\NI \item[(iv)] All other entries in the block matrix  $\widetilde{\overrightarrow{A}}_{\rho}$ are zero.
\end{enumerate}
\item[(c)] Let $\widetilde{G}^k_{s}$ be the matrix similar to the Gram matrix $G_{s}^k$ which is obtained after the column operations and the row operations on $G_{s}^k.$ Then

 \centerline{$\widetilde{G}^k_{s} = \left(\underset{0 \leq r \leq k-s}{\bigoplus} \widetilde{A}_{r, r} \right)$}
\NI where
   \begin{enumerate}
   \item[(i)] The diagonal element of $\widetilde{A}_{r, r}$ is given by

 \centerline{$ \underset{l=0}{\overset{r-1}{\prod}} [x-(s+l)]$}
     \item[(ii)] The entry $b_{(i, \alpha, r), (j, \beta, r)}$ of the block matrix $\widetilde{A}_{r, r}$ is replaced by

     \centerline{$ (-1)^{t}  \ (t)! \  \underset{j=t}{\overset{r-1}{\prod}}  [x-(s  + l)] $} whenever $R^{d^{r}_{i, \alpha}}$ and $R^{d^{r}_{j, \beta}}$ are  as in Remark \ref{R3.24}(b),   Notation \ref{N3.25} and Proposition \ref{P3.31}.

\item[(iii)] All other entries in the block matrix $\overrightarrow{A}_{r, r}$ are zero.
   \end{enumerate}

\end{enumerate}
\end{thm}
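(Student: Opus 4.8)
The plan is to exhibit $\widetilde{G}$ as a congruence transform $P^{t}\,G\,P$ of the Gram matrix, where $P$ is the product of the elementary matrices recording the column operations set up in equation (\ref{e3.2a}) before Lemma \ref{L3.21}, and the ``corresponding row operations'' are the transposed (mirror) operations. Since $G^k_{2s_1+s_2}$, $\overrightarrow{G}^k_{2s_1+s_2}$ and $G^k_s$ are symmetric by Lemma \ref{L3.10}, applying each column operation $L_{(j,\beta,r'_1,r'_2)}\to L_{(j,\beta,r'_1,r'_2)}-L_{(i,\alpha,r_1,r_2)}$ together with its mirror row operation preserves symmetry and yields a matrix congruent to $G$; this is exactly the meaning of ``similar'' here, and it preserves the rank and the determinant up to a unit square, which is all that is needed later for semisimplicity.

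First I would order the index set $J^{2k}_{2s_1+s_2}$ as in Definition \ref{D3.7} and run the column operations inductively, from the coarsest diagrams (fewest horizontal edges) to the finest. By Lemma \ref{L3.21}(a), after these operations every entry $b_{(i,\alpha,r_1,r_2),(j,\beta,r'_1,r'_2)}$ with $(i,\alpha,r_1,r_2)<(j,\beta,r'_1,r'_2)$ vanishes: part (iii) handles the case when $d^{r_1,r_2}_{i,\alpha}$ is coarser than $d^{r'_1,r'_2}_{j,\beta}$, and part (ii) handles the non-coarser case. Hence the column-reduced matrix has all entries above the diagonal (in the Definition \ref{D3.7} ordering) equal to zero, i.e.\ it is block lower-triangular, the blocks being indexed by $2r_1+r_2$ together with $r_1+r_2$. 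Performing the mirror row operations then clears the strictly lower part as well, using Lemma \ref{L3.21}(a)(i), which says that just before the row step each lower entry equals the diagonal entry being subtracted. What survives is block-diagonal, which gives the direct-sum decompositions in (a), (b) and (c).

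Within a diagonal block I would read off the entries from the preceding computations. Theorem \ref{T3.22} shows the diagonal entry $x^{2r_1+r_2}$ is replaced by $\prod_{j=0}^{r_1-1}[x^2-x-2(s_1+j)]\prod_{l=0}^{r_2-1}[x-(s_2+l)]$, together with the degenerate cases $r_1=0$ and $r_2=0$, giving item (i). Proposition \ref{P3.31} supplies the remaining same-block entries, namely $(-1)^{t_1+t_2}2^{t_1}(t_1)!(t_2)!\prod_{j=t_1}^{r_1-1}[x^2-x-2(s_1+j)]\prod_{l=t_2}^{r_2-1}[x-(s_2+l)]$, where $t_1,t_2$ are the tensor parameters of the decomposition in Notation \ref{N3.25}; this is item (ii). Lemma \ref{L3.23} and Lemma \ref{L3.27} together certify that every off-diagonal entry not covered by Proposition \ref{P3.31} is and remains zero, in particular those with incompatible tensor decompositions ($t''_1\neq t_1$ or $t''_2\neq t_2$, conditions (iii),(iv) of Lemma \ref{L3.27}) and those where a $\mathbb{Z}_2$-horizontal edge meets an $\{e\}$-through class; this is item (iii). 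The partition algebra case (c) is formally identical, with each quadratic factor $x^2-x-2(s_1+j)$ replaced by a single linear factor $x-(s+j)$, using $\phi^s_r$ of Notation \ref{N3.28}, Lemma \ref{L3.29} and Proposition \ref{P3.31}(c).

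The hard part, and the reason the signed-partition statement (b) carries an extra summand $\widetilde{\overrightarrow{A}}_{\rho}$, is the boundary restriction built into Definition \ref{D2.8} and $\overrightarrow{\Omega}^{r_1,r_2}_{s_1,s_2}$: when $r_1=0$, $s_1\neq k$ and $s_1+s_2+r_2=k$ the corresponding diagrams are excluded from $\overrightarrow{A}_k^{\mathbb{Z}_2}$, so the count of coarser diagrams used in Theorem \ref{T3.22} and Proposition \ref{P3.31} is short by exactly the all-singleton family with underlying partition $\rho$ (all $|\rho_{1i}|=|\rho_{2j}|=|\rho_{4l}|=1$, $|\rho_{3m}|=0$). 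I would isolate the block on the $\rho$-indexed diagrams and redo the column/row reduction directly there. The missing diagrams contribute the correction term $\prod_{l=0}^{k-s_1-s_2-1}[x-(s_2+l)]$ added to every diagonal and off-diagonal entry in (b)$'$(i),(ii); and for the cross-$(r_1,r_2)$ entries that now survive only inside this block, they produce the sign $(-1)^{r_1+r'_1}$ times that same product in (b)$'$(iii). Verifying these three formulas, and that all other entries of $\widetilde{\overrightarrow{A}}_{\rho}$ vanish, is the only genuinely new calculation; the remainder of the theorem is the assembly of Lemmas \ref{L3.21}, \ref{L3.23}, \ref{L3.27}, Theorem \ref{T3.22} and Proposition \ref{P3.31}.
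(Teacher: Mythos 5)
Your overall architecture coincides with the paper's: parts (a), (c) and the generic blocks of (b) are obtained exactly as you describe, by assembling Lemma \ref{L3.21} (vanishing of the entries linking distinct blocks), Lemmas \ref{L3.23} and \ref{L3.27} (vanishing of the remaining off-diagonal entries), Theorem \ref{T3.22} (the new diagonal entries) and Proposition \ref{P3.31} (the surviving off-diagonal entries inside a block), followed by the mirror row operations; your congruence framing $P^{t}GP$ is a clean way of expressing what the paper means by ``similar'' and is consistent with the symmetry recorded in Lemma \ref{L3.10}. So for those parts the proposal is essentially the paper's proof.

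The one place where you stop short of a proof is part (b)$'$. You correctly locate the difficulty --- the diagrams with underlying partition $\rho$, $r_1=0$, $s_1\neq k$ and $s_1+s_2+r_2=k$ are excluded from $\overrightarrow{A}_k^{\mathbb{Z}_2}$, so the column operations available for the other blocks cannot all be performed --- and you state the corrected entries, but you do not derive them. The paper's proof of (b)$'$ is a genuine induction: it first treats $d^{1,k-s_1-s_2-1}_{i,\rho}$ and $d^{2,k-s_1-s_2-2}_{i,\rho}$, then a general $d^{j,k-s_1-s_2-j}_{i,\rho}$, and the additive correction $\phi^{s_1,s_2}_{2\cdot 0+k-s_1-s_2}(x)$ in (b)$'$(i)--(ii), as well as the sign $(-1)^{r_1+r_1'}$ in (b)$'$(iii), emerge only after interleaving the column operations inside $\widetilde{\overrightarrow{A}}_{\rho}$ with the corresponding row operations and exploiting an alternating binomial cancellation of the form $\sum_{m=1}^{j-1}(-1)^{m-1}\binom{j}{m}-1$. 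Since these three formulas are the actual content of (b)$'$, asserting them without this (or an equivalent) computation leaves that case unproved; the remainder of your plan is sound and matches the paper.
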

\begin{proof}

\NI \textbf{Proof of (a):} Every entry $x^{2r_1+r_2}$ in the sub block matrix $\widetilde{A}_{2r_1+r_2, 2r'_1+r'_2}$ is also replaced by

 \centerline{$\underset{j=0}{\overset{r_1-1}{\prod}} [x^2-x-2(s_1+j)] \underset{l=0}{\overset{r_2-1}{\prod}} [x-(s_2+l)]$}

We continue to do the column operations for all the diagrams whose underlying partition is $\alpha$ where $\alpha = [\alpha_1]^1 [ \alpha_2]^2 [\alpha_3] [ \alpha_4]^4$ with $\alpha_1 = \left( \alpha_{11}, \cdots, \alpha_{1s_1}\right), \alpha_2 = \left(  \alpha_{21}, \cdots, \alpha_{2s_2} \right), \alpha_{3} = \left(\alpha_{31}, \cdots, \alpha_{3r_1} \right),  \alpha_{4} = \left(  \alpha_{41}, \cdots,  \alpha_{4r_2}\right)$ such that atleast one of $\alpha_{1i}, \alpha_{2j}, \alpha_{3l}, \alpha_{4m}$ is greater than $1$ and hence the above entry gets eliminated.

Thus, from Lemmas \ref{L3.19} and \ref{L3.27} it follows that the rectangular sub matrix $\widetilde{A}_{2r_1+r_2, 2r'_1+r'_2}$ with $2r_1+r_2 \neq 2r'_1+r'_2$ becomes zero after all the column operations are carried out.

 After applying the row operations corresponding to the column operations performed in Lemmas \ref{L3.23},  \ref{L3.27}, Proposition \ref{P3.31},  and Theorem \ref{T3.22}, the Gram matrix $G_{2s_1+s_2}^k$ which is similar to a matrix $\widetilde{G}^k_{2s_1+s_2}$  decomposes as a direct sum of block matrices.

\centerline{i.e.,$\widetilde{G}^k_{2s_1 + s_2} = \left(\underset{\ds 0 \leq r_1 +r_2 \leq k-s_1-s_2}{\bigoplus} \widetilde{A}_{2r_1+r_2, 2r_1+r_2} \right) $}

\NI where the diagonal element of $\widetilde{A}_{2r_1+r_2, 2r_1+r_2}$ is given by

\centerline{$\underset{j=0}{\overset{r_1-1}{\prod}} [x^2-x-2(s_1+j)] \underset{l=0}{\overset{r_2-1}{\prod}} [x-(s_2+l)]$.}

\NI Result (i) follows from Theorem \ref{T3.22}(a), result (ii) follows from Proposition \ref{P3.31}(a) and result (iii) follow from Lemmas \ref{L3.21}, \ref{L3.23}, and \ref{L3.27}(a) respectively.

\NI \textbf{Proof of (b)$'$:}

 The column operations corresponding to the diagrams whose underlying partition is $\rho$ where

 \NI $\rho = [\rho_1]^1 [ \rho_2]^2 [\rho_3]^3 [ \rho_4]^4$ where $|\rho_{1i}| = 1, \ \forall \ 1 \leq i \leq s_1, |\rho_{2 j}| = 1 \ \forall \ 1 \leq j \leq s_2, |\rho_{3m}| = 0, \ \forall \ 1 \leq m \leq r_1$ and $|\rho_{4l}| = 1 \ \forall \ 1 \leq l \leq r_2$ such that $s_1 + s_2 + r_2 = k$ with $s_1 \lvertneqq k$ cannot be carried out for the block matrix $\widetilde{\overrightarrow{A}}_{\rho}$, since those diagrams do not belong to the signed partition algebra.

\NI \textbf{Proof of (i):}We prove the result by induction.

\NI \textbf{Case (i):} Let $d^{1, k-s_1-s_2-1}_{i, \rho}$ be a diagram in $\overrightarrow{\mathbb{J}}^{2.1 + k-s_1-s_2-1}_{2s_1+s_2}$, after the column operations the $((i, \rho, 1, k-s_1-s_2-1), (i, \rho, 1, k-s_1-s_2-1))$-entry corresponding to the diagram $d^{1, k-s_1-s_2-1}_{i, \rho}$ will be replaced by

\centerline{$\phi^{s_1, s_2}_{2.1+k-s_1-s_2-1}(x) + \phi^{s_1, s_2}_{2.0+k-s_1-s_2}(x)$}

\NI since the signed partition algebra does not contain diagrams with $k-s_1-s_2$ number of $\mathbb{Z}_2$-horizontal edges.

\NI \textbf{Case (ii):}  Let $d^{2, k-s_1-s_2-2}_{i, \rho}$ be a diagram in $\overrightarrow{\mathbb{J}}^{2.2 + k-s_1-s_2-2}_{2s_1+s_2}.$

After applying the column operations $L_{(i, \rho, 2, k-s_1-s_2-2)} \rightarrow L_{(i, \rho, 2, k-s_1-s_2-2)} - L_{(k, \alpha, r_1, r_2)}$ for all $d^{r_1, r_2}_{k, \alpha}$ where $d^{r_1, r_2}_{k, \alpha} \in \overrightarrow{\mathbb{J}}_{2s_1+s_2}^{2r_1+r_2}$ with $r_1+r_2+s_1+s_2 \leq k-1,$ the $((i, \rho, 2, k-s_1-s_2-2), (i, \rho, 2, k-s_1-s_2-2))$-entry will be replaced by

\centerline{$\phi^{s_1, s_2}_{2.2+k-s_1-s_2-2}(x) + 2 \phi^{s_1, s_2}_{2.1+k-s_1-s_2-1}(x) + \phi^{s_1, s_2}_{2.0+k-s_1-s_2}(x)$}

Again applying the column operations inside the block matrix $\widetilde{\overrightarrow{A}}_{\rho}$, the $((i, \rho, 2, k-s_1-s_2-2), (i, \rho, 2, k-s_1-s_2-2))$-entry becomes

\centerline{$\phi^{s_1, s_2}_{2.2+k-s_1-s_2-2}(x) + 2 \phi^{s_1, s_2}_{2.1+k-s_1-s_2-1}(x) + \phi^{s_1, s_2}_{2.0+k-s_1-s_2}(x)-2\left[ \phi^{s_1, s_2}_{2.1 + k - s_1-s_2-1}(x) + \phi^{s_1, s_2}_{2.0 + k - s_1-s_2}(x)\right]$}

$ = \phi^{s_1, s_2}_{2.2+k-s_1-s_2-2}(x) - \phi^{s_1, s_2}_{2.0 + k- s_1-s_2}(x).$

After applying the row operations corresponding to the column operations which is performed to obtain the above $((i, \rho, 2, k-s_1-s_2-2), (i, \rho, 2, k-s_1-s_2-2))$-entry, the $((i, \rho, 2, k-s_1-s_2-2), (i, \rho, 2, k-s_1-s_2-2))$-entry  further becomes

$\phi^{s_1, s_2}_{2.2+k-s_1-s_2-2}(x) - \phi^{s_1, s_2}_{2.0 + k- s_1-s_2}(x) + 2 \phi^{s_1, s_2}_{2.0 + k-s_1-s_2}(x)$

$ = \phi^{s_1, s_2}_{2.2+k-s_1-s_2-2}(x) + \phi^{s_1, s_2}_{2.0 + k- s_1-s_2}(x).$

In general, Let $d^{j, k-s_1-s_2-j}_{i, \rho}$ be a diagram in $\overrightarrow{\mathbb{J}}^{2.j + k-s_1-s_2-j}_{2s_1+s_2}.$

After applying the column operations, by induction the $((i, \rho, j, k-s_1-s_2-j), (i, \rho, j, k-s_1-s_2-j))$-entry of the matrix $\widetilde{\overrightarrow{A}}_{\rho}$ becomes

$\phi^{s_1, s_2}_{2j + k- s_1-s_2 -j}(x) + \underset{m=1}{\overset{j-1}{\sum}} {_{j}}C_m \ \phi^{s_1, s_2}_{2(j-m)+ k-s_1-s_2-j+m}(x) + \phi^{s_1, s_2}_{2.0 + k - s_1-s_2}(x)$

 $ - \underset{m=1}{\overset{j-1}{\sum}} {_{j}}C_m \left( \phi^{s_1, s_2}_{2(j-m)+ k-s_1-s_2-j+m}(x) + \phi^{s_1, s_2}_{2.0 + k - s_1-s_2}(x)\right)$

 $= \phi^{s_1, s_2}_{2j + k- s_1-s_2 -j}(x) - \underset{m=1}{\overset{j-1}{\sum}} {_{j}}C_m \ \phi^{s_1, s_2}_{2.0+ k-s_1-s_2}(x) + \phi^{s_1, s_2}_{2.0 + k - s_1-s_2}(x).$

 After applying the row operations the $((i, \rho, j, k-s_1-s_2-j), (i, \rho, j, k-s_1-s_2-j))$-entry  further becomes

 $\phi^{s_1, s_2}_{2j + k- s_1-s_2 -j}(x) - \underset{m=1}{\overset{j-1}{\sum}} {_{j}}C_m \ \phi^{s_1, s_2}_{2.0+ k-s_1-s_2}(x) + \phi^{s_1, s_2}_{2.0 + k - s_1-s_2}(x) + \underset{m=1}{\overset{j-1}{\sum}} {_{j}}C_m \ \phi^{s_1, s_2}_{2.0+ k-s_1-s_2}(x)$

 $ = \phi^{s_1, s_2}_{2j + k- s_1-s_2 -j}(x) + \phi^{s_1, s_2}_{2.0 + k- s_1-s_2 }(x)$

Thus, for a diagram $d^{r'_1, k-s_1-s_2-r'_1}_{i, \rho} \in \overrightarrow{\mathbb{J}}_{2s_1+s_2}^{2r'_1+k-s_1-s_2-r'_1}$ the $((i, \rho, r'_1, k-s_1-s_2-r'_1), (i, \rho, r'_1, k-s_1-s_2-r'_1))$-entry is replaced as

\centerline{$ \underset{j=0}{\overset{r'_1-1}{\prod}}[x^2-x-2(s_1+j)] \underset{l=0}{\overset{k-s_1-s_2-r'_1-1}{\prod}}[x-(s_2+l)] +  \underset{l=0}{\overset{k-s_1-s_2-1}{\prod}}[x-(s_2+l)].$}

\textbf{Proof of (ii):} The proof follows from  Proposition \ref{P3.31}(b) and it is similar to the Proof of (1), whenever $d^{r'_1, r'_2}_{i, \rho}$ and $d^{r'_1, r'_2}_{j, \rho}$ are as in Notation \ref{N3.25}.

\textbf{Proof of (iii):}

 \NI \textbf{Case (i):}Let $d^{1, k-s_1-s_2-1}_{i, \rho} \in \overrightarrow{\mathbb{J}}^{2.1 + k-s_1-s_2-1}_{2s_1+s_2}$ and $d^{2, k-s_1-s_2-2}_{j, \rho} \in \overrightarrow{\mathbb{J}}^{2.2+k-s_1-s_2-2}_{2s_1+s_2}$ such that number of $\{e\}$-horizontal edges in $d^{1, k-s_1-s_2-1}_{i, \rho}$ is greater than the number of $\{e\}$-horizontal edges in $d^{2, k-s_1-s_2-2}_{j, \rho}$ then $l \left(d^{1, k-s_1-s_2-1}_{i, \rho} . d^{2, k-s_1-s_2-2}_{j, \rho} \right) \leq 2.1+k-s_1-s_2-1.$

 There will be two diagrams say $d^{1, k-s_1-s_2-1}_{i', \rho}$ and $d^{1, k-s_1-s_2-1}_{i'', \rho}$ coarser than $d^{2, k-s_1-s_2-2}_{j, \rho}.$

 \NI \textbf{Subcase (i):} Suppose $l  \left(d^{1, k-s_1-s_2-1}_{i, \rho} . d^{2, k-s_1-s_2-2}_{j, \rho} \right) = 2.1 + k - s_1- s_2 -1$ then

 \centerline{$a_{(i, \rho, 1, k-s_1-s_2-1), (j, \rho, 2, k-s_1-s_2-2)} = \phi^{s_1, s_2}_{2.1+k-s_1-s_2-1}(x) + \phi^{s_1, s_2}_{2.0+k-s_1-s_2}(x).$}

\NI Also,

$\begin{array}{lll}
            a_{(i, \rho, 1, k-s_1-s_2-1), (i', \rho, 1, k-s_1-s_2-1)} & = & \phi^{s_1, s_2}_{2.1+k-s_1-s_2-1}(x) + \phi^{s_1, s_2}_{2.0+k-s_1-s_2}(x) \text{ and } \\
            a_{(i, \rho, 1, k-s_1-s_2-1), (i'', \rho, 1, k-s_1-s_2-1)} & = &  \phi^{s_1, s_2}_{2.0+k-s_1-s_2}(x)
          \end{array}$

\NI or

$\begin{array}{lll}
            a_{(i, \rho, 1, k-s_1-s_2-1), (i', \rho, 1, k-s_1-s_2-1)} & = & \phi^{s_1, s_2}_{2.0+k-s_1-s_2}(x) \\
           a_{(i, \rho, 1, k-s_1-s_2-1), (i'', \rho, 1, k-s_1-s_2-1)}  & = & \phi^{s_1, s_2}_{2.1+k-s_1-s_2-1}(x) + \phi^{s_1, s_2}_{2.0+k-s_1-s_2}(x)
          \end{array}$

\NI After applying the column operations the $((i, \rho, 1, k-s_1-s_2-1), (j, \rho, 2, k-s_1-s_2-2)), $-entry in $\widetilde{\overrightarrow{A}}_{\rho}$ becomes

\NI $b_{(i, \rho, 1, k-s_1-s_2-1), (j, \rho, 2, k-s_1-s_2-2)}  $
\begin{eqnarray*}
   & = & a_{(i, \rho, 1, k-s_1-s_2-1), (j, \rho, 2, k-s_1-s_2-2)} - a_{(i, \rho, 1, k-s_1-s_2-1), (i', \rho, 1, k-s_1-s_2-1)} - a_{(i, \rho, 1, k-s_1-s_2-1), (i'', \rho, 1, k-s_1-s_2-1)} \\
   &=&  -\phi^{s_1, s_2}_{2.0+k-s_1-s_2}(x)
\end{eqnarray*}
\NI \textbf{Subcase (ii):}Suppose $l \left(d^{1, k-s_1-s_2-1}_{i, \rho} . d^{2, k-s_1-s_2-2}_{j, \rho} \right) < 2.1 + k - s_1- s_2 -1$ then

\centerline{$a_{(i, \rho, 1, k-s_1-s_2-1), (j, \rho, 2, k-s_1-s_2-2)} = \phi^{s_1, s_2}_{2.0+k-s_1-s_2}(x).$}

\NI Also,

 $\begin{array}{lll}
   a_{(i, \rho, 1, k-s_1-s_2-1), (i', \rho, 1, k-s_1-s_2-1)}  & = & \phi^{s_1, s_2}_{2.0+k-s_1-s_2}(x) \text{ and } \\
   a_{(i, \rho, 1, k-s_1-s_2-1), (i'', \rho, 1, k-s_1-s_2-1)}  & = &  \phi^{s_1, s_2}_{2.0+k-s_1-s_2}(x)
  \end{array}
 $

\NI After applying the column operations the $(i, \rho, 1, k-s_1-s_2-1), (j, \rho, 2, k-s_1-s_2-2)$-entry in $\widetilde{\overrightarrow{A}}_{\rho}$ becomes

\NI $b_{(i, \rho, 1, k-s_1-s_2-1), (j, \rho, 2, k-s_1-s_2-2)}$
\begin{eqnarray*}
   &=& a_{(i, \rho, 1, k-s_1-s_2-1), (j, \rho, 2, k-s_1-s_2-2)} - a_{(i, \rho, 1, k-s_1-s_2-1), (i', \rho, 1, k-s_1-s_2-1)} - a_{(i, \rho, 1, k-s_1-s_2-1), (i'', \rho, 1, k-s_1-s_2-1)} \\
   &=&  -\phi^{s_1, s_2}_{2.0+k-s_1-s_2}(x)
\end{eqnarray*}

 In general, let $d^{r'_1, k-s_1-s_2-r'_1}_{i, \rho} \in \overrightarrow{\mathbb{J}}^{2r'_1 + k-s_1-s_2-r'_1}_{2s_1+s_2}$ and $d^{r_1, k-s_1-s_2-r_1}_{j,\rho} \in \overrightarrow{\mathbb{J}}^{2r_1+k-s_1-s_2-r_1}_{2s_1+s_2}$ such that the number of $\{e\}$-horizontal edges in $d^{r_1, k-s_1-s_2-r_1}_{j, \rho}$ is strictly greater than the number of $\{e\}$-horizontal edges in $d^{r'_1, k-s_1-s_2-r'_1}_{i, \rho}$ then $l \left(d^{r'_1, k-s_1-s_2-r'_1}_{i, \rho} . d^{r_1, k-s_1-s_2-r_1}_{j, \rho} \right) \leq 2r'_1+k-s_1-s_2-r'_1.$

After applying the column operations the $((i, \rho, r'_1, k-s_1-s_2-r'_1), (j, \rho, r_1, k-s_1-s_2-r_1))$-entry becomes

\NI $ b_{(i, \rho, r'_1, k-s_1-s_2-r'_1), (j, \rho, r_1, k-s_1-s_2-r_1)}$
\begin{eqnarray*}
  & & \left(\underset{m=1}{\overset{r_1-1}{\sum}} (-1)^{m-1} \ {_{r_1}}C_m - 1 \right) \phi^{s_1, s_2}_{2.0+k-s_1-s_2}(x) \\
   &=& (-1)^{r_1+1} \phi^{s_1, s_2}_{2.0+k-s_1-s_2}(x)
  \end{eqnarray*}

After applying row operations the $((i, \rho, r'_1, k-s_1-s_2-r'_1), (j, \rho, r_1, k-s_1-s_2-r_1))$-entry  further becomes
\begin{eqnarray*}
   b_{(i, \rho, r'_1, k-s_1-s_2-r'_1), (j, \rho, r_1, k-s_1-s_2-r_1)} &=& \left(\underset{m=1}{\overset{r'_1-1}{\sum}} (-1)^{m-1} \ {_{r'_1}}C_m - 1\right) (-1)^{r_1+1}\phi^{s_1, s_2}_{2.0+k-s_1-s_2}(x) \\
   &=& (-1)^{r'_1+r_1} \ \phi^{s_1, s_2}_{2.0+k-s_1-s_2}(x)
  \end{eqnarray*}

Thus, the $((i, \rho, r'_1, k-s_1-s_2-r'_1), (j, \rho, r_1, k-s_1-s_2-r_1))$-entry of the block matrix $\widetilde{\overrightarrow{A}}_{\rho}$ is given by

\centerline{$ (-1)^{r_1+r'_1} \ \phi^{s_1, s_2}_{2.0+k-s_1-s_2}(x).$}

\NI Proof of (b) and (c) are similar to the proof of (a).
\end{proof}

\begin{rem}\label{R3.33}
\begin{enumerate}
  \item[(a)]$\widetilde{G}^k_{0+0} = \underset{\ds 0 \leq r_1 + r_2 \leq k } \oplus \widetilde{A}_{2r_1 + r_2, 2r_1+r_2} $

  \item[(b)]$\widetilde{\overrightarrow{G}}^k_{0+0} = \underset{\substack{\ds 0 \leq r_1 \leq k-1 \\ \ds 0 \leq r_2 \leq k-1 \\ \ds 2r_1 + r_2 \leq 2k - 1}} \oplus \widetilde{\overrightarrow{A}}^k_{2r_1 + r_2, 2r_1+r_2} \oplus \widetilde{\overrightarrow{A}}_{\rho}$

\NI where  $\widetilde{A}_{2r_1+r_2, 2r_1+r_2}$ and $\widetilde{\overrightarrow{A}}_{2r_1+r_2, 2r_1+r_2}$ are the diagonal block matrices whose diagonal entry is given by
\begin{enumerate}
  \item[(i)] $\underset{j=0}{\overset{r_1-1}{\prod}}[x^2-x-2j]  \underset{l=0}{\overset{r_2-1}{\prod}} [x - l], \ \ \ r_1 \geq 1, r_2 \geq 1,$
  \item[(ii)] $  \underset{l=0}{\overset{r_2-1}{\prod}} [x - l], \ \ \ r_1 = 0, $
  \item[(iii)] $\underset{j=0}{\overset{r_1-1}{\prod}}[x^2-x-2j], \ \ \  r_2 = 0,$
\end{enumerate}

 \item[(b)']Let $\widetilde{\overrightarrow{A}}_{\rho}$ where the partition $\rho$ is such that $\rho_{1i} = \Phi, \rho_{2j}  = \Phi, \rho_{3l} = 1, \rho_{4m} = 1$ for $1 \leq i \leq s_1, 1 \leq j \leq s_2, 1 \leq l \leq r_1$ and $1 \leq m \leq r_2$ and $\widetilde{\overrightarrow{A}}_{\rho}$ is the block sub matrix corresponding to the diagrams whose underlying partition is $\rho.$

\NI The $((i, \rho, r'_1, r'_2), (i, \rho, r'_1, r'_2))$-entry $x^{2r'_1+r'_2}$ of the matrix $\widetilde{\overrightarrow{A}}_{\rho}$ is replaced by

      \centerline{$\underset{j=0}{\overset{r'_1-1}{\prod}}[x^2 -x - 2j] \underset{l=0}{\overset{r'_2-1}{\prod}}[x-l] +  \underset{l=0}{\overset{r'_1+r'_2-1}{\prod}}[x-l].$}

  \item[(c)] $G^k_{0} = \underset{0 \leq r \leq k } \oplus \widetilde{A}_{r, r} $

\centerline{$  \underset{l=0}{\overset{r-1}{\prod}} [x - l].$ }
\end{enumerate}
\end{rem}

\section{\textbf{Semisimplicity of Signed Partition Algebras}}

Semisimplicity of the algebra of $\mathbb{Z}_2$-relations and partition algebras are already discussed in  [15] and [2] respectively. In this paper, we give an alternate approach to show that the partition algebras and the algebra of $\mathbb{Z}_2$-relations are semisimple. We also study about the semisimplicity of signed partition algebras.

\begin{defn}\cite{K1} \label{D4.1}
Let $s = 2s_1 + s_2.$ For $0 \leq s \leq 2k$ and $((s, (s_1, s_2)), ((\lambda_1, \lambda_2), \mu)) \in \Lambda' \\ \left(((s, (s_1, s_2)), ((\lambda_1, \lambda_2), \mu)) \in \overrightarrow{\Lambda}' \right),$ put $\lambda = (\lambda_1, \lambda_2).$

The left cell module $W \left[(s, (s_1, s_2)), ((\lambda_1, \lambda_2), \mu) \right]\left(\overrightarrow{W} \left[(s, (s_1, s_2)), ((\lambda_1, \lambda_2), \mu) \right] \right)$ for the cellular algebra $\mathscr{A} \left[ A_k^{\mathbb{Z}_2}\right]\left(\mathscr{A} \left[ \overrightarrow{A}_k^{\mathbb{Z}_2}\right] \right)$ is defined as follows:

\begin{enumerate}
  \item[(i)] $W \left[(s, (s_1, s_2)), ((\lambda_1, \lambda_2), \mu) \right]$ is a free $\mathscr{A}$-module with basis

\centerline{$\left\{C^{m^{\lambda}_{s_{\lambda}} m^{\mu}_{s_{\mu}}}_{S} \ \Big| \ S = (d, P) \in M^k \left[(s, (s_1, s_2))\right]\right\}$}
and $A_k^{\mathbb{Z}_2}$-action is defined on the basis element  by $a$

$a C^{m^{\lambda}_{s_{\lambda}} m^{\mu}_{s_{\mu}}}_{S} \equiv \underset{ (S', s') \in M'^k \Big[ \big(s, (
s_1, s_2) , ((\lambda_1, \lambda_2), \mu)\big) \Big]}\sum  C^{r_a(S', S) m^{\lambda}_{s'_{\lambda}} m^{\mu}_{s'_{\mu}}}_{S'}  $

$ \hspace{8cm} \text{ mod } A_k^{\mathbb{Z}_2}
\Big( < \big(s, (s_1, s_2), ((\lambda_1, \lambda_2), \mu)
\big)\Big),$

\NI where $(S, w) = ((d, P), ((s_{\lambda_1}, s_{\lambda_2}), s_{\mu})), (S', s') = ((d', P'), ((s'_{\lambda_1}, s'_{\lambda_2}), s'_{\mu}))$  $r_a(S', S)$ is as in 3(a)(i) and (b)(i) of Theorem 5.4.

  \item[(ii)] \NI $\overrightarrow{W} \left[(s, (s_1, s_2)), ((\lambda_1, \lambda_2), \mu) \right]$ is a free $\mathscr{A}$-module with basis

\centerline{$\left\{ \overrightarrow{C}^{m^{\lambda}_{s_{\lambda}} m^{\mu}_{s_{\mu}}}_{\overrightarrow{S}} \ \Big| \ \overrightarrow{S} = (d, P) \in \overrightarrow{M}^k \left[(s, (s_1, s_2)) \right]\right\}$}
and $\overrightarrow{A}_k^{\mathbb{Z}_2}$-action is defined on the basis element  by $\overrightarrow{a}$

$\overrightarrow{a} \overrightarrow{C}^{m^{\lambda}_{s_{\lambda}} m^{\mu}_{s_{\mu}}}_{S} \equiv \underset{ (S', s') \in \overrightarrow{M}'^k \Big[ \big(s, (
s_1, s_2) , ((\lambda_1, \lambda_2), \mu)\big) \Big]}\sum  \overrightarrow{C}^{r_{\overrightarrow{a}}(S', S) m^{\lambda}_{s'_{\lambda}} m^{\mu}_{s'_{\mu}}}_{S'}  $

$\hspace{8cm}  \text{ mod } \overrightarrow{A}_k^{\mathbb{Z}_2}
\Big( < \big(s, (s_1, s_2), ((\lambda_1, \lambda_2), \mu)
\big)\Big),$

\NI where $(S, w) = ((d, P), ((s_{\lambda_1}, s_{\lambda_2}), s_{\mu})), (S', s') = ((d', P'), ((s'_{\lambda_1}, s'_{\lambda_2}), s'_{\mu}))$  $r_a(S', S)$ is as in 3(a)(ii) and (b)(ii) of Theorem 5.4.

\end{enumerate}

\end{defn}

\begin{lem}\cite{K1}\label{L4.2}
\begin{itemize}
  \item[(i)]  $C^{m^{\lambda}_{s_{\lambda}, s_{\lambda}} m^{\mu}_{s_{\mu}, s_{\mu}}}_{S, S} \ C^{m^{\lambda}_{t_{\lambda}, t_{\lambda}} m^{\mu}_{t_{\mu}, t_{\mu}}}_{T, T} \ \equiv \ \Phi_1((S, s), (T, t)) \ C^{m^{\lambda}_{s_{\lambda}, t_{\lambda}} m^{\mu}_{s_{\mu}, t_{\mu}}}_{S, T} \ $

       $\hspace{9cm} \text{ mod } \left[ \tiny{ A_k^{\mathbb{Z}_2} <(s, (s_1, s_2), ((\lambda_1, \lambda_2), \mu)}\right]$

     \NI where

       $\begin{array}{llll}
      \Phi_1((S, s),  (T, t)) & = & x^{l(P \vee P')} \phi^{\lambda}_{\delta_1}(s_{\lambda}, t_{\lambda}) \ \phi^{\mu}_{\delta_2}(s_{\mu}, t_{\mu})& \text{ when conditions (a) and (b)}  \\
      & & & \text{ of Definition } \ref{D2.13} \text{ are satisfied}\\
      & = &  0 & \text{Otherwise}
      \end{array}$
  \item[(ii)]  $\overrightarrow{C}^{m^{\lambda}_{s_{\lambda}, s_{\lambda}} m^{\mu}_{s_{\mu}, s_{\mu}}}_{S, S} \ \overrightarrow{C}^{m^{\lambda}_{t_{\lambda}, t_{\lambda}} m^{\mu}_{t_{\mu}, t_{\mu}}}_{T, T} \ \equiv \ \overrightarrow{\Phi}_1((S, s), (T, t)) \ C^{m^{\lambda}_{s_{\lambda}, t_{\lambda}} m^{\mu}_{s_{\mu}, t_{\mu}}}_{S, T} $

       $\hspace{7cm}\text{ mod } \left[ \overrightarrow{A}_k^{\mathbb{Z}_2} <(s, (s_1, s_2), ((\lambda_1, \lambda_2), \mu)\right]$

     \NI where

       $\begin{array}{llll}
      \overrightarrow{\Phi}_1((S, s),  (T, t)) & = & x^{l(P \vee P')} \phi^{\lambda}_{\delta_1}(s_{\lambda}, t_{\lambda}) \ \phi^{\mu}_{\delta_2}(s_{\mu}, t_{\mu})& \text{ when conditions (a) and (b)}  \\
      & & & \text{ of Definition }  \ref{D2.13} \text{ are satisfied}\\
      & = &  0 & \text{Otherwise}
      \end{array}$

\end{itemize}

        \NI  $(S, s) = ((d, P), ((s_{\lambda_1}, s_{\lambda_2}), s_{\mu})),  (T, t) = ((d', P'), ((t_{\lambda_1}, t_{\lambda_2}), t_{\mu})),  l(P \vee P')\left( l( P \vee P')\right) $ denotes the number of connected components in $ d'.d''$  excluding the union of all the connected components of $ P \text{ and } P' $,

          \NI $ m^{\lambda}_{s_{\lambda}, s_{\lambda}} \delta_{1} m^{\lambda}_{t_{\lambda}, t_{\lambda}} \equiv \phi^{\lambda}_{\delta_1}(s_{\lambda}, t_{\lambda}) m^{\lambda}_{s'_{\lambda}, t_{\lambda}}  \text{mod } \mathscr{H} \left(< (\lambda_1, \lambda_2)\right), m^{\mu}_{s_{\mu}, s_{\mu}} \delta_2 m^{\mu}_{t_{\mu}, t_{\mu}} \equiv \phi^{\mu}_{\delta_2}(s_{\mu}, t_{\mu})m^{\mu}_{s'_{\mu}, t_{\mu}}  \text{mod } \mathscr{H}' \left( < \mu \right)$

         \NI $\text{as in Lemma 1.7 [1]}.$

\end{lem}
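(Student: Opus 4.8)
The plan is to reduce the product to the defining action of Definition \ref{D4.1} together with the diagram-multiplication map $\phi^s_{s_1,s_2}$ of Definition \ref{D2.13}, and then to factor the resulting coefficient as a ``diagram part'' times two ``group-algebra parts.'' First I would write each diagonal basis element $C^{m^{\lambda}_{s_{\lambda},s_{\lambda}} m^{\mu}_{s_{\mu},s_{\mu}}}_{S,S}$ with $S=(d,P)\in M^k[(s,(s_1,s_2))]$, keeping the underlying pair $(d,P)$ separate from the wreath-product data $((s_{\lambda_1},s_{\lambda_2}),s_{\mu})$ indexing the cellular bases of $\mathscr H$ and $\mathscr H'$. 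By Lemma \ref{L2.12} every element of $I^{2k}_{2s_1+s_2}$ is recovered uniquely from such a pair together with an element of $(\mathbb{Z}_2\wr\mathfrak{S}_{s_1})\times\mathfrak{S}_{s_2}$, so the product is governed entirely by how the distinguished block $P$ of the first factor meets the block $P'$ of the second inside the concatenated diagram.

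Next I would compute the diagram contribution. Stacking $d$ above $d'$ and applying the multiplication rule, the connected components lying strictly in the middle row and avoiding the distinguished blocks contribute the monomial $x^{l(P\vee P')}$, exactly as in Case (i) of Definition \ref{D2.13}; conditions (a) and (b) there are precisely the requirement that no block of $P'$ fuse two blocks of $P$ (or conversely) and that the propagating number be preserved. Whenever either condition fails, $\sharp^p(d.d')$ drops below $2s_1+s_2$, the product falls into the ideal $\mathscr A\big[A_k^{\mathbb{Z}_2}\big]\big({<}(s,(s_1,s_2),((\lambda_1,\lambda_2),\mu))\big)$, and the coefficient is $0$; this is the ``Otherwise'' branch. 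When the conditions hold, the induced bijection on through-classes produces the element $((f,\sigma_1),\sigma_2)$, which is the only channel through which the wreath-product data enters.

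I would then treat the group-algebra factors. Under this induced element the products $m^{\lambda}_{s_{\lambda},s_{\lambda}}\,\delta_1\,m^{\lambda}_{t_{\lambda},t_{\lambda}}$ and $m^{\mu}_{s_{\mu},s_{\mu}}\,\delta_2\,m^{\mu}_{t_{\mu},t_{\mu}}$ reduce, modulo the lower cells $\mathscr H(<(\lambda_1,\lambda_2))$ and $\mathscr H'(<\mu)$ respectively, to the scalars $\phi^{\lambda}_{\delta_1}(s_{\lambda},t_{\lambda})$ and $\phi^{\mu}_{\delta_2}(s_{\mu},t_{\mu})$ by the cellular multiplication formula (Lemma 1.7 of [1]). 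Because the $\{e\}$-through classes and the $\mathbb{Z}_2$-through classes are processed independently, these two scalars appear as a product, and multiplying by the diagram monomial yields $\Phi_1((S,s),(T,t))=x^{l(P\vee P')}\phi^{\lambda}_{\delta_1}(s_{\lambda},t_{\lambda})\phi^{\mu}_{\delta_2}(s_{\mu},t_{\mu})$. Collecting the surviving basis element as $C^{m^{\lambda}_{s_{\lambda},t_{\lambda}} m^{\mu}_{s_{\mu},t_{\mu}}}_{S,T}$ gives the stated congruence; the arrow version (ii) then follows verbatim upon replacing $M^k$ by $\overrightarrow{M}^k$ and checking that the admissibility constraint of Definition \ref{D2.11}(ii) is preserved under the product.

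The main obstacle will be the careful bookkeeping that the diagram multiplication and the two wreath-product (resp. symmetric-group) reductions genuinely decouple: one must verify that the bijection on through-classes extracted from $d.d'$ is precisely the group element $(\delta_1,\delta_2)$ feeding the cellular formula for $\mathscr H$ and $\mathscr H'$, and that every cross-term — a product in which a middle-row component links a distinguished block to a non-distinguished one — is absorbed into the lower-cell ideal rather than surviving as a spurious coefficient. Establishing this clean separation, and matching the vanishing conditions with conditions (a) and (b) of Definition \ref{D2.13}, is the heart of the argument; the remainder is the standard Graham--Lehrer reduction modulo lower cells.
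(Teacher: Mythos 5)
You should first note that the paper does not actually prove this statement: Lemma \ref{L4.2} is stated with the citation \cite{K1} and imported from that reference without argument, so there is no in-paper proof to measure your attempt against. Judged on its own terms, your outline is the standard Graham--Lehrer-style argument for products of diagonal cellular basis elements in a diagram algebra of this kind, and it is consistent with the surrounding definitions: Lemma \ref{L2.12} justifies splitting a diagram into the pair $(d,P)$ plus an element of $(\mathbb{Z}_2\wr\mathfrak{S}_{s_1})\times\mathfrak{S}_{s_2}$; the middle-row count $l(P\vee P')$ and the dichotomy between Case (i) and Case (ii) of Definition \ref{D2.13} give exactly the coefficient $x^{l(P\vee P')}$ versus membership in the ideal indexed by smaller propagating number; and the reduction of $m^{\lambda}_{s_{\lambda},s_{\lambda}}\,\delta_1\,m^{\lambda}_{t_{\lambda},t_{\lambda}}$ and $m^{\mu}_{s_{\mu},s_{\mu}}\,\delta_2\,m^{\mu}_{t_{\mu},t_{\mu}}$ modulo lower cells is precisely Lemma 1.7 of \cite{GL}. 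The factorization of the scalar into a diagram part times the two group-algebra parts, and the verbatim transfer to the arrow version after checking that $\overrightarrow{M}^k$ is closed under the relevant products, are the right moves.

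The one substantive caveat is that the step you yourself identify as the heart of the matter --- that the permutation $((f,\sigma_1),\sigma_2)$ read off from $d.d'$ is exactly the element $(\delta_1,\delta_2)$ fed into the cellular formulas for $\mathscr{H}$ and $\mathscr{H}'$, and that every cross-term linking a distinguished block to a non-distinguished one is absorbed into the lower-cell ideal --- is announced rather than carried out. Since that decoupling is essentially the entire content of the lemma, your text as written is a correct plan rather than a complete proof; to finish it you would need to track, case by case in Definition \ref{D2.13}, that a violation of condition (a) or (b) strictly decreases $\sharp^p$ and hence lands in $A_k^{\mathbb{Z}_2}\bigl(<(s,(s_1,s_2),((\lambda_1,\lambda_2),\mu))\bigr)$, and that when neither is violated the through-class bijection is well defined and independent of the graph representatives chosen.
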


\begin{defn}\cite{K1} \label{D4.3}
For $\big(s, (s_1, s_2), ((\lambda_1, \lambda_2), \mu)\big) \in
\Lambda'\left(\big(s, (s_1, s_2), ((\lambda_1, \lambda_2), \mu)\big) \in
\overrightarrow{\Lambda}' \right),$  the bilinear map $\phi_{s_1, s_2}^{\lambda, \mu} \left( \overrightarrow{\phi}_{s_1, s_2}^{\lambda, \mu}\right)$ is
defined as
\begin{enumerate}
  \item[(i)] $ \phi_{s_1, s_2}^{\lambda, \mu} \big(  C^{m^{\lambda}_{s_{\lambda}, s_{\lambda}} m^{\mu}_{s_{\mu}, s_{\mu}}}_{(d, P)},   C^{m^{\lambda}_{t_{\lambda}, t_{\lambda}} m^{\mu}_{t_{\mu}, t_{\mu}}}_{(d', P')} \big) = \Phi_1((S, s), (T, t)), \ \ (S, s), (T, t) \in
M'^k \big[s, (s_1, s_2), ((\lambda_1, \lambda_2), \mu) \big] $
  \item[(ii)] $ \overrightarrow{\phi}_{s_1, s_2}^{\lambda, \mu} \big(  \overrightarrow{C}^{m^{\lambda}_{s_{\lambda}, s_{\lambda}} m^{\mu}_{s_{\mu}, s_{\mu}}}_{(d, P)},   \overrightarrow{C}^{m^{\lambda}_{t_{\lambda}, t_{\lambda}} m^{\mu}_{t_{\mu}, t_{\mu}}}_{(d', P')} \big) = \Phi_1((S, s), (T, t)), \ \ (S, s), (T, t) \in
\overrightarrow{M}'^k \big[s, (s_1, s_2), ((\lambda_1, \lambda_2), \mu) \big] $
\end{enumerate}

\NI where $\Phi_1((S, s), (T, t))\left(\overrightarrow{\Phi}_1((S, s), (T, t)) \right)$ is as in Lemma \ref{L4.2}.

Put
\begin{enumerate}
  \item[(i)] $G^{\lambda, \mu}_{2s_1+s_2} = \left(\Phi_1((S, s), (T, t)) \right)_{(S, s), (T, t) \in M'^k\big[s, (s_1, s_2), ((\lambda_1, \lambda_2), \mu) \big]}$

      where

 $\begin{array}{llll}
      \Phi_1((S, s), (T, t)) & = & x^{l(P_i \vee P_j)} \phi^{\lambda}_{\delta_1}(s_{\lambda}, t_{\lambda}) \ \phi^{\mu}_{\delta_2}(s_{\mu}, t_{\mu})& \text{ when conditions (a) and (b)}  \\
      & & & \text{ of Definition }  \ref{D2.13} \text{ are satisfied}\\
       & = &  0 & \text{Otherwise}
      \end{array}$

\NI where $(S, s) = ((d_i, P_i), ((s_{\lambda_1}, s_{\lambda_2}), s_{\mu})), (T, t) = ((d_j, P_j), ((t_{\lambda_1}, t_{\lambda_2}), t_{\mu}))$

  \item[(ii)]$\overrightarrow{G}^{\lambda, \mu}_{2s_1+s_2} = \left(\overrightarrow{\Phi}_1((S, s), (T, t)) \right)_{(S, s), (T, t) \in \overrightarrow{M}'^k\big[s, (s_1, s_2), ((\lambda_1, \lambda_2), \mu) \big]}$

      where

 $\begin{array}{llll}
      \overrightarrow{\Phi}_1((S, s), (T, t)) & = & x^{l(P_i \vee P_j)} \phi^{\lambda}_{\delta_1}(s_{\lambda}, t_{\lambda}) \ \phi^{\mu}_{\delta_2}(s_{\mu}, t_{\mu})& \text{ when conditions (a) and (b)}  \\
      & & & \text{ of Definition }  \ref{D2.13} \text{ are satisfied}\\
       & = &  0 & \text{Otherwise}
      \end{array}$

\NI where $(S, s) = ((d_i, P_i), ((s_{\lambda_1}, s_{\lambda_2}), s_{\mu})), (T, t) = ((d_j, P_j), ((t_{\lambda_1}, t_{\lambda_2}), t_{\mu})),$
\end{enumerate}

        \NI $l(P_i \vee P_j) $ denotes the number of  connected components in $ d'.d''$  excluding the union of all the connected components of $ P_i \text{ and } P_j $, $ m^{\lambda}_{s_{\lambda}, s_{\lambda}} \delta_{1} m^{\lambda}_{t_{\lambda}, t_{\lambda}} \equiv \phi^{\lambda}_{\delta_1}(s_{\lambda}, t_{\lambda}) m^{\lambda}_{s'_{\lambda}, t_{\lambda}}  \text{mod } \mathscr{H} \left(< (\lambda_1, \lambda_2)\right),$

        $m^{\mu}_{s_{\mu}, s_{\mu}} \delta_2 m^{\mu}_{t_{\mu}, t_{\mu}} \equiv \phi^{\mu}_{\delta_2}(s_{\mu}, t_{\mu})m^{\mu}_{s'_{\mu}, t_{\mu}}  \text{mod } \mathscr{H}' \left( < \mu \right) \text{as in Lemma 1.7 [1]}.$

    \NI  $G^{\lambda, \mu}_{2s_1+s_2}\left( \overrightarrow{G}^{\lambda, \mu}_{2s_1+s_2}\right)$ is called the \textbf{Gram matrix of the cell module} $W \left[(s, (s_1, s_2)), ((\lambda_1, \lambda_2), \mu)\right] \\ \left( \overrightarrow{W} \left[(s, (s_1, s_2)), ((\lambda_1, \lambda_2), \mu)\right] \right).$
\end{defn}

\begin{defn}\label{D4.4}

Let $\left\{C_{S_{i, \alpha}^{r_1, r_2}}^{m^{\lambda}_{s_{\lambda}} m^{\mu}_{s_{\mu}}} \right\}_{(S_{i, \alpha}^{r_1, r_2}, t_l) \in M'^k[(s, (s_1, s_2)), ((\lambda_1, \lambda_2), \mu)]} \left\{\left\{\overrightarrow{C}_{S_{i, \alpha}^{r_1, r_2}}^{m^{\lambda}_{s_{\lambda}} m^{\mu}_{s_{\mu}}} \right\}_{(S_{i, \alpha}^{r_1, r_2}, t_l) \in \overrightarrow{M}'k[(s, (s_1, s_2)), ((\lambda_1, \lambda_2), \mu)]}\right\}$

\NI be the basis of the cell module $W[(s, (s_1, s_2)), ((\lambda_1, \lambda_2), \mu)] \left(\overrightarrow{W}[(s, (s_1, s_2)), ((\lambda_1, \lambda_2), \mu)]\right)$,  where $S_{i, \alpha}^{r_1, r_2} = (d_i, P_i), t_l = ((t^l_{\lambda_1}, t^l_{\lambda_2}), t^l_{\mu})$ .

\NI Now, we shall introduce the ordering on the basis of the cell module $W[(s, (s_1, s_2)), ((\lambda_1, \lambda_2), \mu)]$ as follows:

$(S_{i, \alpha}^{r_1, r_2}, t_l) < (S_{j, \beta}^{r'_1, r'_2}, t_k) $

\begin{enumerate}
  \item[(i)] if $(i, \alpha, r_1, r_2) < (j, \beta, r'_1, r'_2)$ as in Definition \ref{D3.7} and
  \item[(ii)] if $(i, \alpha, r_1, r_2) = (j, \beta, r'_1, r'_2)$ then $(S_{i, \alpha}^{r_1, r_2}, t_l),  (S_{j, \beta}^{r'_1, r'_2}, t_k)$  can be indexed arbitrarily.
 \end{enumerate}

The above ordering can be used for the basis of the cell module $\overrightarrow{W}[(s, (s_1, s_2)), ((\lambda_1, \lambda_2), \mu)]$

Arrange the basis of the cell module $W[(s, (s_1, s_2)), ((\lambda_1, \lambda_2), \mu)]$ and $\overrightarrow{W}[(s, (s_1, s_2)), ((\lambda_1, \lambda_2), \mu)]$ according to the order defined above and obtain the Gram matrix $G^{\lambda, \mu}_{2s_1+s_2}$ and $\overrightarrow{G}^{\lambda, \mu}_{2s_1+s_2}$ corresponding to the cell modules $W[(s, (s_1, s_2)), ((\lambda_1, \lambda_2), \mu)]$ and $\overrightarrow{W}[(s, (s_1, s_2)), ((\lambda_1, \lambda_2), \mu)]$ respectively.

\end{defn}

\begin{thm}\label{T4.5}
\mbox{ }
\begin{enumerate}
\item [(i)] The algebra of $\mathbb{Z}_2$-relations $A_k^{\mathbb{Z}_2}(x)$, signed partition algebras $\overrightarrow{A}_k^{\mathbb{Z}_2}(x)$ and partition algebras $A_k(x) $ are semisimple over $\mathbb{K}(x)$ where $\mathbb{K}$ is a field of characteristic zero where $x$ is an indeterminate.
    \item [(ii)]Suppose that the characteristic of the field $\mathbb{K}$ is 0, then
    \begin{enumerate}
                  \item[(a)] the algebra of $\mathbb{Z}_2$-relations $A_k^{\mathbb{Z}_2}(q)$ is semisimple if and only if $q$ is not a root of the polynomial $f(x)$ where $f(x) = \underset{\lambda, \mu} \prod \underset{2s_1+s_2 = 0}{\overset{2k}{\prod}} \det G^{\lambda, \mu}_{2s_1+s_2}$ where $x=q$ and $q \in \mathbb{C}.$
                  \item[(b)]  the signed
partition algebra $\overrightarrow{A}_k^{\mathbb{Z}_2}(q)$ is
semisimple if and only if $q$ is not a root of the polynomial
$f(x)$ where $f(x) = \underset{\lambda, \mu} \prod \underset{2s_1+s_2 = 0}{\overset{2k}{\prod}} \det \overrightarrow{G}^{\lambda, \mu}_{2s_1+s_2}.$
                  \item[(c)] the partition algebra $A_k(x)$ is semisimple if and only if $q$ is not a root of the polynomial $f(x)$ where $f(x) = \underset{\lambda} \prod \underset{s = 0}{\overset{k}{\prod}} \det G^{\lambda}_{s}.$
                \end{enumerate}
     \item[(iii)] \begin{enumerate}
                    \item[(a)]      In particular,  $G^{\lambda, \mu}_{2s_1 +s_2}$  coincides with $G_{2s_1+s_2}^k$ if
\begin{enumerate}
  \item[1.] $\lambda = ([s_1], \Phi)$ and $\mu = [s_2]$ when $s_1, s_2 \neq 0,$
  \item[2.] $\lambda = (\Phi, \Phi)$ and $\mu = [s_2]$ when $s_1 = 0, s_2 \neq 0,$
  \item[3.] $\lambda = ([s_1], \Phi)$ and $\mu = \Phi$ when $s_1 \neq 0, s_2 = 0$
  \item[4.] $\lambda = (\Phi, \Phi)$ and $\mu = \Phi$ when $s_1, s_2 = 0,$
\end{enumerate}
for $0 \leq s_1 \leq k, 0 \leq s_2 \leq k, 0 \leq s_1+s_2 \leq k.$
                    \item[(b)]      In particular,  $\overrightarrow{G}^{\lambda, \mu}_{2s_1 +s_2}$  coincides with $\overrightarrow{G}_{2s_1+s_2}^k$ if
\begin{enumerate}
  \item[1.] $\lambda = ([s_1], \Phi)$ and $\mu = [s_2]$ when $s_1, s_2 \neq 0,$
  \item[2.] $\lambda = (\Phi, \Phi)$ and $\mu = [s_2]$ when $s_1 = 0, s_2 \neq 0,$
  \item[3.] $\lambda = ([s_1], \Phi)$ and $\mu = \Phi$ when $s_1 \neq 0, s_2 = 0$
  \item[4.] $\lambda = (\Phi, \Phi)$ and $\mu = \Phi$ when $s_1, s_2 = 0,$
\end{enumerate}
for $0 \leq s_1 \leq k-1, 0 \leq s_2 \leq k-1, 0 \leq s_1+s_2 \leq k-1.$

                    \item[(c)]  $G^{\lambda}_{s}$  coincides with $G_{s}^k$ if
\begin{enumerate}
  \item[1.] $\lambda = s$  when $s \neq 0,$
  \item[2.] $\lambda = \Phi$ when $s = 0$
 \end{enumerate}
for $0 \leq s \leq k.$
                  \end{enumerate}
\item[(iii)$'$]\begin{enumerate}
               \item[(a)] If $q$ is a root of the polynomial  $$f(x) =  \prod_{2s_1+s_2 = 0}^{2k} \det G_{2s_1+s_2}^k$$ where $\det G_{2s_1+s_2}^k = \underset{\substack{\ds 0 \leq r_1 \leq k-s_1-s_2 \\ \ds 0 \leq r_2 \leq k-s_1-s_2 \\ \ds 2r_1+r_2 \leq 2k-2s_1-2s_2}} \prod \det \widetilde{A}_{2r_1+r_2, 2r_1+r_2} $, $\widetilde{A}_{2r_1+r_2, 2r_1+r_2}$ is as in Theorem \ref{T3.32} then the algebra $A^{\mathbb{Z}_2}_k(q)$ is not semisimple.

 \NI   In particular, $q$ is an integer such that $ 0 \leq q \leq k-2$ and $q$ is a root of the polynomial $x^2-x-2r', 0 \leq r' \leq k-2$ then $A^{\mathbb{Z}_2}_k(q)$ is not semisimple.
               \item[(b)] If $q$ is a root of the polynomial  $$f(x) =  \prod_{2s_1+s_2 = 0}^{2k} \det \overrightarrow{G}_{2s_1+s_2}^k$$ where $\det \overrightarrow{G}_{2s_1+s_2}^k = \underset{\substack{\ds 0 \leq r_1 \leq k-s_1-s_2-1 \\ \ds 0 \leq r_2 \leq k-s_1-s_2-1 \\ \ds 2r_1+r_2 \leq 2k-2s_1-2s_2-1}} \prod \det \widetilde{\overrightarrow{A}}_{2r_1+r_2, 2r_1+r_2} \prod \det \widetilde{\overrightarrow{A}}_{\rho}, $

                   $\widetilde{\overrightarrow{A}}_{2r_1+r_2, 2r_1+r_2}$ and $\widetilde{\overrightarrow{A}}_{\rho}$ are as in Theorem \ref{T3.32} then the algebra $\overrightarrow{A}^{\mathbb{Z}_2}_k(q)$ is not semisimple.

 \NI   In particular, $q$ is an integer such that $ 0 \leq q \leq k-2$ and $q$ is a root of the polynomial $x^2-x-2r', 0 \leq r' \leq k-2$ then $\overrightarrow{A}^{\mathbb{Z}_2}_k(q)$ is not semisimple.

               \item[(c)] If $q$ is a root of the polynomial  $$f(x) =  \prod_{s = 0}^{k} \det G_{s}^k$$ where $\det G_{s}^k = \underset{0 \leq r \leq k-s } \prod \det \widetilde{A}_{r, r} $, $\widetilde{A}_{r, r}$ is as in Theorem \ref{T3.32} then the algebra $A_k(q)$ is not semisimple.


             \end{enumerate}
\item [(iv)] The algebra of $\mathbb{Z}_2$-relations$\left(A_k^{\mathbb{Z}_2}(q)\right)$, signed partition algebra$\left(\overrightarrow{A}_k^{\mathbb{Z}_2}(q)\right)$ and the partition algebra$(A_k(q))$ over a field of characteristics $0$ are quasi-hereditary for $q \neq 0.$
\end{enumerate}
\end{thm}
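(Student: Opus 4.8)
The plan is to deduce all four parts from the cellular structure of these algebras established in \cite{K1} together with the general semisimplicity and quasi-heredity criteria for cellular algebras in \cite{GL}. Recall that for a cellular algebra each cell module carries a canonical bilinear form whose Gram matrix is exactly the $G^{\lambda,\mu}_{2s_1+s_2}$ (resp.\ $\overrightarrow{G}^{\lambda,\mu}_{2s_1+s_2}$, $G^{\lambda}_s$) of Definition \ref{D4.3}, and that the algebra is semisimple over its ground field if and only if \emph{every} such Gram matrix is non-degenerate, while it is quasi-hereditary if and only if \emph{every} such Gram form is nonzero. Thus parts (i)--(iv) all reduce to controlling the determinants, respectively the non-vanishing, of the cell-module Gram matrices, and the bulk of the work is relating these to the combinatorial matrices already analysed in Section 3.

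First I would prove part (iii), the coincidence of $G^{\lambda,\mu}_{2s_1+s_2}$ with the combinatorial matrix $G^k_{2s_1+s_2}$ of Definition \ref{D3.8}. By Lemma \ref{L4.2} the $((S,s),(T,t))$-entry of the cell-module Gram matrix factors as $x^{l(P\vee P')}\,\phi^{\lambda}_{\delta_1}(s_\lambda,t_\lambda)\,\phi^{\mu}_{\delta_2}(s_\mu,t_\mu)$, where the last two factors are the Gram data of the cell modules of $\mathbb{Z}_2\wr\mathfrak{S}_{s_1}$ and $\mathfrak{S}_{s_2}$ labelled by $\lambda$ and $\mu$. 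For the one-row choices $\lambda=([s_1],\Phi)$ and $\mu=[s_2]$ these group cell modules are one-dimensional trivial modules, so $\phi^{\lambda}_{\delta_1}=\phi^{\mu}_{\delta_2}=1$ and the entry collapses to $x^{l(P\vee P')}$, which is precisely the entry of $G^k_{2s_1+s_2}$ once $M^k[(s,(s_1,s_2))]$ is identified with $J^{2k}_{2s_1+s_2}$ as in Lemma \ref{L2.12} and Definition \ref{D3.7}. The boundary cases $s_1=0$ or $s_2=0$ are handled by the same computation over the wreath product, and the signed and partition-algebra versions are identical. I expect this to be the main obstacle, since it requires matching the diagram-to-pair dictionary of Lemma \ref{L2.12} and the basis ordering of Definition \ref{D4.4} with the combinatorial indexing of Definition \ref{D3.7}, and carefully evaluating the $\phi^{\lambda},\phi^{\mu}$ factors.

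Next, for parts (i) and (ii) I would invoke the non-degeneracy already proved. Over $\mathbb{K}(x)$ with $x$ an indeterminate and $\mathbb{K}$ of characteristic zero, the factorization of Lemma \ref{L4.2}, the characteristic-zero semisimplicity of $\mathbb{K}(x)[\mathbb{Z}_2\wr\mathfrak{S}_{s_1}\times\mathfrak{S}_{s_2}]$ (which makes every $\phi^{\lambda},\phi^{\mu}$ factor non-degenerate), and Lemma \ref{L3.9}(ii) (which makes the combinatorial determinant a nonzero monic polynomial) together show that $\det G^{\lambda,\mu}_{2s_1+s_2}$ is a nonzero element of $\mathbb{K}(x)$ for every $\lambda,\mu$; hence every cell form is non-degenerate and the criterion of \cite{GL} yields semisimplicity, proving (i). For (ii) one specializes $x=q$: the same criterion says the specialized algebra is semisimple precisely when none of the specialized determinants vanishes, that is, when $q$ is not a root of $f(x)=\prod_{\lambda,\mu}\prod_{2s_1+s_2}\det G^{\lambda,\mu}_{2s_1+s_2}$, and likewise for the signed and partition cases.

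Finally, for (iii)$'$ and (iv) I would combine the coincidence of (iii) with the block decomposition of Theorem \ref{T3.32}. Since the combinatorial determinants $\det G^k_{2s_1+s_2}$ occur as factors of $f(x)$, a root $q$ of $\det G^k_{2s_1+s_2}$ forces some cell form to be degenerate, so the specialized algebra fails to be semisimple; and by Theorem \ref{T3.32} this determinant equals $\prod\det\widetilde{A}_{2r_1+r_2,2r_1+r_2}$, whose diagonal entries are products of the quadratics $x^2-x-2(s_1+j)$ and the linears $x-(s_2+l)$. Consequently any integer $q\le k-2$ that is a root of some $x^2-x-2r'$ with $0\le r'\le k-2$ is a root of the relevant block determinant, giving the stated non-semisimplicity (the signed case uses in addition the block $\widetilde{\overrightarrow{A}}_{\rho}$). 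For quasi-heredity (iv), I would use that for $q\neq0$ every cell Gram matrix has a nonzero diagonal entry, namely $q^{\,l(P\vee P)}\,\phi^{\lambda}_{\mathrm{id}}\phi^{\mu}_{\mathrm{id}}$, which is nonzero since the characteristic-zero group forms are non-degenerate; thus every cell form is nonzero and the quasi-heredity criterion of \cite{GL} applies. The exclusion $q=0$ is exactly what is needed, because for cell modules with $s_1+s_2<k$ every diagram necessarily carries horizontal edges, so every diagonal entry acquires a positive power of $x$ and would vanish at $q=0$.
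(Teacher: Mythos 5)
Your proposal follows essentially the same route as the paper: it reduces everything to the Graham--Lehrer criteria, uses the factorization of Lemma \ref{L4.2} together with the non-degeneracy of the group-algebra forms and the leading-term argument of Lemma \ref{L3.9} to get (i)--(ii), identifies $G^{\lambda,\mu}_{2s_1+s_2}$ with $G^k_{2s_1+s_2}$ for the one-row $\lambda,\mu$ exactly as the paper does for (iii), and combines this with Theorem \ref{T3.32} and Remark \ref{R3.33} for (iii)$'$ and with the quasi-heredity criterion for (iv). The only quibble is your closing parenthetical: it is not true that every diagram in a cell module with $s_1+s_2<k$ must carry horizontal edges (through classes can absorb all vertices), but since that remark concerns only the unasserted converse at $q=0$, it does not affect the correctness of the proof of the stated theorem.
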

\begin{proof}

\NI \textbf{Proof of (i):}

The matrix  of the bilinear form associated to the cell module $\overrightarrow{W}[(s, (s_1, s_2)), ((\lambda_1, \lambda_2), \mu)]$ as defined in   Definition 4.3(ii) with respect to the ordering of the basis as in Definition \ref{D4.4} is rewritten as follows:

\centerline{$\overrightarrow{G}^{\lambda, \mu}_{2s_1+s_2} = \left( g_{(i, \alpha, r_1, r_2), (j, \beta, r'_1, r'_2)}\right)_{1 \leq (i, \alpha, r_1, r_2), (j, \beta, r'_1, r'_2) \leq \overrightarrow{f}_{2s_1+s_2}}$}

\NI where $g_{(i, \alpha, r_1, r_2), (j, \beta, r'_1, r'_2)} = a_{(i, \alpha, r_1, r_2), (j, \beta, r'_1, r'_2)} B^{\lambda, \mu}_{\delta_1, \delta_2}, $

 $\begin{array}{lll}
     a_{(i, \alpha, r_1, r_2), (j, \beta, r'_1, r'_2)} & = & \left\{
    \begin{array}{ll}
     x^{l(P_i \vee P_j)} , & \hbox{if conditions (a) and (b) of Definition \ref{D2.13} are satisfied;} \\
      0, & \hbox{Otherwise.}
    \end{array}
  \right.
  \\
    B^{\lambda, \mu}_{\delta_1, \delta_2}& = &  B^{\lambda}_{\delta_1} \otimes B^{\mu}_{\delta_2}
  \end{array}$

\NI with $B^{\lambda}_{\delta_1} = \left(\phi^{\lambda}_{\delta_1}(s_{\lambda}, t_{\lambda})\right)$ and  $B^{\mu}_{\delta_2} = \left( \phi^{\mu}_{\delta_2}(s_{\mu}, t_{\mu})\right),$  $B^{\lambda}_{\delta_1}$ and $B^{\mu}_{\delta_2}$ are the matrices of the non-degenerate bilinear forms associated to the cell module $W^{\lambda}$ and $W^{\mu}$ of the cellular algebras of $K[\mathbb{Z}_2 \wr \mathfrak{S}_{s_1}]$ and $K[\mathfrak{S}_{s_2}]$ respectively as in Theorem 3.8 in [1] and $\delta_1$ and $\delta_2$ depends on the product of the diagrams $d^{r_1, r_2}_{i, \alpha}$ and $d^{r'_1, r'_2}_{j, \beta}.$

\begin{eqnarray*}
  \overrightarrow{G}^{\lambda, \mu}_{2s_1+s_2} &=& \left(a_{(i, \alpha, r_1, r_2), (j, \beta, r'_1, r'_2)} B^{\lambda, \mu}_{\delta_1, \delta_2} \right)_{1 \leq (i, \alpha, r_1, r_2), (j, \beta, r'_1, r'_2) \leq \overrightarrow{f}_{2s_1+s_2}}
\end{eqnarray*}

 The $g_{(i, \alpha, r_1, r_2), (i,\alpha, r_1, r_2)} = a_{(i, \alpha, r_1, r_2), (i, \alpha, r_1, r_2)} \ A$ where $A = B^{\lambda, \mu}_{1, 1} =B^{\lambda}_{1} \otimes B^{\mu}_{1}.$

\NI Thus, the leading coefficient of the Gram matrix is $\left( \det A \right)^{\overrightarrow{f}_{2s_1+s_2} \times \text{dim } \overrightarrow{W}[(s, (s_1, s_2)), ((\lambda_1, \lambda_2), \mu)]}$ which is non-zero over a characteristic zero.

\NI Therefore, the algebra $\overrightarrow{A}_k^{\mathbb{Z}_2}(x)$ is semisimple. The proof for the algebra of $\mathbb{Z}_2$-relations and the partition algebras are similar to the above proof.

\NI \textbf{Proof of (ii):}

 \NI By Theorem 3.8 in \cite{GL}, $\overrightarrow{A}_k^{\mathbb{Z}_2}$ is semisimple if and only if $\det \ G^{\lambda, \mu}_{2s_1+s_2} \neq 0$ for all $s_1, s_2$ and for all $\lambda, \mu,$ since

 $\det \ G^{\lambda, \mu}_{2s_1+s_2} \neq 0$ if and only if $\Phi$ is non-degenerate.

\NI \textbf{Proof of (iii)(b):}

Now, $\overrightarrow{G}^{\lambda, \mu}_{2s_1+s_2} = \overrightarrow{G}_{2s_1+s_2}^k, $ if
\begin{enumerate}
  \item $\lambda = ([s_1], \Phi)$ and $\mu = [s_2]$ when $s_1, s_2 \neq 0,$
  \item $\lambda = (\Phi, \Phi)$ and $\mu = [s_2]$ when $s_1 = 0, s_2 \neq 0,$
  \item $\lambda = ([s_1], \Phi)$ and $\mu = \Phi$ when $s_1 \neq 0, s_2 = 0$
\end{enumerate}
for $0 \leq s_1 \leq k-1, 0 \leq s_2 \leq k-1, 0 \leq s_1+s_2 \leq k-1, $since $A$ is the $1 \times 1$ identity matrix,

if $\lambda = (\Phi, \Phi)$ and $\mu = \Phi$ when $s_1, s_2 = 0,$ then $\overrightarrow{G}^{\Phi, \Phi}_{2s_1+s_2}$ coincides with $\overrightarrow{G}^k_{0+0}.$

\NI \textbf{Proof of (iii)(b):} If $q$ is a root of $f(x) = \underset{\substack{\ds 0 \leq r_1 \leq k-s_1-s_2-1 \\ \ds 0 \leq r_2 \leq k-s_1-s_2-1 \\ \ds 2r_1+r_2 \leq 2k-2s_1-2s_2-1}} \prod \det \widetilde{\overrightarrow{A}}_{2r_1+r_2, 2r_1+r_2} \prod \det \widetilde{\overrightarrow{A}}_{\rho}.$

Then $\det \overrightarrow{G}^k_{2s_1 +s_2} = 0 = \det \overrightarrow{G}^{(([s_1], \Phi), [s_2])}_{2s_1+s_2}.$

Thus, the algebra $\overrightarrow{A}_k^{\mathbb{Z}_2}$ is not semisimple.

In particular, by Remark \ref{R3.33} if $q$ is an integer such that $ 0 \leq q \leq k - 2$ and $q$ is a root of polynomial $x^2 - x - 2r', 0 \leq r' \leq k - 2$ then the algebra  $\overrightarrow{A}_k^{\mathbb{Z}_2}$ is not semisimple.

Proof of (a) and (c) are similar to the proof of (b).

\NI \textbf{Proof of (iv):} It follows from Remark 3.10 in \cite{GL} and Theorem 5.4 in \cite{K1}.

\end{proof}

\begin{center}
\textbf{Appendix}
\end{center}

The following is an example of Gram matrix in $\overrightarrow{A}_3^{\mathbb{Z}_2}(x).$

Let $s_1 = 1$ and $s_2 = 0.$ The following  are the diagrams in $J^6_{2 \times 1 + 0}.$

\begin{center}
\includegraphics{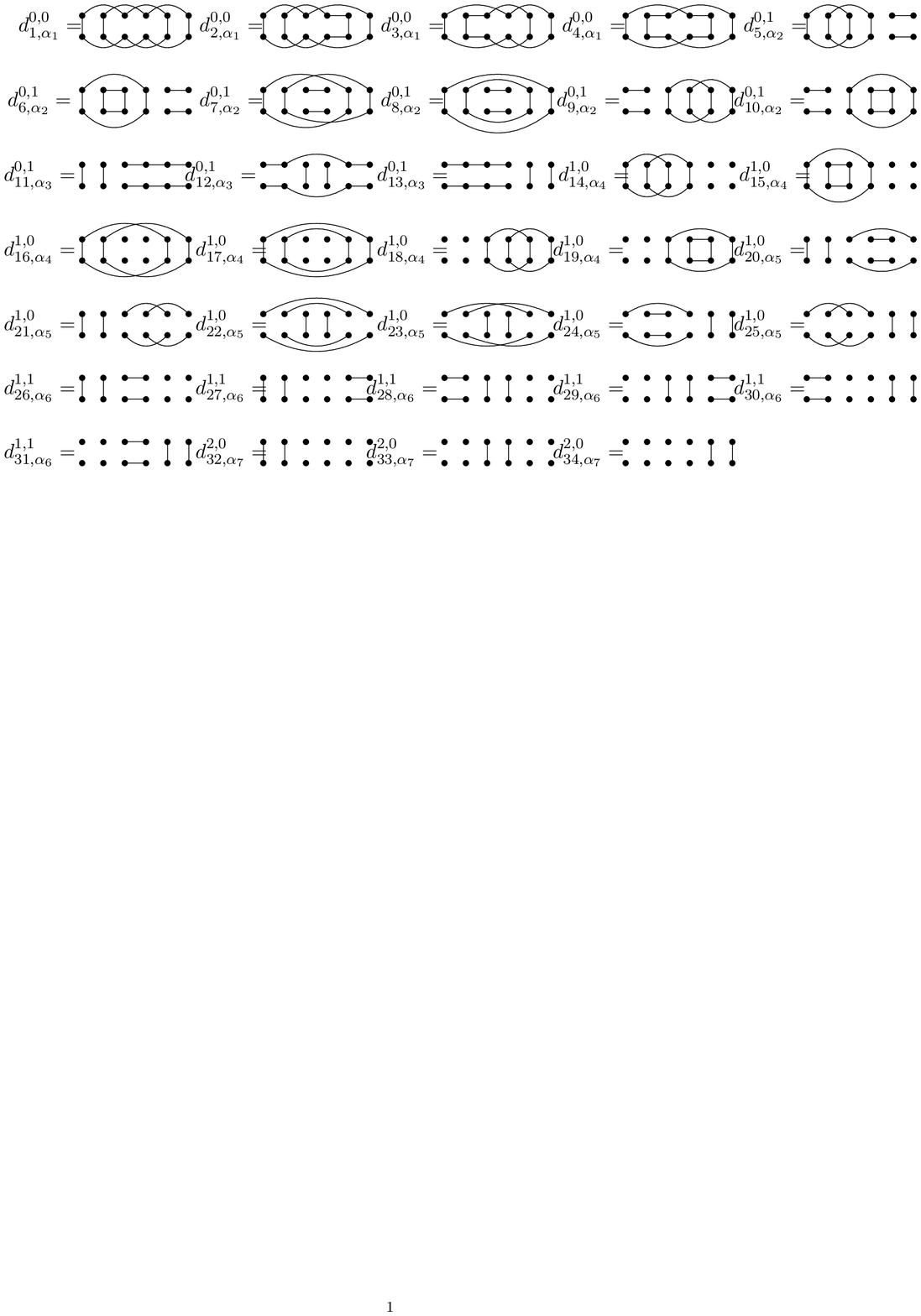}
\end{center}

\NI where $\alpha_1 = (3, \Phi, \Phi, \Phi), \alpha_2 = (2, \Phi, \Phi, 1), \alpha_3 = (1, \Phi, \Phi, 2), \alpha_4 = (2, \Phi, 1, \Phi), \alpha_5 = (1, \Phi, 0, 2), \alpha_6 = (1, \Phi, 1, 1)$ , $\alpha_7 = (1, 0, 1^2, 0)$ and $d_{i, \alpha}^{r_1, r_2}$ is a diagram having $r_1$ number of pairs of $\{e\}$-horizontal edges, $r_2$ number of $\mathbb{Z}_2$-horizontal edges and $\alpha$ is the underlying partition of $d^{r_1, r_2}_{i, \alpha}.$

\begin{landscape}
\thispagestyle{empty}
\scalebox{0.6}{
\begin{tabular}{|c|c|c|c|c||c|c|c|c|c|c|c|c|c||c|c|c|c|c|c|c|c|c|c|c|c||c|c|c|c|c|c||c|c|c|}
  \hline

  & $d_{1, \alpha_1}^{0, 0}$ & $d_{2, \alpha_1}^{0,0}$ & $d_{3, \alpha_1}^{0,0}$ & $d_{4, \alpha_1}^{0,0}$ & $d_{5, \alpha_2}^{0,1}$ & $d_{6, \alpha_2}^{0,1}$ & $d_{7, \alpha_2}^{0,1}$ & $d_{8, \alpha_2}^{0,1}$ & $d_{9, \alpha_2}^{0,1}$ & $d_{10, \alpha_2}^{0,1}$ & $d_{11, \alpha_3}^{0,1}$ & $d_{12, \alpha_3}^{0,1}$ & $d_{13, \alpha_3}^{0,1}$ & $d_{14, \alpha_4}^{1,0}$ & $d_{15, \alpha_4}^{1,0}$ & $d_{16, \alpha_4}^{1,0}$ & $d_{17, \alpha_4}^{1,0}$ & $d_{18, \alpha_4}^{1,0}$ & $d_{19, \alpha_4}^{1,0}$ & $d_{20, \alpha_5}^{1,0}$ & $d_{21, \alpha_5}^{1,0}$ & $d_{22, \alpha_5}^{1,0}$ & $d_{23, \alpha_5}^{1,0}$ & $d_{24, \alpha_5}^{1,0}$ & $d_{25, \alpha_5}^{1,0}$ & $d_{26, \alpha_6}^{1,1}$ & $d_{27, \alpha_6}^{1,1}$ & $d_{28, \alpha_6}^{1,1}$ & $d_{29, \alpha_6}^{1,1}$ & $d^{1,1}_{30, \alpha_6}$ & $d_{31, \alpha_6}^{1,1}$& $d_{32, \alpha_7}^{2,0}$ & $d_{33, \alpha_7}^{2,0}$ & $d_{34, \alpha_7}^{2,0}$   \\  \hline
  $d_{1, \alpha_1}^{0, 0}$ & 1 & 0 & 0 & 0 & 0 & 0 & 0 & 0 & 0 & 0& 0 & 0 & 0 & 1 & 0 & 1 & 0 & 1 & 0 & 1 & 0 & 1 & 0 & 1 & 0 & 0 & 0 & 0 & 0 & 0 & 0 & 1 & 1 & 1  \\  \hline
   $d_{2, \alpha_1}^{0, 0}$& 0 & 1 & 0 & 0 & 0 & 0 & 0 & 0 & 0 & 0& 0 & 0 & 0 & 1 & 0 & 0 & 1 & 0 & 1 & 0 & 1 & 0 & 1 & 1 & 0 & 0 & 0 & 0 & 0 & 0 & 0 & 1 & 1 & 1  \\  \hline
   $d_{3, \alpha_1}^{0, 0}$& 0 & 0 & 1 & 0 & 0 & 0 & 0 & 0 & 0 & 0& 0 & 0 & 0 & 0 & 1 & 0 & 1 & 1 & 0 & 1 & 0 & 0 & 1 & 0 & 1 & 0 & 0 & 0 & 0 & 0 & 0 & 1 & 1 & 1  \\  \hline
  $d_{4, \alpha_1}^{0, 0}$& 0 & 0 & 0 & 1 & 0 & 0 & 0 & 0 & 0 & 0& 0 & 0 & 0 & 0 & 1 & 1 & 0 & 0 & 1 & 0 & 1 & 1 & 0 & 0 & 1 & 0 & 0 & 0 & 0 & 0 & 0 & 1 & 1 & 1  \\  \hline \hline
  $d_{5, \alpha_2}^{0, 1}$& 0 & 0 & 0 & 0 & $x$ & 0 & 0 & 0 & 0 & 0& 0 & 0 & 0 & $x$ & 0 & 0 & 0 & 0 & 0 & 0 & 0 & 0 & 0 & 0 & 0 & 0 & $x$ & 0 & $x$ & 0 & 0 & $x$ & $x$ & 0  \\  \hline
  $d_{6, \alpha_2}^{0, 1}$& 0 & 0 & 0 & 0 & 0 & $x$ & 0 & 0 & 0 & 0& 0 & 0 & 0 & 0 & $x$ & 0 & 0 & 0 & 0 & 0 & 0 & 0 & 0 & 0 & 0 & 0 & $x$ & 0 & $x$ & 0 & 0 & $x$ & $x$ & 0  \\  \hline
  $d_{7, \alpha_2}^{0, 1}$& 0 & 0 & 0 & 0 & 0 & 0 & $x$ & 0 & 0 & 0& 0 & 0 & 0 & 0 & 0 & $x$ & 0 & 0 & 0 & 0 & 0 & 0 & 0 & 0 & 0 & $x$ & 0 & 0 & 0 & 0 & $x$ & $x$ & 0 & $x$  \\  \hline
  $d_{8, \alpha_2}^{0, 1}$& 0 & 0 & 0 & 0 & 0 & 0 & 0 & $x$ & 0 & 0& 0 & 0 & 0 & 0 & 0 & 0 & $x$ & 0 & 0 & 0 & 0 & 0 & 0 & 0 & 0 & $x$ & 0 & 0 & 0 & 0 & $x$ & $x$ & 0 & $x$  \\  \hline
 $d_{9, \alpha_2}^{0, 1}$&  0 & 0 & 0 & 0 & 0 & 0 & 0 & 0 & $x$ & 0& 0 & 0 & 0 & 0 & 0 & 0 & 0 & $x$ & 0 & 0 & 0 & 0 & 0 & 0 & 0 & 0 & 0 & $x$ & 0 & $x$ & 0 & 0 & $x$ & $x$  \\  \hline
 $d_{10, \alpha_2}^{0, 1}$&  0 & 0 & 0 & 0 & 0 & 0 & 0 & 0 & 0 & $x$& 0 & 0 & 0 & 0 & 0 & 0 & 0 & 0 & $x$ & 0 & 0 & 0 & 0 & 0 & 0 & 0 & 0 & $x$ & 0 & $x$ & 0 & 0 & $x$ & $x$  \\  \hline
 $d_{11, \alpha_3}^{0, 1}$&  0 & 0 & 0 & 0 & 0 & 0 & 0 & 0 & 0 & 0& $x$ & 0 & 0 & 0 & 0 & 0 & 0 & 0 & 0 & $x$ & $x$ & 0 & 0 & 0 & 0 & $x$ & $x$ & 0 & 0 & 0 & 0 & $x$ & 0 & 0  \\  \hline
 $d_{12, \alpha_3}^{0, 1}$&  0 & 0 & 0 & 0 & 0 & 0 & 0 & 0 & 0 & 0& 0 & $x$ & 0 & 0 & 0 & 0 & 0 & 0 & 0 & 0 & 0 & $x$ & $x$ & 0 & 0 & 0 & 0 & $x$ & $x$ & 0 & 0 & 0 & $x$ & 0  \\  \hline
  $d_{13, \alpha_3}^{0, 1}$& 0 & 0 & 0 & 0 & 0 & 0 & 0 & 0 & 0 & 0& 0 & 0 & $x$ & 0 & 0 & 0 & 0 & 0 & 0 & 0 & 0 & 0 & 0 & $x$ & $x$ & 0 & 0 & 0 & 0 & $x$ & $x$ & 0 & 0 & $x$  \\  \hline \hline
 $d_{14, \alpha_4}^{1,0}$&  1 & 1 & 0 & 0 & $x$ & 0 & 0 & 0 & 0 & 0& 0 & 0 & 0 & $x^2$ & 0 & 1 & 1 & 1 & 1 & 1 & 1 & 1 & 1 & 0 & 0 & 0 & $x$ & 0 & $x$ & $x$ & 0 & $x^2$ & $x^2$ & 0  \\  \hline
 $d_{15, \alpha_4}^{1, 0}$&  0 & 0 & 1 & 1 & 0 & $x$ & 0 & 0 & 0 & 0& 0 & 0 & 0 & 0 & $x^2$ & 1 & 1 & 1 & 1 & 1 & 1 & 1 & 1 & 0 & 0 & 0 & $x$ & 0 & $x$ & $x$ & 0 & $x^2$ & $x^2$ & 0  \\  \hline
  $d_{16, \alpha_4}^{1, 0}$& 1 & 0 & 0 & 1 & 0 & 0 & $x$ & 0 & 0 & 0& 0 & 0 & 0 & 1 & 1 & $x^2$ & 0 & 1 & 1 & 1 & 1 & 0 & 0 & 1 & 1 & $x$ & 0 & 0 & 0 & 0 & $x$ & $x^2$ & 0 & $x^2$  \\  \hline
  $d_{17, \alpha_4}^{1, 0}$& 0 & 1 & 1 & 0 & 0 & 0 & 0 & $x$ & 0 & 0& 0 & 0 & 0 & 1 & 1 & 0 & $x^2$ & 1 & 1 & 1 & 1 & 0 & 0 & 1 & 1 & $x$ & 0 & 0 & 0 & 0 & $x$ & $x^2$ & 0 & $x^2$  \\  \hline
 $d_{18, \alpha_4}^{1, 0}$&  1 & 0 & 1 & 0 & 0 & 0 & 0 & 0 & $x$ & 0& 0 & 0 & 0 & 1 & 1 & 1 & 1 & $x^2$ & 0 & 0 & 0 & 1 & 1 & 1 & 1 & 0 & 0 & $x$ & 0 & $x$ & 0 & 0 & $x^2$ & $x^2$  \\  \hline
 $d_{19, \alpha_4}^{1, 0}$&  0 & 1 & 0 & 1 & 0 & 0 & 0 & 0 & 0 & $x$& 0 & 0 & 0 & 1 & 1 & 1 & 1 & 0 & $x^2$ & 0 & 0 & 1 & 1 & 1 & 1 & 0 & 0 & $x$ & 0 & $x$ & 0 & 0 & $x^2$ & $x^2$  \\  \hline
$d_{20, \alpha_5}^{1, 0}$&   1 & 0 & 1 & 0 & 0 & 0 & 0 & 0 & 0 & 0& $x$ & 0 & 0 & 1 & 1 & 1 & 1 & 0 & 0 & $x^2$ & $x$ & 1 & 1 & 1 & 1 & $x$ & $x$ & 0 & 0 & 0 & 0 & $x^2$ & 0 & 0  \\  \hline
 $d_{21, \alpha_5}^{1, 0}$&  0 & 1 & 0 & 1 & 0 & 0 & 0 & 0 & 0 & 0& $x$ & 0 & 0 & 1 & 1 & 1 & 1 & 0 & 0 & $x$ & $x^2$ & 1 & 1 & 1 & 1 & $x$ & $x$ & 0 & 0 & 0 & 0 & $x^2$ & 0 & 0  \\  \hline
  $d_{22, \alpha_5}^{1, 0}$& 1 & 0 & 0 & 1 & 0 & 0 & 0 & 0 & 0 & 0& 0 & $x$ & 0 & 1 & 1 & 0 & 0 & 1 & 1 & 1 & 1 & $x^2$ & $x$ & 1 & 1 & 0 & 0 & $x$ & $x$ & 0 & 0 & 0 & $x^2$ & 0  \\  \hline
 $d_{23,\alpha_5}^{1, 0}$&  0 & 1 & 1 & 0 & 0 & 0 & 0 & 0 & 0 & 0& 0 & $x$ & 0 & 1 & 1 & 0 & 0 & 1 & 1 & 1 & 1 & $x$ & $x^2$ & 1 & 1 & 0 & 0 & $x$ & $x$ & 0 & 0 & 0 & $x^2$ & 0  \\  \hline
 $d_{24, \alpha_5}^{1, 0}$&  1 & 1 & 0 & 0 & 0 & 0 & 0 & 0 & 0 & 0& 0 & 0 & $x$ & 0 & 0 & 1 & 1 & 1 & 1 & 1 & 1 & 1 & 1 & $x^2$ & $x$ & 0 & 0 & 0 & 0 & $x$ & $x$ & 0 & 0 & $x^2$ \\  \hline
$d_{25, \alpha_5}^{1, 0}$&   0 & 0 & 1 & 1 & 0 & 0 & 0 & 0 & 0 & 0& 0 & 0 & $x$ & 0 & 0 & 1 & 1 & 1 & 1 & 1 & 1 & 1 & 1 & $x$ & $x^2$ & 0 & 0 & 0 & 0 & $x$ & $x$ & 0 & 0 & $x^2$ \\  \hline \hline
 $d_{26, \alpha_6}^{1, 1}$&  0 & 0 & 0 & 0 & 0 & 0 & $x$ & $x$ & 0 & 0& $x$ & 0 & 0 & 0 & 0 & $x$ & $x$ & 0 & 0 & $x$ & $x$ & 0 & 0 & 0 & 0 & $x^3$ & $x^2$ & 0 & 0 & 0 & 0 & $x^3$ & 0 & 0 \\  \hline
$d_{27, \alpha_6}^{1, 1}$&   0 & 0 & 0 & 0 & $x$ & $x$ & 0 & 0 & 0 & 0& $x$ & 0 & 0 & $x$ & $x$ & 0 & 0 & 0 & 0 & $x$ & $x$ & 0 & 0 & 0 & 0 & $x^2$ & $x^3$ & 0 & 0 & 0 & 0 & $x^3$ & 0 & 0 \\  \hline
 $d_{28, \alpha_6}^{1, 1}$&  0 & 0 & 0 & 0 & 0 & 0 & 0 & 0 & $x$ & $x$& 0 & $x$ & 0 & $x$ & $x$ & 0 & 0 & $x$ & $x$ & 0 & 0 & $x$ & $x$ & 0 & 0 & 0 & 0 & $x^3$ & $x^2$ & 0 & 0 & 0 & $x^3$ & 0 \\  \hline
 $d_{29, \alpha_6}^{1, 1}$&  0 & 0 & 0 & 0 & $x$ & $x$ & 0 & 0 & 0 & 0& 0 & $x$ & 0 & $x$ & $x$ & 0 & 0 & 0 & 0 & 0 & 0 & $x$ & $x$ & 0 & 0 & 0 & 0 & $x^2$ & $x^3$ & 0 & 0 & 0 & $x^3$ & 0 \\  \hline
 $d_{30, \alpha_6}^{1, 1}$&  0 & 0 & 0 & 0 & 0 & 0 & 0 & 0 & $x$ & $x$& 0 & 0 & $x$ & 0 & 0 & 0 & 0 & $x$ & $x$ & 0 & 0 & 0 & 0 & $x$ & $x$ & 0 & 0 & 0 & 0 & $x^3$ & $x^2$ & 0 & 0 & $x^3$ \\  \hline
 $d_{31, \alpha_6}^{1, 1}$&  0 & 0 & 0 & 0 & 0 & 0 & $x$ & $x$ & 0 & 0& 0 & 0 & $x$ & 0 & 0 & $x$ & $x$ & 0 & 0 & 0 & 0 & 0 & 0 & $x$ & $x$ & 0 & 0 & 0 & 0 & $x^2$ & $x^3$ & 0 & 0 & $x^3$ \\  \hline \hline
 $d_{32, \alpha_7}^{2, 0}$&  1 & 1 & 1 & 1 & $x$ & $x$ & $x$ & $x$ & 0 & 0& $x$ & 0 & 0 & $x^2$ & $x^2$ & $x^2$ & $x^2$ & 0 & 0 & $x^2$ & $x^2$ & 0 & 0 & 0 & 0 & $x^3$ & $x^3$ & 0 & 0 & 0 & 0 & $x^4$ & 0 & 0 \\  \hline
 $d_{33, \alpha_7}^{2, 0}$&  1 & 1 & 1 & 1 & $x$ & $x$ & 0 & 0 & $x$ & $x$& 0 & $x$ & 0 & $x^2$ & $x^2$ & 0 & 0 & $x^2$ & $x^2$ & 0 & 0 & $x^2$ & $x^2$ & 0 & 0 & 0 & 0 & $x^3$ & $x^3$ & 0 & 0 & 0 & $x^4$ & 0 \\  \hline
 $d_{34, \alpha_7}^{2, 0}$&  1 & 1 & 1 & 1 & 0 & 0 & $x$ & $x$ & $x$ & $x$& 0 & 0 & $x$ & 0 & 0 & $x^2$ & $x^2$ & $x^2$ & $x^2$ & 0 & 0 & 0 & 0 & $x^2$ & $x^2$ & 0 & 0 & 0 & 0 & $x^3$ & $x^3$ & 0 & 0 & $x^4$\\  \hline
\end{tabular}
}
\end{landscape}
}
\newpage
After applying the column operations and by Theorem \ref{T3.32} the matrix $\overrightarrow{G}_{2.1+0}^3$ reduces as follows:

$\overrightarrow{A}_{0, 0} \sim \widetilde{\overrightarrow{A}}_{0,0} = I_4, \overrightarrow{A}_{1, 1} \sim \widetilde{\overrightarrow{A}}_{1,1} = xI_9, \overrightarrow{A}_{2, 2} \sim \widetilde{\overrightarrow{A}}_{2, 2} = (x^2-x-2)I_{12} + (-2)I'_{12}. $

\NI where $I_n$ denotes $n \times n$ identity matrix and $I'_n$ denotes $n \times n$ off-diagonal matrix.

After applying the row and column operations, the matrix $\overrightarrow{A}_{\rho}$ is reduced as follows:\\

\tiny{$\overrightarrow{A}_{\rho} \sim $\begin{tabular}{|c|c|c|c|c|c|c|c|c|c|}
                   \hline
                    & $d_{26, \alpha_6}^{1, 1}$ & $d_{27, \alpha_6}^{1, 1}$ & $d_{28, \alpha_6}^{1, 1}$ & $d_{29, \alpha_6}^{1, 1}$ & $d_{30, \alpha_6}^{1, 1}$ & $d_{31, \alpha_6}^{1, 1}$ & $d_{32, \alpha_7}^{2, 0}$ & $d_{33, \alpha_7}^{2, 0}$ & $d_{34,\alpha_7}^{2, 0}$ \\\hline
                   $d_{26, \alpha_6}^{1, 1}$ & $x^3 - 3x$ &$x^2-x$& $0$ & $0$ & $0$ & $-2x$ & $-x^2+x$ & $0$ & $0$ \\\hline
                   $d_{27, \alpha_6}^{1, 1}$ & $x^2-x$ & $x^3-3x$ & $0$ & $-2x$ & $0$ & $0$ & $-x^2+x$ & $0$ & $0$ \\\hline
                   $d_{28, \alpha_6}^{1, 1}$ & $0$ & $0$ & $x^3 - 3x$ & $x^2-x$ & $-2x$ & $0$ & $0$ & $-x^2+x$ & $0$ \\\hline
                   $d_{29, \alpha_6}^{1, 1}$ & $0$ & $-2x$ & $x^2-x$ & $x^3 - 3x$ & $0$ & $0$ & $0$ & $-x^2+x$ & $0$ \\\hline
                   $d_{30, \alpha_6}^{1, 1}$ & $0$ & $0$ & $-2x$ & $0$ & $x^3-3x$ & $x^2-x$ & $0$ & $0$ & $-x^2+x$ \\\hline
                   $d_{31, \alpha_6}^{1, 1}$ & $-2x$ & $0$ & $0$ & $0$ & $x^2-x$ & $x^3-3x$ & $0$ & $0$ & $-x^2+x$ \\\hline
                   $d_{32, \alpha_7}^{2, 0}$ & $-x^2+x$ & $-x^2+x$ & $0$ & $0$ & $0$ & $0$ & $x^4-2x^3$ & $-2x^2+2x+8$ & $-2x^2+2x+8$ \\
                   &  &  &  &  &  &  & $-4x^2+5x+8$ &  &  \\\hline
                   $d_{33, \alpha_7}^{2, 0}$ & $0$ & $0$ & $-x^2+x$ & $-x^2+x$ & $0$ & $0$ & $-2x^2+2x+8$ & $x^4-2x^3$ & $-2x^2+2x+8$ \\
                    &  &  &  &  &  &  &  & $-4x^2+5x+8$ &  \\\hline
                   $d_{34, \alpha_7}^{2, 0}$ & $0$  & $0$ & $0$ & $0$ & $-x^2+x$ & $-x^2+x$ & $-2x^2+2x+8$ & $-2x^2+2x+8$ & $x^4-2x^3$ \\
                    &  &  &  &  &  &  &  &  & $-4x^2+5x+8$ \\
                   \hline
                 \end{tabular}
}\\

The entry $x^2-x$ in the above matrix cannot be eliminated while applying column operations since the following diagrams do not belong to $\overrightarrow{A}^{\mathbb{Z}_2}_{3}(x).$

\begin{center}
\includegraphics{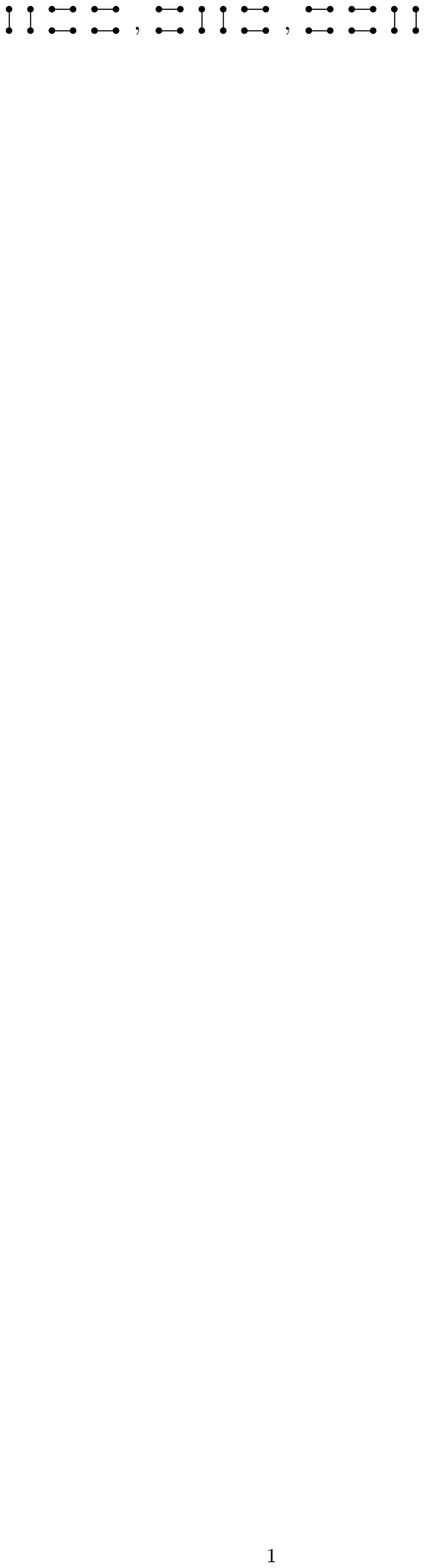}
\end{center}

\begin{center}
{\bf Acknowledgement}
\end{center}
 The authors would like to express their gratitude and sincere thanks to the  referee for all his(her) valuable commands and suggestions which in turn made the paper easy to read.

\end{document}